\def\<{\langle}
\def\>{\rangle}
\def\p{\partial}
\def\W{\pmb{|} }
\def\zd{\mathbb Z^d}
\def\zn{\mathbb Z^N}
\def\e{\epsilon}
\def\m{\mathcal}
\def\ti{\tilde}
\def\be{\begin{equation}}
\def\ee{\end{equation}}
\def\ba{\begin{array}}
\def\ea{\end{array}}
\newtheorem{Lemma}{Lemma}[section]
\newtheorem{Corollary}{Corollary}
\newtheorem{thm}{Theorem}[section]
\newtheorem{lem}[thm]{Lemma}
\newtheorem{rem}{Remark}
\numberwithin{equation}{section}
\newcommand{\mi}{\mathbf{i}}
\def\e{\epsilon}
\def\be{\begin{equation}}
\def\ee{\end{equation}}
\def\bee{\begin{equation*}}
\def\eee{\end{equation*}}
\def\br{\begin{eqnarray}}
\def\er{\end{eqnarray}}
\def\p{\partial}
\def\l{\langle}
\def\r{\rangle}
\def\lf{\lfloor}
\def\rc{\rceil}
\begin{document}
\setlength{\parindent}{2em}
\newpage
%\fontsize{12}{22}\selectfont\thispagestyle{empty}

\noindent

\title{\bf KAM theorem with normal frequencies of finite limit-points  for some shallow water equations\footnote{Supported by NNSF of China. (No.11790272 and No.11421061 ) }}
\author{{Xiaoping Yuan \footnote{Email:$\:$xpyuan@fudan.edu.cn; yuanxiaoping@hotmail.com}}\\
{\em\small School of Mathematical Sciences}\\
{\em\small  Fudan University}\\
{\em\small  Shanghai 200433,  China}\\
}
\date{}
\maketitle

%\renewcommand{\baselinestretch}{1.2}
%\large\normalsize

\noindent {\bf Abstract.}  By constructing an infinite dimensional KAM theorem of  the normal frequencies being dense at finite-point, we show that some shallow water equations such as Benjamin-Bona-Mahony equation and the  generalized $d$-Dim.
Pochhammer-Chree equation subject to some boundary conditions possess many (a family of initial values of positive Lebesgue measure of finite dimension) smooth solutions which are quasi-periodic in time.

%\skip0.8pt

%\noindent {\bf Subjclass}[2000]{$\;\;$Primary 37K55, 37J40; Secondary 35B35, 35Q35}

%

%\subjclass[2000]{Primary 37K55, 37J40; Secondary 35B35, 35Q35}

%%%%%%%%%%%%%
%%%%%%%%%%%%%%%%%%%
\tableofcontents

\section{Statement of the main result}

\iffalse%%%%%%%%
   In the present paper, we construct a new KAM theorem to deal with those infinite dimensional Hamiltonian
    systems of normal frequencies which are dense in zero-point. As for application, we
are concerned with the KAM tori or quasi-periodic solutions to  A generalized $d$-dimensional version of BBM equation (gBBM) which is given in \cite{Avrin-Goldstein} and \cite{Goldstein-Wichnoski}
\[\label{10.1} u_{t}-\Delta\,u_{t}+\operatorname {div}\,\varphi (u)=0,\]
where $\varphi $ is a sufficiently smooth function from $ \mathbb {R}$  to $\mathbb {R} ^{d}$. Another
generalized version of BBM (gBBM) is
\[u_t-\triangle u_t=(b,\nabla\, u)+u^p(a,\nabla u),\;x\in\mathbb R^d,\, p\ge 1,\]
where $a$ and $b\neq 0$ are fixed vectors in $\mathbb R^d$, $(\cdot,\cdot)$ is the standard inner product in
$\mathbb R^d\times \mathbb R^d$. See \cite{Biler-Piotr}.
%%%%%%%%%%%%%

These equations can be governed by infinite dimensional Hamiltonian systems which consist in linear Hamiltonian one attached by nonlinear Hamiltonian perturbation, where
 the linear system is of dense pure point spectra.\fi
%%%%%%%%%%%
%%%%%%%%%%
%In this paper the KAM theorem to be constructed is aimed at those
% partial differential equations, BBM equation and its higher dimensional version in some sense, for example, where the unperturbed linear operators have eigenvalues clustering at zero not at $\infty$.

%%%%%%%%%%%%%%%

\ \

\ \
The general problem discussed here is the persistency of quasi-periodic solutions of linear or integrable partial
 differential equations (PDEs) after Hamiltonian perturbation, which is closely related to the well-known
  Kolmogorov-Arnold-Moser (KAM) theory of finite (lower) dimensional invariant tori in smooth infinite dimensional dynamical systems. In this setting the considered PDEs can be written as  an infinite
dimensional Hamiltonian in the (exterior or interior ) parameter dependent normal form $H_0$ subject to a Hamiltonian perturbation $R$
\be \label{2.1}H=H_0+R=(\omega^0(\xi),y)+ \sum_{j\in \Bbb Z^d}\lambda_j(\xi)z_j\bar z_j+\<B^0(\xi)z,\bar z\>+R(x,y,z,\bar z; \, \xi)
\ee  with the symplectic structure
\be\label{2.2} dy\wedge\;dx+{\bf i}\sum_{j\in\mathbb Z^d} d \bar z_j\wedge\;d z_j,\;\;{\bf i}^2=-1 \ee
where $\xi$ is in some parameter set $\mathcal O_0$ of positive Lebesgue measure, and $(x,y,z,\bar z)$ is in the domain $D_p(s_0,r_0)$ which is to be specified later.
The tangent
frequencies $\omega^0=(\omega_1^0,\cdots,\omega_N^0)$ and the
normal frequencies $\lambda_j$ ($\;j\in \zd$) and the linear operator $B^0$ depend on $ N$ dimensional
parameter vector $\xi\in \mathcal O_0\subset \Bbb R^{N}$. Let
$$\Lambda=\Lambda(\xi)=\text{diag}(\lambda_j(\xi):j\in\zd).$$
  Then the
Hamiltonian equations of motion of $H_0$ with symplectic structure $dy\wedge dx+\mi \,d\bar z\wedge dz $ are
\be\dot x=\partial_y H_0=\omega^0(\xi),\,  \dot y=-\partial_x H_0=0,\, \dot
z={\bf i}(\Lambda(\xi)+B^0(\xi))z,\, \dot {\bar
z}=-{\bf i}(\Lambda(\xi)+B^0(\xi))\bar z. \ee
 Hence, for each $\xi\in \mathcal O_0$, there is an invariant
$N$-dimensional torus \be\mathcal{T}^N_0=\mathbb {T}^N\times
\{y=0\}\times\{z=0\} \times \{\bar z=0\}\ee with rotational frequencies $\omega^0(\xi)$.
The main  aim of KAM theory is to prove the persistence of the torus $\mathcal T^N_0$, for
``most" (in the sense of Lebesgue measure) parameter vector
$\xi\in \mathcal O_0$, under small perturbation $R$ of the Hamiltonian
$H_0$.  Here we give a brief, but not complete at all, history for the infinite dimensional KAM theory to deal with lower dimensional invariant tori.
In that direction, Kuksin \cite{Kuk0}-\cite{Kuk3} and Wayne \cite{Way}  initiated  KAM theory to deal with some partial differential equations of spatial dimension $d=1$ such as 1 dimensional nonlinear Schr\"odinger equation (\cite{Kuk1},\cite{K-P})  and 1 dimensional nonlinear wave equation ( \cite{Posch2}, \cite{Way}). Bourgain\cite{Bour1}-\cite{Bour6} developed a new
method initiated by Craig-Wayne\cite{C-W} to deal with the KAM tori for the PDEs in
high spatial dimension, based on the Newton iteration, Fr\"ohlich-Spencer techniques,
harmonic analysis and semi-algebraic set theory (see \cite{Bour6}). This is called Craig-Wayne-Bourgain (C-W-B) method.
We also mention the work by Eliasson-Kuksin\cite{E-Kuk} and Eliasson-Gr\'ebert-Kuksin\cite{E-Kuk2} where the classical KAM theorem is extended
in the direction of \cite{Eliasson}, \cite{Kuk0}-\cite{Kuk3},  \cite{Posch1} and \cite{Way} to deal with higher spatial dimensional nonlinear
Schr\"odinger equation by introducing elegant analysis of T\"oplitz-Lipschitz operator. The obtained KAM tori by \cite{E-Kuk} is linear stable. In addition, the KAM theory is also developed to deal some
1 dimensional PDEs of unbounded perturbation. See, for example, \cite{BBM,Baldi-Berti-Montalto-2, BBM2,BBP,  Berti-Montalto, Feola-Procesi,Kapp-Posch,Kuk2,liu-Y1,liu-Y2} and \cite{Zhang-Gao-Yuan}, for the details. In all works mentioned as the above, a basic assumption is that the normal frequencies $\lambda_j$'s cluster to infinity, that is, for some $\kappa>0$,
\[\lambda_j\approx |j|^\kappa\to\infty,\;\text{as}\; |j|\to\infty.\]   We will construct a KAM theorem for
\[\lambda_j\approx \varpi+ |j|^{-\kappa}\to \varpi\neq \infty,\;\text{as}\; |j|\to\infty\]
where $\varpi$ is a finite real number.

 \iffalse%%%%%When $\lambda_j\to \infty$, the difference $\lambda_i-\lambda_j$ ($i\neq j$) may be dense, which violate the second Melnikov  conditions but the first ones.
 When $\lambda_j$'s cluster at some finite limit-point, several significant new problems arise due to the density of not only the difference $\lambda_i-\lambda_j$ ($i\neq j$) but also $\lambda_j$'s itself. At this case, both of the first and the second Melnikov conditions can be violated.  We will list some of those problems in end of this section.
 \fi%%%%
 The present work will contain the first result on the persistency of quasi-periodic solutions and KAM tori for those PDEs with $\lambda_j$'s clustering to a finite limit-point.
   As for totally new applications of the KAM theorem, we will show that there are many (finite dimensional initial value set of positive Lebesgue measure) KAM tori and  quasi-periodic solutions for
some kinds of shallow water equations such as Benjamin-Bona-Mahony (BBM) equation (with $\varpi=0$):
\[ u_{t}+u_{x}+uu_{x}-u_{{xxt}}=0,\quad x\in \; \text{some compact domain in }\; \mathbb R\]
and the  generalized
Pochhammer-Chree  (gPC) equation (with $\varpi=1$)
\[u_{tt}-\Delta u_{tt}-\Delta u+\Delta \, u^3=0,\; x\in \; \text{some compact domain in}\; \mathbb R^d,\; d\ge 1.\]
\iffalse %%%%%%%This equation was studied in 1972 by Benjamin, Bona, and Mahony\cite{Benjamin-Bona-Mahony} as an improvement of the Korteweg-de Vries equation (KdV equation) for modeling long surface gravity waves of small amplitude - propagating uni-directionally in $1+1$ dimensions.
\fi%%%%%%%%%%%

\iffalse%%%%%%%%%
In either KAM theory or C-W-B method dealing with lower dimensional invariant tori, one always encounters the so-called  first Melnikov conditions(\cite{Mel})
\[(k,\omega^0)\pm \lambda_j\neq 0, \; k\in\mathbb Z^N,\; j\in\mathbb Z^d,\]
which are the delicate small divisors in KAM iteration, specially for $d>1.$
%where $N$ is the dimension of the KAM tori.
The number of the small divisors must be essentially finite in each step of KAM iterative. For example, in $m$-th step, one can assume $|k|\preceq 2^m$ by truncating the analytical perturbation. In view of $\lambda_j\approx |j|^\iota \rightarrow \infty$ as $|j|\rightarrow \infty$, one sees that $(k,\omega^0)\pm \lambda_j$ is NOT small when $|j|\ge C\, 2^{C\, m}$ for some large constant $C$. Thus the number of the  small divisors is bounded by $ C\, 2^{C\, m}$ such like. However, for BBM equation, for example, one sees that
\[\lambda_j\approx |j|^{-\kappa}\to 0,\; |j|\to\infty\;\, \text{for some}\;\, \kappa>0.\]
Note that there are an infinite number of $k\in\mathbb Z^N$ and $j\in\mathbb Z^d$  such that $(k,\omega^0)\pm \lambda_j$'s are small. Thus, there are always an infinite number of small divisors in the first Melnikov conditions when $\lambda_j$ goes to zero.
\fi%%%%%%%%%%%%%%%
\ \

  In order to state our results, let us introduce some notations. Denote by $d\ge 1$ the spatial dimension of those partial differential equations
to be considered.
% and by $p>d/2$ the weight of Sobolev space. Let $\kappa>0$ be a fixed constant to be specified. Set $q=p+\kappa$.
%
For a $d$-dimensional integer vector $j=(j_1,...,j_d)\in \mathbb Z^d$, define $|j|=|j_1|+\cdots+|j_d|$.
For $x\in\mathbb R^N$ or $x\in\mathbb C^N$, we also by $|x|$ denote the Euclidean norm of $x$.
Given $p>d/2, \kappa>0,$ let $q=p+\kappa.$
 For  $\ti p\in\{p,q\}$, define
\[h_{\ti p}=\{z=(z_j\in\mathbb C:j\in\mathbb Z^d):\quad ||z||_{\ti p}^2=\sum_{j\in\mathbb Z^d}|z_j|^2|j|^{2\ti p}\},\] where we take $|j|=1$ as $j=0$ for convenience.
This is a Hilbert space  with a natural inner product corresponding to the norm $||\cdot||_{\ti p}$.
Given an integer $N>0,$ let $\mathbb T^N_{s}$ be the complexization of $\mathbb T^N=\mathbb{R}^{N}/(2\pi \mathbb{Z})^{N}$ with width $s>0$:
\[\mathbb T_{s}^N=\{x\in\mathbb C^N/(2\pi\mathbb Z^N):\quad |\Im{x}|\le s\}.\]
%
%For fixed $p>0$ and $\kappa>0$, let $q=p+\kappa$ and
For given $s_0>0$ define
\[\mathcal {P}^{\tilde p}=\mathbb T^N_{s_0}\times\mathbb C^N\times h_{\ti p}\times h_{\ti p},\;\; \ti p\in\{p,q\}.\]
Take $\m P^p$ as phase spaces. Denote by $\mathcal L (h_{\ti p},h_{\ti q})$ the set consisting of all bounded linear operator from $h_{\ti p}\to h_{\ti q}$ where $\tilde p,\; \tilde q\in\{p,q\}$. Introduce a $D_{ p}(s_0,r_0)$-neighborhood of the torus $\mathbb T^N\times\{0\}\times\{0\}\times\{0\}$ in the phase space $\mathcal {P}^{p}$:
\[D_{p}(s_0,r_0)=\mathbb T^N_{s_0}\times\{y\in\mathbb C^N:|y|<r_0^2\}\times
\{z\in h_{p}:||z||_{p}<r_0\}\times \{\bar z\in h_{p}:||\bar z||_{p}<r_0\}.\]
 By $X_{\m G}$ denote the Hamiltonian vector field for a Hamiltonian function $\m G$ defined in $D_{p}(s_0,r_0)$ or some sub-domain of $D_{p}(s_0,r_0)$ The phase space $\m P^p$ endowed with
\eqref{2.2} is a symplectic space.
For two vectors
$b,c\in \Bbb C^\iota$ or $\Bbb R^\iota$, we write
$(b,c)=\sum_{j=1}^\iota b_jc_j$ if $\iota<\infty$. If the index $j\in\mathbb Z^d$, we write $\<b,c\>=\sum_{j\in\mathbb Z^d} b_jc_j$.

\iffalse%%%%%%%%%%%%%%%%%%\noindent\vspace{4pt}
\noindent{\it\bf Assumption A:} {\it ( Small Twist.) Let $\mathcal O_0^i$ be the project of $\m O_0$ to the $i$-th direction, i.e., $\m O_0^i:=\{\xi_i:\; (\xi_1,...,\xi_i,...,\xi_N)\in\m O_0\}$. Assume that there exists a constant $0<\tau_0<1$ such that
\be\label{2014-3-23-1} \min_{1\le i\le N}\mbox{Measure}\; \m O_0^i\ge \e_0^{\tau_0}\gg \e_0. \ee
\fi%%%%%%%%%%
%And assume
%\be \label{2014-3-23-2} \mbox{diameter} \; \m O_0\le c_0.\ee}

\noindent{\it\bf Assumption A:} {\it (Non-degeneracy.)  {\it  Assume that $\omega^0(\xi)=(\omega_1^0(\xi),...,\omega_N^0(\xi)):\; \m O_0\subset\mathbb R^N\to\mathbb R^N$ is real continuously differentiable in $\xi\in\mathcal O_0$ in the sense of Whitney.\footnote{In the following arguments, the differentiability with respect to the parameter $\xi$ is always in the sense of Whitney. We will not mention it again. An alternative way is replacing the Whitney smoothness by Lipschitzian continuity. } And assume there are two absolute constants $c_1,c_2>0$ such that
\be\label{2.6} \inf_{\xi\in\mathcal O_0}|\text{det }\p_{ \xi}\omega^0(\xi)|\ge c_1,\ee
\be \label{2.7}\sup_{\xi\in{\mathcal O}_0}|\p_{ \xi}\,\omega^0(\xi)|\le c_2.\ee
%\be\label{3-23-afternoon} \min_{1\le i\le N}\inf_{\xi\in\m O_0}|\omega_i(\xi)|\ge c_3 \ee
}

\begin{rem} Let $\omega^0(\xi)=\omega$. By  Assumption {\bf A}, we can regard $\omega$ as parameter, and $\xi=(\omega^0)^{-1}(\omega):\; \omega^0(\m O_0)\to \m O_0$. Or assuming $\omega(\xi)\equiv \xi$ without loss of generality.

\end{rem}

\iffalse%%%%%%%%
%%%%%%%%%%%%%%%%%%%%%
\vspace{4pt}
\noindent{\it\bf Assumption B:} {\it (Multiplicity.)  Assume that
there are constants $c_4,c_5>0$ such that for all $\xi\in\mathcal
O_0$,
\begin{equation}\label{yuan2.9}(\lambda_j)^\sharp\le c_4|j|^{c_5}\end{equation}
where we denote by $(\cdot)^\sharp$ the multiplicity of
$(\cdot)$.}
\vspace{4pt}
\fi%%%%%%%%%%%%%%%%

\noindent{\it\bf Assumption B} {\it (The normal frequencies clustering at the origin $0\in\mathbb{R}$, i.e. $\varpi=0$.)  Assume $\lambda_j=\lambda_j(\xi)$'s are  real and continuously differentiable in $\xi\in\mathcal O_0$.  Assume  that there exist constants $c_{11},c_{12}, c_{13}>0$ and $\kappa>0$ such that
\be\label{3-08-morning} c_{11}|j|^{-\kappa}\leq\lambda_j(\xi)\leq c_{12}|j|^{-\kappa},\quad \forall\; \xi\in\m O_0,\;\; j\in\mathbb Z^d \ee
and
\be\label{170512-1} \sup_{\xi\in{\mathcal O}_0}|\p_{ \xi}\,\lambda_j(\xi)|\le c_{13}\, |j|^{-\kappa}\, ,\;\;j\in\mathbb Z^d.\ee
 And assume
 for every $0\neq k\in\mathbb Z^n$  and $i,j\in\mathbb Z^d$,%and $l=(l_j:j\in\mathbb Z^d)\in (\mathbb Z)^{\mathbb Z^d}$ with $1 \le  |l| \le 2$ (here $|l| = \sum_{j\in\mathbb Z^d }|l_j|)$ and the signs of $l_j$ are the same for all non-zero $l_j$,
\be\label{170510-2} \frac{d^*}{d\,\omega}\left(  (k,\omega)\pm\lambda_i\right)> 0,\;\; \frac{d^*}{d\,\omega}\left(  (k,\omega)\pm(\lambda_i+\lambda_j\right)> 0,\ee
where $\lambda_i=\lambda_i(\xi(\omega))=\lambda_i((\omega^0)^{-1}(\omega))$ and $\frac{d^*}{d\,\omega}$ is the directional derivative along the direction such that $\frac{d^*}{d\,\omega} (k,\omega)\ge 1$.

\begin{rem} Note that $\frac{d^*}{d\,\omega} (k,\omega)\ge 1$.  By Assumption {\bf A} and \eqref{170512-1}, there is $j_0>0$ such that, for all $i,j\in\mathbb{Z}^d $ with $|i|,|j|\ge j_0$, the inequality \eqref{170510-2} holds true automatically, and  there is a constant $c^*>0$ such that
\be\label{usefortwist} \frac{d^*}{d\,\omega}\left(  (k,\omega)\pm\lambda_i\right)> c^*,\;\; \frac{d^*}{d\,\omega}\left(  (k,\omega)\pm\lambda_i\pm \lambda_j\right)> c^*
. \ee In addition, if $c_{13}$ is small enough, the inequality \eqref{170510-2} holds true automatically, too.

\end{rem}

\begin{rem}\label{2018-2-28} By \eqref{3-08-morning} and $q=\kappa+p$, we have that for any linear operator $\m A:\; h_p\to h_q$
\[||\m A||_{h_p\to h_q}\le C ||\Lambda^{-1}\, \m A||_{h_p\to h_p},\; ||\Lambda^{-1}\, \m A||_{h_p\to h_p}\le C\, ||\m A||_{h_p\to h_q},\; ||\m A\, \Lambda^{-1}||_{h_q\to h_q}\le C\, ||\m A||_{h_p\to h_q}.\]
\end{rem}

%Up to now, we specify the $\kappa$ in $q=p+\kappa$ to be the $\kappa$ in %{\it\bf Assumption C}

%%%%%%%%%%
\vspace{4pt}{\rm
We will consider a Hamiltonian perturbation $R^0$ of the integrable Hamiltonian $H_0$.
In order to define the size of the perturbation $R^0$, we need to introduce some norms. Arbitrarily take $s$ and $r$ with $s_0\ge s>0,r_0>r>0$, $\tilde p, \ti q\in\{p,q\}$, and
arbitrary take a subset $\m O\subset \m O_{0}$.
For a map $f(x,\xi):\mathbb{T}_{s}^{N}\times \m O \rightarrow \mathbb{C}^{N},$ define
$$|f|^{2}_{s,\mathcal{O}}=\sup_{\xi\in\m O}\sum_{k\in \mathbb{Z}^{N}}|\widehat{f}(k,\xi)|^{2}e^{2|k|s},$$ where $\widehat{f}(k,\xi)$ is the $k$-Fourier coefficient of $f(x,\xi)$ in $x\in \mathbb T^N$.
For a map $f:\mathbb{T}_{s}^{N}\times \m O \rightarrow h_{\tilde{p}},$ define
$$||f||^{2}_{\tilde{p}, s, \mathcal{O}}=\sup_{\xi\in\m O}\sum_{k\in \mathbb{Z}^{N}}||\widehat{f}(k, \xi)||^{2}_{\tilde{p}}\, e^{2|k|s},\;\;\tilde{p}\in \{p, q\}.$$
Consider a map $f:\; D_{\ti p}(s,r)\times\m O\to h_{\tilde q}$ with $\ti p,\, \ti q\in\{p,q\}$. For $(x,y,z,\bar z)\in D_{\ti {p}}(s,r)$, write $f$ into Fourier series in $x$:
\[f(x,y,z,\bar z;\xi)=\sum_{k\in\zn}\widehat{f}(k;y,z,\bar z;\xi)\; e^{\mi \; (k,x)}.\]
Define
 \be ||f(\cdot,y,z,\bar z;\xi)||_{\tilde q,s}^2:=\sum_{k\in\zn}e^{2|k|s}\;||\widehat{f}(k;y,z,\bar z;\xi)||^2_{{\tilde q}}. \ee
 The bound  $||f(\cdot,y,z,\bar z;\xi)||_{\tilde q,s}<\infty$ implies that $f(x,y,z,\bar z;\xi)$ as a function of $x$ with its range in $h_{\ti q}$ is analytic in $|\Im x|<s$ and continuous in $|\Im x|=s$. Let
\be ||f||_{\tilde p,\ti q,s,r,\m O}:=\sup_{\xi\in\m O,|y|< r^2,||z||_{\ti p}< r,||\bar z||_{\ti p}< r}||f(\cdot,y,z,\bar z;\xi)||_{{\tilde q},s}.\ee
%For $f:\;D_{\ti p}(s,r)\times\m O\to\mathbb C^N$, define
%\bee  |f|^{2}_{\ti p,s,r,\m O}=\sum_{k\in \mathbb{Z}^{N}}|\hat{f}(k;y,z,\bar{z};\xi)|^{2}e^{2|k|s}.\eee
For $f:\;D_{\ti p}(s,r)\times\m O\to\mathbb C^N$, define

\be  |f|_{\ti p,s,r,\m O}:=\sup_{\xi\in\m O,|y|< r^2,||z||_{\ti p}< r,||\bar z||_{\ti p}< r}\sqrt{\sum_{k\in\zn}|\widehat{f}(k;y,z,\bar z;\xi)|^2e^{ 2|k|s}}.\ee
For a map $$W=(X,Y,Z,\bar Z):\; D_{\ti p}(s,r)\times\mathcal O\subset\mathcal P^{\ti p}\times\m O\to\mathcal P^{\tilde q},$$ define
\be\label{20170729}\W W\W_{\tilde q,D_{\ti p}(s,r)\times\mathcal O}:=\sqrt{|X|_{\tilde p,s,r,\m O}^2+
|Y|_{\tilde p, s,r,\m O}^2+||Z||_{\tilde p, \tilde q,s,r,\m O}^2+||\overline{Z}||_{\tilde p, \tilde q,s,r,\m O}^2} .\ee
%{\color{red} and \[\W W\W_{\tilde q,D_{\ti p}(s,r)\times\mathcal O}^{\mathcal L}:=
%\W \p_\xi W\W_{\tilde q,D_{\ti p}(s,r)\times\mathcal O}.\] to be deleted}
%
Following \cite{Bambusi-Grebert} with a minor modification, we introduce the modulus of a function (  vector, matrix or  operator). For a scalar complex value function defined on $D_ {\ti p}(s_0,r_0)\times \m O_0,$ $\ti {p}\in \{p, q\},$
\[f(x,y,z,\bar z;\xi)=\sum_{k\in\mathbb Z^N,\gamma\in\mathbb Z^N_+\alpha,\beta\in\mathbb Z_+^{\mathbb{Z}^d}}\, f_{k,\gamma,\alpha,\beta}(\xi)e^{\mi (k,x)}\,y^\gamma\,z^{\alpha}\, \bar z^{\beta},\]
we define
\[\lfloor f \rceil= \lfloor f(x,y,z,\bar z;\xi)\rceil=\sum_{k\in\mathbb Z^N,\gamma\in\mathbb Z^N_+\alpha,\beta\in\mathbb Z_+^{\mathbb{Z}^d}} \left|f_{k,\gamma,\alpha,\beta}(\xi)\right|\,e^{\mi ( k,x)}\,y^\gamma\,z^{\alpha}\, \bar z^{\beta},\]
where $\mathbb Z_+$ consists of all non-negative integers. For a vector $$f(x,y,z,\bar z;\xi)=(f_j(x,y,z,\bar z;\xi)\in\mathbb C:\, j\in \mathbb Z^d \; \text{or a subset of }\; \mathbb Z^d),$$ define
\[\lfloor f \rceil=(\lfloor f_j(x,y,z,\bar z;\xi)\rceil :\, j\in \mathbb Z^d \; \text{or a subset of }\; \mathbb Z^d)).\]
For an operator or matrix $$f(x,y,z,\bar z;\xi)=(f_{ij}(x,y,z,\bar z;\xi)\in\mathbb C:\, i,j\in \mathbb Z^d \; \text{or a subset of }\; \mathbb Z^d),$$ define
$$\lfloor f(x,y,z,\bar z;\xi) \rceil =(\lfloor f_{ij}(x,y,z,\bar z;\xi)\rceil :\, i,j\in \mathbb Z^d \; \text{or a subset of }\; \mathbb Z^d).$$
In the whole of this paper, we denote by $C$ a universal constant which may be different in different places and which is independent of the steps of KAM iterations.

\vspace{4pt}

\noindent{\it\bf Assumption C:} {\it (Regularity.) %Let $s_0,r_0$ given.
Assume the perturbation term $ R^0(x,y,z,\bar z;\xi)$ which
is defined on the domain $D_{p}(s_0,r_0)\times\mathcal O_0$ is analytic in
the space coordinates $(x,y,z,\bar z)$ and $C^1$-smooth in $\xi$ of the parameter vector $\xi\in\mathcal O_0$, and  for each $\xi\in \mathcal O_0$,
the modulus $\lfloor X_{R^0}\rceil$ of
 its Hamiltonian vector field $$X_{ R^{0}}:=(
R^{0}_y,- R^{0}_x, {\bf i}\;\p_{\bar z} R^{0},-{\bf i}\;\p_{ z}
R^{0}),$$ defines
 a analytic map
$$\lfloor X_{ R^{0}}\rceil :D_p(s_0,r_0)\subset\mathcal P^{p}\to\mathcal P^{q}
$$
satisfying
\be \label{2.10}\W \lfloor X_{R^0}\rceil\W_{q,D_p(s_0,r_0)\times\mathcal O_0} \le C\,\epsilon_0,\; \W \lfloor \p_\xi\, X_{R^0}\rceil\W_{q,D_p(s_0,r_0)\times\mathcal O_0} \le C\, \epsilon_0.\ee
}

%%Also assume  that $X_{ R^{0}}$ is analytic in  each entry of  $\xi\in\Cal O_0$.

\vspace{4pt}

\noindent{\it\bf Assumption D:} {\it (Reality.) The Hamiltonian functions $H_0(x,y,z,\bar z;\xi)$ and $R^0(x,y,z,\bar z;\xi)$ are real when  $x$ and $y$  are real and $\bar z$ is the complex conjugate of $z$.}

 \begin{rem}This condition implies that $\omega_0(\xi)$ and $\lambda_j(\xi)$'s  are real  and that $B^0(\xi)$ is self-adjoint in the square-summable space $\ell_2$ and
$$\overline{R^0(x,y,z,\bar z;\xi)}=R^0(x,y,\bar z, z;\xi).$$
\end{rem}

%%%%%%%%%
%%%%%%%%%%%
%Assumption F will be used in \S 5. if we do not pursue the linear stability, \S 5 can be omitted. Thus, Assumption F is not necessary  if omitting the linear stability.

%
\iffalse
With the symplectic structure $dy\wedge dx+\sqrt{-1}d\bar z\wedge dz$ in coordinate system $w:=(x,y,z,\bar z)\in \m P^p$, the vector field $X_{R}$ reads
\[\left(
\p_y R(w),- \p_x R(w), {\bf i}\,\p_{\bar z} R(w),-{\bf i}\,\p_{ z}
R(w)\right)^T.\]\fi
%
% We define
%\[ \Lambda^{-1}X_{R^0}:=\left(
%R^{0}_y,- R^{0}_x, {\bf i}\;\Lambda^{-1}\p_z R^{0},-{\bf i}\Lambda^{-1}\;\p_{\bar z}
%R^{0}\right).\]
%
\iffalse%%%%%%%%
The tangent map $\m D$ of the vector field $X_R$ reads formally
\[ \m DX_R(w)=\begin{pmatrix} \p_x\p_y  R & \p_y\p_y R&  \p_z\p_y R &-\p_{\bar z}\p_y R\\ -\p_x\p_x R& -\p_y\p_x R& - \p_z\p_x R&\p_{\bar z}\p_x R\\ \mi\,\p_x\p_{\bar z} R & \mi\,\p_y\p_{\bar z} R& \mi\,\p_z\p_{\bar z} R & \mi\,\p_{\bar z}\p_{\bar z} R\\ -\mi\, \p_x\p_{\bar z}R & -\mi\,\p_y \p_{ z}R & -\mi\,\p_z \p_{ z} R & -\mi\,\p_{\bar z} \p_{\bar z}R
 \end{pmatrix}.\]
 \fi%%%
%%%%%%%%%
%%%%%%%%%%%%%%%%%%%%%%%%
%%%%%%%%%%%%%%%%%%%%%%
\iffalse%%%%%%%%%%%%%%%%%%
\[\Lambda^{-1} D X_{R^0}=\begin{pmatrix}\Lambda^{-1} \p_z\p_z R^{0} & \Lambda^{-1} \p_z\p_{\bar z} R^{0}\\
\Lambda^{-1}\p_{\bar z}z\p_z R^{0} &  \Lambda^{-1}\p_{\bar z}\p_{\bar z} R^{0} \end{pmatrix},\;   D X_{R^0}\Lambda^{-1}=\begin{pmatrix} \p_z\p_z R^{0}\Lambda^{-1} &  \p_z\p_{\bar z} R^{0}\Lambda^{-1}\\
\p_{\bar z}z\p_z R^{0}\Lambda^{-1} &  \p_{\bar z}\p_{\bar z} R^{0}\Lambda^{-1} \end{pmatrix}\]
and
\[||\Lambda^{-1} D X_{R^0} ||_{h_{\ti p}\to h_{\ti q}}=||\Lambda^{-1} \p_z\p_z R^{0} ||_{h_{\ti p}\to h_{\ti q}}+\cdots+||\Lambda^{-1} \p_{\bar z}\p_{\bar z} R^{0} ||_{h_{\ti p}\to h_{\ti q}}\]
\[|| D X_{R^0}\Lambda^{-1} ||_{h_{\ti p}\to h_{\ti q}}=|| \p_z\p_z R^{0} \Lambda^{-1} ||_{h_{\ti p}\to h_{\ti q}}+\cdots+|| \p_{\bar z}\p_{\bar z} R^{0}\Lambda^{-1} ||_{h_{\ti p}\to h_{\ti q}}\]
\fi%%%%%%%%%%%%%%
%%%%%%%%%%%%%%%%%
%%%%%%%%%%%%%%%
\vspace{4pt}

\iffalse%%%%%%%%%%%%%%%
%%%%%%%%%%%%%%%and
\be \label{2.11}\W X_{R^0}\circ (\Lambda^{-1}(\cdot))\W_{q,D_q(s_0,r_0)\times\mathcal O_0} \le \epsilon_0,\; \W X_{R^0}\circ (\Lambda^{-1}(\cdot))\W^{\m L}_{q,D_q(s_0,r_0)\times\mathcal O_0} \le \epsilon_0.\ee

\begin{rem} Remark. It follows from  \eqref{2.11} that

\be || D X_{R^0} \Lambda^{-1} ||_{h_{ p}\to h_{p}}\le \e_0 ,\; || D X_{R^0} \Lambda^{-1}||^{\m L}_{h_{ q}\to h_{q}}\le \e_0.\ee
\end{rem}

\fi%%%%%%%%%%
%%%%%%%%%%%%
%%%%%%%%%%%%
\noindent{\it\bf Assumption E:} {\it For any $\xi\in\mathcal{O}_0$, the modulus of the operator $B_0$ is relatively small with respect to $q=p+\kappa$ in the following sense:
$$\sup_{\xi\in\mathcal{O}_0}||\lfloor B^0(\xi)\rceil||_{h_{p}\rightarrow h_{q}}\leq C \,\epsilon_0,\;
\sup_{\xi\in\mathcal{O}_0}||\lfloor\p_\xi\,B^0(\xi)\rceil||_{h_{p}\rightarrow h_{q}}\leq C\, \epsilon_0.$$}
\begin{rem} If the modulus $\lf B\rc=(|B_{ij}|:\;i,j\in\mathbb Z^d)$ of a linear operator $B$ is bounded from $h_p\to h_q$, it implies that $B$ is absolutely bounded. See \cite{Halmos-Sunder} for the notation ``absolute boundedness".
\end{rem}

%
\iffalse%%%%%%%%%%%%%%$$\sup_{\xi\in\mathcal{O}_0}||\partial_{\xi}\Lambda^{-1}B_0(\xi)||_{h_{p}\rightarrow h_{p}}\leq\epsilon_0^{\tau},\; \sup_{\xi\in\mathcal{O}_0}||\partial_{\xi}B_0(\xi)\Lambda^{-1}||_{h_{q}\rightarrow h_{q}}\leq\epsilon_0^{\tau}$$
where $0<\tau<1$ is a constant.}
\fi%%%%%%%%%%%%%%%%%%%%%%%%%
%%%%%%%%%%%%%

%\begin{Corollary}\label{Cor3}
%The obtained KAM tori  are long-time stable in the following sense:
%for any initial value $(\varpi,\xi):=(x,y,z,\bar z;\xi)\in D(s_0/4,r_0/4)\times\mathcal O$ with
%\[d(\varpi,\Phi(\mathcal T_0))\le \epsilon_0/2,\]
%then the solution $X(t):=(x(t),y(t),z(t),\bar z(t))$ satisfies
%\[d(X(t),\Phi(\mathcal T_0))\le C\epsilon_0,\quad\text{for time length}\; |t|\le \epsilon_0^{-1}. \]
%\end{Corollary}

%%%%%%%%%%
%3 section%%%%%%%
%%%%%%%%%%

\begin{thm} \label{theorem2}  ($\varpi=0$). Suppose that the Hamiltonian $H=H_0+R^0$ obeys the assumptions $\bf{A, B, C, D, E.}$ Then
there is a sufficiently small $\epsilon^*=\epsilon^*(N,p,q,d,\mathcal{O}_0)>0$ such that for
any $0<\epsilon_0<\epsilon^*$ there is a subset $\mathcal{O}\subset\mathcal{O}_0$  and a function $\gamma=\gamma(\e_0) $ with
\begin{equation}\label{1}\mbox{Meas}\mathcal{O}\geq(\mbox{Meas}\mathcal{O}_0)(1-O(\gamma(\epsilon_0))),
\quad \lim_{\e_0\to 0}\gamma(\e_0)=0,\end{equation}
and there is a symplectic coordinate change
\begin{equation}\label{2}\Phi:D_p(\frac{s_0}{2},\frac{r_0}{2})\times\mathcal{O}
\subset\mathcal{P}^{ p}\rightarrow
D_{ p}(s_0,r_0)\times\mathcal{O}_0\subset\mathcal{P}^{ p},\end{equation}
such that $H=H_0+R^0$ is changed into
\begin{equation}\label{3}H^{\infty}:=H\circ\Phi=(\omega(\xi),y)+\sum_{j\in\mathbb{Z}^d}\lambda_j(\xi)z_j\bar{z}_j+\langle B^{\infty}(\xi)z,\bar{z}\rangle+R,\end{equation}
where \begin{equation}\label{4}R=O(|y|^2+|y|||z||_{p}+||z||_{p}^3),\end{equation}

 \begin{equation}\label{5}|\lf X_R\rc |_{q,D_p(s_0/2,r_0/2)\times\mathcal{O}}\leq C\epsilon_0,\; |\lf \p_\xi\,X_R\rc|_{q,D_p(s_0/2,r_0/2)\times\mathcal{O}}\leq C\epsilon_0,\end{equation}
 \iffalse%%%%%%%%%%%%%%%%
 %%%%%%%%%%%%%%%%%%%%%and

\begin{equation}\label{6}|X_R\circ(\Lambda^{-1}(\cdot))|_{q,D_q(s_0/2,r_0/2)\times\mathcal{O}}\leq C\epsilon_0,\; |X_R\circ(\Lambda^{-1}(\cdot))|^{\mathcal{L}}_{q,D_q(s_0/2,r_0/2)\times\mathcal{O}}\leq C\epsilon_0\end{equation}
\fi%%%%%%%%%%%%
%%%%%%%%%%%%%%%%%%%
%%%%%%%%%%%%%%
and for any $\xi\in\mathcal{O}$, the operator $B^{\infty}(\xi)$  obeys
\begin{equation}
\label{7}||\lf B^{\infty}(\xi)-B^{0}(\xi)\rc||_{h_{p}\rightarrow h_{q}}\leq C\epsilon_0,\;
||\lf \p_\xi(B^{\infty}(\xi)-B^{0}(\xi))\rc||_{h_{p}\rightarrow h_{q}}\leq C\epsilon_0,\end{equation}
%
\iffalse%%%%%%%\begin{equation}\label{8}||\partial_{\xi}(\Lambda^{-1}(B^{\infty}(\xi)-B^{0}(\xi)))||_{h_{p}\rightarrow h_{p}}\leq C\epsilon_0^{\tau}\end{equation}
\fi%%%%%%%%%%%%%%
%%%%%%%%%%%%%%
and $\omega:\mathcal{O}\rightarrow\mathbb{R}^N$ with
\begin{equation}\label{9}\sup_{\xi\in\mathcal{O}}|\omega-\omega^0|\leq C\epsilon_{0},\quad \sup_{\xi\in\mathcal{O}}|\partial_{\xi}(\omega-\omega^0)|\leq C\epsilon_{0}.\end{equation}

\end{thm}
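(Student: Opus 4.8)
The plan is to prove Theorem~\ref{theorem2} by the standard KAM iteration scheme (Newton-type rapidly convergent iteration with parameter exclusion), but organized so that the pathology introduced by the normal frequencies clustering at $\varpi=0$ is absorbed into the operator $B^\infty$ rather than into the small divisors. At each step $m$ we have a Hamiltonian $H_m = N_m + R^m$ with $N_m = (\omega^m(\xi),y) + \langle (\Lambda + B^m(\xi))z,\bar z\rangle$ and $\|X_{R^m}\| \lesssim \eps_m = \eps_0^{(3/2)^m}$ on a shrinking domain $D_p(s_m,r_m)$ over a shrinking parameter set $\mathcal O_m$. We seek $F_m$ with $X_{F_m}$ of size $O(\eps_m)$ such that the time-one map $\Phi_m = X_{F_m}^1$ transforms $H_m$ into $H_{m+1}$ with the new perturbation quadratically small. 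Writing the homological equation $\{N_m,F_m\} + \widehat R^m = \widehat{N}_{m+1} - N_m$, where $\widehat R^m$ is the truncation of $R^m$ to terms at most quadratic in $(y,z,\bar z)$ and of Fourier order $|k|\le K_m$, the key point is that I do \emph{not} try to solve for the coefficient of $z_i\bar z_j$ with $i\ne j$ via a small divisor $(k,\omega)+\lambda_i-\lambda_j$ (which is dense near zero and uncontrollable); instead I keep the full off-diagonal block: the new normal form is $\langle(\Lambda + B^{m+1})z,\bar z\rangle$ with $B^{m+1} = B^m + \widehat R^m_{z\bar z}|_{k=0}$, and $F_m$ only needs to kill the $k\ne 0$ part of $\widehat R^m_{z\bar z}$ and the $\widehat R^m_{zz}$, $\widehat R^m_{\bar z\bar z}$, $\widehat R^m_y$, $\widehat R^m_{\text{const}}$ terms. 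Thus the only small divisors to be controlled are $(k,\omega)$ (for the $y$ and constant terms, $k\ne 0$), $(k,\omega)\pm\lambda_i$ (for the $z,\bar z$ linear terms), and $(k,\omega)\pm(\lambda_i+\lambda_j)$, $(k,\omega)+\lambda_i-\lambda_j$ with $k\ne 0$ (for the quadratic-in-$z$ terms). Crucially, because $\lambda_i,\lambda_j\to 0$, for $|k|\ne 0$ one has $(k,\omega)\pm(\lambda_i\pm\lambda_j)$ bounded away from zero once $|i|,|j|$ are large (Remark after Assumption~B), so only finitely many $(i,j)$ per $k$ produce genuine small divisors — this is exactly what makes the scheme work with $\varpi=0$.

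The steps, in order: (1) \emph{Homological equation.} Solve $\{N_m,F_m\}+\widehat R^m = \text{(new normal form)}$ componentwise in Fourier modes. For the $z\bar z$-block with $k\ne 0$ one must invert $\mathrm{i}(k,\omega) + \mathrm{i}\,\mathrm{ad}_{\Lambda+B^m}$ acting on matrices; here I use that $B^m$ is absolutely bounded from $h_p\to h_q$ (hence small relative to $\Lambda$ in the sense of Remark~\ref{2018-2-28}), treat $\mathrm{ad}_\Lambda$ as the dominant diagonal part, and impose Melnikov-type nonresonance conditions on $(k,\omega)+\lambda_i-\lambda_j$ only for the finitely many relevant $(i,j)$; the contribution of $B^m$ is handled by a Neumann series since $\eps_m/\text{(divisor)}$ is small. (2) \emph{Estimates on $F_m$ and $\Phi_m$.} Standard Cauchy estimates on the shrunk domain $D_p(s_{m+1},r_{m+1})$, using the modulus norms $\lfloor\cdot\rceil$ and the $\W\cdot\W$ norms defined in the excerpt, give $\W X_{F_m}\W \lesssim \eps_m \gamma_m^{-c}$ and control the parameter derivative $\p_\xi F_m$; then $\Phi_m$ maps $D_p(s_{m+1},r_{m+1})$ into $D_p(s_m,r_m)$ and is close to identity. (3) \emph{New error.} $R^{m+1} = \int_0^1 \{(1-t)(\widehat N_{m+1}-N_m) + R^m - \widehat R^m, F_m\}\circ X_{F_m}^t\,dt + (R^m - \widehat R^m)$; the truncation tail $R^m-\widehat R^m$ is $O(\eps_m)$ but with a gain from the Fourier cutoff $e^{-K_m(s_m-s_{m+1})}$ and from the Taylor remainder in $(y,z)$, so with the usual choices $K_m\sim (\log\eps_m^{-1})^{c}$, $s_m-s_{m+1}\sim s_0 2^{-m}$, one gets $\W X_{R^{m+1}}\W \lesssim \eps_m^{3/2} = \eps_{m+1}$. (4) \emph{Parameter exclusion.} Remove from $\mathcal O_m$ the resonant set where any of the finitely many active Melnikov denominators is smaller than $\gamma_m/|k|^{\tau}$; using Assumption~A (the twist $|\det\p_\xi\omega^0|\ge c_1$, so $\omega$ is a good parameter) together with the positivity of the directional derivatives in \eqref{170510-2}–\eqref{usefortwist}, each excised sliver has measure $\lesssim \gamma_m/|k|^{\tau}$, and summing over $k$ and $m$ gives the total excision $O(\gamma(\eps_0))\to 0$. (5) \emph{Convergence.} $\Phi = \lim \Phi^0\circ\cdots\circ\Phi^m$ converges on $D_p(s_0/2,r_0/2)\times\mathcal O$ to the desired map; $\omega^m\to\omega$, $B^m\to B^\infty$ with the stated bounds \eqref{7}, \eqref{9}, and the limit Hamiltonian has the form \eqref{3}–\eqref{5}.

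The main obstacle — and the novel point — is Step (1), the solvability and estimates of the homological equation for the $z\bar z$-block in the presence of the clustering. Two things must be handled carefully: first, that the off-diagonal terms $(k=0,\ i\ne j)$ are \emph{not} solved but pushed into $B^{m+1}$, which means one must verify inductively that $B^{m+1}$ stays absolutely bounded $h_p\to h_q$ with norm $\le C\eps_0$ (it grows by $\sum_m\eps_m < C\eps_0$, which is fine) and that this is compatible with re-running the estimate at the next step — i.e. the smallness of $B^m$ relative to $\Lambda$ is preserved; second, one must show that even though $\lambda_i-\lambda_j$ can be arbitrarily small, the set of $(i,j)$ for which $(k,\omega)+\lambda_i-\lambda_j$ is small (for fixed $k\ne 0$ on the truncated range $|k|\le K_m$) is \emph{finite} and its cardinality is polynomially bounded in $K_m$, so that the measure estimate in Step (4) and the operator-norm estimates survive; this uses $|\lambda_j|\le c_{12}|j|^{-\kappa}$ together with $(k,\omega)$ being bounded below on nonresonant parameters. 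A secondary technical nuisance is that the regularity gain $h_p\to h_q$ is exactly $\kappa = q-p$, the same exponent as in the frequency asymptotics, so all operator manipulations must be tracked in the pair of spaces $(h_p,h_q)$ using Remark~\ref{2018-2-28}; but this is bookkeeping, not a real difficulty.
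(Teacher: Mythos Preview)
Your overall architecture matches the paper's: KAM iteration, push the $k=0$ part of the $z\bar z$-block into the normal form $B^{m+1}$, and solve only for $k\neq 0$ there. That is exactly what the paper does. But two of the steps you sketch as routine are in fact the heart of the paper, and as written your proposal has genuine gaps at both.

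\textbf{The homological equation with $B^m$ present.} Your plan ``impose Melnikov conditions on the finitely many relevant $(i,j)$; handle $B^m$ by Neumann series since $\eps_m/(\text{divisor})$ is small'' does not work as stated. The operator $B^m$ has size $\|B^m\|_{h_p\to h_q}\lesssim\eps_0$ (not $\eps_m$), while the smallest divisor at step $m$ is only $K_m^{-c}$ with $K_m\to\infty$; a naive Neumann series $\sum (B^m/\text{divisor})^n$ diverges for large $m$. What the paper actually does (Section~3, Lemma~3.1) is a multiscale block decomposition: split at a carefully chosen scale $K_3$ (eq.~\eqref{K_3}), reduce via Schur complements to a \emph{finite} Hermitian block plus an infinite tail; on the tail the Neumann series converges not because $\eps_0$ is small but because $B$ restricted to indices $>K_3$ inherits an extra factor $K_3^{-\kappa}$ from the smoothing $h_p\to h_q$; on the finite block one uses self-adjointness (from Assumption~D, which you never invoke) and eigenvalue variation to get a measure estimate. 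The tensor-product reformulation \eqref{17-3.65}--\eqref{1704041} is needed precisely because $B$ couples the blocks. Also, your citation of ``Remark after Assumption~B'' is off: that remark concerns the \emph{directional derivative} \eqref{usefortwist}, not the size of the divisor.

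\textbf{The regularity loss at $k=0$.} You call the $h_p/h_q$ tracking ``bookkeeping, not a real difficulty''; it is more than that. For the $k=0$ Fourier modes of $F^z,F^{\bar z}$ the divisor is $\lambda_j\sim|j|^{-\kappa}\to 0$, and for $F^{zz},F^{\bar z\bar z}$ it is $\lambda_i+\lambda_j\to 0$; these pieces lie only in $h_p$ (resp.\ $\mathcal L(h_{\tilde p},h_{\tilde p})$), not in $h_q$ (resp.\ $\mathcal L(h_p,h_q)$) --- see \eqref{170411-w2}, \eqref{170412-6.17} and the Remark after \eqref{1704049}. If you feed this directly into Cauchy estimates for $X_{F_m}$ you lose the $h_q$ target and the iteration does not close. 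The paper's fix (Section~7, Lemma~\ref{lemx7.1}) is structural: the $k=0$ pieces are independent of $x$, so $\partial_x$ annihilates them and the tangent map $\mathcal D X_{f_0^u}\equiv 0$; this is what allows $\|\mathcal D X_F\|_{p,p}$ and $\|\mathcal D X_F\|_{q,q}$ to be bounded separately by $K_m^C\eps_m$ despite the regularity loss. Without this observation your Step~(2) estimates would fail.
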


\begin{rem} The function $\gamma=\gamma(\e_0)$ can be improved to $\gamma(\e_0)=\e_0^{1/3}$ when the measure of the set $\m O_0$ is independent of $\e_0$.
When $\m O_0$ depends on $\e_0$, we consider $\m O_0=[0,\sqrt{\e_0}]^N$, for example. Then $\gamma(\e_0)=1/|\log\, \e_0|$, and at the same time, \eqref{5} should be replaced by
\begin{equation}\label{5+}|\lf X_R\rc |_{q,D_p(s_0/2,r_0/2)\times\mathcal{O}}\leq C\epsilon_0,\; |\lf \p_\xi\,X_R\rc|_{q,D_p(s_0/2,r_0/2)\times\mathcal{O}}\leq C\sqrt{\epsilon_0},\end{equation}
 Thus we always have that $\lim_{\e_0\to 0} \, \text{Meas}\, \m O=\text{Meas}\, \m O_0$.

\end{rem}
\iffalse%%%%%%%%%%%%
%%%%%%%%%%%%%%%%%%%%%%
%When the unperturbed Hamiltonian does not involve any parameters to counter the resonances,
%it is necessarily to extract the parameters form the perturbation by Birkhoff  normal form. The parameters obtained  in this way are usually small and both the tangent frequencies and normal frequencies are involved with the parameter.\fi%%
%\begin{thm} \label{theorem3} Assume all conditions are fulfilled in Theorem \ref{theorem2}. If $\omega=\omega(\xi)$ and $\Omega(\xi)$  depends on the parameters $\xi$ and satisfy that for every $k\in\mathbb Z^n$ and $l=(l_j:j\in\mathbb Z^d)\in (\mathbb Z^d)^{\mathbb Z^d}$ with $1 \le  |l| \le 2$ (here $|l| = \sum_{j\in\mathbb Z^d }|l_j|)$ and the signs are the same for all non-zero $l_j$, the
%resonance set
%\be\label{170510-2} \text{Measure}\;\{\xi\in\m O:\; (k,\omega)+\<l,\Omega\>=0\}=0.\ee
% Then the results about  theorem \ref{theorem2} still holds true .
%\end{thm}
\fi%%%%%%%%%%%%
%%%%%%%%%%%%%

\noindent{\it\bf Assumption ${\bf B}^\star$} {\it Let $\varpi\neq 0$ be a finite real number.
  Assume $\lambda_j=\lambda_j(\xi)$'s are  real and continuously differentiable in $\xi\in\mathcal O_0$.  Assume  that there exist constants $c_{11},c_{12}, c_{13}>0$ and $\kappa>0$ such that
\be\label{3-08-morning+} c_{11}|j|^{-\kappa}\leq|\lambda_j(\xi)-\varpi|\leq c_{12}|j|^{-\kappa},\quad \forall\; \xi\in\m O_0,\;\; j\in\mathbb Z^d \ee
and
\be\label{170512-1+} \sup_{\xi\in{\mathcal O}_0}|\p_{ \xi}\,\left(\lambda_j(\xi)-\varpi\right)|\le c_{13}\, |j|^{-\kappa}\, ,\;\;j\in\mathbb Z^d.\ee
 And assume
 for every $0\neq k\in\mathbb Z^n$  and $i,j\in\mathbb Z^d$, there is a constant $c^\star$ such that%and $l=(l_j:j\in\mathbb Z^d)\in (\mathbb Z)^{\mathbb Z^d}$ with $1 \le  |l| \le 2$ (here $|l| = \sum_{j\in\mathbb Z^d }|l_j|)$ and the signs of $l_j$ are the same for all non-zero $l_j$,
\be\label{170510-2}\left| \frac{d^*}{d\,\omega}\left(  (k,\omega)\pm\lambda_i\right)\right|> c^\star,\;\; \left|\frac{d^*}{d\,\omega}\left(  (k,\omega)\pm(\lambda_i+\lambda_j\right)\right|> c^\star,\ee
where $\lambda_i=\lambda_i(\xi(\omega))=\lambda_i((\omega^0)^{-1}(\omega))$ and $\frac{d^*}{d\,\omega}$ is the directional derivative along the direction such that $\frac{d^*}{d\,\omega} (k,\omega)\ge 1$. }

\begin{thm} \label{theorem2+2} ($\varpi\neq 0$).  Suppose that the Hamiltonian $H=H_0+R^0$ obeys the assumptions $\bf{A,  C, D, E}$ and ${\bf B}^\star$.  Then the result of Theorem \ref{theorem2} holds true.
\end{thm}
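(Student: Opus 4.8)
\emph{Proof proposal.}

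The plan is to deduce Theorem~\ref{theorem2+2} from the proof of Theorem~\ref{theorem2} by translating the normal frequencies so that their accumulation point is moved to the origin. Set $\tilde\lambda_j(\xi):=\lambda_j(\xi)-\varpi$ and $\tilde\Lambda:=\mathrm{diag}(\tilde\lambda_j:j\in\zd)$, so that
\[ H=H_0+R^0=(\omega^0(\xi),y)+\varpi\langle z,\bar z\rangle+\langle\tilde\Lambda z,\bar z\rangle+\langle B^0(\xi)z,\bar z\rangle+R^0 . \]
By \eqref{3-08-morning+} and \eqref{170512-1+} the shifted frequencies obey $c_{11}|j|^{-\kappa}\le|\tilde\lambda_j(\xi)|\le c_{12}|j|^{-\kappa}$ and $\sup_\xi|\p_\xi\tilde\lambda_j(\xi)|\le c_{13}|j|^{-\kappa}$ --- exactly the bounds imposed on $\lambda_j$ in Assumption~{\bf B}, the sole difference being that $\tilde\lambda_j$ need not be positive. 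Since the proof of Theorem~\ref{theorem2} uses the size of the normal frequencies only through $|\lambda_j|\asymp|j|^{-\kappa}$ --- in particular Remark~\ref{2018-2-28} and every estimate built on the regularizing operator $\Lambda^{-1}$ stay valid verbatim with $\Lambda$ replaced by $\tilde\Lambda$, because $\|\m A\|_{h_p\to h_q}\asymp\|\tilde\Lambda^{-1}\m A\|_{h_p\to h_p}$ and $\|\m A\tilde\Lambda^{-1}\|_{h_q\to h_q}\le C\|\m A\|_{h_p\to h_q}$ involve $|\tilde\lambda_j|$ only --- the entire analytic part of the KAM iteration of Theorem~\ref{theorem2} can be rerun essentially word for word, carrying $\varpi$ and $\tilde\Lambda$ along as fixed data. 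The $n$-th normal form reads $(\omega^{(n)}(\xi),y)+\varpi\langle z,\bar z\rangle+\langle(\tilde\Lambda+B^{(n)}(\xi))z,\bar z\rangle+\cdots$; the extra term $\varpi\langle z,\bar z\rangle$ belongs to the integrable part (it generates only the rigid rotation $\dot z_j=\mi\varpi z_j$), is never truncated and never enters the quantitative estimates.

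What changes is the small-divisor bookkeeping, and here the point is that Assumption~${\bf B}^\star$ supplies exactly what is needed. In the homological equations the commutator-type divisors are $(k,\omega)+\lambda_i-\lambda_j=(k,\omega)+\tilde\lambda_i-\tilde\lambda_j$: the $\varpi$ cancels, so these are literally the ones of the $\varpi=0$ problem, and by \eqref{170512-1+}, Assumption~{\bf A} and the Remark following Assumption~{\bf B} they are transversal in $\omega$ automatically (for $c_{13}$ small, or for $|i|,|j|$ large). The first Melnikov divisors $(k,\omega)\pm\lambda_i$ and the anti-commutator-type $(k,\omega)\pm(\lambda_i+\lambda_j)$, on the other hand, acquire the shift $\pm\varpi$, resp.\ $\pm2\varpi$; this is precisely why ${\bf B}^\star$ keeps the two-sided transversality hypotheses $|\tfrac{d^*}{d\omega}((k,\omega)\pm\lambda_i)|>c^\star$ and $|\tfrac{d^*}{d\omega}((k,\omega)\pm(\lambda_i+\lambda_j))|>c^\star$ where Assumption~{\bf B} used one-sided ones. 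With these the step-by-step excision of resonant parameters goes through as in Theorem~\ref{theorem2} and produces the same measure bound $\mathrm{Meas}\,\m O\ge(\mathrm{Meas}\,\m O_0)(1-O(\gamma(\e_0)))$; all the constants in \eqref{1}--\eqref{9} depend only on $(N,p,q,d,\m O_0)$ and on $c_{11},c_{12},c_{13},c^\star$, not on $\varpi$. Passing to the limit produces $\Phi$, $\m O$, and $H^\infty=(\omega(\xi),y)+\sum_j\lambda_j z_j\bar z_j+\langle B^\infty(\xi)z,\bar z\rangle+R$, where $\sum_j\lambda_j z_j\bar z_j=\varpi\langle z,\bar z\rangle+\langle\tilde\Lambda z,\bar z\rangle$, together with \eqref{4}--\eqref{9}, in particular \eqref{7}.

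The genuinely delicate point is not created by the translation; it is the one already present in Theorem~\ref{theorem2}, namely the difference-type divisors with $k=0$, i.e.\ $\lambda_i-\lambda_j=\tilde\lambda_i-\tilde\lambda_j$, which tend to $0$ as $|i|,|j|\to\infty$ for \emph{every} $\xi$ and hence cannot be bounded below on a set of nearly full measure. As in the proof of Theorem~\ref{theorem2} one copes with this by refraining from diagonalizing the normal part: the non-diagonal, absolutely bounded block operator $\langle B^{(n)}z,\bar z\rangle$ ($B^{(n)}\colon h_p\to h_q$, controlled through the $|j|^{-\kappa}$-smoothing and converging to $B^\infty$) is kept, and the perturbation is normalized only modulo such blocks. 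Translating by $\varpi$ does not disturb this mechanism, since $\varpi$ disappears from every difference-type divisor; and the sum-type divisors, which for $k=0$ now cluster near $2\varpi\neq0$ rather than near $0$, are in fact \emph{less} delicate here than in the $\varpi=0$ case --- they are bounded below for all but finitely many index pairs (a condition trivially met in the gPC application), the remaining finitely many being handled by the ordinary excision. Hence the full conclusion \eqref{1}--\eqref{9} of Theorem~\ref{theorem2} carries over, which is precisely Theorem~\ref{theorem2+2}. The hard analysis has already been carried out in Theorem~\ref{theorem2}; all that the case $\varpi\neq0$ requires is to verify that $\varpi$ can be carried along harmlessly and that the two-sided transversality in ${\bf B}^\star$ absorbs the $\pm2\varpi$ shift appearing in the sum-type Melnikov conditions.
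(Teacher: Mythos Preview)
Your proposal is correct and follows essentially the same route as the paper: write $\lambda_j=\varpi+\tilde\lambda_j$, carry the constant $\varpi$ through the homological equations, and record which small-divisor conditions pick up a $\pm\varpi$ or $\pm2\varpi$ shift. The paper's own argument is in fact terser than yours --- it merely lists the replacements needed in \eqref{61}, \eqref{62}, \eqref{17-3-12-1}, \eqref{103*} and declares the rest ``similar''; your explicit separation into difference-type divisors (where $\varpi$ cancels) versus first-Melnikov and sum-type divisors (where it does not), and your remark that the $k=0$ sum-type case becomes \emph{easier} since the divisors now cluster near $2\varpi\neq0$, are helpful elaborations of the same idea rather than a different approach.
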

\begin{Corollary}\label{Cor1}
For any $\xi\in\mathcal O$, $\Phi(\mathcal T_0)=\Phi(\mathbb T^N\times\{y=0\}\times
\{z=0\}\times\{\bar z=0\})$ is an invariant torus with rotational frequency $\omega$ for the original Hamiltonian $H=H_0+R^0$. The torus carries quasi-periodic solutions with frequency $\omega$ for $H$.
\end{Corollary}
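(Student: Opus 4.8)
\textbf{Proof plan for Corollary \ref{Cor1}.}

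The plan is to read off the conclusion directly from the normal form produced by Theorem \ref{theorem2} (or Theorem \ref{theorem2+2}), using the structure of $H^\infty$ in \eqref{3}--\eqref{4} together with the symplectic nature of $\Phi$. First I would recall that $\Phi:D_p(\tfrac{s_0}{2},\tfrac{r_0}{2})\times\mathcal O\to D_p(s_0,r_0)\times\mathcal O_0$ is a symplectic change of coordinates for the symplectic form \eqref{2.2}, so it maps trajectories of $X_{H^\infty}$ to trajectories of $X_H$. Hence it suffices to show that, for each fixed $\xi\in\mathcal O$, the torus $\mathcal T_0=\mathbb T^N\times\{y=0\}\times\{z=0\}\times\{\bar z=0\}$ is invariant for the flow of $X_{H^\infty}$ and carries the linear flow $x\mapsto x+\omega(\xi)t$.

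Next I would examine the vector field $X_{H^\infty}$ restricted to $\mathcal T_0$. Writing $H^\infty=(\omega(\xi),y)+\sum_{j}\lambda_j(\xi)z_j\bar z_j+\langle B^\infty(\xi)z,\bar z\rangle+R$ with $R=O(|y|^2+|y|\,\|z\|_p+\|z\|_p^3)$, the key point is that every term of $R$ carries at least a factor $|y|^2$, or $|y|\,\|z\|_p$, or $\|z\|_p^3$, so that on the set $\{y=0,z=0,\bar z=0\}$ all of $R$, $\partial_x R$, $\partial_y R$, $\partial_z R$, $\partial_{\bar z}R$ vanish: a monomial $y^\gamma z^\alpha\bar z^\beta$ occurring in $R$ has $2|\gamma|+|\alpha|+|\beta|\ge 3$ in the weighted sense above, hence each first partial derivative still contains a positive power of $y$, $z$ or $\bar z$ and vanishes at the origin. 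Likewise the quadratic-in-$z$ terms $\sum_j\lambda_j z_j\bar z_j+\langle B^\infty z,\bar z\rangle$ and their $z,\bar z$ derivatives vanish when $z=\bar z=0$. Therefore on $\mathcal T_0$ the equations of motion read $\dot x=\partial_y H^\infty=\omega(\xi)$, $\dot y=-\partial_x H^\infty=0$, $\dot z=\mathbf i\,\partial_{\bar z}H^\infty=0$, $\dot{\bar z}=-\mathbf i\,\partial_z H^\infty=0$, which shows $\mathcal T_0$ is invariant and that the induced flow is $t\mapsto (x_0+\omega(\xi)t,\,0,\,0,\,0)$, i.e. a quasi-periodic (linear) flow on $\mathbb T^N$ with frequency vector $\omega(\xi)$.

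Finally I would push this forward through $\Phi$: since $\Phi$ is symplectic (in particular a diffeomorphism onto its image) and conjugates $X_{H^\infty}$ to $X_H$, the set $\Phi(\mathcal T_0)$ is an embedded $N$-torus invariant under the flow of $X_H=X_{H_0+R^0}$, and on it the dynamics is $\Phi$-conjugate to the linear flow with frequency $\omega$; thus $\Phi(\mathcal T_0)$ carries quasi-periodic solutions of the original Hamiltonian $H=H_0+R^0$ with frequency $\omega$. For $\xi\in\mathcal O$ such that $\omega(\xi)$ is rationally independent (a full-measure subset, by \eqref{9} and Assumption $\mathbf A$ after shrinking if necessary) these solutions are genuinely quasi-periodic with $N$ rationally independent frequencies. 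The only mild subtlety — not really an obstacle — is to make sure the "$O(\cdot)$" in \eqref{4} is interpreted so that all first derivatives of $R$ vanish on $\mathcal T_0$; this is exactly how the KAM normal form is constructed (the linear and $z$-independent parts of the perturbation are removed at each step), so it follows from the proof of Theorem \ref{theorem2}. The $\varpi\neq0$ case is identical since Theorem \ref{theorem2+2} delivers a normal form of the same shape.
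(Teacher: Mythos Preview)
Your proposal is correct and follows essentially the same approach as the paper: use that $\Phi$ is symplectic to conjugate $X_{H^\infty}$ to $X_H$, so invariance of $\mathcal T_0$ for $X_{H^\infty}$ transfers to invariance of $\Phi(\mathcal T_0)$ for $X_H$. The paper's own proof is a single sentence asserting exactly this conjugation argument, taking the invariance of $\mathcal T_0$ under $X_{H^\infty}$ for granted; your more detailed verification that all first derivatives of $R$ vanish on $\mathcal T_0$ simply spells out what the paper leaves implicit.
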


\begin{Corollary}\label{Cor2}
The obtained KAM tori and quasi-periodic solutions are linearly stable.
\end{Corollary}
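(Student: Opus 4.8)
We have to prove that the invariant torus $\m T:=\Phi(\m T_0)$ furnished by Theorem~\ref{theorem2} (and, for $\varpi\neq0$, Theorem~\ref{theorem2+2}), together with the quasi-periodic solutions it carries (Corollary~\ref{Cor1}), is \emph{linearly stable}: the variational (linearized) equation of $X_H$ along any of those orbits is reducible to a linear system with constant coefficients whose spectrum lies on the imaginary axis, so the linearized flow has no hyperbolic directions. The plan is to transport the problem to the normal form $H^\infty$ by the symplectic conjugacy $\Phi$, linearize $X_{H^\infty}$ at $\m T_0$ and read off the structure of the variational equation from \eqref{3}--\eqref{4}, and then reduce that equation to constant coefficients and locate its spectrum. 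For the first step, $\Phi$ is a real-analytic symplectomorphism of $D_p(s_0/2,r_0/2)$ with $\Phi=\mathrm{id}+O(\epsilon_0)$ and $D\Phi,\ D\Phi^{-1}$ bounded on $\m P^p$, it conjugates $X_H$ to $X_{H^\infty}$ and maps $\m T_0=\mathbb T^N\times\{0\}\times\{0\}\times\{0\}$ onto $\m T$; hence along any orbit $t\mapsto\Phi(x_0+\omega t,0,0,0)$ the derivative $D\Phi$ evaluated on $\m T_0$ is a bounded, boundedly invertible, real-analytic quasi-periodic family of linear maps which conjugates the variational equation of $X_H$ to that of $X_{H^\infty}$ along $t\mapsto(x_0+\omega t,0,0,0)$. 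Since reducibility and the location of the spectrum are invariant under such a conjugacy, it suffices to treat $H^\infty$ at $\m T_0$.

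Next I linearize $X_{H^\infty}=(H^\infty_y,-H^\infty_x,\mi H^\infty_{\bar z},-\mi H^\infty_z)$ at $(x_0+\omega t,0,0,0)$, using $H^\infty=(\omega,y)+\sum_j\lambda_j z_j\bar z_j+\<B^\infty z,\bar z\>+R$ with $R=O(|y|^2+|y|\,\|z\|_p+\|z\|_p^3)$. By the monomial bookkeeping of $\lfloor R\rceil$, the Hessian of $R$ at $\m T_0$ has no entries in the rows indexed by $x$ (there are no pure-$x$, no $O(y)$ and no $O(z)$ monomials in $R$), no entries in the $(z,z)$, $(\bar z,\bar z)$ and $(z,\bar z)$ blocks (there are no $O(\|z\|_p^2)$ monomials in $R$), while the only possibly nonzero entries sit in the $(y,y)$ and $(y,z),(y,\bar z)$ blocks, produced by the $|y|^2$ and $|y|\,\|z\|_p$ monomials, and they are $O(\epsilon_0)$ by \eqref{5}. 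Therefore the variational equation takes the form, with $\Lambda+B^\infty$ independent of $x$,
\begin{align*}
\dot{\delta y}&=0,\\
\dot{\delta z}&=\mi(\Lambda+B^\infty)\,\delta z+\mi\,L(x_0+\omega t)\,\delta y,\\
\dot{\delta x}&=A(x_0+\omega t)\,\delta y+M(x_0+\omega t)\,\delta z+\overline{M}(x_0+\omega t)\,\delta\bar z,
\end{align*}
together with the complex-conjugate equation for $\delta\bar z$, where $A,L,M$ are real-analytic, quasi-periodic and of size $O(\epsilon_0)$.

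Since $\delta y$ is a constant of motion, the coupling $\mi L(x_0+\omega t)\delta y$ in the $\delta z$-equation is eliminated by a quasi-periodic change $\delta z=\delta z'+T(x_0+\omega t)\delta y$, where $T$ solves the cohomological equation $\omega\cdot\p_x T=\mi(\Lambda+B^\infty)T+\mi L$; this is solvable with analytic, $O(\epsilon_0)$-small $T$ for $\xi\in\m O$ because the first Melnikov non-resonances $(k,\omega)\neq\lambda^\infty$ (for every eigenvalue $\lambda^\infty$ of $\Lambda+B^\infty$ and every $0\neq k\in\mathbb Z^N$) hold on $\m O$, being among the conditions enforced in the proof of Theorem~\ref{theorem2}; the analogous change treats $\delta\bar z$. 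After these changes the $(\delta y,\delta z',\delta\bar z')$-subsystem is autonomous with generator $\mathrm{diag}\!\big(0,\ \mi(\Lambda+B^\infty),\ -\mi(\Lambda+B^\infty)^{*}\big)$. By Assumption~\textbf{D} (Reality) $B^0$ is self-adjoint on $\ell_2$, and since reality is preserved along the KAM iteration the correction $B^\infty-B^0$ is self-adjoint as well; together with \eqref{7} and Assumption~\textbf{E}, and using $q=p+\kappa$, $B^\infty$ is a self-adjoint operator that is $\kappa$-smoothing and $O(\epsilon_0)$-bounded, hence a relatively bounded self-adjoint perturbation of the real diagonal operator $\Lambda$ (whose entries are bounded because $\lambda_j\to\varpi$). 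Consequently $\Lambda+B^\infty$ is a bounded self-adjoint operator with real spectrum $\{\lambda^\infty_j\}_{j\in\zd}$, the group $e^{\mi(\Lambda+B^\infty)t}$ is unitary on $\ell_2$ (and bounded on $h_p$), the spectrum of the reduced linearized operator is $\{0\}\cup\{\pm\mi\lambda^\infty_j:j\in\zd\}$, entirely on the imaginary axis, and $\delta z',\delta\bar z'$ stay bounded; finally $\delta x$ is the integral of bounded quasi-periodic terms plus the shear $A(x_0+\omega t)\delta y$ with $\delta y$ constant, so it grows at most linearly in $t$ --- the neutral drift intrinsic to a lower-dimensional torus, not an instability. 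Transporting back through $\Phi$ gives linear stability of $\m T$ and of the quasi-periodic solutions of $H=H_0+R^0$.

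The delicate points, and the ones I expect to be the main obstacle, are: (i) the Hessian bookkeeping with $\lfloor R\rceil$ that isolates exactly which blocks of the linearization survive and certifies that they are $O(\epsilon_0)$; (ii) the reducing cohomological equation for $T$, which requires the first Melnikov conditions on $\m O$ --- these are supplied by the proof of Theorem~\ref{theorem2}, but one must verify that the eigenvalue form $(k,\omega)\neq\lambda^\infty$ with $\lambda^\infty\in\mathrm{spec}(\Lambda+B^\infty)$ indeed follows from the frequency-separation conditions used in the iteration; and (iii) controlling the normal-mode propagator $e^{\mi(\Lambda+B^\infty)t}$ on the phase space $h_p$ rather than merely on $\ell_2$, which is exactly where the combination ``small \emph{and} $\kappa$-regularizing'' for $B^\infty$ (Assumption~\textbf{E} and \eqref{7}) is used so that $B^\infty$ is a relatively bounded self-adjoint perturbation of the diagonal $\Lambda$. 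The same smallness-plus-regularity also underlies the boundedness of $D\Phi^{\pm1}$ on $h_p$ used when transporting the variational equation by $\Phi$.
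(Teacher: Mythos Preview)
Your proposal is correct in substance, but it takes a considerably more elaborate route than the paper's own proof, and one of your ``delicate points'' is a detour that is both unnecessary and not obviously justified.

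The paper's argument is essentially contained in Remark~1.5: the KAM normal form $H^\infty$ in \eqref{3}--\eqref{4} already has \emph{no} quadratic terms in $(z,\bar z)$ inside $R$; the entire normal linearization is $\dot z=\mi(\Lambda+B^\infty(\xi))z$ with $\Lambda+B^\infty$ independent of $x$ and self-adjoint on $\ell_2$ (reality, Assumption~\textbf{D}, is preserved along the iteration). The paper then just multiplies by $\bar z$ and takes the real part: since $\langle(\Lambda+B^\infty)z,\bar z\rangle\in\mathbb R$, one gets $\frac{d}{dt}\sum_j|z_j|^2=2\operatorname{Re}\sum_j\dot z_j\bar z_j=0$, so the $\ell_2$-norm of the normal variation is conserved. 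That is the whole proof.

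Your approach reproduces this core step but wraps it in a full block analysis of $DX_{H^\infty}$ on $\m T_0$, including the tangent directions $(\delta x,\delta y)$ and the shear they produce. That is fine and even more informative; your Hessian bookkeeping from $R=O(|y|^2+|y|\,\|z\|_p+\|z\|_p^3)$ is correct. However, your step (ii)---solving the cohomological equation $\omega\cdot\partial_xT=\mi(\Lambda+B^\infty)T+\mi L$ to kill the $\delta y\to\delta z$ coupling---is superfluous and is precisely where your argument becomes shaky: it requires first Melnikov conditions $(k,\omega)\neq\lambda^\infty$ for \emph{all} $k\in\mathbb Z^N$ against the eigenvalues of the \emph{limit} operator $\Lambda+B^\infty$, whereas Theorem~\ref{theorem2} only supplies truncated ($|k|\le K_m$) conditions against the intermediate operators. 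You do not need this: since $\delta y$ is constant and the homogeneous propagator $e^{\mi(\Lambda+B^\infty)t}$ is unitary on $\ell_2$, the Duhamel integral gives $\|\delta z(t)\|\le\|\delta z(0)\|+C|t|\,|\delta y|$---at worst linear growth, which you already accept in the $\delta x$-direction as neutral shear. Dropping step (ii) removes the gap and brings your argument in line with the paper's, which never attempts to eliminate that coupling.
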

\begin{rem} Since we can by the symplectic transformation $\Phi$ eliminate
the quadratic terms (linear part in vector field)
$\langle R^{zz}(x,\xi)\, z,z\rangle+\langle R^{\bar z\bar z}(x,\xi)\,
\bar z,\bar z\rangle$ and reduce $\langle R^{z\bar z}(x,\xi)\,
z,\bar z\rangle$ to $\langle B^\infty(\xi)\, z,\bar z\rangle$ where $ B^\infty(\xi)$ is independent of angle variable  $x$,,
all the obtained KAM tori are linearly stable.
However, Not all the persisted tori by KAM technique are linearly stable for PDEs of high spatial
dimension $d\ge 2$ when $\lambda_j\to \infty$. Recently, Eliasson-Grebert-Kuksin\cite{E-Kuk2}
construct explicit examples of partially hyperbolic KAM tori for $d\ge 2$ dimensional beam equation where the nonlinear perturbation is of the very general form, especially depending on the spatial variable $x\in\mathbb T^d$. Those hyperbolic KAM tori are situated in the neighborhood of the origin and create around them some local instabilities. It seems possible that the  local instabilities lead global instabilities (chaotic motion) by searching for Smale horseshoes via the partially hyperbolic KAM tori.

\end{rem}
\ \

%%%%

While constructing the (classical) lower dimensional KAM tori, one always needs the first Melnikov conditions
\[\Delta_{kj}:=\langle k,\omega\rangle+\lambda_j\neq 0,\quad \forall\; k\in\mathbb Z^N,\; \forall\; j\in\mathbb Z^d\]
and the second Melnikov conditions
\[\Delta_{kij}:=\langle k,\omega\rangle+\lambda_j-\lambda_i\neq 0,\quad \forall\; k\in\mathbb Z^N,\; \forall\; i\neq j\in\mathbb Z^d.\]
Refer to \cite{Mel} for the Melnikov conditions. Due to the analyticity of the perturbation, one can assume
\[|k|\le K=K_m\approx 2^m\]
where $m$ is the step number of the Newton iteration.
\begin{itemize}

\item For nonlinear Shr\"odinger equation, the normal frequencies $$\lambda_j=\lambda_j^{NLS}=|j|^2,\; j\in\mathbb Z^d,$$ for example. When $|j|>C K$ with $C\gg |\omega|$, we have
    \[\left| \Delta_{kj} \right|>\lambda_j^{NLS}-|\langle k,\omega\rangle|>C|j|^2-|\omega| \, K>1,\]
which is not small.Thus the number of small divisors $\Delta_{kj}$ is {\it finite} in the first Melnikov conditions. It is worth to point out that the first Melnikov conditions are unavoidable in both the classical KAM theory for the lower dimensional invariant tori and the KAM developed by Craig-Wayne-Bourgain.   On the other hand, for BBM and gPC equations, $\lambda_j=\lambda_j^{shallow}=\varpi+|j|^{-\kappa}$ with $\kappa>0$. At this time, the normal frequencies $\lambda_j^{shallow}$ cluster to a finite point $\varpi$.  We have
 \[\Delta_{kj} =\langle k,\omega\rangle+\lambda_j^{shallow}\to \langle k,\omega\rangle+\varpi,\; \text{as}\; |j|\to\infty.\] Note that zero is a limit point of the set $\{\langle k,\omega\rangle+\varpi:\; k\in\mathbb Z^N\}$. Thus, the number of small divisors $\Delta_{kj}$ is {\it infinite} in the first Melnikov conditions for BBM and gPC equations.

\item For NLS, the frequencies $\lambda_j^{NLS}$ has the gap property in sense that there is a constant $C$ such that $|\lambda_i^{NLS}-\lambda_j^{NLS}|>C$ when $\lambda_i^{NLS}\neq\lambda_j^{NLS}$. Clearly, for BBM and gPC, the gap property does not hold true, since
    \[\lambda_j^{shallow}-\lambda_i^{shallow}\to 0,\;\;\text{as}\;\; |i|,\, |j|\to\infty.\]
        Incidentally, the gap property does not hold true for nonlinear wave equation of spatial dimension
$d\ge 2$. Thus it is an open problem that whether or not there is classical KAM tori which are linearly stable for the  nonlinear wave equation with $d\ge 2$.
\end{itemize}
\section{Solution of linear equation for the first Melnikov conditions}

 \iffalse%%%%%%%%%%
 In \S 1, we introduce the Hilbert space $h_{\tilde{p}}$ for $\tilde{p}\in \{p, q\}$.  Arbitrarily take  $s$ with $s_0\ge s>0$. For a $h_{\ti p}$-value function $f:\; \mathbb T^N_s\to h_{\tilde p}$, define
 \be ||f||_{\tilde p,s}^2:=\sum_{k\in\zn}||\widehat{f}(k)||^2_{\tilde p}\, e^{2|k|s}=\sum_{k\in\zn}\left(
 \sum_{j\in\zd}|\widehat{f_j}(k)|^2|j|^{2\tilde p}\right)\, e^{2|k|s},\ee
where $f=(f_j(x):j\in\zd)$ and $\widehat{f_j}(k)$ is the $k$-Fourier coefficient of $f_j(x)$ and $x\in\mathbb T^N_s$. If $||f||_{\tilde p,s}<\infty$, then $f(x)$ is analytic in $|\Im x|<s$ and continuous in $|\Im x|=s$. For a $h_{\ti p}$-value function $f:\;\mathbb T^N_s\times\m O\to h_{\tilde p}$, define
\be ||f||_{\tilde p,s,\m O}:=\sup_{\xi\in\m O}||f(\cdot,\xi)||_{\tilde p,s}<\infty.\ee
%
%\vspace{5pt}
%\noindent {\bf (A1).} {\it  Assume $\Lambda=\text{diag}\;(\lambda_j\in\mathbb R:\; j\in\zd)$ satisfies
%\be\label{4.3} \lambda_j\sim |j|^{\kappa},\;\mbox{ for some }\; \kappa>0\ee
% where $|j|=\sqrt{|j_1|^2+\cdots+|j_d|^2}$.}
%\color{red} there is a confusion between $\Lambda$ and $\Lambda$ \color{black}
%\vspace{5pt}
%\noindent {\bf (A2).} {\it Assume  linear  operator-value function $B=B(x,\xi):\;\mathbb T^N_s\times\m O\to \m L(\m H_s^p,\m H^q_s)$ satisfies $\Lambda^{-1}B:\; \mathbb T^N_s\times\m O\to \m L(\m H_s^p,\m H^p_s)$ with \be\label{4.4} ||\Lambda^{-1}(\xi)B(x,\xi) ||_{p,p,s,\m O}\le\e_0,\; ||\p_\xi(\Lambda^{-1}(\xi)B(x,\xi)) ||_{p,p,s,\m O}\le\e_0\ee where \be\label{4.5}||f(x,\xi)||_{p,\tilde p,s,\m O}:=\sup_{|\Im\, x|\le s,\omega\in\m O}||f||_{\m H^p_s\to\m H^{\tilde p}_s} \ee for a linear operator-value function $f:\;\mathbb T^N_s\times\m O\to \m L(\m H_s^p,\m H^{\tilde p}_s)$.}
%\vspace{5pt}
%
\fi%%%%%%%%%%%%%%
For a vector (or matrix) value function $f$ defined in $\mathbb T^N_s$ and a large number $K>0$, introduce a cut-off operator $\Gamma=\Gamma_K$ as follows:
\be \label{K}(\Gamma f)(x)=(\Gamma_Kf)(x):=\sum_{|k|\le K}\widehat{f}(k)e^{{\bf i}(k,x)},\ee
where $\widehat{f}(k)$ is the $k$-Fourier coefficient of $f(x)$.

\begin{Lemma} \label{lem4.1} Replacing $\m O_0$, $\omega^0$ and $B^0$ by $\m O$, $\omega$ and $B$. Assume  $\omega=\omega(\xi)$, $\lambda_j=\lambda_j(\xi)$ (here $j\in\mathbb Z^d$) defined in $\m O$ satisfy Assumptions {\bf A, B}, respectively.  And assume $B$ satisfies Assumptions {\bf E}.
In addition, assume
\be R(x,\xi):\;\mathbb T^N_s\times\m O\to h_q\ee
is analytic in $x\in\mathbb T^N_s$, $C^1$ in $\xi\in\m O,$ and the average
 \be\label{17-3-10-1}\int_{\mathbb T^N} R(x,\xi)\, d x=0,\quad \forall\;\; \xi\in\mathcal O.\ee
 Then there is a subset $\m O_1\subset\m O$ with
\be \text{Meas}\;\m O_1=(\text{Meas}\;\m O)(1-O(K^{-C}))\ee such that
%\footnote{ \color{red}Should change $O(K^C)$ into $O(K^C/||\omega||_0)$
%where $||\omega||_0=\inf_i|\omega_i^{02}-\omega_i^{01}$
%if writing $\mathcal O=\Pi_{i=1}^d[\omega_i^{01},\,\omega_i^{02}]$. \color{black}}
for any $\xi\in\m O_1$, the homological equation
\be \label{4.9}\Gamma\left(({\bf i}\omega\cdot\p_x+\Lambda+ B(\xi))F(x,\xi) \right)=(\Gamma R)(x,\xi)\ee
has unique solution
\be F(x,\xi):\;\mathbb T^N_{s^\prime}\times\m O_1\to h_q\ee
 with $\Gamma F=F$ and
\be\label{2.9} ||\lfloor F\rceil||_{q,s^\prime,\m O_1}\le  K^C||\lfloor R\rceil||_{q,s,\m O}\ee and
\be \label{2.10} ||\lfloor \p_\xi F\rceil||_{q,s'',\m O_1}\le  K^C(||\lfloor R\rceil||_{q,s,\m O}+||\lfloor\p_\xi R\rceil||_{q,s,\m O}),\ee
where $0<s''<s'\le s$.

\end{Lemma}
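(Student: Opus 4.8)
The plan is to solve the homological equation \eqref{4.9} by passing to Fourier coefficients in $x$ and, for each $|k|\le K$, inverting the operator $L_k(\xi):={\bf i}(k,\omega(\xi))I+{\bf i}\Lambda(\xi)+{\bf i}B(\xi)$ acting on $h_q$. Writing $F(x,\xi)=\sum_{|k|\le K}\widehat F(k,\xi)e^{{\bf i}(k,x)}$ and $R(x,\xi)=\sum_{|k|\le K}\widehat R(k,\xi)e^{{\bf i}(k,x)}$ (only these modes matter because of $\Gamma$), the equation becomes $L_k(\xi)\widehat F(k,\xi)=\widehat R(k,\xi)$. Since the zeroth Fourier mode of $R$ vanishes by \eqref{17-3-10-1}, we only need $k\ne 0$. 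The decomposition $L_k={\bf i}\bigl((k,\omega)I+\Lambda\bigr)+{\bf i}B$ suggests treating the diagonal part $D_k:=(k,\omega)I+\Lambda$ as the main term and $B$ as a small perturbation (Assumption {\bf E} makes $\lfloor B\rceil$ of size $C\epsilon_0$ as a map $h_p\to h_q$, hence, by Remark \ref{2018-2-28}, $\Lambda^{-1}B$ is small on $h_p$).

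First I would carry out the measure estimate for the diagonal part. For fixed $k\ne 0$ and $j\in\mathbb Z^d$, the small divisor is $\Delta_{kj}=(k,\omega)+\lambda_j$. Assumption {\bf B}, specifically the transversality \eqref{170510-2}, says that along a suitable direction the derivative of $\Delta_{kj}$ in $\omega$ is bounded below (by $c^*$ once $|j|\ge j_0$, and positive for the finitely many remaining $j$). Hence for each threshold $\delta>0$ the set $\{\xi:\ |\Delta_{kj}(\xi)|<\delta\}$ has measure $O(\delta)$ by the standard one-dimensional sublevel-set argument. Here is the crucial point about the finite limit-point: although there are \emph{infinitely many} $j$ for which $\Delta_{kj}$ is small (since $\lambda_j\to 0$ and $(k,\omega)$ can be near zero), the bound $|\lambda_j|\le c_{12}|j|^{-\kappa}$ means that if $|(k,\omega)|\ge 2c_{12}|j|^{-\kappa}$ then $|\Delta_{kj}|\ge c_{12}|j|^{-\kappa}$ is comparable to $\lambda_j$ and \emph{not} a small divisor to be excised. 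So the genuinely dangerous indices are those with $|(k,\omega)|\lesssim |j|^{-\kappa}$, i.e. $|j|\lesssim |(k,\omega)|^{-1/\kappa}$; combined with $|k|\le K$ this is a \emph{finite} set of pairs $(k,j)$ (of cardinality polynomial in $K$ and in $\operatorname{dist}((k,\omega),0)^{-1}$), while for all larger $j$ the divisor $\Delta_{kj}$ is automatically of order $|j|^{-\kappa}$ and, after multiplying the equation by $\Lambda^{-1}$, is harmless. I would therefore set $\delta=K^{-C}$ with $C$ large, define
\[
\m O_1=\m O\setminus\bigcup_{0<|k|\le K}\ \bigcup_{j:\ |(k,\omega)|<2c_{12}|j|^{-\kappa}}\{\xi\in\m O:\ |\Delta_{kj}(\xi)|<\delta\langle j\rangle^{-\kappa}\},
\]
and sum the measures: the number of excised sets is polynomial in $K$, each has measure $O(K^{-C}\langle j\rangle^{-\kappa})$ with the $j$-sum convergent, so $\operatorname{Meas}\m O_1=(\operatorname{Meas}\m O)(1-O(K^{-C}))$ after renaming $C$.

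On $\m O_1$ the diagonal operator $D_k$ is invertible with $\|D_k^{-1}\|_{h_q\to h_q}\le K^{C}$ on the dangerous block and $\|\Lambda^{-1}\|$-type control on the rest; I would record this as $\|\Lambda^{-1}(D_k)^{-1}\|_{h_p\to h_p}\lesssim K^C$, equivalently $\|(D_k)^{-1}\|_{h_p\to h_q}\lesssim K^C$ via Remark \ref{2018-2-28}. Then I invert the full $L_k$ by a Neumann series: $L_k^{-1}={\bf i}^{-1}(D_k)^{-1}\sum_{n\ge 0}\bigl(-B(D_k)^{-1}\bigr)^{n}$, where convergence uses that $B(D_k)^{-1}$ is small on $h_q$ — indeed $\|B(D_k)^{-1}\|_{h_q\to h_q}\le\|B\|_{h_p\to h_q}\|\Lambda\,(D_k)^{-1}\|\dots$, hmm, rather one writes $B(D_k)^{-1}=(B\Lambda^{-1})(\Lambda(D_k)^{-1})$ and notes $\|B\Lambda^{-1}\|_{h_q\to h_q}\le C\|\lfloor B\rceil\|_{h_p\to h_q}\le C\epsilon_0$ while $\|\Lambda(D_k)^{-1}\|_{h_q\to h_q}$ is controlled once the dangerous block is handled separately (for the dangerous block one pays a factor $K^C$, but $\epsilon_0 K^C$ is still small because in the KAM scheme $K$ is chosen depending on $\epsilon_0$ so that $\epsilon_0 K^C\ll 1$; if one wants the lemma to be self-contained one simply assumes $\epsilon_0$ small relative to $K$). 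Throughout I keep everything in terms of the modulus $\lfloor\cdot\rceil$ so that the operator norms used are the $\|\lfloor\cdot\rceil\|_{h_p\to h_q}$ (absolute boundedness), which is stable under the Neumann-series manipulations since moduli of products are dominated by products of moduli. Then \eqref{2.9} follows mode by mode: $\|\lfloor\widehat F(k)\rceil\|_q\le K^C\|\lfloor\widehat R(k)\rceil\|_q$, and multiplying by $e^{|k|s'}$ with $s'<s$ and summing, the polynomial factor $K^C$ is absorbed against the loss $e^{-|k|(s-s')}$ (which for $|k|\le K$ is harmless anyway since we kept the same exponential weight — the $s'<s$ is only needed to later iterate). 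For the parameter derivative \eqref{2.10} I would differentiate $L_k(\xi)\widehat F(k,\xi)=\widehat R(k,\xi)$ in $\xi$, getting $\widehat F_\xi=L_k^{-1}(\widehat R_\xi-(L_k)_\xi\widehat F)$; the new ingredient is $(L_k)_\xi$, which involves $\partial_\xi(k,\omega)$ (bounded by $c_2|k|\le c_2 K$ via Assumption {\bf A}), $\partial_\xi\Lambda$ (bounded by $c_{13}|j|^{-\kappa}$ via \eqref{170512-1}, so $\Lambda^{-1}\partial_\xi\Lambda$ is bounded), and $\partial_\xi B$ (size $C\epsilon_0$ via Assumption {\bf E}). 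Each application of $L_k^{-1}$ costs another $K^C$, so $\|\lfloor\widehat F_\xi(k)\rceil\|_q\le K^C(\|\lfloor\widehat R(k)\rceil\|_q+\|\lfloor\widehat R_\xi(k)\rceil\|_q)$, and the extra shrinking $s''<s'$ absorbs the factor $|k|\le K$ coming from $\partial_\xi(k,\omega)$; summing gives \eqref{2.10}.

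The main obstacle — and the conceptual heart of the paper — is precisely the infinitude of small divisors in the first Melnikov conditions caused by $\lambda_j\to\varpi$ (here $\varpi=0$): one cannot simply say ``finitely many $(k,j)$, excise a set of measure $O(K^{-C})$ each'' as in the classical $\lambda_j\to\infty$ theory. The resolution I sketched above is the key trick: separate the indices into a \emph{finite} dangerous block $\{|j|\lesssim|(k,\omega)|^{-1/\kappa}\}$ — finite because once $|(k,\omega)|$ is fixed and $|k|\le K$, there are only finitely many such $j$ — on which we pay $K^C$ after excising a small-measure set, and an \emph{infinite} tail $\{|j|$ large$\}$ on which $|\Delta_{kj}|\sim|j|^{-\kappa}\sim\lambda_j$ is never a small divisor, so that multiplying the homological equation by $\Lambda^{-1}$ (i.e. working with the operator $\Lambda^{-1}L_k=I+\Lambda^{-1}(k,\omega)+\Lambda^{-1}B$, whose ``tail part'' is $I+$small) makes it invertible for free. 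The $j$-summability $\sum_j\langle j\rangle^{-\kappa}$ of the excised measures needs $\kappa>d$; if $\kappa\le d$ one instead excises $\{|\Delta_{kj}|<\delta\langle j\rangle^{-\kappa-d-1}\}$ and pays a correspondingly larger (still polynomial in $K$) constant in \eqref{2.9}–\eqref{2.10}, which is acceptable. A secondary technical point is maintaining absolute boundedness ($\lfloor\cdot\rceil$-norms) through the Neumann series and through differentiation in $\xi$; this is routine given that $\lfloor AB\rceil\le\lfloor A\rceil\lfloor B\rceil$ entrywise and $\lfloor(D_k)^{-1}\rceil$ is itself diagonal hence equals its own modulus.
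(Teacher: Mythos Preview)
Your reduction to Fourier modes and your identification of the conceptual difficulty (infinitely many small divisors because $\lambda_j\to 0$) are correct, and the idea of splitting into a finite ``dangerous'' block of indices $j$ and an infinite tail on which $\Delta_{kj}$ is automatically of size $\lambda_j$ is exactly the right move. But the way you incorporate the off-diagonal operator $B$ has a genuine gap.

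You propose to invert $L_k=D_k+B$ by the Neumann series $D_k^{-1}\sum_n(-BD_k^{-1})^n$, and you correctly compute that on the dangerous block $\|\Lambda D_k^{-1}\|$ is of order $K^C$ (since your excision only guarantees $|\Delta_{kj}|\gtrsim K^{-C}\langle j\rangle^{-\kappa}$), so $\|BD_k^{-1}\|\lesssim \|B\Lambda^{-1}\|\cdot\|\Lambda D_k^{-1}\|\lesssim \epsilon_0 K^C$. For the Neumann series to converge you therefore need $\epsilon_0 K^C\ll 1$. You say this holds ``because in the KAM scheme $K$ is chosen depending on $\epsilon_0$''. That is false: in the iteration $K=K_m\sim m^2(1+\varrho_0)^m|\log\epsilon_0|\to\infty$ while $B$ stays of size $\epsilon_0$ (it accumulates the $B^{z\bar z}_{(j)}$'s, each of size $\epsilon_{j-1}$, summing to $O(\epsilon_0)$). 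So $\epsilon_0 K_m^C\to\infty$ and your series diverges at all but finitely many steps. Adding ``$\epsilon_0$ small relative to $K$'' as a hypothesis is not allowed: the lemma does not assume it, and doing so would make the lemma unusable in the iteration.

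The paper avoids this by \emph{not} treating $B$ perturbatively on the finite block. After excising only $\{|(k,\omega)|<K^{-c_{21}}\}$ (a condition on $\omega$ alone), it partitions at the fixed threshold $|j|=K^{c_{22}}$ with $\kappa c_{22}\gg c_{21}$. On the tail $|j|\ge K^{c_{22}}$ the whole of $\Lambda^{(2)}+B^{(22)}$ is dominated by $(k,\omega)$ and a Neumann series inverts $\mathcal B_2=-(k,\omega)+\Lambda^{(2)}+B^{(22)}$ without any smallness coupling to $K$. A Schur-complement reduction then leaves the finite block $-(k,\omega)+\Lambda^{(1)}+\mathcal B_1$, where $\mathcal B_1$ is an $O(\epsilon_0)$ \emph{Hermitian} correction. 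Here the paper uses the self-adjointness: by the variational principle for eigenvalues and the transversality \eqref{170510-2}/\eqref{usefortwist}, every eigenvalue $\mu(\xi)$ of the full finite-block operator moves with speed $\ge c^*-C\epsilon_0\ge c^*/2$ in a fixed direction. One then excises the (polynomial-in-$K$ many) sublevel sets $\{|\mu(\xi)|<K^{-C_0}\}$ directly. This gives $\|(\text{finite block})^{-1}\|_{\ell_2}\le K^{C_0}$ with a measure loss $O(K^{-C})$, requiring only $\epsilon_0\ll c^*$ --- independent of $K$. The conversion from $\ell_2$ to $h_p$ norms costs another power of $K$ (the block has dimension $\lesssim K^{dc_{22}}$), which is harmless.

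In short: your scalar-divisor excision controls the eigenvalues of the \emph{diagonal} $D_k$, not of $D_k+B$; bridging that gap by a Neumann series is exactly what fails. The fix is to use the Hermitian structure of $\Lambda+B$ and excise on the eigenvalues of the full operator on the finite block.
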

%%%%%%%%%%%%%%%%%%%%%%%
%%%%%%%%%%%%%%%%%%%%%%
\iffalse
\begin{rem} Note the definition of the norm $||\cdot||_{h_{\ti p}}$ for $\ti p\in\{p,q\}$. For the vector $F=F(x,\xi)$, we have $||F||_{h_{\ti p}}=||\lf F\rc||_{h_{\ti p}}$ and $||\lf \p_\xi F\rc||_{h_{\ti p}}=||\lf \p_\xi F\rc||_{h_{\ti p}}$. This conclusion holds true for the vector $R$.
\end{rem}
\fi
%%%%%%%%%%%%%%%%%%%%%%%%%%
%%%%%%%%%%%%%%%%%%%%%%%%%%

\vspace{5pt}
\noindent{\bf Proof:}% We will give the proof only for $\ti p=p$. The proof for $\ti p=q$ is similar.  Because of %\eqref{17-3-10-1}, take $k\in\mathbb Z^N\setminus\{0\}$.
$\;\;$By passing \eqref{4.9} to Fourier coefficients, we have
 \begin{equation}\label{61} (-( k,\omega)+\Lambda+{B})\widehat{{F}}(k)=\widehat{{R}}(k),\quad
 \forall \;k\in\mathbb{Z}^N,\;0<|k|\leq K,\end{equation}
 where $( k,\omega)=( k,\omega) E$ with $E$ being the identity from $h_{q}\rightarrow h_{q}$. In the following, we always by $1$ instead of $E$ denote the identity from $h_{\tilde p}$ to $h_{\tilde p}$ ($\tilde p\in\{p,q\}$) or from some finite dimensional space to itself, and write $E x=1 x=x$.

According to Assumption {\bf A}, assume $\omega(\xi)=\xi$ without loss of generality. In standard procedure, it can be easily proved that there is a subset $\mathcal{O}_1\subset\mathcal{O}$ of $\mbox{Meas}(\mathcal{O}_1)\leq K^{-c_{20}}$
  with constant $c_{20}>0$ such that
  \begin{equation}\label{62}|(k,\omega(\xi))|\geq K^{-c_{21}},\quad 0<|k|\leq K,
   \xi\in\mathcal{O}\setminus\mathcal{O}_1,\end{equation}%
  with constant $c_{21}=c_{20}+N$.
  Recall the Assumption {\bf B}: \begin{equation}\label{63}c_{11}|j|^{-\kappa}\leq\lambda_{j}\leq c_{12}|j|^{-\kappa}.\end{equation}
  So we choose a constant $c_{22}$ with $c_{22}\gg c_{21}$ such that
  \begin{equation}\label{64}|\lambda_j|\leq c_{12} K^{-\kappa\,c_{22}}\ll K^{-c_{21}}, \quad \mbox{for}\; |j|\geq K^{c_{22}}.\end{equation}
  %%%%%%%%%%%%%%%%%%%%%%%%%%
 % \iffalse%%%%%%%%%%%%%It follows
 % \begin{equation}\label{65}|\frac{\lambda_j}{\langle k,\omega\rangle}|\leq\frac12,\quad \mbox{for}\; 0<|k|\leq K,|j|\geq K^{c_{22}},\xi\in\mathcal{O}\subset\mathcal{O}_1.\end{equation}
  %\fi%%%%%%%%%%%%%%%%%%%%%%%%%
  %%%%%%%%%%%%%%%%%%%%%%%%%%%
 Using $K^{c_{22}}$, we partition $\Lambda$ as follows: $$\Lambda=\Lambda^{(1)}\oplus\Lambda^{(2)}=\left(\begin{array}{ll}
\Lambda^{(1)}&0\\
0&\Lambda^{(2)}
\end{array}\right),$$ where $\Lambda^{(1)}=\mbox{diag}(\Lambda_j:|j|<K^{c_{22}})$, $\Lambda^{(2)}=\mbox{diag}(\Lambda_j:|j|\geq K^{c_{22}})$. In such way,
we partition ${{B}}:$
$${{B}}=\left(\begin{array}{ll}
B^{(11)}&B^{(12)}\\
B^{(21)}&B^{(22)}
\end{array}\right),$$
where $$B^{(11)}=(B_{ij}:|i|<K^{c_{22}},|j|<K^{c_{22}}),\quad B^{(21)}=(B_{ij}:|i|\geq K^{c_{22}},|j|<K^{c_{22}}),$$
$$B^{(12)}=(B_{ij}:|i|<K^{c_{22}},|j|\geq K^{c_{22}}),\quad B^{(22)}=(B_{ij}:|i|\geq K^{c_{22}},|j|\geq K^{c_{22}})$$
with $B_{ij}=B_{ij}(\xi)$ being the elements of matrix $B=B(\xi)$.
Again in such way, partition the $k$-Fourier coefficients $\widehat{F}(k)$ of $F(x)$ and $\widehat{{R}}(k)$ of $R(x)$: $$\widehat{{F}}(k)=\left(\begin{array}{ll}
F^{(1)}\\
F^{(2)}
\end{array}\right),\;\;\widehat{{R}}(k)=\left(\begin{array}{ll}
R^{(1)}\\
R^{(2)}
\end{array}\right),$$
where $F^{(1)}=(\widehat{{F}}_j(k):\, |j|<K^{c_{22}})$ and  $F^{(2)}=(\widehat{{F}}_j(k):\, |j|\ge K^{c_{22}})$ and so on.
And we split $h_p=h_p^{01}\oplus h_p^{02}$, $h_q=h_q^{01}\oplus h_q^{02}$ and $\ell_2=\ell_2^{01}\oplus \ell_2^{02}$ by advantage of $K^{c_{22}}$.
For example,
\[h_p^{01}=\{z=(z_j\in\mathbb C):j\in\mathbb Z^d, |j|< K^{c_{22}}\}\]
with
\[||z||_{h_p^{01}}^2=\sum_{j\in\mathbb Z^d, |j|< K^{c_{22}}} |z_j|^2|j|^{2p},\]
and
\[h_p^{02}=\{z=(z_j\in\mathbb C):j\in\mathbb Z^d, |j|\ge  K^{c_{22}}\}\]
with
\[||z||_{h_p^{02}}^2=\sum_{j\in\mathbb Z^d, |j|> K^{c_{22}}} |z_j|^2|j|^{2p}.\]
Therefore, by the partition as the above, we write
\begin{equation}\label{66}-(k,\omega)+(\Lambda+\widehat{{B}}(k))=\left(\begin{array}{ll}
-(k,\omega)+\Lambda^{(1)}+B^{(11)}&B^{(12)}\\
B^{(21)}&-( k,\omega)+\Lambda^{(2)}+B^{(22)}
\end{array}\right).\end{equation}
 Let \begin{equation}\label{79}\mathcal{B}_1:=B^{(11)}-B^{(12)}\mathcal{B}_2^{-1}B^{(21)},\end{equation}
\begin{equation}\label{80}\mathcal{B}_2:=-( k,\omega)+\Lambda^{(2)}+B^{(22)}.\end{equation}
Then by \eqref{61}
\begin{equation}\label{77}F^{(1)}=(-( k,\omega)+\Lambda^{(1)}+
\mathcal B_1)^{-1}(R^{(1)}-B^{(12)}\mathcal B_2^{-1}R^{(2)}),\end{equation}
 \begin{equation}\label{78}F^{(2)}=\mathcal B_2^{-1}R^{(2)}-\mathcal B_2^{-1}B^{(21)}F^{(1)}.\end{equation}
In view of Assumption {\bf E} by replacing $B^0$ by  $B$ and using Remark \ref{2018-2-28}, one has
\be\label{67} \sup_{\xi\in\mathcal{O}}||\Lambda^{-1}B||_{h_{p}\rightarrow h_{p}}\leq \sup_{\xi\in\mathcal{O}}||\Lambda^{-1}\lf B\rc||_{h_{p}\rightarrow h_{p}} \le C\,\epsilon_0.\ee
Similarly,
\be\label{67+1}\sup_{\xi\in\mathcal{O}}||
\Lambda^{-1}\,\partial_{\xi}\,B||_{h_{p}\rightarrow h_{p}}\leq \sup_{\xi\in\mathcal{O}}||
\Lambda^{-1}\,\lfloor\partial_{\xi}\,B\rceil||_{h_{p}\rightarrow h_{p}}\leq C\,\epsilon_0.\ee
 It follows by  Lemma \ref{splitlemma} in the Appendices,    that for $i,j\in\{1,2\},$
 \begin{equation}\label{68}\sup_{\xi\in\mathcal{O}}||(\Lambda^{(i)})^{-1}B^{(ij)}||_{h_{p}^{0i}\rightarrow h_{p}^{0j}}\leq C\,\epsilon_0,\end{equation}
 \begin{equation}\label{68+1}\sup_{\xi\in\mathcal{O}}||(\Lambda^{(i)})^{-1}\partial_{\xi}\, B^{(ij)}||_{h_{p}^{0i}
 \rightarrow h_{p}^{0j}}\leq C\,\epsilon_0.\end{equation}
 By \eqref{64} and \eqref{68}, we have
 \be\label{69}\begin{array}{lll}\big|\big| \Lambda^{(2)}+B^{(22)}\big|\big|_{h_p^{02}\to h_p^{02}}&=&
 \big|\big| \Lambda^{(2)}\left(1+(\Lambda^{(2)})^{-1}B^{(22)}\right)\big|\big|_{h_p^{02}\to h_p^{02}}
 \\& \le &  \big|\big| \Lambda^{(2)}\big|\big|_{h_p^{02}\to h_p^{02}} \left(1+ \big|\big|(\Lambda^{(2)})^{-1}B^{(22)} \big|\big|_{h_p^{02}\to h_p^{02}}\right)\\ &\le & C \, K^{-\kappa\, c_{22}}(1+\e_0).
 \end{array}\ee
 By $\kappa\, c_{22}\gg c_{21}$ and $|(k,\omega)|\ge K^{-c_{21}}$, it follows that there does exist the inverse
 of $\m B_2$:
 \be\label{201705-y3}\m B_2^{-1}=\left(-(k,\omega)+
 \Lambda^{(2)}+B^{(22)} \right)^{-1}=\frac{-1}{( k,\omega)}
 \left( 1-\frac{1}{(k,\omega)}\left(\Lambda^{(2)}+B^{(22)} \right)\right)^{-1}\ee
 and
 \be\label{170520-1} \big|\big| \m B_2^{-1}\big|\big|_{h_p^{02}\to h_p^{02}}\le C\frac{1}{|(k,\omega)|}\le C\, K^{c_{21}}. \ee
 Moreover, by noting $ || (\Lambda^{(2)})^{-1}\,B^{(22)}||_{h_p^{02}\to h_p^{02}}\le C\,\e_0$ and $ \big|\big| \Lambda^{(2)} \big|\big|_{h_p^{02}\to h_p^{02}}\le C\,K^{-\kappa\, c_{22}}\ll K^{-c_{21}}$,
 \be\label{201705-y3+1}\begin{array}{lll}||\big((\Lambda^{(2)})^{-1}\mathcal{B}^{-1}_2\Lambda^{(2)}\big)
 ||_{h_p^{02}\to h_p^{02}}&=&\big|\big|\frac{-1}{(k,\omega)}
 \left( 1-\frac{1}{(k,\omega)}\left(1+(\Lambda^{(2)})^{-1}\,B^{(22)} \right)\, \Lambda^{(2)} \right)^{-1}\big|\big|_{h_p^{02}\to h_p^{02}}\\ &\le  &C \, K^{c_{21}}.\end{array}\ee
 By \eqref{170512-1} in Assumption {\bf B},
 \be \label{170520-2} \big|\big| \p_{\xi}\, \Lambda^{(2)} \big|\big|_{h_p^{02}\to h_p^{02}} \le C \,
 \big|\big| \Lambda^{(2)} \big|\big|_{h_p^{02}\to h_p^{02}}\le K^{-\kappa\, c_{22}}\ll K^{-c_{21}}.\ee
 Note
 \[\left| \p_\xi\, (k,\omega)\right|\le K.\]
 By \eqref{68+1}, \eqref{201705-y3}, \eqref{170520-1} and \eqref{170520-2}, we have
 \be\label{201705-y4}   \big|\big| \p_{\xi}\m B_2^{-1} \big|\big|_{h_p^{02}\to h_p^{02}}=
 \big|\big| \m B_2^{-1}\left(\p_{\xi}\m B_2\right) \m B_2^{-1} \big|\big|_{h_p^{02}\to h_p^{02}}\le C \, K^{2\, c_{21}+1}.\ee
 In addition,
 \be\label{201705-y5}  \big|\big| B^{(12)} \big|\big|_{h_p^{02}\to h_p^{01}}\le C\, \e_0,\;  \big|\big| B^{(21)} \big|\big|_{h_p^{01}\to h_p^{02}}\le
  \big|\big|\Lambda^{(2)} \big|\big|_{h_p^{02}\to h_p^{02}}
   \big|\big|(\Lambda^{(2)})^{-1} B^{(21)} \big|\big|_{h_p^{01}\to h_p^{02}}\le C\,\e_0\, K^{-\kappa\, c_{22}}\ee
 and
 \be\label{201705-y6}   \big|\big|\p_\xi\, B^{(12)} \big|\big|_{h_p^{02}\to h_p^{01}}\le C\, \e_0,\;  \big|\big|\p_\xi\, B^{(21)} \big|\big|_{h_p^{01}\to h_p^{02}}\le   \big|\big|\Lambda^{(2)} \big|\big|_{h_p^{02}\to h_p^{02}}    \big|\big|(\Lambda^{(2)})^{-1} \p_\xi\,B^{(21)} \big|\big|_{h_p^{01}\to h_p^{02}}\le C\,\e_0\, K^{-\kappa\, c_{22}}.\ee
 Applying \eqref{68}, \eqref{68+1}, \eqref{201705-y3}, \eqref{201705-y5} and \eqref{201705-y6} to \eqref{79}, we have
 \be\label{201705-y7} \sup_{\xi\in\m O}\big|\big|\p_\xi^t\, \m B_1 \big|\big|_{h_p^{01}\to h_p^{01}} \le C\, \e_0,\;\; t=0,1.\ee
%%%%%%%%%%%%%%%%%%
Since $B$ is self-adjoint  in $\ell_2$, it is easy to see that $\mathcal B_1$ is Hermitian. Thus, by \eqref{17-15-3} in Appendices,
\begin{equation}\label{83}||\mathcal{B}_1||_{\ell_2^{01}\rightarrow\ell_{2}^{01}}\le ||\mathcal B_1||_{h_p^{01}\to h_p^{01}}
\le C\, \epsilon_0,\end{equation}
and \begin{equation}\label{84}||\partial_{\xi}\mathcal{B}_1||_{\ell_{2}^{01}\rightarrow\ell_{2}^{01}}\le ||\partial_{\xi}\mathcal B_1||_{h_p^{01}\to h_p^{01}}\le C\,
\epsilon_0,\quad
\xi\in\mathcal{O}\setminus\mathcal{O}_1.\end{equation}
Choose $\hat\xi$ to be a direction such that the directional derivative $\p_{\hat \xi}\, ( k,\omega)=|k|$. (Recall that we have assumed that $\omega(\xi)\equiv \xi$).  Then by \eqref{usefortwist},
\begin{equation} -\partial_{\hat \xi}\left(-(k,\omega)+\Lambda^{(1)}+\mathcal B_1\right)\ge c^*-C\,\epsilon_0\ge c^*/2, \end{equation}
where $X\ge Y$ means that $X-Y$ is positive for Hermitian matrices $X$ and $Y$.
{By the variation principle of eigenvalue of matrix (See \cite{[Ka]}), any eigenvalue, say $\mu=\mu(\xi)$, of $(-\langle k,\omega\rangle+\Lambda^{(1)}+\mathcal B_1)$ satisfies
\[|\partial_{\hat\xi}\mu(\xi)|\ge c^*-C \epsilon_0\ge \, c^*/2,\] where and in the following $C$ is a general constant which might depends on $N,s_0,\tau$ and which may be different in different places.}
\iffalse%%%%%%%%%%
\noindent By \eqref{83} \eqref{84}, \begin{equation}\label{87}\sup_{\xi\in\mathcal{O}_m\setminus\mathcal{O}_1}||u(\sigma)||_{h_{p}\rightarrow h_{p}}\leq\epsilon_0,\quad
 \sup_{\xi\in\mathcal{O}_m\setminus\mathcal{O}_1}||\partial_{\sigma}u(\sigma)||_{h_{p}\rightarrow h_{p}}\leq\epsilon_0^{\tau}.\end{equation}
\fi%%%%%%%%%%%%%%%%%%%
Therefore, there is a subset $\mathcal{O}_2\subset\mathcal{O}\setminus\mathcal{O}_1$ with $\mbox{meas}\mathcal{O}_2=O(K^{-C_0/2})$ such that for $\forall\xi\in\mathcal{O}\setminus(\mathcal{O}_1\cup\mathcal{O}_2)$,
 \begin{equation}\label{88}||((-(k,\omega)+\Lambda^{(1)}+\mathcal B_1))^{-1}||_{\ell_2^{01}\rightarrow\ell_2^{01}}\leq
 K^{C_0},\end{equation} where $C_0$ is chosen such that $C_0- c_{22}>C_0/2$.
By (\ref{63}) and (\ref{64}), we can write $\mathcal B_1=(\mathcal B_1(i,j):\; |i|,|j|\le K^{c_{22}}),$ where $\mathcal B_1(i,j)$ is the matrix elements of $\mathcal B_1$. Therefore,
 \begin{equation}\label{y88}||((-( k,\omega)+\Lambda^{(1)}+\mathcal B_1))^{-1}||_{h_p^{01}\to h_p^{01}}\leq K^{p\, c_{22}}
 K^{C_0}:=K^{C_1}.\end{equation}

 \noindent Note that $||(\Lambda^{(1)})^{-1}||_{h_p^{01}\to h_p^{01}}\leq K^{\kappa\, c_{22}}$. It follows from \eqref{y88}, \eqref{170520-1} and \eqref{77} that
 \begin{equation}\label{89}||(\Lambda^{(1)})^{-1}F^{(1)}||_{h_{p}^{01}}\leq K^C(||(\Lambda^{(1)})^{-1}R^{(1)}||_{h_{p}^{01}}+||(\Lambda^{(2)})^{-1}R^{(2)}||_{h_{p}^{02}})\leq
 K^C||\Lambda^{-1}\hat{R}(k)||_{h_p},\end{equation}
where $C\gg c_{22}$ and   $\hat{R}(k)=\hat{R}(k,\xi)$ depends $\xi$.
 Moreover, by \eqref{78}, \eqref{201705-y3+1}, \eqref{68} and \eqref{89},
 \begin{eqnarray}\nonumber ||(\Lambda^{(2)})^{-1}F^{(2)}||_{h_{p}^{02}}&=
 &||\big((\Lambda^{(2)})^{-1}\mathcal{B}^{-1}_2\Lambda^{(2)}\big)(\Lambda^{(2)})^{-1}R^{(2)}||_{h_{p}^{02}}\\
 \nonumber&&+||\big((\Lambda^{(2)})^{-1}\mathcal{B}^{-1}_2\Lambda^{(2)}\big)\big((\Lambda^{(2)})^{-1}B^{(21)}\big)
 (\Lambda^{(1)})\big((\Lambda^{(1)})^{-1}F^{(1)}\big)||_{h_{p}^{02}}\\
 \label{90}&\leq&C_1\, K^{c_{21}}||(\Lambda^{(2)})^{-1}R^{(2)}||_{h_p^{2}}+C_2 \, K^{c_{21}}\cdot\epsilon_0\cdot ||(\Lambda^{(1)})^{-1}F^{(1)}||_{h_p^{01}}\\
 \nonumber &\leq& K^C\, ||\Lambda^{-1}\hat{R}(k)||_{h_p}.
 \end{eqnarray}
 By \eqref{89} and \eqref{90},
 \begin{equation}\label{91}  ||\hat{F}(k)||_{h_{q}}= ||\Lambda^{-1}\hat{F}(k)||_{h_{p}}\leq K^C ||\Lambda^{-1}\hat{R}(k)||_{h_p},\quad \xi\in\mathcal{O}_m\setminus(\mathcal{O}_1\cup\mathcal{O}_2),\quad 0<|k|\leq K.\end{equation}
 %%%%%%%%%%%%%%%%%
 %%%%%%%%%%%%%%%%%%%%
 \iffalse
 It is easy to check, by the above procedure, that
 \begin{equation}\label{92}||\partial_{\xi}\,\hat{F}(k)||_{h_{q}}=||\partial_{\xi}(\Lambda^{-1}\hat{F}(k))||_{h_{p}}\leq K^C \left( ||\Lambda^{-1}\hat{R}(k)||_{h_p}+||\Lambda^{-1}\p_\xi\,\hat{R}(k)||_{h_p}\right),\end{equation} where $ \xi\in\mathcal{O}_m\setminus(\mathcal{O}_1\cup\mathcal{O}_2),\quad  0<|k|\leq K$.
 Since $\lambda_j\approx |j|^{-\kappa}$ and $q=p+\kappa$, one has $||\Lambda^{-1}\hat{R}(k)||_{h_p}=||\hat{R}(k)||_{h_q}$ and $||\Lambda^{-1}\p_\xi\,\hat{R}(k)||_{h_p}=||\p_\xi\,\hat{R}(k)||_{h_q}$
\fi
%%%%%%%%%%%%%%%%%%%%%
%%%%%%%%%%%%%%%%%%%
Note $||\widehat{{F}}(k)||_{h_{q}}=||\lfloor \widehat{{F}}(k)\rceil||_{h_{q}}$ and $||\widehat{{R}}(k)||_{q}=||\lfloor \widehat{{R}}(k)\rceil||_{q}.$
By \eqref{91},
\begin{eqnarray*}
||\lfloor F \rceil||^{2}_{q,s',\mathcal{O}_{1}}&=&\sum_{0\neq k\in\mathbb{Z}^{N}}||\widehat{{F}}(k)||^{2}_{q}\cdot e^{2|k|s'}\\
&\leq &K^{2C}\sum_{0\neq k\in\mathbb{Z}^{N}}||\lfloor \widehat{{R}}(k)\rceil||^{2}_{q}\cdot e^{2|k|s}\\
&=&(K^{C}||\lfloor R\rceil||_{q,s,\mathcal{O}})^{2}.
\end{eqnarray*}
It follows that
$$||\lfloor F\rceil||_{q,s',\mathcal{O}_{1}}\leq K^{C}||\lfloor R\rceil||_{q,s,\mathcal{O}}.$$
This proves \eqref{2.9}.

Applying $\partial_{\xi}$ to both sides of \eqref{4.9}, we have
$$\Gamma(({\bf i}\omega\cdot \partial_{x}+\Lambda+B)\partial_{\xi}F)=\Gamma(\partial_{\xi}R)-\Gamma((\partial_{\xi}({\bf i}\omega\cdot \partial_{x}+\Lambda +B))F).$$
Repeating the previous procedure, we can prove \eqref{2.10}.
%%%%%%%%%%%%%%%%%%%%%%%%%%%%%%%%%%%
% The proof is finished by the last two inequalities \eqref{91} and \eqref{92}.$\qed$

 \iffalse%%%%%%%%%%
 Let$F_{(1)}^u(k,\xi)=\sum_{0<|k|\leq K}\hat{F}^u(k)e^{\mi\langle k,x\rangle}.$ Then by \eqref{91} and \eqref{92},
 \begin{equation}\label{93}\sup_{\xi\in\mathcal{O}_m\setminus(\mathcal{O}_1\cup\mathcal{O}_2)\atop{x\in\mathbb{T}_{m+1}}}||\partial_{\xi}^jF^u_{(1)
 }||_{\ell_{a,p}\rightarrow\ell_{a,p}}\leq K^c\cdot\epsilon_m,\quad j=0,1.\end{equation}
\fi%%%%%%

\section{Solution of linear equation for the second Melnikov conditions}

\begin{lem} \label{lem5.1} Assume $\m O$, $\omega=\omega(\xi)$, $\lambda_j$'s satisfy Assumptions {\bf A, B}, accordingly, by replacing $\m O_0$, $\omega^0$ by $\m O$, $\omega$. Assume both $B$ and $\breve B$ are self-adjoint in the square summable space $\ell_2(\mathbb Z^d)$ and   satisfy   Assumption {\bf E}.
 And assume the operator-value function $\Lambda^{-1}R$ obeys that
\be \Lambda^{-1}\lf R(x,\xi)\rc:\;\mathbb T^N_s\times\m O\to \mathcal{L}(h_{p},h_p)\ee
is analytic in $x\in\mathbb T^N_s$, $C^1$ in $\xi\in\m O$.
  Then there is a subset $\m O_2\subset\m O$ with
\be \text{Meas}\;\m O_2=(\text{Meas}\;\m O)(1-O(K^{-N}))\ee such that
for any $\xi\in\m O_2$, the homological equation
\be \label{II}\Gamma\left( ( -{\bf i}\omega\cdot\p_x\pm(\Lambda+ B))F \right)\pm \Gamma\left( F(\Lambda+ \breve B)\right)=(\Gamma R)(x,\xi)\ee
has unique solution $$F=F(x,\xi)=\widehat{F}(0,\xi)+\sum_{0\neq k\in\mathbb Z^N,|k|\le K}\widehat{F}(k,\xi)\,e^{\mi (k,x)}:=\widehat{F}(0,\xi)+\tilde F(x,\xi)$$ fulfilling \be\label{5.4} \tilde F:\;\mathbb T^N_s\times\m O_2\to \m L( h_p,h_q)\ee and
\be \label{5.8}\sup_{x\in \mathbb T^N_{\tilde s},\xi\in\m O_2}||\lf \tilde F\rc||_{h_p\to h_q}\le \frac{1}{(s-\tilde s)^N}\, K^C\, \sup_{x\in \mathbb T^N_s,\xi\in\m O}||\lf R\rc||_{h_p\to h_q},\; 0<\tilde s< s,
\ee
%\be\label{5.9} ||F||_{q,q,s,\m O_2}\le K^C||R||_{p,q,s,\m O}\ee and
\be\label{5.10}\sup_{x\in \mathbb T^N_{\tilde s},\xi\in\m O_2}||\lf \p_\xi\tilde F\rc ||_{h_p\to h_q}\le \frac{1}{(s-\tilde s)^N}\, K^C\, \sup_{x\in \mathbb T^N_s,\xi\in\m O}(||\lf R\rc||_{h_p\to h_q}+||\lf \p_\xi R\rc||_{h_p\to h_q}),\; 0<\tilde s< s,\ee
\be\label{2017-7-6-1} \sup_{\xi\in\m O_2}||\lf \widehat{ F}(0,\xi)\rc||_{h_{\tilde p}\to h_{\tilde p}}\le K^C\, \sup_{\xi\in\m O}||\lf \widehat{R}(0,\xi)\rc||_{h_p\to h_q}, \; \tilde p\in\{p,q\},\ee
\be\label{2017-7-6-2} \sup_{\xi\in\m O_2}||\lf \p_\xi\widehat{ F}(0,\xi)\rc||_{h_{\tilde p}\to h_{\tilde p}}\le K^C\, \sup_{\xi\in\m O}\left(||\lf \widehat{R}(0,\xi)\rc||_{h_p\to h_q}+||\lf \p_\xi \widehat{R}(0,\xi)\rc||_{h_p\to h_q}\right), \; \tilde p\in\{p,q\},\ee
where we require that
\be \int_{\mathbb T^N} R(x,\xi)\,d \, x=0,\quad \forall\; \xi\in\mathcal O\ee when the sign $\pm$ in \eqref{II} appears in the form
\[\pm (\Lambda+B)\, F\pm F(\Lambda+\breve B)=\pm \left( (\Lambda+B)\, F- F (\Lambda+\breve B)\right).\]

%\be \label{5.11}||\p_\xi F||_{q,q,s,\m O_2}\le K^C( ||\p_\omega R||_{p,q,s,\m O}+ ||\p_\omega R||_{p,q,s,\m O})\ee
\end{lem}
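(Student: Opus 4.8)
The plan is to mimic the strategy of Lemma \ref{lem4.1}, now for the equation governing the $z\bar z$-block, which amounts to solving a Sylvester-type equation on $\mathcal{L}(h_p,h_q)$. Passing \eqref{II} to Fourier coefficients in $x$ gives, for each $k$ with $|k|\le K$,
\[
\bigl(\mp(k,\omega)\pm(\Lambda+B)\bigr)\widehat{F}(k)\pm\widehat{F}(k)(\Lambda+\breve B)=\widehat{R}(k),
\]
an equation of the form $\mathcal{L}_k(\widehat F(k))=\widehat R(k)$ where $\mathcal{L}_k$ is a linear operator on matrices. First I would dispose of the non-resonant part: for $0\neq k$ with $|k|\le K$, excise a parameter set $\mathcal{O}_2$ of measure $O(K^{-N})$ on which $|(k,\omega)|\ge K^{-c_{21}}$ (as in \eqref{62}), and also where the directional-derivative condition \eqref{usefortwist} forces the full symbol to stay away from zero. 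Then I would split indices at $|j|=K^{c_{22}}$ exactly as in Lemma \ref{lem4.1}: write $\Lambda=\Lambda^{(1)}\oplus\Lambda^{(2)}$, correspondingly block $B$, $\breve B$, $\widehat F(k)$, $\widehat R(k)$ into $2\times 2$ arrays of blocks, and observe that on the $(2,2)$ tail-block the operator $\Lambda^{(2)},\breve\Lambda^{(2)}=O(K^{-\kappa c_{22}})$ is negligible against the divisor $|(k,\omega)|\ge K^{-c_{21}}$ (when $k\neq0$) or against the twist constant $c^*$ (when $k=0$), so that block is inverted by a Neumann series with norm $O(K^{c_{21}})$; this is the analogue of \eqref{201705-y3}--\eqref{201705-y6}.

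Next, the Schur-complement reduction. The genuinely delicate part is the finite $(1,1)$-block $\mathcal{B}_1$ supported on $|i|,|j|<K^{c_{22}}$, where after eliminating the other three blocks one is left with inverting an operator of the form $\mp(k,\omega)\mathrm{Id}\pm(\Lambda^{(1)}+\mathcal{B}_1^{(L)})(\cdot)\pm(\cdot)(\Lambda^{(1)}+\mathcal{B}_1^{(R)})$ on the finite-dimensional matrix space $\mathcal{L}(h_p^{01},h_p^{01})$, with $\mathcal{B}_1^{(L)},\mathcal{B}_1^{(R)}$ Hermitian and $O(\epsilon_0)$ in norm (the Schur complements of the tail, estimated via Lemma \ref{splitlemma} and \eqref{17-15-3} in the Appendices, cf. \eqref{201705-y7}--\eqref{84}). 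The eigenvalues of this matrix-Sylvester operator are differences/sums $\mu_a\mp\mu_b'\mp(k,\omega)$ of eigenvalues of $\mp(k,\omega)\pm(\Lambda^{(1)}+\mathcal{B}_1^{(L)})$ and $\Lambda^{(1)}+\mathcal{B}_1^{(R)}$; by the eigenvalue-variation principle (\cite{[Ka]}) each such eigenvalue has directional derivative in $\hat\xi$ bounded below in absolute value by $c^\star-C\epsilon_0\ge c^\star/2$, using \eqref{170510-2} in Assumption ${\bf B}^\star$ (in the $\varpi=0$ case one uses \eqref{usefortwist} instead). A standard measure estimate then excises a further set of measure $O(K^{-N})$ off which $\|\mathcal{L}_k^{(11),-1}\|_{\ell_2\to\ell_2}\le K^{C_0}$, and converting from $\ell_2$ to $h_p^{01}$ costs a factor $K^{p\,c_{22}}$, as in \eqref{88}--\eqref{y88}.

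With all four blocks under control, I would reassemble: express $\widehat F(k)$ through the Schur formulas (the matrix analogue of \eqref{77}--\eqref{78}), use $\|\Lambda^{-1}\mathcal A\|_{h_p\to h_p}\le C\|\mathcal A\|_{h_p\to h_q}$ and the companion bounds of Remark \ref{2018-2-28} to translate everything into $h_p\to h_q$ norms with a single polynomial loss $K^C$ per Fourier mode, and finally sum over $0\neq k$ with the weight $e^{2|k|\tilde s}$, absorbing $\sum_k K^{C}e^{-2|k|(s-\tilde s)}\lesssim (s-\tilde s)^{-N}K^C$ to get \eqref{5.8}; the $k=0$ term is handled separately (no small divisor from $(k,\omega)$, only the twist) and yields \eqref{2017-7-6-1} in both the $h_p\to h_p$ and $h_q\to h_q$ norms since $\Lambda^{-1}$ is bounded below and above on the relevant tail scale. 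The derivative bounds \eqref{5.10} and \eqref{2017-7-6-2} follow by differentiating \eqref{II} in $\xi$, moving $\p_\xi(\mp(k,\omega)\pm(\Lambda+B))F\mp F\p_\xi(\Lambda+\breve B)$ to the right-hand side, and re-running the same inversion; the only new ingredient is $|\p_\xi(k,\omega)|\le K$ and the $O(\epsilon_0)$ bounds on $\p_\xi B,\p_\xi\breve B$, which cost one extra power of $K$ as in \eqref{201705-y4}. The main obstacle is the $(1,1)$-block: one must certify that the \emph{differenced} small divisors $\mu_a-\mu_b'-(k,\omega)$ (including the genuinely second-Melnikov terms with $k=0$ and $a\neq b$) have uniformly large $\hat\xi$-derivative despite the perturbation $\mathcal{B}_1$ destroying the diagonal structure — this is exactly where the self-adjointness of $B,\breve B$, the relative smallness Assumption {\bf E}, and the twist condition \eqref{170510-2} are all used together, via the Hermitian eigenvalue-variation estimate.
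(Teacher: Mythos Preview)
Your outline tracks the paper's strategy for $k\neq 0$ in spirit, but there are two genuine gaps.

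\textbf{The $k=0$ mode.} You write that the zero mode is ``handled separately (no small divisor from $(k,\omega)$, only the twist)'' and earlier that on the tail block for $k=0$ one inverts ``against the twist constant $c^*$''. There is no twist here: Assumption~\textbf{B} only controls $\p_\omega\bigl((k,\omega)\pm\lambda_i\bigr)$ and $\p_\omega\bigl((k,\omega)\pm(\lambda_i+\lambda_j)\bigr)$ for $k\neq0$. At $k=0$ the equation is $(\Lambda+B)X+X(\Lambda+\breve B)=\widehat R(0)$ (only the ``$+$'' sign survives, since for the ``$-$'' sign the zero average hypothesis forces $\widehat R(0)=0$), and no parameter excision helps. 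The paper's mechanism is completely different and purely algebraic: for the diagonal part $\Lambda X+X\Lambda=Y$ one has $X_{ij}=Y_{ij}/(\lambda_i+\lambda_j)$, and the key identity
\[
X_{ij}=\frac{\lambda_i}{\lambda_i+\lambda_j}\,(\Lambda^{-1}Y)_{ij}=\frac{\lambda_j}{\lambda_i+\lambda_j}\,(Y\Lambda^{-1})_{ij},
\qquad 0<\frac{\lambda_i}{\lambda_i+\lambda_j}<1,
\]
gives $\|\lfloor X\rceil\|_{h_p\to h_p}\le\|\lfloor Y\rceil\|_{h_p\to h_q}$ and $\|\lfloor X\rceil\|_{h_q\to h_q}\le\|\lfloor Y\rceil\|_{h_p\to h_q}$ \emph{elementwise}. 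A Picard iteration then absorbs $B,\breve B$. Note that $\widehat F(0)$ is bounded only as $h_{\tilde p}\to h_{\tilde p}$ for $\tilde p\in\{p,q\}$, \emph{not} $h_p\to h_q$ --- this is precisely the regularity loss in \eqref{2017-7-6-1} versus \eqref{5.8}, and your sketch does not distinguish the two.

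\textbf{The Schur reduction for $k\neq0$ does not preserve Sylvester form.} You assert that after eliminating the three off-blocks one is left with an operator of the shape $(\Lambda^{(1)}+\mathcal B_1^{(L)})(\cdot)+(\cdot)(\Lambda^{(1)}+\mathcal B_1^{(R)})$ on the finite block. This is false: the block equations \eqref{121}--\eqref{124} couple $F^{(11)}$ to $F^{(12)},F^{(21)},F^{(22)}$ through \emph{both} left and right multiplications simultaneously, so the reduced operator on $F^{(11)}$ is a general linear map $\mathcal L_1$ on matrices, not left/right multiplication by Hermitian perturbations. The paper handles this by vectorizing via $\operatorname{Vec}$ and tensor products, obtaining $\bigl(1\otimes M^{(11)}+N^{(11)T}\otimes 1+\mathcal A\bigr)\operatorname{Vec}F^{(11)}=\operatorname{Vec}\mathcal R^{(11)}$ with $\mathcal A$ shown to be Hermitian on $\ell_2^1\otimes\ell_2^1$ and small in derivative; the eigenvalue-variation argument is then applied to this single Hermitian operator, not to separate left/right eigenvalues. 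Correspondingly the paper uses \emph{two} cutoff scales, $K_3\gg K_2\gg K^{c_{22}}$: the outer split at $K_3$ makes the tail negligible against $K^{-c/y}$, while the inner split at $K_2$ (inside Lemma~\ref{lemma3.3} and Lemma~\ref{lemma3.5}) is what allows inversion of the mixed operators $1\otimes M^{(ii)}+N^{(jj)T}\otimes 1$ that feed into the smallness of $\mathcal A$. A single split at $K^{c_{22}}$, imported verbatim from Lemma~\ref{lem4.1}, will not close these estimates.
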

\vspace{5pt}
\noindent{\bf Proof:} By passing to Fourier coefficients, we get

\be\label{17-3-12-1} (( k,\omega) \pm(\Lambda+B))\widehat{{F}}(k)\pm \widehat{{F}}(k)(\Lambda+\breve B)=\widehat{{R}}(k),\quad k\in\mathbb Z^N,\;|k|\le K.\ee
%where $k\neq 0$ when $\pm$ takes $-$.

\ \

\noindent{\bf Case 1. $k\neq 0$.}

\ \

In order to partition the operator $B$, we need the following lemma.

\begin{lem}\label{lemma3}
There are a large constant $c$
%and a large constant $K_2=K_2(c,\kappa,d)$
%
and a subset $\mathcal{O}_4\subset\mathcal{O}$ with $\mbox{meas}(\mathcal{O}_4)\leq K^{-N}$ such that
%\begin{equation}\label{103}|\langle k,\omega\rangle\pm\lambda_j|\geq K^{-c},\quad \mbox{for}\; 0<|k|\leq K
%,j\in\mathbb{Z}^d,|j|\le K_2,\xi\in\mathcal{O}\setminus\mathcal{O}_4,\end{equation}
%and
\begin{equation}\label{103*}|( k,\omega)\pm\lambda_j|\geq\frac12 K^{-c},\quad \mbox{for}\;\; k\in\mathbb Z^N,\; 0<|k|\leq K
,\;j\in\mathbb{Z}^d,\;\xi\in\mathcal{O}\setminus\mathcal{O}_4,\end{equation}

\end{lem}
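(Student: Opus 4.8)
The plan is to realize $\mathcal{O}_4$ as the union of two kinds of removed sets: a ``purely Diophantine in $k$'' piece, and a \emph{finite} family of sublevel sets indexed only by the indices $j$ that are not automatically controlled. The point that makes this work cheaply — and is the one genuinely new feature compared with the classical lower–dimensional theory — is that although there are infinitely many $j\in\mathbb{Z}^d$ and the divisors $(k,\omega)\pm\lambda_j$ really do cluster at $(k,\omega)$, once $(k,\omega)$ is kept away from $0$ at scale $K^{-c'}$ the decay $\lambda_j\approx|j|^{-\kappa}$ forces $|(k,\omega)\pm\lambda_j|\gtrsim K^{-c'}$ for all but $O(K^{c'd/\kappa})$ values of $j$.

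By the Remark following Assumption {\bf A} we may assume $\omega(\xi)\equiv\xi$, so for $0<|k|\le K$ the affine function $\xi\mapsto(k,\xi)$ has constant gradient $k$ with $|k|\ge1$; a one–dimensional Fubini estimate in the direction of $k$ gives $\mathrm{meas}\{\xi\in\mathcal{O}:|(k,\xi)|<\delta\}\le C\delta$ with $C=C(N,\mathrm{diam}\,\mathcal{O})$. Summing over $0<|k|\le K$, the set $\mathcal{O}':=\bigcup_{0<|k|\le K}\{\xi\in\mathcal{O}:|(k,\xi)|<K^{-c'}\}$ has measure $\le CK^{N-c'}$, which is $\le\tfrac12K^{-N}$ once $c'=2N$ and $K$ is large. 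Now split $j$ according to $|j|\ge J:=K^{a}$ or $|j|<J$. Using the upper bound $\lambda_j\le c_{12}|j|^{-\kappa}$ of \eqref{3-08-morning}, for $\xi\in\mathcal{O}\setminus\mathcal{O}'$, every $|j|\ge J$ and every $0<|k|\le K$ we get $|(k,\omega)\pm\lambda_j|\ge|(k,\omega)|-\lambda_j\ge K^{-c'}-c_{12}K^{-a\kappa}\ge\tfrac12K^{-c'}$ as soon as $a\kappa>c'$ and $K$ is large — and this bound is \emph{uniform} over the infinitely many such $j$.

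It remains to handle the at most $CJ^d=CK^{ad}$ indices $j$ with $|j|<J$. For each triple $(k,j,\pm)$ with $0<|k|\le K$, $|j|<J$, set $g(\xi)=(k,\omega(\xi))\pm\lambda_j(\xi)$. Choosing the unit direction $\hat\xi$ with $\partial_{\hat\xi}(k,\omega)=|k|\ge1$ and invoking $|\partial_\xi\lambda_j|\le c_{13}|j|^{-\kappa}\le c_{13}$ from \eqref{170512-1} together with the transversality \eqref{usefortwist} (equivalently, the smallness of $c_{13}$ noted in the Remark after Assumption {\bf B}), we have $|\partial_{\hat\xi}g(\xi)|\ge c^*>0$ throughout $\mathcal{O}$; hence $\mathrm{meas}\{\xi\in\mathcal{O}:|g(\xi)|<\tfrac12K^{-c}\}\le C(c^*)^{-1}K^{-c}$ by the same Fubini argument in the $\hat\xi$ direction. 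Summing over the $\le CK^N\cdot K^{ad}$ choices of $(k,j,\pm)$ contributes total measure $\le CK^{N+ad-c}$.

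Finally, take $\mathcal{O}_4$ to be $\mathcal{O}'$ together with all the sublevel sets of the previous step, and choose $c\ge\max\{c',\,2N+ad\}$. Then $\mathrm{meas}(\mathcal{O}_4)\le\tfrac12K^{-N}+CK^{N+ad-c}\le K^{-N}$ for $K$ large, while for $\xi\in\mathcal{O}\setminus\mathcal{O}_4$ the estimate $|(k,\omega)\pm\lambda_j|\ge\tfrac12K^{-c}$ holds in both regimes ($|j|\ge J$ via the uniform bound $\tfrac12K^{-c'}\ge\tfrac12K^{-c}$ since $c\ge c'$, and $|j|<J$ by construction). The step I expect to be the main obstacle is not any computation but the structural observation of the first two paragraphs — that uniformity over all $j\in\mathbb{Z}^d$ comes for free after excising a $K^{-c'}$–neighbourhood of the resonances $(k,\omega)=0$, reducing the infinite small–divisor problem to a finite excision — together with the routine verification that Assumption {\bf B} supplies exactly the transversality needed to remove the finitely many dangerous indices.
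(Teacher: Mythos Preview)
Your argument is correct and follows essentially the same strategy as the paper's proof: first excise a Diophantine set where $|(k,\omega)|<K^{-c'}$, use the decay $\lambda_j\lesssim|j|^{-\kappa}$ to get a uniform bound for all $|j|\ge J=K^{a}$ (with $a\kappa>c'$) on the complement, and then remove finitely many sublevel sets for $|j|<J$ via the transversality \eqref{usefortwist}. The only differences from the paper are cosmetic choices of the exponents (the paper writes $c'=c/y$ with $y=(3d/\kappa)+3$ and $J=K_2=(2K^{c/y})^{3/\kappa}$, then fixes $c$ via \eqref{104}--\eqref{105}), while your choice $c'=2N$, $a>c'/\kappa$, $c\ge\max\{c',2N+ad\}$ is an equivalent bookkeeping.
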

\begin{proof} By Assumption {\bf A}, assume $\omega(\xi)\equiv \xi$ without loss of generality.

\noindent {\bf Part I}:
Let $y=(3d/\kappa)+3>3$. Choose a constant $c>0$ such that \begin{equation}\label{104}c>\frac{c}{y}>100 \, N(1+\kappa+d)\end{equation}
and \begin{equation}\label{105}c(1-\frac{3d}{y\,\kappa})>100\, N(1+\kappa+d).\end{equation}
By \eqref{104}, there is a subset $\mathcal{O}_{41}\subset\mathcal{O}$ with
 \begin{equation}\label{106}\mbox{meas}(\mathcal{O}_{41})\leq K^{-N}\end{equation} such that for any $\xi\in\mathcal{O}\setminus\mathcal{O}_{41}$,
 \begin{equation}\label{107}|( k,\omega)|\geq K^{-\frac{c}{y}}>K^{-c}.\end{equation}
 Note $c_{11}|j|^{-\kappa}\leq\lambda_j\leq c_{12}|j|^{-\kappa}.$ For simplicity, we assume $c_{12}=1$ here. So we have
 \begin{eqnarray}\nonumber|(k,\omega)\pm\lambda_j|&\geq&K^{-\frac{c}{y}}-|\lambda_j|\\
 \label{108}&\geq&K^{-\frac{c}{y}}-|j|^{-\kappa}\\
 \nonumber &\geq&(1/2)K^{-c},\end{eqnarray}
if
\be \label{17-3-16-1}|j|\geq (2K^{c/y})^{3/\kappa}=(2^{\frac{1}{\kappa}}K^{\frac{c}{\kappa y}})^{3}:=K_2.\ee
In addition, by the definition of $K_2$,
\[|\lambda_j|<(\frac12 K^{-c/y})^{3},\quad |j|>K_2.\]

\begin{rem} If without assuming $c_{12}=1$ , one can take
$K_2=(2 c_{12}^{-1}\, K^{c/y})^{3/\kappa}$.

\end{rem}
\noindent {\bf Part II}: $\;\;$ Consider $j$ with $|j|<K_2$.  Let
\begin{equation}\label{109}\mathcal{O}_{42}:=\bigcup_{0<|k|<K\atop{|j|<K_2}}\{\xi\in\mathcal{O}:
|(k,\omega)\pm\lambda_j|<K^{-c}\}.\end{equation}
Then by \eqref{usefortwist} \begin{eqnarray}\nonumber\mbox{meas}\mathcal{O}_{42}&\leq& C\,K^{-c}K^N\cdot(2^{\frac{1}{\kappa}}K^{\frac{c}{\kappa y}})^{3d}\\
\nonumber &=&C\,2^{\frac{3d}{\kappa}}K^{-(1-\frac{3d}{\kappa y})c+N}\\
 \label{110}(\operatorname{by}\, \eqref{105})&\le &  C\, K^{-N}.\end{eqnarray}
By \eqref{106} and \eqref{110}, letting $\mathcal{O}_4=\mathcal{O}_{41}\cup\mathcal{O}_{42}$, then
\begin{equation}\label{111}\mbox{meas}\mathcal{O}_{4}\leq CK^{-N},\end{equation}
and using \eqref{108} and \eqref{109}, one has
\begin{equation}\label{112}|( k,\omega)\pm\lambda_j|\geq \frac12 K^{-c},\quad \mbox{for}\ \xi\in\mathcal{O}\setminus\mathcal{O}_4, \;j\in
\mathbb{Z}^d, \;|k|\leq K.\end{equation}
This completes the proof.$\qed$
\end{proof}

\begin{lem}\label{lemma3.2} Assume that there are real numbers $\mu_j=\mu_j(\xi)$'s with $j\in\mathbb Z^d$ and $|j|\le K_2$ which satisfy
\[|\partial_{\xi}\mu_j(\xi)|\ll 1.\]
Then there is a  subset $\mathcal{O}_4^{\prime}\subset\mathcal{O}$ with $\mbox{meas}(\mathcal{O}_4^{\prime})\leq K^{-N}$ such that
\begin{equation}\label{103}|(k,\omega)\pm\mu_j|\geq K^{-c},\quad \mbox{for}\;\; k\in\mathbb Z^N,\; 0<|k|\leq K
,\;j\in\mathbb{Z}^d,\;|j|\le K_2,\;\xi\in\mathcal{O}\setminus\mathcal{O}^{\prime}_4.\end{equation}

\end{lem}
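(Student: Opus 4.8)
The statement is a standard measure‑exclusion estimate of the same flavor as Lemma~\ref{lemma3}, so the plan is to mimic that argument with $\lambda_j$ replaced by the slightly perturbed quantities $\mu_j$. First I would record what is available: by Assumption~{\bf A} we may take $\omega(\xi)\equiv\xi$, so that for a unit direction $\hat\xi$ with $\partial_{\hat\xi}(k,\omega)=|k|\ge1$ we have $\partial_{\hat\xi}\bigl((k,\omega)\pm\mu_j\bigr)=|k|\pm\partial_{\hat\xi}\mu_j$, and the hypothesis $|\partial_\xi\mu_j|\ll1$ forces this directional derivative to satisfy $\bigl|\partial_{\hat\xi}\bigl((k,\omega)\pm\mu_j\bigr)\bigr|\ge 1-|\partial_\xi\mu_j|\ge 1/2$ for every admissible $k,j$. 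This is the non‑degeneracy that drives the measure estimate.

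**Key steps.** For fixed $k$ with $0<|k|\le K$ and fixed $j$ with $|j|\le K_2$, consider the resonant set
\[
\mathcal R_{k,j}:=\{\xi\in\mathcal O:\ |(k,\omega)\pm\mu_j(\xi)|<K^{-c}\}.
\]
Because the function $\xi\mapsto(k,\omega)\pm\mu_j$ has directional derivative bounded below by $1/2$ along $\hat\xi$, a one‑dimensional slicing argument (Fubini in the $\hat\xi$‑direction, together with $\text{Meas}\,\mathcal O\le \text{Meas}\,\mathcal O_0<\infty$) gives $\text{Meas}\,\mathcal R_{k,j}\le C\,K^{-c}$, with $C$ depending only on $N$ and $\mathcal O_0$. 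Then set $\mathcal O_4'=\bigcup_{0<|k|\le K,\ |j|\le K_2}\mathcal R_{k,j}$. The number of pairs $(k,j)$ is at most $C\,K^N\cdot K_2^{\,d}$, and since $K_2=(2^{1/\kappa}K^{c/(\kappa y)})^3$ with $y=(3d/\kappa)+3$, one has $K_2^{\,d}\le C\,K^{3cd/(\kappa y)}$, so that
\[
\text{Meas}\,\mathcal O_4'\ \le\ C\,K^{N+3cd/(\kappa y)}\cdot K^{-c}\ =\ C\,K^{-\left(1-\frac{3d}{\kappa y}\right)c+N}.
\]
Invoking the choice of $c$ made in \eqref{105}, namely $c\bigl(1-\tfrac{3d}{y\kappa}\bigr)>100N(1+\kappa+d)$, the exponent is $\le -N$ (after possibly enlarging $c$), so $\text{Meas}\,\mathcal O_4'\le K^{-N}$, and for $\xi\in\mathcal O\setminus\mathcal O_4'$ the lower bound \eqref{103} holds by construction. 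That completes the argument.

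**Main obstacle.** There is no deep obstacle here; the only point requiring care is the slicing estimate for $\text{Meas}\,\mathcal R_{k,j}$, since $\mu_j$ need not be monotone — only its derivative is small. The resolution is exactly the observation above: it is the \emph{sum} $(k,\omega)\pm\mu_j$, not $\mu_j$ alone, whose derivative is controlled, and $|k|\ge1$ dominates $|\partial_\xi\mu_j|\ll1$, so the sublevel set in each line has length $\le C\,K^{-c}$. One should also make sure the bookkeeping of the constant $c$ is consistent with the usage of the same symbol in Lemma~\ref{lemma3} (it may be taken to be the same $c$, or enlarged once and for all), so that the two lemmas can be applied together downstream. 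Everything else is routine.
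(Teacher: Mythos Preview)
Your proposal is correct and follows exactly the approach the paper intends: the paper's own proof simply reads ``The proof is the same as Part~II in the proof of Lemma~\ref{lemma3},'' and your write-up is precisely that argument with $\lambda_j$ replaced by $\mu_j$, using $|\partial_\xi\mu_j|\ll1$ in place of \eqref{usefortwist} to secure the directional-derivative lower bound $\ge 1/2$, then the same counting via \eqref{105} and the definition of $K_2$ in \eqref{17-3-16-1}. Nothing is missing.
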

\begin{proof} The proof is the same as Part II in the proof of Lemma \ref{lemma3}.

\end{proof}

We are now prepared to partition the matrices $\Lambda$ and $B$. To this end, let
\be\label{K_3} K_3=\max\left\{K_2^{100(N+1)\,\kappa\, y},\, 2^{1/\kappa}\, K^{100(1+\frac{p+\kappa}{\kappa\, y})\frac{c}{\kappa}}\right\},\ee where $K_2$ is defined in \eqref{17-3-16-1}.
By the definition of $K_3$, we have
\be\label{importantsplit} |\lambda_j|<\min\{K^{-100c},K^{-10}_2\},\, \;\operatorname{if}\; |j|>K_3. \ee

 %in the light of Lemma \ref{lemma3}.
  %
  Recall $\Lambda=\mbox{diag}(\lambda_j:j\in\mathbb{Z}^d)$.
 Write \[\Lambda=\Lambda^{(1)}\oplus\Lambda^{(2)}=\left(\begin{array}{ll}
\Lambda^{(1)}&0\\
0&\Lambda^{(2)}\end{array}\right)\]
 with $\Lambda^{(1)}=\mbox{diag}(\lambda_j:|j|\leq K_3), \Lambda^{(2)}=\mbox{diag}(\lambda_j:|j|> K_3).$

According to the partition of $\Lambda$, we partition $B$  as follows
$$B=\left(\begin{array}{ll}
B^{(11)}&B^{(12)}\\
B^{(21)}&B^{(22)}\end{array}\right), $$
where $B^{(11)}=(B_{ij}:\; |i|\le K_3, |j|\le K_3)$, $B^{(12)}=(B_{ij}:\; |i|\le K_3, |j|> K_3)$, $B^{(21)}=(B_{ij}:\; |i|>K_3, |j|\le K_3)$ and $B^{(22)}=(B_{ij}:\; |i|>K_3, |j|> K_3)$.
Similarly,
$$\breve B=\left(\begin{array}{ll}
\breve B^{(11)}&\breve B^{(12)}\\
\breve B^{(21)}&\breve B^{(22)}\end{array}\right),\quad \widehat{R}(k)=\left(\begin{array}{ll}
R^{(11)}&R^{(12)}\\
R^{(21)}&R^{(22)}\end{array}\right), \quad \widehat{{F}}(k)=\left(\begin{array}{ll}
F^{(11)}&F^{(12)}\\
F^{(21)}&F^{(22)}\end{array}\right).$$

\iffalse%%%%%%%%%%%%%
Note $\Omega^{-1}M=J^{-1}\Lambda^{-1}JB=J^{-1}\Lambda^{-1}J\Lambda\cdot\Lambda^{-1}B=\Lambda^{-1}B,\Omega^{-1}N=J^{-1}\Lambda^{-1}BJ.$ Then
\begin{equation}\label{114}||\Omega^{-1}M||_{\ell_{a,p}\rightarrow\ell_{a,p}}\leq||\Lambda^{-1}B||_{\ell_{a,p}}\leq\epsilon_0,\quad
||\Omega^{-1}N||_{\ell_{a,p}\rightarrow\ell_{a,p}}\leq\epsilon_0,\end{equation}
%
\begin{equation}\label{115}||\Omega^{-1}M||^{\mathcal{L}}_{\ell_{a,p}\rightarrow\ell_{a,p}}\leq\epsilon^{\tau}_0,\quad
||\Omega^{-1}N||^{\mathcal{L}}_{\ell_{a,p}\rightarrow\ell_{a,p}}\leq\epsilon_0^{\tau}.\end{equation}
\fi%%%%%%%%%%%%%%%%%%%%%
Again according to the partition of $\Lambda$, split $h_{p}=h_{p}^1\oplus h_{p}^2,$ and $\ell_2(\mathbb Z^d)=\ell_2^1\oplus\ell_2^2$,  corresponding. It follows, from  Assumption {\bf E} and   Lemma \ref{splitlemma} in the Appendices,   that for $b\in \{B,\breve B\}$,
\begin{equation}\label{116}||(\Lambda^{(i)})^{-1}\lf b^{(ij)}\rc||_{h^i_{p}\rightarrow h^j_{p}}\le\epsilon_0,\quad ||\partial_\xi\left((\Lambda^{(i)})^{-1}\lf b^{(ij)}\rc\right)||_{h^i_{p}\rightarrow h^j_{p}}\le\epsilon_0
,\quad i,j\in\{1,2\}. \end{equation}
And we have that for $i,j\in\{1,2\}$,
\begin{equation}\label{117}||(\Lambda^{(i)})^{-1}\lf R^{(ij)}\rc||_{h^i_{p}\rightarrow h^j_{p}}\le ||\Lambda^{-1}\lf R\rc||_{h_{p}\to h_{p}},\;
||(\Lambda^{(i)})^{-1}\, \lf\partial_\xi\,R^{(ij)}\rc||_{h^i_{p}\rightarrow h^j_{p}}\le ||
\Lambda^{-1}\, \lf\partial_\xi\,R\rc||_{h_{p}\to h_{p}}.\end{equation}

%
%\begin{equation}\label{118}||(\Omega^{(i)})^{-1}R^{(i,j)}||_{\ell^i_{a,p}\rightarrow\ell^j_{a,p}}\leq\epsilon_m,\quad i,j\{1,2\}, \end{equation}
%
%\begin{equation}\label{119}||(\Omega^{(i)})^{-1}R^{(i,j)}||^{\mathcal{L}}_{\ell^i_{a,p}\rightarrow\ell^j_{a,p}}\leq\epsilon_m^{\tau},\quad i,j\{1,2\}. \end{equation}
With the above notations, the homological equation \eqref{17-3-12-1} becomes
\begin{eqnarray}\nonumber&&\left(\begin{array}{ll}
(k,\omega)\pm(\Lambda^{(1)}+B^{(11)})&B^{(12)}\\
B^{(21)}&(k,\omega)\pm(\Lambda^{(2)}+B^{(22)})\end{array}\right)\left(\begin{array}{ll}
F^{(11)}&F^{(12)}\\
F^{(21)}&F^{(22)}\end{array}\right)\\
&\pm &\label{120}\left(\begin{array}{ll}
F^{(11)}&F^{(12)}\\
F^{(21)}&F^{(22)}\end{array}\right)\left(\begin{array}{ll}
\Lambda^{(1)}+\breve B^{(11)}&\breve B^{(12)}\\
\breve B^{(21)}&\Lambda^{(2)}+\breve B^{(22)}\end{array}\right)=\left(\begin{array}{ll}
R^{(11)}&R^{(12)}\\
R^{(21)}&R^{(22)}\end{array}\right).\end{eqnarray}
For brevity, let

\br\label{17-3-17-1}
{M}^{(11)}=( k,\omega)\pm(\Lambda^{(1)}+B^{(11)}),\; {M}^{(22)}=( k,\omega)\pm(\Lambda^{(2)}+B^{(22)}),\;
M^{(12)}=\pm B^{(12)},\; M^{(21)}=\pm B^{(21)},
\er
\be\label{17-3-17-2}
{N}^{(11)}=\pm(\Lambda^{(1)}+\breve B^{(11)}),\;{N}^{(22)}=\pm(\Lambda^{(2)}+\breve B^{(22)}),\;
N^{(12)}=\pm \breve B^{(12)},\; N^{(21)}=\pm \breve B^{(21)}.
\ee
%%%%%%%%%%%%%%%%%
%%%%%%%%
\iffalse
\br
&&\begin{array}{ll}\label{17-3-17-1}
{M}^{(11)}=( k,\omega)\pm(\Lambda^{(1)}+B^{(11)}),& {M}^{(22)}=( k,\omega)\pm(\Lambda^{(2)}+B^{(22)}),\\
M^{(12)}=\pm B^{(12)}, &M^{21}=\pm B^{(21)},
\end{array}\\
&&\begin{array}{ll}\label{17-3-17-2}
{N}^{(11)}=\pm(\Lambda^{(1)}+\breve B^{(11)}),\quad&{N}^{(22)}=\pm(\Lambda^{(2)}+\breve B^{(22)}),\\
N^{(12)}=\pm \breve B^{(12)}, \quad&N^{21}=\pm \breve B^{(21)}.
\end{array}
\er
\fi
%%%%%%%%
%%%%%%%%%%%

Then \eqref{120} becomes
%%%%%%%%%%%%%%%%%
 %%%%%%%%%%%%%%%%%%
 \iffalse
 \begin{equation}\label{121}{M}^{(11)}F^{(11)}+F^{(11)}{N}^{(11)}+M^{(12)}F^{(21)}+F^{(12)}N^{(21)}=R^{(11)},\end{equation}
 %
 \begin{equation}\label{123}{M}^{(22)}F^{(21)}+F^{(21)}{N}^{(11)}+M^{(21)}F^{(11)}+F^{(22)}N^{(21)}=R^{(21)},\end{equation}
 %
 \begin{equation}\label{122} {M}^{(11)}F^{(12)}+F^{(12)}{N}^{(22)}+M^{(12)}F^{(22)}+F^{(11)}N^{(12)}=R^{(12)},\end{equation}
 %
 \begin{equation}\label{124}{M}^{(22)}F^{(22)}+F^{(22)}{N}^{(22)}+M^{(21)}F^{(12)}+F^{(21)}N^{(12)}=R^{(22)}.\end{equation}
\fi %
%%%%%%%%%%%%%%%%%%%%
 %%%%%%%%%%%%%%
  \begin{eqnarray}
&&{M}^{(11)}F^{(11)}+F^{(11)}{N}^{(11)}+M^{(12)}F^{(21)}+F^{(12)}N^{(21)}=R^{(11)},\label{121}\\
&&{M}^{(22)}F^{(21)}+F^{(21)}{N}^{(11)}+M^{(21)}F^{(11)}+F^{(22)}N^{(21)}=R^{(21)},\label{123}\\
 &&{M}^{(11)}F^{(12)}+F^{(12)}{N}^{(22)}+M^{(12)}F^{(22)}+F^{(11)}N^{(12)}=R^{(12)},\label{122}\\
&&{M}^{(22)}F^{(22)}+F^{(22)}{N}^{(22)}+M^{(21)}F^{(12)}+F^{(21)}N^{(12)}=R^{(22)} \label{124}.\end{eqnarray}
 Denote by $X=G_{22}(Y)$ the unique solution of the operator equation $${M}^{(22)}X+X{N}^{(22)}=Y.$$
 That is to say, we formally denote by $G_{22}$ the Green function:
 \be \label{17-3-30-1}G_{22}(\cdot)=\left({M}^{(22)}(\cdot)+(\cdot){N}^{(22)}\right)^{-1},\ee
 where
 \[\left({M}^{(22)}(\cdot)+(\cdot){N}^{(22)}\right)X={M}^{(22)}(X)+(X){N}^{(22)}.\]
 Then by \eqref{124},
 \begin{equation}\label{125}F^{(22)}=G_{22}(R^{(22)})-G_{22}(M^{(21)}F^{(12)})-G_{22}(F^{(21)}N^{(12)}).\end{equation}
Similarly, let the Green function $G_{21}$ be formally defined by
 \begin{equation}\label{126}G_{21}(\cdot)=\left({M}^{(22)}(\cdot)+(\cdot){N}^{(11)}-
 G_{22}((\cdot)N^{(12)})N^{(21)}\right)^{-1}.\end{equation}
 Then by inserting \eqref{125} into \eqref{123}, \begin{equation}\label{127}F^{(21)}=-G_{21}(M^{(21)}F^{(11)})+G_{21}\left(G_{22}(M^{(21)}F^{(12)})N^{(21)}\right)+\tilde{R}^{(21)},\end{equation}
 where \begin{equation}\label{128}\tilde{R}^{(21)}=G_{21}\left(R^{(21)}-G_{22}(R^{(22)})N^{(21)}\right).\end{equation}
 Inserting \eqref{127} into \eqref{125}, we get
 \begin{eqnarray}\nonumber F^{(22)}&=&G_{22}(R^{(22)})-G_{22}(M^{(21)}F^{(12)})+G_{22}\left(G_{21}(M^{(21)}F^{(11)})N^{(12)}\right)\\
 \label{129}&&-G_{22}\left(G_{21}\left(G_{22}(M^{(21)}F^{(12)})N^{(21)}\right)N^{(12)}\right)
 -G_{22}(\tilde{R}^{(21)}N^{(12)}).\end{eqnarray}
 Inserting \eqref{129} into \eqref{122}, one has
 \begin{eqnarray}\nonumber&&{M}^{(11)}F^{(12)}+F^{(12)}{N}^{(22)}+M^{(12)}G_{22}\left(G_{21}(M^{(21)}F^{(11)})N^{(12)}\right)+F^{(11)}N^{(12)}\\
 \label{130}&&-M^{(12)}\left(G_{22}(G_{21}(G_{22}(M^{(21)}F^{(12)})N^{(21)})N^{(12)})+G_{22}(M^{(21)}F^{(12)})\right)\\
 \nonumber &&=-M^{(12)}\left(G_{22}(R^{(22)})-G_{22}(\tilde{R}^{(21)}N^{(12)})\right)+R^{(12)}.
 \end{eqnarray}
Let the Green function $G_{12}$ be formally defined by
\begin{equation}\label{131}G_{12}(\cdot)=\left({M}^{(11)}(\cdot)+(\cdot){N}^{(22)}-M^{(12)}\left(
G_{22}\left(G_{21}(G_{22}(M^{(21)}(\cdot)
)N^{(21)})N^{(12)}+(M^{(21)}(\cdot))\right)\right)\right)^{-1}.\end{equation}
It follows from \eqref{130} that \begin{equation}\label{132}F^{(12)}=\mathcal{L}(F^{(11)})+\tilde{R}^{(12)},\end{equation}
where
\begin{equation}\label{133}\mathcal{L}(F^{(11)}):=-G_{12}\left(M^{(12)} G_{22}\left(G_{21}
(M^{(21)}F^{(11)})N^{(12)}\right)+F^{(11)}N^{(12)}\right),\end{equation}
\begin{equation}\label{134}\tilde{R}^{(12)}:=-G_{12}\left(M^{(12)}\left(G_{22}(R^{(22)})-G_{22}(\tilde{R}^{(21)}N^{(12)})\right)-R^{(12)}\right).\end{equation}
Inserting \eqref{132} into \eqref{127}, we have
\begin{eqnarray}\label{135}
F^{(21)}&=&-G_{21}(M^{(21)}F^{(11)})+G_{21}\left(G_{22}(M^{(21)}\mathcal{L}(F^{(11)}))N^{(21)}\right)\nonumber\\
&&+G_{21}\left(G_{22}(M^{(21)}\tilde{R}^{(12)})N^{(21)}\right)+\tilde{R}^{(21)}.
\end{eqnarray}
Finally, inserting \eqref{132} and \eqref{135} into \eqref{121}, we have
\begin{equation}\label{136} {M}^{(11)}F^{(11)}+F^{(11)}{N}^{(11)}+\mathcal{L}_1(F^{(11)})=\mathcal{R}^{(11)},\end{equation}
where
\begin{eqnarray}\nonumber \mathcal{L}_1(F^{(11)}):=&&\!\!\!\!M^{(12)}G_{21}\left(-M^{(21)}F^{(11)}+G_{22}(M^{(21)}\mathcal{L}(F^{(11)}))N^{(21)}\right)\\
\label{137}&&\!\!\!\!-G_{12}\left(M^{(12)}\left(G_{22}(G_{21}(M^{(21)}F^{(11)})N^{(12)})+F^{(11)}N^{(12)}\right)\right)N^{(21)}\end{eqnarray}
and \begin{equation}\label{138}\mathcal{R}^{(11)}:=R^{(11)}-M_{12}G_{21}\left(G_{22}(M^{(21)}\tilde{R}^{(12)})N^{(21)}\right)-M^{(12)}\tilde{R}^{(21)}
-\tilde{R}^{(12)}N^{(21)}.\end{equation}

Our strategy is as follows:
 \begin{enumerate}
 \item to find $F^{(11)}$ by solving \eqref{136};\item to find $F^{(12)}$ by solving \eqref{132};\item to find $F^{(21)}$ by solving \eqref{127}; \item to find $F^{(22)}$ by solving \eqref{125}. \end{enumerate}

In order to solve \eqref{136}, we need a more explicit form of $\m L_1(F^{(11)})$. To this end, we introduce the following notations.

Let $\m H_1$ and $\m H_2$ be Hilbert spaces. For each $\phi_1\in\m H_1$, $\phi_2\in\m H_2$, let $\phi_1\otimes\phi_2$ denote the conjugate bilinear form which acts on $\m H_1\times\m H_2$ by
\be (\phi_1\otimes\phi_2)\<\psi_1,\psi_2\>=(\psi_1,\phi_1)(\psi_2,\phi_2),\quad \forall\; \psi_1\in\m H_1,\psi_2\in\m H_2,\ee
where $(\psi_i,\phi_i)$ is the inner product of $\phi_i$ and $\psi_i$ in $\m H_i$ ($i=1,2$). Let $\m E$ be the set of finite linear combinations of such conjugate linear forms. We define an inner product $(\cdot,\cdot)$ on $\m E$ by
\be (\phi\otimes\psi,\eta\otimes\mu)=(\phi,\eta)(\psi,\mu) \ee
and extending it by linearity to $\m E$. By Proposition 1 in pp.49-50 of \cite{RS}, $(\cdot,\cdot)$ is really an inner product. We define $\m H_1\otimes\m H_2$ to be the completion of $\m E$ under the inner product $(\cdot,\cdot)$. Let $X$ and $Y$ be densely defined operators on Hilbert spaces $\m H_1$ and $\m H_2$ respectively. Let $D(X)\in\m H_1$ and $D(Y)\in\m H_2$ be the definition domains of $X$ and $Y$, respectively. Denote by $D(X)\otimes D(Y)$ the set of finite linear combinations of vectors of the form $\phi\otimes\psi$ where $\phi\in D(X)$ and $\psi\in D(Y)$. Then $D(X)\otimes D(Y)$ is dense in $\m H_1\otimes\m H_2$. We define $X\otimes Y$ on $D(X)\otimes D(Y)$ by
\be \label{5.91}(X\otimes Y)(\phi\otimes\psi)=X\phi\otimes Y\psi\ee
and extend it by linearity. Then the operator $X\otimes Y$ is well defined. If $X$ and $Y$ are bounded operators on Hilbert space $\m H_1$ and $\m H_2$, respectively, then
\be\label{x5.52} ||X\otimes Y||=||X||||Y||,\ee where the $||\cdot||$ of $||X\otimes Y||$ is the operator norm from $\m H_1\otimes\m H_2$ to itself and  the $||\cdot||$ of $||X||$ (and $||Y||$) is the operator norm from $\m H_1$ (and $\m H_2$) to itself .
See Chapter VIII.10 in \cite{RS} for the details. By (\ref{5.91}) one see that if write $X=(X_{ij})$ then
\be\label{17-3.45} X\otimes Y=(X_{ij}Y),\ee
where $X_{ij}$'s are the matrix elements of $X$ under the basis of $\m H_1$.
 In partitioned multiplication of matrix, we immediately get
\be \label{x5.54}(X\otimes Y)(U\otimes V)=(X U)\otimes (Y V)\ee
if the operation above is reasonable. Moreover,
\be\label{x5.55} (X\otimes Y)^{-1}=X^{-1}\otimes Y^{-1}\ee
if the inverses exist. In addition, we see from \eqref{17-3.45} that
\be\label{17-3.48} (X\otimes Y)^*=X^*\otimes Y^*,\quad (\cdot)^*=\mbox{adjoint operator of}\;(\cdot). \ee
%
\iffalse%%%%%%%%%%Assume $\m H_1$ and $\m H_2$ are separable. Let $\{e_i^1\}$ and $\{e_j^2\}$ be a basis of $\m H_1$ and $\m H_2$, respectively. For a bounded linear operator $X:\; \m H_1\to \m H_2$, a bilinear form $(X\cdot,\cdot)_2$ is defined by
\be (X\alpha,\beta)_2,\quad \forall\; \alpha\in\m H_1,\;\forall\; \beta\in\m H_2, \ee
where $(\cdot,\cdot)_2$ is the inner product of $\m H_2$. That is, $(X\cdot,\cdot)_2\in \m H_1\otimes\m H_2$. Let
\be \text{Vec}\;X=(X\cdot,\cdot)_2.\ee
By $X_{ij}$ denote the matrix element of $X$. Then
\be \text{Vec}\;X=(X_{ij}:\; i,j)^T,\ee
where $T$ means the transpose of matrix which may be infinite dimensional.
\fi%%%%%%%
 For a matrix $X=(X_{ij}\in\mathbb C:\; i,j\in\mathbb Z^d)$, write $X=(...,X_1,X_2,...,X_j,...)$ with $X_j$'s are column vectors of $X$. Then formally,
%\[\operatorname{Vec}\, X=(\cdots,X_1,\cdots,X_j,\cdots)^T.\]
\[\operatorname{Vec}\, X=\begin{pmatrix} \vdots\\ X_1\\ \vdots\\ X_j\\ \vdots\end{pmatrix}.\]
 \iffalse%%%%%%%%%
 %%%%%%%%%%%%%%%%%%%%Let $\m W_j=\text{diag}\; \left( w_i^{(j)}:i\right) $ be the weight of the Hilbert space $\m H_j$ ($j=1,2$),
that is, for any $f\in\m H_j$, \[||f||_{\m H_j}=||\m Wf||_{\ell_2},\] where $||\cdot||_{\ell_2}$ is the usual $\ell_2$ norm.
Let
\be ||\text{Vec}\; X||^2_{\m H_1\otimes\m H_2}=\sum_{i,j}|X_{ij}|^2\left(w_i^{(1)}(w_j^{(2)})^{-1}\right)^2. \ee\color{black}
Now take
\be \m H_1=\m H_2=\m H_1^p:=\m H\ee where $\m H_1^p$ is defined in (\ref{x5.28}). For $X:\; \m H_1^p\to \m H_1^p$, write $X=(X_{ij}(k,l):\; i,j\in\mathbb Z^d_K\;\in k,l\in\mathbb Z^N_K )$.
Then
\be\label{x5.62} ||\text{Vec}\; X ||_{\m H\otimes\m H}^2=\sum_{i,j\in\mathbb Z^d_K;\; k,l\in\mathbb Z^N_K}|X_{ij}(k,l)|^2e^{2s(|k|-|l|)}e^{2a(|i|-|j|)}|i|^{2p}|j|^{-2p}. \ee
\fi%%%%%%%%%%%%%%%%%%%%%%%%%%%%%%%%%%%%
%%%%%%%%%%%%%%%%%%%%%%%%%%%%%
In partitioned multiplication of matrix, then the following holds true formally
\be \label{x5.60}\text{Vec}\; (X U Y)= (Y^T\otimes X)\text{Vec}\;U \ee for the three linear operators $X, Y, U$.

With those notations above, by applying ``$\operatorname{Vec}$'' to  \eqref{136} we get
\be \label{1703-2}\left(1\otimes M^{(11)}+N^{(11)T}\otimes 1+1\otimes\mathcal{L}_1\right)\mbox{Vec}\,F^{(11)}=\mbox{Vec}\,\mathcal {R}^{(11)}.\ee
By \eqref{138},
\be \label{1703-1}\mbox{Vec}\,\mathcal {R}^{(11)}=\mbox{Vec}\,{R}^{(11)}+\mbox{Vec}\,\left( -M^{(12)}G_{21}(G_{22}(M^{(21)}\tilde{R}^{(12)})N^{(21)})-M^{(12)}\tilde{R}^{(21)}
-\tilde{R}^{(12)}N^{(21)}\right). \ee
%Directly applying ``Vec" to $\mathcal{L}_1(F^{(11)})$ and using \eqref{137}, we should have an explicit expression of $1\otimes \mathcal{L}_1(F^{(11)}$. However, the calculation of $1\otimes \mathcal{L}_1$ is too complicated in such way. Here we give a simple calculation about that.
By applying ``Vec" to both left sides and right sides of \eqref{121}-\eqref{124}, we have formally that
\begin{eqnarray}\nonumber&&\left(\begin{array}{llll}
1\otimes M^{(11)}+N^{(11)T}\otimes 1&1\otimes M^{(12)}& N^{(21)T}\otimes1&0\\ 1\otimes M^{(21)}&1\otimes M^{(22)}+N^{(11)T}\otimes1&0& N^{(21)T}\otimes1  \\
N^{(12)T}\otimes1&0&1\otimes M^{(11)}+N^{(22)T}\otimes 1&1\otimes M^{(12)}\\ 0& N^{(12)T}\otimes 1&1\otimes M^{(21)}& 1\otimes M^{(22)}+N^{(22)T}\otimes 1
\end{array}\right)\cdot\\
&&\left(\begin{array}{l}
\mbox{Vec}\,F^{(11)}\\  \mbox{Vec}\, F^{(21)}\\
\mbox{Vec}\, F^{(12)}\\ \mbox{Vec}\, F^{(22)}\end{array}\right)=\left(\begin{array}{l}
\mbox{Vec}\,R^{(11)}\\  \mbox{Vec}\, R^{(21)}\\
\mbox{Vec}\, R^{(12)}\\ \mbox{Vec}\, R^{(22)}\end{array}\right) .\end{eqnarray}
It follows that
\be \label{1703-3}\left( 1\otimes M^{(11)}+N^{(11)T}\otimes 1+\mathcal A \right) \mbox{Vec}\, F^{(11)}=\mbox{Vec}\, R^{(11)}+\underline{R}, \ee
where
\be \label{17-3-14-1}\mathcal A=-\begin{pmatrix} 1\otimes M^{(12)}& N^{(21)T}\otimes 1& 0\end{pmatrix}\cdot Q^{-1}\cdot\begin{pmatrix} 1\otimes M^{(21)}\\ N^{(12)T}\otimes 1\\0 \end{pmatrix},\ee

\begin{eqnarray}\nonumber&&Q=\left(\begin{array}{lll}
 1\otimes M^{(22)}+N^{(11)T}\otimes1&0& N^{(21)T}\otimes1  \\
0&1\otimes M^{(11)}+N^{(22)T}\otimes 1&1\otimes M^{(12)}\\  N^{(12)T}\otimes 1&1\otimes M^{(21)}& 1\otimes M^{(22)}+N^{(22)T}\otimes 1
\end{array}\right)\end{eqnarray}
and
\be \underline{R}=-\begin{pmatrix} 1\otimes M^{(12)}& N^{(21)T}\otimes 1& 0\end{pmatrix}\cdot Q^{-1}\cdot\begin{pmatrix} \mbox{Vec}\, R^{(21)}\\\mbox{Vec}\, R^{(12)}\\ \mbox{Vec}\, R^{(22)} \end{pmatrix}. \ee
In priori, assume that there exist the inverse of $1\otimes M^{(11)}+N^{(11)T}\otimes 1+1\otimes \m L_1$ and $1\otimes M^{(11)}+N^{(11)T}\otimes 1+\m A$. Regard  $R^{(ij)}$ ($i,j\in\{1,2\})$ as ``variables"  in the left-hand sides of  both (\ref{1703-2}) and (\ref{1703-3}). Then comparing (\ref{1703-2}) and (\ref{1703-3}), we get
\be\label{17-3.65} 1\otimes \mathcal L_1=\mathcal A=\begin{pmatrix} 1\otimes M^{(12)}& N^{(21)T}\otimes 1& 0\end{pmatrix}\cdot Q^{-1}\cdot\begin{pmatrix} 1\otimes M^{(21)}\\ N^{(21)}\otimes 1\\0 \end{pmatrix}.\ee
Using $1\otimes \mathcal L_1=\mathcal A$ and  comparing (\ref{1703-2}) and (\ref{1703-3}) again, we get
\be\label{17-3.66} \underline{R}=\mbox{Vec}\,\left( -M_{12}G_{21}(G_{22}(M^{(21)}\tilde{R}^{(12)})N^{(21)})-M^{(12)}\tilde{R}^{(21)}
-\tilde{R}^{(12)}N^{(21)}\right).  \ee

We are now in position to prove that $\mathcal A$ is a self-adjoint operator (i.e., Hermitian matrix) from $\ell_2^1$ to $\ell_2^1$. Actually, it is obvious. Recall that $B$ is self-adjoint in $\ell_2$. So we have
\[M^{(ij)*}=M^{(ji)},\quad N^{(ij)*}=N^{(ji)},\quad i,j\in\{1,2\},\]
where the operation $*=$ the complex conjugate plus transpose of matrix. And note that $(X\otimes Y)^*=X^*\otimes Y^*$. It follows that $Q^* =Q$. Moreover, $\mathcal A^* =\mathcal A$ if there exists the inverse of $Q$. Now let us prove that there does exist the inverse of $Q$. Let
\be \mathcal H=\begin{pmatrix} h_p^1\otimes h_p^2\\ h_p^2\otimes
h_p^1\\ h_p^2\otimes h_p^2 \end{pmatrix}=(h_p^1\otimes h_p^2)\oplus (h_p^2\otimes
h_p^1)\oplus(h_p^2\otimes h_p^2)  .\ee For $(x,y,z)^T\in\mathcal H$, define
\[||(x,y,z)^T||_{\mathcal H}=\sqrt{||x||^2_{h_p^1\otimes h_p^2}+||y||^2_{h_p^2\otimes h_p^1}+
||z||^2_{h_p^2\otimes h_p^2}}.\] Then $\mathcal H$ is a Hilbert space with an inner product corresponding to $||\cdot||_{\m H}$.

{
By \eqref{116}, \eqref{17-3-17-1}, \eqref{17-3-17-2}   and using  Lemma \ref{splitlemma} in the Appendices,
\be\label{17-3-15-4} ||\lf M^{(ij)}\rc||_{h_p^i\to h_p^j}\le ||\Lambda^{(i)}||_{h_p^i\to h_p^i}||(\Lambda^{(i)})^{-1}\lfloor M^{(ij)\rceil}||_{h_p^i\to h_p^j} \le \e_0,\;i\neq j\in\{1,2\},\ee
 \be \label{170513-01}||\lf\p_\xi M^{(ij)}\rc||_{h_p^i\to h_p^j}\le||\Lambda^{(i)}||_{h_p^i\to h_p^i}
 ||(\Lambda^{(i)})^{-1}\lf \p_\xi\,M^{(ij)}\rc||_{h_p^i\to h_p^j}\le \e_0,\;\;i\neq j\in\{1,2\},\ee
\be\label{17-3-15-5}||\lf N^{(ij)}\rc||_{h_p^i\to h_p^j}\le \e_0,||\lf\p_\xi N^{(ij)}\rc||_{h_p^i\to h_p^j}\le \e_0,\;\;i\neq j\in\{1,2\}, \ee
\be \label{17-3-15-6}||\lf M^{(ii)}-((k, \omega)\pm\Lambda^{(i)})\rc||_{h_p^i\to h_p^i}\le \e_0, ||\lf \p_\xi (M^{(ii)}-((k, \omega)\pm\Lambda^{(i)}))\rc|
|_{h_p^i\to h_p^i}\le \e_0,\;\;i\in\{1,2\},\ee
\be\label{17-3-15-7}||\lf N^{(ii)}\mp\Lambda^{(i)}\rc||_{h_p^i\to h_p^i}\le \e_0,||\lf \p_\xi ( N^{(ii)}\mp\Lambda^{(i)})\rc||_{h_p^i\to h_p^i}\le \e_0,\;\;i\in\{1,2\}. \ee}
By  Assumption {\bf B},
\be\label{17-3-15-8}||\Lambda||_{h_p\to h_p}\le C.\ee
By (\ref{17-3-15-4})-(\ref{17-3-15-8}) and noting that $||X\otimes Y||=||X||\; ||Y||$, we get that each of $Q$ and $\p_\xi\, Q$ is a bounded linear operator from $\mathcal H$ to $\mathcal H$.
\begin{lem}\label{lemma3.3} (i). The operator
\[1\otimes M^{(22)}+N^{(11)T}\otimes 1:h_p^1\otimes h_p^2\to h_p^1\otimes h_p^2\]
has a unique bounded inverse with
\be \label{17-3.73}||\lf (1\otimes M^{(22)}+N^{(11)T}\otimes 1)^{-1} \rc||\le
K_{2}^{p+\frac{d}{2}}\cdot K^{c}\cdot K^{2c/y}:=K_4, \ee
where $||\cdot||$ is the operator norm from $h_p^1\otimes h_p^2$ to $h_p^1\otimes h_p^2$.

\ \

(ii).
The operator
\[1\otimes M^{(11)}+N^{(22)T}\otimes 1:\, h_p^2\otimes h_p^1\to h_p^2\otimes h_p^1\]
has a unique bounded inverse with
\be \label{17-3.73}||\lf (1\otimes M^{(11)}+N^{(22)T}\otimes 1)^{-1} \rc||\le K_4, \ee
where $||\cdot||$ is the operator norm from $h_p^2\otimes h_p^1$ to $ h_p^2\otimes h_p^1$.

\ \

(iii).
The operator
\[1\otimes M^{(22)}+N^{(22)T}\otimes 1:\, h_p^2\otimes h_p^2\to h_p^2\otimes h_p^2\]
has a unique bounded inverse with
\be \label{17-3.73}||\lf (1\otimes M^{(22)}+N^{(22)T}\otimes 1)^{-1}\rc ||\le K_4, \ee
where $||\cdot||$ is the operator norm from $h_p^2\otimes h_p^2$ to $ h_p^2\otimes h_p^2$.

\end{lem}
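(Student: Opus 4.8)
All three assertions are of the same nature: (ii) is the transpose of (i) (it interchanges the left and right factors and the roles of $M^{(11)}$ and $N^{(22)}$), and (iii) is the easy case; so I describe the argument for (i) and indicate the modifications at the end. The first move is to pass, via $\operatorname{Vec}$ and the identity $\operatorname{Vec}(XUY)=(Y^{T}\otimes X)\operatorname{Vec}U$ of \eqref{x5.60}, from $1\otimes M^{(22)}+N^{(11)T}\otimes 1$ on $h_{p}^{1}\otimes h_{p}^{2}$ to the Sylvester-type operator $\mathcal S\colon U\mapsto M^{(22)}U+U N^{(11)}$ acting on matrices $U=(U_{ji})$ with $|j|>K_{3}$, $|i|\le K_{3}$. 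Since $B,\breve B$ are self-adjoint in $\ell_{2}(\mathbb Z^{d})$ and $\omega(\xi)\equiv\xi$ is real, both $M^{(22)}$ and $N^{(11)}$ are Hermitian (a real multiple of the identity plus $\pm$ a Hermitian block), hence $\mathcal S$ is Hermitian with $\sigma(\mathcal S)=\sigma(M^{(22)})+\sigma(N^{(11)})$; I bound $\mathcal S^{-1}$ by keeping this set away from $0$. By \eqref{116} and \eqref{importantsplit}, $\|\Lambda^{(2)}\|_{\ell_{2}^{2}\to\ell_{2}^{2}}\le c_{12}K_{3}^{-\kappa}$ and $\|B^{(22)}\|_{\ell_{2}^{2}\to\ell_{2}^{2}}\le\|B^{(22)}\|_{h_{p}^{2}\to h_{p}^{2}}\le\epsilon_{0}\|\Lambda^{(2)}\|$, so $M^{(22)}=(k,\omega)\cdot 1+E$ with $\|E\|\ll K^{-c/y}$. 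Writing $\mathcal S=\mathcal S_{0}+E\otimes 1$ with $\mathcal S_{0}(U)=(k,\omega)U+U N^{(11)}=U\big((k,\omega)\cdot 1+N^{(11)}\big)$, a Neumann series reduces everything to inverting the finite Hermitian matrix $A:=(k,\omega)\cdot 1+N^{(11)}$ (of dimension $m\le CK_{3}^{d}$), since $\mathcal S_{0}^{-1}(Y)=YA^{-1}$ and $\|\mathcal S_{0}^{-1}\|_{\ell_2\to\ell_2}=\|A^{-1}\|_{\ell_{2}^{1}\to\ell_{2}^{1}}$.

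The heart of the proof is inverting $A$. Split the index set $\{|i|\le K_{3}\}$ into the ``low'' part $\{|i|\le K_{2}\}$ and the ``medium'' part $\{K_{2}<|i|\le K_{3}\}$, and split $N^{(11)}=\pm(\Lambda^{(1)}+\breve B^{(11)})$ into blocks $N^{vv},N^{vm},N^{mv},N^{mm}$ accordingly. For $K_{2}<|i|\le K_{3}$ one has $\lambda_{i}<c_{12}K_{2}^{-\kappa}$ and, by \eqref{116}, the self-adjoint block $N^{mm}$ differs from $\pm\Lambda^{(1,2)}$ by an operator of $\ell_{2}$-norm $\le c_{12}K_{2}^{-\kappa}\epsilon_{0}$; hence every eigenvalue of $N^{mm}$ is $<CK_{2}^{-\kappa}\ll K^{-c/y}\le|(k,\omega)|$ by \eqref{107}, so the Sylvester operator $X\mapsto(k,\omega)X+XN^{mm}$ is invertible with the $\ell_2$-norm of its inverse $\le 2K^{c/y}$ \emph{with no further exceptional set}. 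Eliminating the medium columns of $U$ by this Schur complement reduces the inversion of $A$ to that of a Hermitian matrix of the form $A^{vv}+(\text{an }O(\epsilon_{0})\ \ell_{2}\text{-correction})$ on the $\{|i|\le K_{2}\}$-block. Its eigenvalues are $(k,\omega)\pm\mu_{i}$ with $\mu_{i}=\mu_{i}(\xi)$ the eigenvalues of a Hermitian $O(\epsilon_{0})$-perturbation of $\Lambda^{(1,1)}$; by the variation principle for eigenvalues (as already used in the proof of Lemma \ref{lem4.1}) together with \eqref{usefortwist}, $\big|\tfrac{d^{*}}{d\omega}\big((k,\omega)\pm\mu_{i}\big)\big|>c^{*}-C\epsilon_{0}>c^{*}/2$. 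Applying Lemma \ref{lemma3.2} with these $\mu_{i}$ (at most $CK_{2}^{d}$ of them, $0<|k|\le K$) yields an exceptional set of measure $O(K^{-N})$ outside of which $|(k,\omega)\pm\mu_{i}|\ge K^{-c}$; combined with the bound $|(k,\omega)\pm\lambda_{j}|\ge\tfrac12 K^{-c}$ of Lemma \ref{lemma3} (used when matching with the high indices $|j|>K_{3}$, where the shift $\lambda_{j}$ is $\le K^{-100c}$, hence negligible) this gives $\|A^{-1}\|_{\ell_{2}^{1}\to\ell_{2}^{1}}\le 4K^{c}$.

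Putting the two previous steps together by a Neumann series gives $\|\mathcal S^{-1}\|_{\ell_{2}^{1}\otimes\ell_{2}^{2}\to\ell_{2}^{1}\otimes\ell_{2}^{2}}\le 8K^{c}$. The operator $\mathcal S^{-1}$ is, up to the negligible Neumann correction by $E\otimes 1$, block-diagonal in the index $j$, each block being (a Neumann perturbation of) $A^{-1}$ on the $\le CK_{2}^{d}$-dimensional genuinely-low part together with the harmless medium part; passing to the modulus costs at most a Hilbert--Schmidt factor $\sqrt{CK_{2}^{d}}$ on that block, and converting $\ell_{2}$- to $h_{p}$-norm on the $\{|i|\le K_{2}\}$-block costs a further factor $K_{2}^{p}$ (as $1\le|i|\le K_{2}$ there; the $|j|^{2p}$ weight factors out of each $j$-block and cancels). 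Multiplying these with the $K^{c}$ from $\|A^{-1}\|$ and the powers of $K^{c/y}$ coming from the Schur-complement denominators bounds $\|\lfloor\mathcal S^{-1}\rceil\|_{h_{p}^{1}\otimes h_{p}^{2}\to h_{p}^{1}\otimes h_{p}^{2}}$ by $K_{2}^{p+d/2}\cdot K^{c}\cdot K^{2c/y}=K_{4}$, which is \eqref{17-3.73}. Assertion (ii) is the transpose of (i): $U\mapsto M^{(11)}U+U N^{(22)}$ becomes $U^{T}\mapsto N^{(22)T}U^{T}+U^{T}M^{(11)T}$, i.e.\ the same situation with $M^{(11)},N^{(22)}$ interchanged, and the argument applies verbatim. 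Assertion (iii) is simpler: $M^{(22)}=(k,\omega)\cdot 1+(\text{norm}\ll K^{-c/y})$ and $N^{(22)}=\pm(\Lambda^{(2)}+\breve B^{(22)})$ has norm $\ll K^{-c/y}$, so $1\otimes M^{(22)}+N^{(22)T}\otimes 1=(k,\omega)\cdot(1\otimes 1)+(\text{norm}\ll K^{-c/y})$, and a single Neumann series (for the operator and for its modulus), using only \eqref{107}, gives a bound $\le 2K^{c/y}\le K_{4}$.

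The main difficulty is the dichotomy in the middle step: one must see that only the $O(K_{2}^{d})$ eigenvalues attached to indices $|i|\le K_{2}$ actually require a Melnikov-type excision (so the exceptional set stays of measure $O(K^{-N})$, as demanded of $\mathcal O_{4}$ and $\mathcal O_{4}'$), while the ``medium'' indices $K_{2}<|i|\le K_{3}$ are controlled for free because their normal frequencies are already much smaller than the secured lower bound $|(k,\omega)|\ge K^{-c/y}$; this same dichotomy is precisely what makes the final constant $K_{4}$ involve $K_{2}$ rather than the much larger $K_{3}$, and it also forces the careful separate treatment of $\ell_{2}$-operator norms (where self-adjointness and the variation principle live) and the $h_{p}$-modulus norm in which the statement is phrased.
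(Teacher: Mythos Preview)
Your approach is essentially the same as the paper's: you reduce to inverting the finite Hermitian block $A=(k,\omega)\cdot1+N^{(11)}$ (the paper does the analogous $M^{(11)}$ in case (ii)), make the same $K_2$--split into low and medium indices, handle the medium block by Neumann using $|(k,\omega)|\ge K^{-c/y}$, diagonalize the low Hermitian Schur block and invoke Lemma \ref{lemma3.2} for the $O(K_2^d)$ many eigenvalues, and then pick up the tail via a second Neumann series since $\|\Lambda^{(2)}+b^{(22)}\|\le CK_3^{-\kappa}$. The paper writes this for case (ii) and uses the explicit Schur factorization \eqref{17-3-29-1}; your Sylvester/$\operatorname{Vec}$ packaging is equivalent.

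Two minor points. First, your invocation of Lemma \ref{lemma3} for the high indices $|j|>K_3$ is superfluous: only \eqref{107} is used there (and this is all the paper uses). Second, your last paragraph on the modulus bound is the one place where you are noticeably sketchier than the paper. The clean way (and what the paper does) is not to argue via ``block-diagonality in $j$'' of $\mathcal S^{-1}$, but to bound $\|\lfloor A^{-1}\rceil\|_{h_p^1\to h_p^1}$ directly: the low Schur block contributes $K_2^{p}$ (from $\ell_2\!\to\!h_p$) times $K_2^{d/2}$ (Lemma \ref{finite-norm}) times $K^{c}$; the medium block and the off-diagonal Schur pieces have their modulus bounds for free from the Neumann series (Lemma \ref{absolute-summation}), contributing the $K^{2c/y}$; then $\|\lfloor (A^{T}\otimes1)^{-1}\rceil\|=\|\lfloor A^{-1}\rceil\|_{h_p^1\to h_p^1}$ by \eqref{x5.52}, and a final modulus-Neumann series in $h_p$ absorbs the tail $E\otimes1$. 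Your heuristic arrives at the right constant, but spelling this out would close the one genuine gap in the write-up.
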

\begin{proof} We give the proof only for the case (ii). The remaining proofs are similar.

By \eqref{17-3-17-1} and \eqref{17-3-17-2},
\[1\otimes M^{(11)}+N^{(22)T}\otimes 1=1\otimes(( k,\omega)\pm(\Lambda^{(1)}+B^{(11)}))\pm (\Lambda^{(2)}+\breve B^{(22)})\otimes 1. \]
Since $B$ is self-adjoint operator from $\ell_2$ to $\ell_2$ and $\Lambda$ are real diagonal matrix, the matrix $\Lambda^{(1)}+B^{(11)}$ is Hermitian (or self-adjoint in $\ell_2^1$). Make a finer partition of $\Lambda^{(1)}+B^{(11)}$ as follows:
\be\label{bucong-3.74}\Lambda^{(1)}=\begin{pmatrix}\Lambda^{(1)}_1& 0\\ 0& \Lambda^{(1)}_2\end{pmatrix} \ee with
$\Lambda^{(1)}_1=\mbox{diag}\; (\lambda_j:\; |j|\le K_2)$ and $ \Lambda^{(1)}_2=\mbox{diag}\; (\lambda_j:\; K_2<|j|\le K_3)$.
See \eqref{17-3-16-1} for $K_{2}$ and see \eqref{K_3} for $K_3$.  In this principle as above,  make partition
\be\label{bucong-3.75} B^{(11)}=\begin{pmatrix}B^{(11)}_{11}& B^{(11)}_{12}\\ B^{(11)}_{21}& B^{(11)}_{22} \end{pmatrix}\ee and
$$h_p^1=h_p^{11}\oplus h_p^{12},\; \ell_2^1=\ell_2^{11}\oplus \ell_2^{12},\, h_q^1=h_q^{11}\oplus h_q^{12}.$$
With those partitions, one has a formal equality:
\be\label{17-3-29-1}\begin{pmatrix}1 &- B_{12}^{(11)}\m B^{-1}\\ 0&1 \end{pmatrix} (( k,\omega)\pm(\Lambda^{(1)}+B^{(11)}))\begin{pmatrix}1 &0\\ -\m B^{-1}B_{21}^{(11)} &1 \end{pmatrix}=\begin{pmatrix}( k,\omega)+\m C &0\\0& \m B \end{pmatrix},\ee
where \[\m B=(k,\omega)\pm(\Lambda_2^{(1)}+B_{22}^{(11)}),\]
\be\label{170513bu1}\m C=\pm\left(\Lambda^{(1)}_1+B^{(11)}_{11}\right)-B_{12}^{(11)}\left(( k,\omega) \pm(
\Lambda_2^{(1)}+B^{(11)}_{22})\right)^{-1}B_{21}^{(11)}.\ee
By \eqref{116}, %and \eqref{17-15-3+1} in Appendices,
\[\begin{array}{lll}||\lf B^{(11)}_{22}\rc||_{h_p^{12}\to h_p^{12}}&\le &
||\Lambda_2^{(1)}||_{h_p^{12}\to h_p^{12}}\cdot ||(\Lambda_2^{(1)})^{-1}\lf B^{(11)}_{22}\rc||_{h_p^{12}\to h_p^{12}}
\\ &\le & \sup\{\lambda_j:\; |j|\ge K_2\}||\Lambda^{-1}\lf B\rc||_{h_p\to h_p}\\
&\le &((1/2)\;K^{-c/y})^{3}\e_0. \end{array}\]
%where and in the following we use $||\cdot||$ to denote the $\ell_2$-operator norm of (finite dimensional ) matrix without %specifying the concrete space.
Moreover,
\be\label{17-3-3.74}||\lf \Lambda_2^{(1)}+B^{(11)}_{22}\rc||_{h_p^{12}\to h_p^{12}}\le \sup_{|j|>K_2}\{\lambda_j\}+||\lf B^{(11)}_{22}\rc||_{h_p^{12}\to h_p^{12}}\le (1/8)(1+O(\e_0))K^{-3c/y}.\ee
Recall that $|( k,\omega)|\ge |K|^{-c/y}$. Using Neumann series and Lemma \ref{absolute-summation}, we get
\be \label{201707-23} \begin{array}{lll}||\lf \m B^{-1}\rc ||_{h_p^{12}\to h_p^{12}}&=&
||\lf (( k,\omega) \pm(\Lambda_2^{(1)}+B^{(11)}_{22}))^{-1}\rc||_{h_p^{12}\to h_p^{12}}\\&=&||\frac{1}{|(k,\omega)|}\lf \left(1\pm\frac{1}{(k,\omega)}(\Lambda_2^{(1)}+B^{(11)}_{22})
\right)^{-1}\rc||_{h_p^{12}}
\\&=&||\frac{1}{|(k,\omega)|}\lf \sum_{j=0}^\infty\left(\pm\frac{1}{(k,\omega)}(\Lambda_2^{(1)}+B^{(11)}_{22})
\right)^{j}\rc||_{h_p^{12}}\\ &\le & \frac{1}{|(k,\omega)|} \sum_{j=0}^\infty\left(\frac{1}{|(k,\omega)|}(||\lf \Lambda_2^{(1)}+B^{(11)}_{22}\rc ||_{h_p^{12}})
\right)^{j}\\ &\le &  K^{c/y} \sum_{j=0}^{\infty} (\frac{1+O(\e_0)}{2})^j  \\&
\le &3 K^{c/y}.\end{array}\ee
Again by \eqref{116}% and \eqref{17-15-3} in Appendices,
$\,$ for $t=0,1$,
\be\label{lijin1}\begin{array}{lll}||\lf \p_\xi^t B^{(11)}_{21}\rc||_{h_p^{11}\to h_p^{12}}&\le & || \Lambda_2^{(1)}||_{h_p^{12}\to h_p^{12}}\cdot ||(\Lambda_2^{(1)})^{-1}\lf \p_\xi^t B^{(11)}_{21}\rc||_{h_p^{11}\to h_p^{12}}\\ &\le & \sup_{ |j|\ge K_2} \{\lambda_j\}||\Lambda^{-1}\,\lf \p_\xi^t\,B\rc||_{h_p\to h_p}\\&\le& (1/8)\e_0 \;K^{-3c/y},\end{array}\ee
and, similarly,
\be\label{lijin2} ||\lf \p_\xi^t B^{(11)}_{12}\rc||_{h_p^{12}\to h_p^{11}}\le  \sup_{j\in\mathbb Z^d}\{\lambda_j\}||\Lambda^{-1}\, \lf \p_\xi^t\,B\rc||_{h_p\to h_p}\le C\,\e_0,\;t=0, 1. \ee
Therefore,
\be \label{17-3-3.76}||\lf \p_\xi^t\,\left(B_{12}^{(11)}\left(( k,\omega)\pm(\Lambda_2^{(1)}+B^{(11)}_{22})\right)^{-1}B_{21}^{(11)}\right) \rc||_{h_p^{11}\to h_p^{11}}\le C\,\e_0^2\, K^{2c/y}\,K^{-3c/y}K
\le \e_0\ll 1. \ee
By applying \eqref{116} and \eqref{17-3-3.76} to \eqref{170513bu1},
\be \label{17-3-3.77} ||\lf \p_\xi \; \mathcal C\rc||_{h_p^{11}\to h_p^{11}}\le C\e_0\ll 1.\ee
Since $B$ is self-adjoint in $\ell_2$, it is easy to verify that $\mathcal C$ is Hermitian.
Thus, by \eqref{17-15-3},
\be \label{17-3-3.77+1} ||\p_\xi \; \mathcal C||_{\ell_2\to \ell_2}\le ||\p_\xi \; \mathcal C||_{h_p^{11}\to h_p^{11}}\le ||\lf \p_\xi \; \mathcal C\rc||_{h_p^{11}\to h_p^{11}}
\le C\e_0\ll 1.\ee
And since $\mathcal C$ is Hermitian, there are a unitary matrix $U=U(\xi)$ and a diagonal matrix $\mathcal M=\mbox{diag}\; (\mu_j(\xi):\;j\in\mathbb Z^d,\; |j|\le K_2)$ such that
\be \label{1704031}\mathcal C=U^* \mathcal M \; U \ee where the star $*$ is the complex conjugate plus transpose.
By variation principle of eigenvalues and \eqref{17-3-3.77},
\[|\p_\xi\mu_j|\le C\e_0\ll 1.\]
By Lemma \ref{lemma3.2}, therefore, there is a subset $\mathcal {O}_4^{\prime}$ with its measure $\le K^{-N}$ such that for $\xi\in\mathcal O\setminus\mathcal {O}_4^{\prime},$
\be \label{17-3-3.79}||\mbox{diag}\;((k,\omega)+\mu_j:\; |j|\le K_2)^{-1}||_{\ell_2^{11}\to\ell_2^{11}}\le K^c.\ee
Note that $||U^*||_{\ell_2^{11}\to\ell_2^{11}}=||U||_{\ell_2^{11}\to\ell_2^{11}}=1$. It follows that
\be \label{17-3-3.79+1}||((k,\omega)+\mathcal C)^{-1}||_{\ell_2^{11}\to\ell_2^{11}}\le K^c.\ee
Let $\m C_{ij}$'s  be the elements of $\m C$. By the definition of $\m C$,
\[|i|\le K_2,\; |j|\le K_2.\]
Thus,
\[||(( k,\omega)+\mathcal C)^{-1}||_{h_p^{11}\to h_p^{11}}\le K_2^p\,K^c.\]
Moreover,   by Lemma \ref{finite-norm},
\be \label{17-3-3.79+1}||\lf(( k,\omega)+\mathcal C)^{-1}\rc ||_{h_p^{11}\to h_p^{11}}\le K_2^p\, K_2^{d/2}\,K^c<K_4.\ee
%%%%%%%
By applying \eqref{201707-23}, \eqref{lijin1}, \eqref{lijin2} and \eqref{17-3-3.79+1} to \eqref{17-3-29-1}, we have
\[||\lf\left(( k,\omega)\pm\left(\Lambda^{(1)}+B^{(11)}\right)\right)^{-1}\rc||_{h_p^1\to h_p^1}\le K_4.\]
So
\be \label{17-3-81}\begin{array}{lll}||\lf (1\otimes M^{(11)} )^{-1}\rc||_{h_p^2\otimes h_p^1\to h_p^2\otimes h_p^1}&=& ||\lf 1\otimes (M^{(11)})^{-1} \rc||_{h_p^2\otimes h_p^1\to h_p^2\otimes h_p^1}\\ &=&||\lf  (M^{(11)})^{-1} \rc||_{h_p^1\to h_p^1}  \\ &=&||\lf(( k,\omega)\pm(\Lambda^{(1)}+B^{(11)}))^{-1}\rc||_{h_p^1\to h_p^1}\\&\le & K_4.\end{array}\ee
 Noting
$N^{(22)T}=N^{(22)}.$
Again by \eqref{116} and \eqref{K_3},%  and \eqref{17-15-3+1} in %Appendices,
\be\label{17-3.84} \begin{array}{lll}||\lf N^{(22)T})\otimes 1\rc||_{h_p^2\otimes h_p^1\to h_p^2\otimes h_p^1}&=&||\lf \Lambda^{(2)}+ \breve B^{(22)}\rc||_{h_p^2\to h_p^2}\\ &\le & || \Lambda^{(2)}||_{h_p^2\to h_p^2}\cdot ||(\Lambda^{(2)})^{-1}\, \lf \breve B^{(22)}\rc||_{h_p^2\to h_p^2}\\ &\le & \sup_{ |j|\ge K_3} \{\lambda_j\}||\Lambda^{-1}\,\lf \breve B\rc||_{h_p\to h_p}\\& \le& \e_0\, K_3^{-\kappa}.\end{array} \ee

Finally, the proof is finished by using Neumann series, \eqref{17-3-81} and \eqref{17-3.84}:
\[\begin{array}{lll}||\lf (1\otimes M^{(11)}+N^{(22)T}\otimes 1)^{-1} \rc ||&\le & ||\lf(1\otimes M^{(11)} )^{-1}\rc||||\lf (1+(1\otimes M^{(11)} )^{-1}(N^{(22)T}\otimes 1))^{-1} \rc||\\ &\le & ||\lf(1\otimes M^{(11)} )^{-1}\rc||\sum_{j=0}^\infty ||\lf((1\otimes M^{(11)} )^{-1}(N^{(22)T}\otimes 1))\rc||^j\\
&\le & K_4\sum_{j=0}^\infty (K_4\, K_3^{-\kappa}\,\e_0)^j\\ &\le & 2 K_4,
\end{array}
\] where $||\cdot||=||\cdot||_{h_p^2\otimes h_p^1\to h_p^2\otimes h_p^1}$. This completes the proof of the lemma.
\end{proof}

\ \

By \eqref{K_3} ( the definition of $K_3$) and \eqref{116},
\be\label{17-3.87} \begin{array}{lll}|\lf |N^{(12)T}\otimes 1\rc||_{h_p^1\otimes h_p^2\to h_p^2\otimes h_p^2 }&=&||\lf N^{(12)T}\rc||_{h_p^1\to h_p^2}=||\lf N^{(21)}\rc||_{h_p^1\to h_p^2}\\&\le &||\Lambda^{(2)}||_{h_p^2\to h_p^2}||(\Lambda^{(2)})^{-1}\lf N^{(21)}\rc||_{h_p^1\to h_p^2}\\ &\le& \e_0 K_3^{-\kappa}. \end{array} \ee
Similarly,
\be\label{17-3.88+1} ||\lf 1\otimes M^{(21)}\rc||_{h_p^2\otimes
h_p^1\to h_p^2\otimes
h_p^2}\le ||\lf M^{(21)} \rc||_{h_p^1\to h_p^2}\le\e_0 K_3^{-\kappa}.\ee
By \eqref{116},
\iffalse% $$\overline{N^{(12)}}=N^{(21)T},\; \overline{N^{(21)}}=N^{(12)T},\; %\overline{M^{(12)}}=M^{(21)T},\; \overline{M^{(21)}}=M^{(12)T}$$ where %the operation ``bar" is the complex conjugate of the elements of the %matrix.Thus
%%%%
\be\label{17-3.88} ||N^{(21)T}\otimes 1||_{h_p^2\otimes h_p^2\to h_p^1\otimes h_p^2 }= || N^{(21)T}||_{h_p^2\to h_p^1}=|| \overline{N^{(12)}}||_{h_p^2\to h_p^1}=|| N^{(12)}||_{h_p^2\to h_p^1}\ee
By \eqref{17-15-3+1} in Appendices,
\[\begin{array}{lll}|| N^{(12)}||_{h_p^2\to h_p^1}&\le &|| N^{(12)}||_{h_q^2\to h_q^1}\\&\le&
|| N^{(12)}\, (\Lambda^{(2)})^{-1}||_{h_q^2\to h_q^1}||\Lambda^{(2)})||_{h_q\to h_q}\\&=& || N^{(12)}||_{h_p^2\to h_q^1}||\Lambda^{(2)}) ||_{h_q\to h_q}\\&\le& \e_0 K^{-3c}\end{array} \]
Thus,
\fi%%%%%%%%%
%%%%%%%%%%
\be\label{17-3.88} ||\lf N^{(21)T}\otimes 1\rc||_{h_p^2\otimes h_p^2\to h_p^1\otimes h_p^2 }=||N^{(12)}||_{h_p^2\to h_p^1}\le \e_0 \ee
 %%%%%%
 and
%%%%%%
\be \label{17-3.88+2}  ||\lf 1\otimes M^{(12)}\rc||_{h_p^2\otimes h_p^2\to h_p^2\otimes h_p^1}\le
||\lf M^{(12)} \rc||_{h_p^2\to h_p^1}\le \e_0. \ee
%%%%%%%%
\color{black}
%According to \eqref{17-3.87} and \eqref{17-3.88} and Lemma \ref{lemma3.3}, we see that $Q$ is diagonal dominant. To be more precise,
In order to see that $Q$ is invertible,
we can partition $Q=Q_1+Q_2$ with
\begin{eqnarray}\nonumber&&Q_1=\left(\begin{array}{lll}
 1\otimes M^{(22)}+N^{(11)T}\otimes1&0& N^{(21)T}\otimes1  \\
0&1\otimes M^{(11)}+N^{(22)T}\otimes 1&1\otimes M^{(12)}\\  0& 0& 1\otimes M^{(22)}+N^{(22)T}\otimes 1
\end{array}\right)\end{eqnarray}
and
\begin{eqnarray}\nonumber&&Q_2=\left(\begin{array}{lll}
 0&0& 0  \\
0&0&0\\  N^{(12)T}\otimes 1&1\otimes M^{(21)}&0
\end{array}\right).\end{eqnarray}
By  \eqref{17-3.87} and \eqref{17-3.88+1},
\[||\lf Q_2\rc||_{\m H\to \m H}\le \e_0 K_3^{-\kappa}.\]
 And note that $Q_1$ is upper triangle matrix and
 \begin{eqnarray}\nonumber&&\m Q_0 Q_1=\left(\begin{array}{ccc}
 1\otimes M^{(22)}+N^{(11)T}\otimes1&0&0  \\
0&1\otimes M^{(11)}+N^{(22)T}\otimes 1&0\\  0& 0& 1\otimes M^{(22)}+N^{(22)T}\otimes 1
\end{array}\right),\end{eqnarray}
 where
  \begin{eqnarray}\nonumber&&\m Q_0=\left(\begin{array}{ccc}
 1&0&-(1\otimes M^{(22)}+N^{(22)T}\otimes 1)^{-1}(1\otimes N^{(21)T})  \\
0&1&-(1\otimes M^{(22)}+N^{(22)T}\otimes 1)^{-1}(1\otimes M^{(12)})\\  0& 0& 1
\end{array}\right).\end{eqnarray} Also note
 \begin{eqnarray}\nonumber&&\m Q_0^{-1}=\left(\begin{array}{ccc}
 1&0&(1\otimes M^{(22)}+N^{(22)T}\otimes 1)^{-1}(1\otimes N^{(21)T})  \\
0&1&(1\otimes M^{(22)}+N^{(22)T}\otimes 1)^{-1}(1\otimes M^{(12)})\\  0& 0& 1
\end{array}\right).\end{eqnarray}
In view of \eqref{17-3.88}, \eqref{17-3.88+2}   and Lemma \ref{lemma3.3},
\[||\lf Q_1^{-1}\rc||_{\m H\to \m H}\le K_4^2.\]
Note that $K_4^2\ll K_3^{\kappa}$.
In view of \eqref{17-3.87}, \eqref{17-3.88+1} and using Neumann series, it is easy to get
 \be \label{17-3.89} ||\lf Q^{-1}\rc||_{\m H\to \m H}= ||\lf Q_1^{-1}(1+Q_1^{-1}\, Q_2)^{-1}\rc||_{\m H\to \m H}\le C K^{2}_4.\ee
By \eqref{116},
\be\label{17-3.90} ||\lf \partial_\xi \, Q\rc||_{\m H\to \m H}\le C\, K.\ee
In view of \eqref{116}, \eqref{17-3.65}, \eqref{17-3.89} and \eqref{17-3.90},
\br \label{17-3.91}&&||\lf \partial_\xi( 1\otimes \m L_1)\rc||_{h_p^1\otimes h_p^1}=||\lf \partial_\xi\m A\rc||_{h_p^1\otimes h_p^1}\nonumber\\
&
\le& ||\lf \p_\xi\, Q^{-1}\rc||_{\m H\to\m H}\left( || (\Lambda^{2})^{-1}\lf M^{(21)}\rc||_{h_p^1\to h_p^2}+
||  (\Lambda^{2})^{-1} \lf N^{(21)T}\rc||_{h_p^1\to h_p^2}\right)\sup_{|j|>K_{3}}|\lambda_{j}|\\
&\le &
 \varepsilon_{0}\,C K\, K_4^{4}\, K_3^{-\kappa}\ll K^{-c}. \nonumber\er
By \eqref{17-15-3} in Appendices,
\be \label{17-3.91+1}||\partial_\xi( 1\otimes \m L_1)||_{\ell_2^1\to\ell_2^1}=||\partial_\xi\, \m A||_{\ell_2^1\to\ell_2^1}
\le ||\lf\partial_\xi\, \m A\rc||_{h_p^1\to h_p^1}
\le K^{-c}\ll \e_0. \ee

%%%%%
Note
\[1\otimes M^{(11)}+N^{(11)T}\otimes 1+\m A=1\otimes(( k,\omega))\pm 1\otimes B^{(11)}\pm \breve B^{(11)T}\otimes 1+\m A. \]
By \eqref{usefortwist}, \eqref{116} and \eqref{17-3.91+1}, we have
\be \partial_{k(\xi)} (1\otimes M^{(11)}+N^{(11)T}\otimes 1+\m A)\ge c^*-C\e_0>c^*/2>0,\ee
where $k(\xi)$ is the direction derivative such that $(k(\xi)\cdot \partial_\xi)(( k,\xi))=|k|$. (We have used the notation $X\ge Y$ when $X-Y$ is a Hermitian positive definite matrix for Hermitian matrices $X$ and $Y$.  ) In addition, $1\otimes M^{(11)}+N^{(11)T}\otimes 1+\m A$ is obviously Hermitian, since $B$ is self-adjoint in $\ell_2$. Let $\mu=\mu(\xi)$ be any eigenvalue of $1\otimes M^{(11)}+N^{(11)T}\otimes 1+\m A$. Then
\[ \left|\partial_{k(\xi)}\mu(\xi)\right|\ge c^*/2,\]
by the variation of eigenvalues for Hermitian matrix. Moreover, there is a subset $\m O_2\subset\m O$ with Meas $\m O_2\le K^{-N}$ such that for any $\xi\in \m O\setminus \m O_2$,
\be\label{1704041} ||(1\otimes M^{(11)}+N^{(11)T}\otimes 1+\m A)^{-1} ||_{\ell_2^1\otimes\ell_2^1\to \ell_2^1\otimes\ell_2^1}\le K^C,\ee
where $C$ is chosen large enough such that $K^{-C}\cdot K_{3}^{d}<K^{-N}.$
Thus, by \eqref{1703-2},
\[||\operatorname{Vec}\, F^{(11)}||_{\ell_2^1\otimes\ell_2^1}\le  K^C \, ||\m R^{(11)}||_{\ell_2^1\otimes\ell_2^1}.\]

\ \

Now we are in position to estimate $||\m R^{(11)}||_{\ell_2}$.
%
\iffalse%%%%%%%%%
%%%%%%%%%%%%%%%%%%%%%%%%%%%
We are now in position to estimate $R^*$ defined in \eqref{17-3.66}.
By \eqref{17-3-30-1}, we define
\be 1\otimes G_{22}=\left(1\otimes M^{(22)}+N^{(22)T}\otimes 1\right)^{-1}. \ee
By \eqref{126}, we define
\be 1\otimes G_{21}=\left(1\otimes M^{(22)}+N^{(11)T}\otimes 1-(N^{(21)}\otimes1)(1\otimes G_{22})(N^{(12)T}\otimes 1)\right)^{-1} \ee
By \eqref{131}, we define
\begin{eqnarray} && (1\otimes G_{21})^{-1}\\ & =&1\otimes M^{(11)}+N^{(22)T}\otimes 1\\ &&-
(1\otimes M^{(12)})(1\otimes G_{22})(N^{(12)T}\otimes 1)(1\otimes G_{22})(N^{(21)T}\otimes 1)(1\otimes G_{21})(N^{(21)T}\otimes 1)\\ & &-(1\otimes M^{(12)})(1\otimes G_{22})(1\otimes M^{(21)})
\end{eqnarray}
It follows from \eqref{17-3.87}, \eqref{17-3.88} and Lemma \ref{lemma3.3} that
\be ||1\otimes G_{22}||\le K^c,\; ||1\otimes G_{21}||\le 2K^c,\; ||1\otimes G_{12}||\le 2 K^c.\ee
\fi%%%%%%%%%%%%%%%%%%%%%%%%%%%%%%%%
%%%%%%%%%%%%%%%%%%%%%
To that end, we introduce some notations. For $\tilde p,\ti q \in\{p,q\}$, let
$h_{\tilde p,\ti q}^{ij}=\m L(h_{\tilde p}^i\to h_{\tilde q}^j)$ be the set of all the bounded linear operators from $h_{\tilde p}^i$ to $h_{\tilde q}^j$, (recalling $h_{\tilde p}=h_{\tilde p}^1\oplus h_{\tilde p}^2$), and for any $X\in h_{\tilde p,\ti q}^{ij},$ define
\[||X||_{h_{\tilde p,\ti q}^{ij}}=||X||_{h_{\tilde p}^i\to h_{\tilde q}^j}.\]
Then $(h_{\tilde p,\ti q}^{ij},||\cdot||_{h_{\tilde p,\ti q}^{ij}})$ is a Banach space for $i,j\in\{1,2\},\; \ti p,\ti q\in\{p,q\}$.

\begin{lem}\label{lemma3.5} (i). Arbitrarily take $\lf Y\rc\in h_{p,q}^{22}$ (it follows
 $(\Lambda^{(2)})^{-1}\lf Y\rc\in h_{p,p}^{22}$ and $\lf Y\rc(\Lambda^{(2)})^{-1}\in h_{q,q}^{22}$).
 Then $\lf G_{22}(Y)\rc\in h_{p,p}^{22}\cap h_{q,q}^{22}$  and
\[||(\Lambda^{(2)})^{-1}\lf G_{22}(Y)\rc||_{h_{p,p}^{22}}\le 2K^{c} ||(\Lambda^{(2)})^{-1}\,\lf Y\rc||_{h_{p,p}^{22}},\; ||\lf G_{22}(Y)\rc(\Lambda^{(2)})^{-1}||_{h_{q,q}^{22}}\le 2\, K^{c} ||\lf Y\rc\,(\Lambda^{(2)})^{-1}||_{h_{q,q}^{22}}. \]

\ \

(ii).
Arbitrarily take $\lf Y\rc\in h_{p,q}^{12}$ ( it follows
 $(\Lambda^{(2)})^{-1}\lf Y\rc\in h_{p,p}^{12}$ and $\lf Y\rc(\Lambda^{(1)})^{-1}\in h_{q,q}^{12}$ ). Then $\lf G_{21}(Y)\rc\in h_{p,p}^{12}\cap h_{q,q}^{12}$ and
\[||(\Lambda^{(2)})^{-1}\lf G_{21}(Y)\rc||_{h_{p,p}^{12}}\le K_5 ||\lf Y\rc||_{h_{p,q}^{12}},\; ||\lf G_{21}(Y)\rc(\Lambda^{(1)})^{-1}||_{h_{q,q}^{12}}\le K_5 ||\lf Y\rc||_{h_{p,q}^{12}},\; K_5:=K^{4cq}. \]
(iii).
Arbitrarily take $\lf Y\rc\in h_{p,q}^{21}$ (it follows
 $(\Lambda^{(1)})^{-1}\lf Y\rc\in h_{p,p}^{21}$ and $\lf Y\rc(\Lambda^{(2)})^{-1}\in h_{q,q}^{21}$). Then $\lf G_{12}(Y)\rc\in h_{p,p}^{21}\cap h_{q,q}^{21}$ and
\[||\lf G_{12}(Y)\rc(\Lambda^{(2)})^{-1}||_{h_{p,p}^{21}}\le K_5\,||\lf Y\rc||_{h_{p,q}^{21}},\; ||(\Lambda^{(1)})^{-1}\lf G_{12}(Y)\rc||_{h_{q,q}^{21}}\le K_5\,||\lf Y\rc||_{h_{p,q}^{21}}. \]
\end{lem}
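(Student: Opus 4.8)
\noindent{\bf Proof (plan).} I would prove the three items in the order (i), (ii), (iii), since the definition \eqref{126} of $G_{21}$ uses $G_{22}$ and the definition \eqref{131} of $G_{12}$ uses both $G_{22}$ and $G_{21}$. Two preliminary observations organize everything. First, on the block of indices $|j|>K_3$ all relevant matrices are tiny: by \eqref{importantsplit}, \eqref{116}, \eqref{17-3.87}, \eqref{17-3.88+1} and \eqref{17-3-15-4}--\eqref{17-3-15-7}, each of $\Lambda^{(2)}$, $B^{(22)}$, $\breve B^{(22)}$, $M^{(21)}$, $N^{(21)}$ has modulus norm $\le C\epsilon_0 K_3^{-\kappa}$, whereas $|(k,\omega)|\ge K^{-c/y}>K^{-c}$ off the excluded set of \eqref{107}; since $K_3$ is a huge fixed power of $K$ (see \eqref{K_3}), these blocks are negligible against every $K^{-c}$, $K_4^{-1}$, $K_5^{-1}$ that will appear. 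Second, by Remark \ref{2018-2-28} the three weighted modulus norms $||(\Lambda^{(i)})^{-1}\lfloor X\rfloor||_{h_p^i\to h_p^j}$, $||\lfloor X\rfloor(\Lambda^{(j)})^{-1}||_{h_q^i\to h_q^j}$ and $||\lfloor X\rfloor||_{h_p^i\to h_q^j}$ are mutually comparable up to universal constants; hence in each item it suffices to bound the single unweighted quantity $||\lfloor G_{\bullet\bullet}(Y)\rfloor||_{h_p^\bullet\to h_q^\bullet}$ by the stated factor times $||\lfloor Y\rfloor||_{h_p^\bullet\to h_q^\bullet}$, and both displayed inequalities of that item then follow at once.

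For (i): $X=G_{22}(Y)$ solves $M^{(22)}X+XN^{(22)}=Y$. Writing $M^{(22)}=(k,\omega)+\bigl(\pm(\Lambda^{(2)}+B^{(22)})\bigr)$ and $N^{(22)}=\pm(\Lambda^{(2)}+\breve B^{(22)})$, the linear map $Z\mapsto M^{(22)}Z+ZN^{(22)}$ equals $(k,\omega)\,Z+\mathcal T(Z)$ with $||\lfloor\mathcal T\rfloor||\le C\epsilon_0 K_3^{-\kappa}\ll K^{-c}\le|(k,\omega)|$. Hence $G_{22}=\sum_{n\ge0}(-1)^n(k,\omega)^{-n-1}\mathcal T^{\,n}$, and taking moduli termwise so that the Neumann series dominates the modulus (exactly as in \eqref{201707-23}) gives $||\lfloor G_{22}(Y)\rfloor||_{h_p^2\to h_q^2}\le|(k,\omega)|^{-1}\sum_{n\ge0}\bigl(|(k,\omega)|^{-1}||\lfloor\mathcal T\rfloor||\bigr)^n\,||\lfloor Y\rfloor||_{h_p^2\to h_q^2}\le 2K^{c}\,||\lfloor Y\rfloor||_{h_p^2\to h_q^2}$; the two inequalities of (i) follow by Remark \ref{2018-2-28}.

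For (ii) and (iii): the operator inverted by $G_{21}$ (resp. $G_{12}$) is the Sylvester operator $Z\mapsto M^{(22)}Z+ZN^{(11)}$ (resp. $Z\mapsto M^{(11)}Z+ZN^{(22)}$) perturbed by a correction $\mathcal E$, and, after the $\operatorname{Vec}$ map and using \eqref{17-3.45} and \eqref{x5.60}, the Sylvester operator becomes $1\otimes M^{(22)}+N^{(11)T}\otimes1$ (resp. $1\otimes M^{(11)}+N^{(22)T}\otimes1$), whose modulus-inverse is bounded by $K_4$ thanks to Lemma \ref{lemma3.3}(i) (resp. (ii)). The correction $\mathcal E$ coming from \eqref{126} is $-G_{22}\bigl((\cdot)N^{(12)}\bigr)N^{(21)}$, i.e. in $\operatorname{Vec}$ form $-(N^{(21)T}\otimes1)\bigl(1\otimes M^{(22)}+N^{(22)T}\otimes1\bigr)^{-1}(N^{(12)T}\otimes1)$; by \eqref{17-3.87}, \eqref{17-3.88}, \eqref{17-3.88+2}, Lemma \ref{lemma3.3}(iii) and item (i) its modulus norm is $\le\epsilon_0\cdot K_4\cdot\epsilon_0 K_3^{-\kappa}\ll K_4^{-1}$. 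Likewise the correction coming from \eqref{131} carries a leading factor $M^{(12)}$ (modulus norm $\le\epsilon_0$ by \eqref{17-3.88+2}) and an interior factor $M^{(21)}$ (modulus norm $\le\epsilon_0 K_3^{-\kappa}$ by \eqref{17-3.88+1}), the remaining factors being bounded by items (i), (ii) and \eqref{17-3.87}--\eqref{17-3.88}, so again $||\lfloor\mathcal E\rfloor||\ll K_4^{-1}$. A Neumann series around the Lemma \ref{lemma3.3} inverse then gives $||\lfloor(\text{Sylvester}+\mathcal E)^{-1}\rfloor||\le2K_4$; translating this back through the $\operatorname{Vec}$ dictionary and Lemma \ref{finite-norm} (the finite block $|j|\le K_3$ costing only a fixed power of $K_3$), and then using Remark \ref{2018-2-28}, yields the stated bounds with $K_5=K^{4cq}$, which by the choice of $K_3$ dominates $2K_4$ times that fixed power of $K_3$.

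The main obstacle I anticipate is precisely this last translation step and the bookkeeping it entails: checking that each composition inside $\mathcal E$ (and inside $G_{21}$, $G_{12}$ themselves) is applied between the correct pair of block spaces $h_{\tilde p}^i\to h_{\tilde q}^j$, that every inner $G_{\bullet\bullet}$ acts on an argument lying in its domain, and that the accumulated powers of $K^{c}$, $K_4$ and $K_3$ stay below $K_5$. The other ingredients---the smallness of the block-$2$ matrices, the Neumann series, and the passage between $\ell_2$- and $h_p$-modulus norms on the finite block---are routine and parallel the estimates already carried out for $\mathcal B_1$, $\mathcal C$ and $Q^{-1}$ above.
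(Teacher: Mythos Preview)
Your treatment of (i) is fine and essentially coincides with the paper's: the paper writes the solution of $M^{(22)}X+XN^{(22)}=Y$ via the integral $X=\int_0^\infty e^{-t((k,\omega)/2+\mathcal N_l)}\,Y\,e^{-t((k,\omega)/2+\mathcal N_r)}\,dt$, which is just a repackaging of your Neumann series around $(k,\omega)$ using that $\|\lfloor\Lambda^{(2)}+b^{(22)}\rfloor\|\le CK_3^{-\kappa}\ll K^{-c/y}\le|(k,\omega)|$.

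For (ii) and (iii), however, your argument has a genuine gap at the very step you flagged as ``the main obstacle''. Passing through $\operatorname{Vec}$ and Lemma~\ref{lemma3.3} gives a bound on a tensor/Hilbert--Schmidt norm, not on the operator norm $h_p^1\to h_q^2$ that the lemma asks for. Converting back costs the square root of the dimension of the \emph{finite} block, i.e.\ a factor $K_3^{d/2}$. Your final sentence asserts that $K_5=K^{4cq}$ dominates $2K_4$ times this power of $K_3$; but the opposite is true. By \eqref{K_3} one has $K_3\ge K_2^{100(N+1)\kappa y}\sim K^{300(N+1)c}$, and the paper itself records $K_5\ll K_3$ immediately after \eqref{1704036}. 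So $K_4\cdot K_3^{d/2}\gg K_5$, and your route cannot yield the stated constant.

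The paper avoids this by never vectorizing in (ii)--(iii). It moves the scalar $(k,\omega)$ to act on the \emph{right} with $N^{(11)}$, i.e.\ rewrites the $G_{21}$ equation as
\[
\pm(\Lambda^{(2)}+B^{(22)})X+X\,\mathcal I-G_{22}(X\breve B^{(12)})\breve B^{(21)}=Y,\qquad \mathcal I:=(k,\omega)\pm(\Lambda^{(1)}+\breve B^{(11)}),
\]
and bounds $\|\lfloor\mathcal I^{-1}\rfloor\|_{h_{\tilde p}^1\to h_{\tilde p}^1}\le K_5$ directly in operator norm (here only $K_2$-powers enter, via the finer split $h_p^1=h_p^{11}\oplus h_p^{12}$ and Lemma~\ref{finite-norm} applied on the $K_2$-block, not the $K_3$-block). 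Then a Picard iteration $X_0=Y\mathcal I^{-1}$, $X_{\nu+1}=X_0-\bigl((\Lambda^{(2)}+B^{(22)})X_\nu+G_{22}(X_\nu\breve B^{(12)})\breve B^{(21)}\bigr)\mathcal I^{-1}$ contracts because the left multiplication by $\Lambda^{(2)}+B^{(22)}$ brings the tiny factor $K_3^{-\kappa}$, so the rate is $O(K_5K_3^{-\kappa})\ll1$. This works entirely at the level of operator norms and produces the bound $K_5$; (iii) is symmetric. If you want to repair your plan, replace the $\operatorname{Vec}$/Lemma~\ref{lemma3.3} step for (ii)--(iii) by this Picard scheme.
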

\begin{proof}

Recall $|\lambda_j|\le K_3^{-1/\kappa}$ for $|j|>K_3$.
By Assumption {\bf E},
 \be\label{17-04-3.95} ||\lf b^{(22)}\rc||_{h_{p,p}^{22}}\le \sup_{|j|>K_3}|\lambda_j|\, ||(\Lambda^{(2)})^{-1}
 \,\lf  b\rc||_{h_{p,p}^{22}}\le \e_0 K_3^{-\kappa}, \;
  b\in\{B,\breve B\},\ee
\[ ||\lf b^{(22)}\rc||_{h_{q,q}^{22}}\le \sup_{|j|>K_3}|\lambda_j|\, ||\lf b\rc\,(\Lambda^{(2)})^{-1}
 ||_{h_{q,q}^{22}}\le \e_0 K_3^{-\kappa},\;
  b\in\{B,\breve B\},\]
\be \label{17-04-3.95+1}||\lf \Lambda^{(2)}+b^{(22)}\rc||_{h_{p,p}^{22}}\le C\,K_3^{-\kappa},\; ||\lf \Lambda^{(2)}+b^{(22)}\rc||_{h_{q,q}^{22}}\le C\, K_3^{-\kappa},\; b\in\{B,\breve B\}. \ee

 {\it Proof of (i).}
 %%%%%%%%%%%%%%%%%%%%%%%
 \iffalse%%%%%%%%%%%%%
 Recall that $\lambda_j\sim |j|^{-\kappa}$ and $q=p+\kappa$. Therefore,for any operator $Y:\; h_p^2\to h_q^2$,  one has
 \be\label{17-4-9.73}(\Lambda^{(2)})^{-1}Y\in h_p^{22},\quad Y(\Lambda^{(2)})^{-1}\in h_q^{22}.\ee
 \fi%%%%%%%%%%%%%%%%%%%%%%%%%%%%%%%%%%%%
 %%%%%%%%%%%%%%%%%%%%%%%%%%%%%%%%%%%%%%%%%%
Consider the operator equation with the unknown variable $X$:
\be \label{17-4-3.98} M^{(22)}X+X N^{(22)}=Y.\ee
Recall \eqref{17-3-17-1},
\[M^{(22)}= ( k,\omega)\pm(\Lambda^{(2)}+B^{(22)}),\quad N^{(22)}=\pm (\Lambda^{(22)}+\breve B^{(22)}).\]
Let
\[\m N_l=\pm(\Lambda^{(2)}+B^{(22)}),\; \m N_r=\pm (\Lambda^{(22)}+\breve B^{(22)}).\]
Note that the $(k,\omega)$ in the linear operator $M^{(22)}$ is a product by a scalar $(k,\omega)$ and an identity map. So the equation \eqref{17-4-3.98} can be rewritten as
\[\left(\frac{(k,\omega)}{2}+\m N_l\right)\, X+X\, \left(\frac{(k,\omega)}{2}+\m N_r\right)=Y.\]
Without loss of generality, we can assume $(k,\omega)>0$. Thus, by \eqref{107}, we have $(k,\omega)>K^{-c/y}>0$. According to \eqref{17-04-3.95+1},
\[||\m N_l||_{h_{\ti p,\ti p}}^{22}<C K_3^{-\kappa},\; ||\m N_r||_{h_{\ti p,\ti p}}^{22}<C K_3^{-\kappa}, \;\; \ti p\in\{p,q\}.\]
Note $K_3^{-\kappa}\ll K^{-c/y}.$ Thus for any $t>0,$
\[||\lf \exp(-t (\frac{(k,\omega)}{2}+\m N_l))\rc||_{h_{\ti p,\ti p}}\le e^{-\frac{t}{2}(K^{-c/y}-K_3^{-\kappa})}\le e^{- (t/4)\, K^{-c/y}}\] and
\[||\lf \exp(-t (\frac{(k,\omega)}{2}+\m N_r))\rc||_{h_{\ti p,\ti p}}\le e^{-\frac{t}{2}(K^{-c/y}-K_3^{-\kappa})}\le e^{- (t/4)\, K^{-c/y}}.\]
Thus
\[X=\int_0^\infty \exp(-t (\frac{(k,\omega)}{2}+\m N_l)) \, Y\, \exp(-t (\frac{(k,\omega)}{2}+\m N_r)) \, dt\]
is well-defines and solves \eqref{17-4-3.98}.
Moreover,
\[(\Lambda^{(2)})^{-1}X=\int_0^\infty \exp(-t (\frac{(k,\omega)}{2}+(\Lambda^{2)})^{-1}\,\m N_l\,\Lambda^{(2)})) \, ((\Lambda^{(2)})^{-1}\,Y) \, \exp(-t (\frac{(k,\omega)}{2}+\m N_r)) \, dt.\]
Thus,
\[||(\Lambda^{(2)})^{-1}\lf X\rc ||_{h_{ p, p}}\le ||(\Lambda^{2)})^{-1}\lf Y\rc ||_{h_{ p, p}} \int_0^{\infty}\; e^{-\frac{t}{2} K^{-c/y}}\; dt=2K^{c/y}\,||\lf Y\rc ||_{h_{ p,q}}
\leq 2K^{c}\,||\lf Y\rc ||_{h_{ p,q}}.\]
Similarly,
\[||\lf X\rc (\Lambda^{2)})^{-1}||_{h_{ p, p}}\le 2 K^c\, ||\lf Y\rc ||_{h_{ p,q}}.\]
This completes the proof of (i).

\ \

{\it Proof of (ii).} By the definition of $G_{21}$  (See \eqref{126}),  the operator valued equation $G_{21}(Y)=X$ reads
\begin{equation}\label{126+1}
 \pm(\Lambda^{(2)}+B^{(22)})X+X\left( ( k,\omega)\pm (\Lambda^{(11)}+B^{(11)})\right)-G_{22}(X (\pm\breve B^{(12)}))(\pm\breve B^{(21)})=Y,\end{equation} where we used $(k,\omega)\, X=X\, (k,\omega)$.
Set $\m I:=( k,\omega)\pm(\Lambda^{(11)}\pm \breve B^{(11)})$. \iffalse%% Without loss of generality,
assume $\pm =+$.\fi%%%%%%%%%%
By \eqref{17-3-29-1} and \eqref{1704031}, one has
\be \m I^{-1}=\begin{pmatrix} 1& \breve B_{12}^{(11)}\m B^{-1}\\0&1\end{pmatrix}\begin{pmatrix} U & 0\\  0&1\end{pmatrix}\begin{pmatrix} ((k,\omega)+\m M)^{-1} & 0\\  0&\m B^{-1}\end{pmatrix} \begin{pmatrix} U^{*} & 0\\  0&1\end{pmatrix}\begin{pmatrix} 1 &0\\\m B^{-1} \breve B^{(11)}_{21} &1\end{pmatrix},\ee
where $\m B= ( k,\omega)\pm(\Lambda_2^{(1)}+\breve B^{(11)}_{22})$ and the star $*$ means complex conjugate plus transpose.

In view of the partition \eqref{bucong-3.74} and \eqref{bucong-3.75},   and using  Lemma \ref{splitlemma} in the Appendices,  and  Assumption {\bf E}, we have that
for $\tilde p\in \{p,q\}$ and $b\in\{B,\breve B\}$,
\be ||\lf b^{(11)}_{22}\rc||_{h_{\ti p}^{12}\to  h_{\ti p}^{12} }, \;||\lf b^{(11)}_{12}\rc||_{ h_{\ti p}^{12}\to  h_{\ti p}^{11} },\;
 ||\lf b^{(11)}_{21}\rc||_{h_{\ti p}^{11}\to  h_{\ti p}^{12} }\le ||\lf b^{(11)}\rc||
 _{h_{\ti p}^{11}\to h_{\ti p}^{11}}\le \e_0,\ee
\be\begin{array}{ll} ||\lf \Lambda_2^{(1)}+b^{(11)}_{22}\rc||_{ h_{ p}^{12}\to  h_{ p}^{12} }&\le ||\Lambda_2^{(1)}||_{ h_{ p}^{12}\to  h_{ p}^{12} }+||\Lambda_2^{(1)}||_{ h_{ p}^{12}\to  h_{ p}^{12} }\,||(\Lambda_2^{(1)})^{-1}\lf b_{22}^{(11)}\rc||_{ h_{ p}^{12}\to  h_{ p}^{12} }\\
&\le (1+\e_0)\sup_{|j|\ge K_2}|\lambda_j|\le (1+\e_0)K_2^{-\kappa}\\&\le\frac12(1+\e_0)K^{-c/y}\end{array}\ee
and
\be\label{17-0701-3.128}\begin{array}{ll} ||\lf \Lambda_2^{(1)}+b^{(11)}_{22}\rc||_{ h_{ q}^{12}\to  h_{ q}^{12} }&\le ||\Lambda_2^{(1)}||_{ h_{ q}^{12}\to  h_{ q}^{12} }+||\Lambda_2^{(1)}||_{ h_{ q}^{12}\to  h_{ q}^{12} }||\lf b_{22}^{(11)}\rc \,(\Lambda_2^{(1)})^{-1}||_{ h_{ q}^{12}\to  h_{ q}^{12} }\\
&\le (1+\e_0)\sup_{|j|\ge K_2}|\lambda_j|\\&\le(1+\e_0)(\frac{1}{2} K^{-c/y}).\end{array}\ee
Write
\[\m B= (k,\omega)\left( 1+\frac{1}{\pm(k,\omega)}(\Lambda_2^{(1)}+\breve B_{22}^{(11)})\right).\]

Recall $|(k,\omega)|\ge K^{-c/y}$. By Neumann series and using \eqref{17-0701-3.128},
\be \label{1704032}||\lf \m B^{-1}\rc||_{ h_{\ti p}^{12} \to h_{\ti p}^{12} }\le K^{c/y}\sum_{j=0}^\infty \left(||\frac{1}{
|(k,\omega)|}\lf(\Lambda_2^{(1)}+\breve B^{(11)}_{22})\rc||_{ h_{\ti p}^{12} \to h_{\ti p}^{12} }\right)^j\le 2K^{c/y}.\ee
Since $U$ is unitary, so $||U||_{ \ell_{2}^{12} \to \ell_{0}^{12} }=1$, and furthermore,
\[ ||U||_{ h_{\ti p}^{12} \to h_{\ti p}^{12} }=||\text{diag}\,(|j|^{\ti p}:|j|\le K_2)U \text{diag}\,(|j|^{-\ti p}:|j|\le K_2)||_{ \ell_{2}^{12} \to \ell_{2}^{12} }\le K_2^{\ti p}||U||_{ \ell_{2}^{12} \to \ell_{2}^{12} }\le K^{3c q/y}. \]
By Lemma \ref{finite-norm}, moreover,
\be\label{20170724} ||U||_{ h_{\ti p}^{12} \to h_{\ti p}^{12} }\le K^{3c q/y}K_2^{d/2}. \ee
By \eqref{103} and noting $\m M$ is diagonal,
\be \label{1704035}||\lf (( k,\omega)+\m M)^{-1}\rc||_{\ti h_{\ti p}^2\to \ti h_{\ti p}^2 }=||(( k,\omega)+\m M)^{-1}||_{\ti h_{\ti p}^2\to \ti h_{\ti p}^2 } \le K^c.\ee
Consequently,
\be \label{1704036} ||\lf \m I^{-1}\rc||_{h_{\ti p,\ti p}^{11}}\le C (K^c+K^{cq/y})^3< K_5\ll K_3.\ee
%%%%%%%%%%%%%
%%%%%%%%%%%%%%%%
\iffalse
Note that for $b\in\{B,\breve B\}$,
\be ||\lf b^{(21)}\rc||_{h_{p,p}^{12}}\le ||\Lambda^{(2)}||_{h_{p,p}^{12}}||(\Lambda^{(2)})^{-1}\lf b^{(21)}\rc||_{h_{p,p}^{12}}\le K_3^{-1/\kappa}\e_0\ee
and
\be ||\lf b^{(12)}\rc||_{h_{q,q}^{21}}\le ||\Lambda^{(2)}||_{h_{q,q}^{21}}||\lf b^{(12)}\rc (\Lambda^{(2)})^{-1}||_{h_{q,q}^{21}}\le K_3^{-1/\kappa}\e_0\ee
It follows form (i) of this lemma that
\be\label{1704038} ||\lf (G_{22}(X \breve B^{(12)}))\breve B^{(21)})\rc||_{h_{\ti p,\ti p}^{11}}\le K^c K_3^{-1/\kappa}||\lf X\rc||_{h_{\ti p,\ti p}^{11}}.\ee
\fi
%%%%%%%%%%%%%%%%
%%%%%%%%%%%%
Let $\pm=+$ without loss of generality in \eqref{126+1}.
Construct a Picard sequence as follows:
\be\begin{array}{ll} X_0&=Y\m I^{-1},\\
X_{\nu+1}&=X_0- \left((\Lambda^{(2)}+B^{(22)})X_\nu+(G_{22}(X_\nu \breve B^{(12)}))\breve B^{(21)})\right)\m I^{-1}, \quad\nu=0,1,2,... .\end{array}\ee
%By \eqref{17-04-3.95+1} and \eqref{1704038}, the sequence $\{X_\nu\}$ is convergent and  the proof of (ii) is completed by taking limit in the following
It follows
\[\begin{array}{lll}(\Lambda^{(2)})^{-1}X_{\nu+1}&=&(\Lambda^{(2)})^{-1}Y\m I^{-1} \\ & & - \left((1+(\Lambda^{(2)})^{-1}B^{(22)}) \Lambda^{(2)}((\Lambda^{(2)})^{-1}\,X_\nu)\right)\m I^{-1}\\&&+\left(\left((\Lambda^{(2)})^{-1}(G_{22}(X_\nu \breve B^{(12)}))\right)\Lambda^{(2)}((\Lambda^{(2)})^{-1}\,\breve B^{(21)})\right)\m I^{-1}.\end{array}\]
%%%%%%%%%%%%%%%%%%%
%%%%%%%%%%%%%%%%%%%%%
\iffalse
By \eqref{17-04-3.95+1} and \eqref{1704038} and noting $K_5\ll K_3$, it is easy to prove that $\Lambda^{(2)})^{-1}X_{\nu}$ is convergent to $\Lambda^{(2)})^{-1}X$ in the norms $||\lf\cdot\rc||_{h_{p, p}^{11}}$ and $||\lf\cdot\rc||_{h_{q, q}^{11}}$.
\fi
%%%%%%%%%%%%%%%%%%%%
%%%%%%%%%%%%%%%%%%%%%
By \eqref{116},
\[|| (\Lambda^{(2)})^{-1}\lf B^{(22)}\rc||_{h_p^2\to h_p^2}\le \e_0, \;\;|| (\Lambda^{(2)})^{-1}\lf\breve B^{(21)}\rc||_{h_p^1\to h_p^2}\le\e_0.\]
By (i) of this Lemma,
\[\left|\left|(\Lambda^{(2)})^{-1}\lf (G_{22}(X_\nu \breve B^{(12)}))\rc\right|\right|_{h_p^2\to h_p^2}\le
 2K^c\left|\left| (\Lambda^{(2)})^{-1}\, \lf X_\nu\breve B^{(12)}\rc \right|\right|_{h_p^2\to h_p^2}
 \le 2K^c
 \varepsilon_{0}\left|\left|(\Lambda^{(2)})^{-1}\, \lf X_\nu\rc \right|\right|_{h_p^1\to h_p^2}.\]
%it is easy to prove that $\Lambda^{(2)})^{-1}X_{\nu}$ is  convergent to $\Lambda^{(2)})^{-1}X$ in the norm $||\cdot||_{h_p^1\to h_p^2}$ and
By the definition of $K_{3},$ $||\Lambda^{(2)}||_{h_p^2\to h_p^2}\leq K_{3}^{-\kappa}.$
Consequently
\[\begin{array}{lll} & &||(\Lambda^{(2)})^{-1}\, \lf X_{\nu+1}\rc- (\Lambda^{(2)})^{-1}\, \lf X_{\nu}\rc||_{h_{p,p}^{12}}\\ &\le & \left((1+\e_0)K_5\, K_3^{-\kappa}+2 \right)||(\Lambda^{(2)})^{-1}\, \lf X_{\nu}\rc- (\Lambda^{(2)})^{-1}\, \lf X_{\nu-1}\rc||_{h_{p,p}^{12}}\\ &\le & \e_0\, ||(\Lambda^{(2)})^{-1}\, \lf X_{\nu}\rc- (\Lambda^{(2)})^{-1}\, \lf X_{\nu-1}\rc||_{h_{p,p}^{12}}.
\end{array}\]
%%%%%%%%%%%%%%%%%%%%%%%
It follows that $(\Lambda^{(2)})^{-1}X_{\nu}$ is convergent to
$(\Lambda^{(2)})^{-1}X$ in the norm $||\lfloor \cdot\rceil||_{h^{11}_{p,p}},$
and
\[ ||(\Lambda^{(2)})^{-1}\lf X \rc||_{h_p^1\to h_p^2} \le K_5 ||(\Lambda^{(2)})^{-1}\lf Y \rc||_{h_p^1\to h_p^2}.\]
Similarly,
\[ ||\lf X\rc\,(\Lambda^{(1)})^{-1} ||_{h_q^1\to h_q^2} \le K_5||\lf Y\rc (\Lambda^{(1)})^{-1} ||_{h_q^1\to h_q^2}.\]
We omit the detail. This completes the proof of (ii).
%and
%\[X_{\nu+1}(\Lambda^{(1)})^{-1}=Y\m I^{-1}(\Lambda^{(1)})^{-1}- \left((\Lambda^{(2)}+B^{(22)})X_\nu+(G_{22}(X_\nu B^{(12)}))B^{(21)})\right)\m I^{-1}(\Lambda^{(1)})^{-1}.\]

\ \

{\it Proof of (iii).} The proof is similar to that of (ii). We omit it here.
\end{proof}

 Recall that $$||\Lambda^{-1}\lf R\rc||_{h_{p}\to h_p}\le ||\lf R\rc||_{h_p\to h_q},\;||\lf R\rc\Lambda^{-1}||_{h_{q}\to h_q}\le ||R||_{h_p\to h_q},$$
 $$||\Lambda^{-1}\lf B\rc||_{h_{p}\to h_p}\le ||B||_{h_p\to h_q}\le C\,\e_0,\;\;
 ||\lf B\rc\Lambda^{-1}||_{h_{q}\to h_q}\le ||\lf B\rc||_{h_p\to h_q}\le C\,\e_0$$ and
 $$N^{(21)}=\pm B^{(21)},\;||\lf N^{(21)}\rc||_{h_p^1\to h_p^2}\le ||\Lambda^{(2)}||_{h_p^2\to h_p^2}||
 (\Lambda^{(2)})^{-1}\lf N^{(21)}\rc||_{h_p^1\to h_p^2}\le \e_0\, K_3^{-\kappa}.$$
By (i)  of Lemma  \ref{lemma3.5}, one has
\[\begin{array}{lll}||(\Lambda^{(2)})^{-1}\lf G_{22}(R^{(22)}N^{(21)})\rc||_{h_{p}^1\to h_p^2}&\le& K^{2c} ||(\Lambda^{(2)})^{-1}\lf R^{(22)}\rc||_{h_{p}^2\to h_{p}^{2}} ||\lf N^{(21)}\rc||_{h_{p}^1\to h_p^{2}}\\&
\le &\e_0 \, K^{2c}K_3^{-\kappa}\,||\lf R\rc||_{h_p\to h_q}.\end{array}\]
By applying Lemma  \ref{lemma3.5} to \eqref{128}, one has
\begin{eqnarray*}
&&||(\Lambda^{(2)})^{-1}\lf\ti R^{(21)}\rc||_{h_{p}^1\to h_{p}^{2}}=||(\Lambda^{(2)})^{-1}G_{21}(R^{(21)}-G_{22}(R^{(22)})N^{(21)})||_{h_{p}^1\to h_{p}^{2}}\\
&& \leq K_{5}||\lfloor R^{(21)}-G_{22}(R^{(22)}N^{(21)})\rceil||_{h_{p}^1\to h_{q}^{2}}
\leq K_{5}(||\lfloor R^{(21)}\rceil||_{h_{p}^1\to h_{q}^{2}}+||\lfloor G_{22}(R^{(22)}N^{(21)})\rceil||_{h_{p}^1\to h_{q}^{2}})\\
&&=K_{5}||\lfloor R^{(21)}\rceil||_{h_{p}^1\to h_{q}^{2}}+K_{5}||(\Lambda^{(2)})^{-1}\lfloor G_{22}(R^{(22)}N^{(21)})\rceil||_{h_{p}^1\to h_{p}^{2}}\\
&&\leq K_{5}||\lfloor R^{(21)}\rceil||_{h_{p}^1\to h_{q}^{2}}+K_{5}K^{c}||(\Lambda^{(2)})^{-1}\lfloor (R^{(22)}N^{(21)})\rceil||_{h_{p}^1\to h_{p}^{2}}\\
&&\leq K_{5}||\lfloor R^{(21)}\rceil||_{h_{p}^1\to h_{q}^{2}}+K_{5}K^{c}\varepsilon_{0}||\lfloor R^{(22)}\rceil||_{h_{p}^2\to h_{p}^{2}}\\
&&\le2\e_0\,K^{c}\,K_5\, ||\lf R\rc||_{h_p\to h_q}.
\end{eqnarray*}
Moreover, applying Lemma  \ref{lemma3.5} to \eqref{134}, one has
\begin{eqnarray*}
&&||(\Lambda^{(1)})^{-1}\lf\ti R^{(12)}\rc||_{h_{p}^2\to h_{p}^{1}}=||(\Lambda^{(1)})^{-1}\lf G_{12}\left(M^{(12)}\left(G_{22}(R^{(22)})-G_{22}(\tilde{R}^{(21)}N^{(12)})\right)-R^{(12)}\right)\rc||_{h_{p}^2\to h_{p}^{1}}\\
&&\le K_{5}||(\Lambda^{(1)})^{-1}\left(M^{(12)}\left(G_{22}(R^{(22)})-G_{22}(\tilde{R}^{(21)}N^{(12)})\right)-R^{(12)}\right)||_{h_{p}^2\to h_{p}^{1}}\\
&&\le K_{5}||(\Lambda^{(1)})^{-1}\lfloor M^{(12)}\rceil||_{h_{p}^2\to h_{p}^{1}}\cdot ||\Lambda^{(2)}||_{h_{p}^2\to h_{p}^{2}}
\cdot ||(\Lambda^{(2)})^{-1}\lfloor G_{22}(R^{(22)}-(\tilde{R}^{(21)}N^{(12)}))\rceil||_{h_{p}^2\to h_{p}^{2}}\\
&&+K_{5}||(\Lambda^{(1)})^{-1}R^{(12)}||_{h_{p}^2\to h_{p}^{1}}\\
&&\le K_{5}\,\varepsilon_{0}\,K_{3}^{-\kappa}||(\Lambda^{(2)})^{-1}\lfloor R^{(22)}-\tilde{R}^{(21)}N^{(12)}\rceil||_{h_{p}^2\to h_{p}^{2}}
+K_{5}\,||\lfloor R\rceil||_{h_{p}\to h_{q}}\\
&&\le K_{5}\,\varepsilon_{0}\,K_{3}^{-\kappa}(||\lfloor R^{(22)}\rceil||_{h_{p}^2\to h_{q}^{2}}+||(\Lambda^{(2)})^{-1}\lfloor \tilde{R}^{(21)}\rceil||_{h_{p}^1\to h_{p}^{2}}\cdot \varepsilon_{0})+K_{5}\,||\lfloor R\rceil||_{h_{p}\to h_{q}}\\
&&\le 2\, K_5\, ||\lf R\rc||_{h_p\to h_q}.
\end{eqnarray*}
Finally, applying  Lemma  \ref{lemma3.5} to \eqref{138},
one has
\be  ||\lf\m R^{(11)}\rc||_{h_{p,p}^{11}}\le K_5^2\, ||\lf R\rc||_{h_p\to h_q}.\ee
Note that we can write $\m R^{(11)}=(\m R^{(11)}_{ij}:|i|\le K_3,|j|\le K_3)$. So
\be\label{1704042} ||\text{Vec}\,\m R^{(11)}||_{\ell_2^1\otimes \ell_2^1}\le ||\text{Vec}\,\m R^{(11)}||_{h_p^1\otimes h_p^1}\le C\, K_3^d\,K_5^2 \, ||\lf R\rc||_{h_p\to h_q}.\ee
By \eqref{1704041} and \eqref{1704042} to the equation \eqref{1703-2}, one gets
\[||\text{Vec}\; F^{(11)}||_{\ell_2^1\otimes\ell_2^1}\le K^{C}||\lf R\rc||_{h_p\to h_q},\] where
$C$ is a constant large enough such that  $K^C>K_3^d\, K_5^2$.
Noting that we can write $F^{(11)}=(F^{(11)}_{ij}:|i|\le K_3,|j|\le K_3)$, we have
\be \label{1704043} ||\lf F^{(11)}\rc||_{h_p^1\to h_q^1}\le K^{5C}||\lf R\rc||_{h_p\to h_q}. \ee

By \eqref{1704043} and applying Lemma \ref{lemma3.5} to \eqref{127}, \eqref{129} and \eqref{132}, one has
\be\label{1704045}  ||\lf\hat{F}(k)\rc||_{h_p\to h_q}=||\lf \Lambda^{-1}\hat{F}(k)\rc||_{h_p\to h_p}\le  K^{5C}||\lf\hat{R}(k)\rc||_{h_p\to h_q}, \;\forall\; 0<|k|<K.\ee
It follows \eqref{5.8} for $k\neq 0$.
 Applying $\p_\xi$ to both sides of equation \eqref{II} and using \eqref{5.8}, we can prove \eqref{5.10}. Here we omit the detail.

\ \

\noindent{\bf Case 2. $k= 0$.}

\ \

At this case, the sign  $\pm$ take $+$. So we can rewrite \eqref{17-3-12-1}:
\be\label{2017-7-6-3} (\Lambda+B)F+ F(\Lambda+\breve B)=R, \ee
where $F:=\widehat{{F}}(0),R:=\widehat{{R}}(0).$ Consider the equation with the unknown variable $X$:
\be \label{17-4-3.x1}\Lambda X+X \Lambda=Y.\ee

Set
\[g=\left( \Lambda(\cdot)+ (\cdot)\Lambda\right)^{-1}. \]
Then $g(Y)=X$.
Writing \eqref{17-4-3.x1} in its elements of matrix
\be\label{5.58-1x2} X_{ij}=\frac{Y_{ij}}{\lambda_i+\lambda_j},\quad i,j\in\mathbb Z^d. \ee
Moreover,
\be\label{5.58} X_{ij}=\alpha_{ij}\;\tilde Y_{ij}=\beta_{ij}\breve{Y}_{ij},\quad i,j\in\mathbb Z^d, \ee
where \be \alpha_{ij}=\frac{\lambda_i}{\lambda_i+ \lambda_j},\;\beta_{ij}=\frac{\lambda_j}{\lambda_i+ \lambda_j},\quad \tilde Y_{ij}=\lambda_i^{-1}Y_{ij},\; \breve{Y}_{ij}=Y_{ij}\lambda_{j}^{-1}.\ee
Note that $0<\alpha_{ij}<1$ and $0<\beta_{ij}<1$.
Thus, $|X_{ij}|\le |\tilde Y_{ij}|$ and $|X_{ij}|\le |\breve Y_{ij}|$. Moreover,
\be \label{26-1}||\lf g(Y)\rc||_{h_p\to h_p}=||\lf X\rc||_{h_p\to h_p}\le ||\lf \tilde Y\rc||_{h_p\to h_p}= ||\lf Y\rc||_{h_p\to h_q}\ee and
\be \label{26-2}||\lf g(Y)\rc||_{h_q\to h_q}=||\lf X\rc||_{h_q\to h_q}\le ||\lf \breve Y\rc||_{h_q\to h_q}= ||\lf Y\rc||_{h_p\to h_q}.\ee
%
\iffalse
Construct Picard series
\be\begin{array}{ll} X_0=g(R),&\\
X_{\nu+1}=X_0- g\left(B X_\nu+X_\nu \breve B\right), &\quad\nu=0,1,2,...\end{array}\ee
Note
\[||\lf B\rc||_{h_{\ti p}\to h_{\ti p}}\le ||\lf B\rc||_{h_p\to h_q}\le \e_0,\; ||\lf \breve B\rc||_{h_{\ti p}\to h_{\ti p}}\le ||\lf B\rc||_{h_p\to h_q}\le \e_0 ,\;\; \ti p\in\{p,q\}.\]
Thus
\[||\lf X_{\nu+1}\rc-\lf X_{\nu}\rc ||_{h_{\ti p}\to h_{\ti p}}\le \e_0^\nu ||\lf X_{1}\rc-\lf X_{0}\rc ||_{h_{\ti p}\to h_{\ti p}}.\]
It is easy to prove that
  $\widehat F(0)=\lim_{\nu\to\infty}X_\nu$ does exist and
\be \label{1704049} ||\lf \widehat{F}(0,\xi)\rc||_{h_p\to h_p}\le ||\lf \widehat{R}(0,\xi)\rc||_{h_p\to h_q},\;\; ||\lf \widehat{F}(0,\xi)\rc||_{h_q\to h_q}\le  ||\lf \widehat{R}(0,\xi)\rc||_{h_q\to h_q},\; \forall\; \xi\in \m O.\ee
\fi
%
For an operator $X$ with $\lfloor X\rceil \in\mathcal{{L}}(h_{p}\to h_{p})$ and $\lfloor X\rceil \in\mathcal{{L}}(h_{q}\to h_{q}),$ we call that $X$ is a $\tau-$ approximate
solution of equation \eqref{2017-7-6-3}, if
$$||\lfloor (\Lambda+B)X+X(\Lambda+\breve{B})-R\rceil||_{h_{p}\to h_{q}}\leq \tau.$$
Let $X_{0}=g(R).$ By \eqref{26-1} and \eqref{26-2}, we obtain
$$||\lfloor X_{0}\rceil||_{h_{p}\to h_{p}}\leq ||\lfloor R\rceil||_{h_{p}\to h_{q}}\triangleq \delta,\;\;
||\lfloor X_{0}\rceil||_{h_{q}\to h_{q}}\leq ||\lfloor R\rceil||_{h_{p}\to h_{q}}= \delta.$$
Then
\begin{eqnarray}\label{8.14-1}
&&||\lfloor(\Lambda+B)X_{0}+X_{0}(\Lambda +\breve{B})-R\rceil||_{h_{p}\to h_{q}}
=||\lfloor(\Lambda X_{0}+X_{0}\Lambda-R)+BX_{0} +X_{0}\breve{B}\rceil||_{h_{p}\to h_{q}}\nonumber\\
&&=||\lfloor BX_{0} +X_{0}\breve{B}\rceil||_{h_{p}\to h_{q}}
\leq ||\lfloor X_{0}\rceil||_{h_{p}\to h_{p}}\cdot||\lfloor B\rceil||_{h_{p}\to h_{q}} +||\lfloor \breve{B}\rceil||_{h_{p}\to h_{q}}\cdot||\lfloor X_{0}\rceil||_{h_{q}\to h_{q}}\nonumber\\
&&\leq 2\varepsilon_{0}\delta.
\end{eqnarray}
That implies $X_{0}$ is a $2\varepsilon_{0}\delta-$ approximate solution of \eqref{2017-7-6-3}. Let $X=X_{0}+Y$ and insert it into \eqref{2017-7-6-3}.
Then
\be \label{8.14-2}
(\Lambda+B)X_{0}+X_{0}(\Lambda+\breve{B})-R+\Lambda Y+Y\Lambda+BY+Y\breve{B}=0.
\ee
Consider
\be \label{8.14-3}
\Lambda Y+Y\Lambda=R-((\Lambda+B)X_{0}+X_{0}(\Lambda+\breve{B})):=R_{1}.
\ee
Then $$Y=g(R_{1}).$$
By \eqref{8.14-1}, we have
\be\label{8.14-4}
||\lfloor Y\rceil||_{h_{p}\to h_{q}}\leq 2\varepsilon_{0}\delta,\;\;||\lfloor Y\rceil||_{h_{q}\to h_{q}}\leq 2\varepsilon_{0}\delta.
\ee
Let $X_{1}=Y$ and $X^{(1)}=X_{0}+X_{1}.$ By \eqref{8.14-2} and \eqref{8.14-4}, we have
\begin{eqnarray*}
&&||\lfloor (\Lambda+B)X^{(1)}+X^{(1)}(\Lambda+\breve{B})-R\rceil||_{h_{p}\to h_{q}}=||\lfloor BX_{1}+X_{1}\breve B\rceil||_{h_{p}\to h_{q}}\\
&&\leq ||\lfloor X_{1}\rceil||_{h_{p}\to h_{p}}||\lfloor B\rceil||_{h_{p}\to h_{q}}+||\lfloor \breve{B}\rceil ||_{h_{p}\to h_{q}}||\lfloor X_{1}\rceil||_{h_{q}\to h_{q}}
\leq (2\varepsilon_{0})^{2}\delta.
\end{eqnarray*}
Thus $X^{(1)}$ is $(2\varepsilon_{0})^{2}\delta-$ approximate solution of \eqref{2017-7-6-3}.

Assume that
$$X^{(m)}=X_{0}+X_{1}+\cdots +X_{m-1}$$
is a $(2\varepsilon_{0})^{m+1}\delta-$ approximate solution of \eqref{2017-7-6-3}. Thus
$$R_{m}:=R-((\Lambda+B)X^{(m)}+X^{(m)}(\Lambda+\breve{B}))$$
satisfies
$$||\lfloor R_{m}\rceil||_{h_{p}\to h_{q}}\leq (2\varepsilon_{0})^{m+1}\delta.$$
Take $X_{m}=g(R_{m}).$ That means  $\Lambda X_{m}+X_{m}\Lambda=R_{m}.$
Then
\be\label{8.14-5}\begin{array}{c}
                   ||\lfloor X_{m}\rceil|| _{h_{p}\to h_{p}}\leq ||\lfloor R_{m}\rceil||_{h_{p}\to h_{q}}\leq (2\varepsilon_{0})^{m+1}\delta,\\
                   ||\lfloor X_{m}\rceil|| _{h_{q}\to h_{q}}\leq ||\lfloor R_{m}\rceil||_{h_{p}\to h_{q}}\leq (2\varepsilon_{0})^{m+1}\delta.
                 \end{array}
\ee
Let $$X^{(m+1)}=X_{0}+X_{1}+\cdots +X_{m-1}+X_{m}=X^{(m)}+X_{m}.$$
Then
It follows \eqref{8.14-5} that
\begin{eqnarray*}
&&||\lfloor (\Lambda +B)X^{(m+1)}-X^{(m+1)}(\Lambda+\breve{B})-R_{m}\rceil||_{h_{p}\to h_{q}}\\
&&=||\lfloor \Lambda X_{m}-X_{m}\Lambda-R_{m}+BX_{m}+X_{m}\breve{B}\rceil||_{h_{p}\to h_{q}}\\
&&=||\lfloor BX_{m}+X_{m}\breve{B}\rceil||_{h_{p}\to h_{q}}\leq
||\lfloor X_{m}\rceil||_{h_{p}\to h_{p}}||\lfloor B\rceil||_{h_{p}\to h_{q}}+||\lfloor \breve{B}\rceil ||_{h_{p}\to h_{q}}||\lfloor X_{m}\rceil||_{h_{q}\to h_{q}}\\
&&\leq (2\varepsilon_{0})^{m+2}\delta.
\end{eqnarray*}
Thus $X^{(m+1)}$ is a $(2\varepsilon_{0})^{m+2}\delta-$ approximate solution.

By induction, we can assume
$$\sum_{m=0}^{\infty}X_{m}=X,\;\;\text{in}\;\;||\cdot||_{h_{p}\to h_{p}},$$
$$\sum_{m=0}^{\infty}X_{m}=\widetilde{X},\;\;\text{in}\;\;||\cdot||_{h_{q}\to h_{q}}.$$
For any $L>0$ and $M=(m_{ij}\in \mathbb{C}: i, j \in \mathbb{Z}^{d}),$ define
$$M^{L}=(m_{ij}\in \mathbb{C}: \;|i|\leq L,\;|j|\leq L).$$
Similarly, we can define $h_{p}^{L}$ and $h_{q}^{L}.$
By Lemma \ref{splitlemma}, we have
$$\sum_{m=0}^{\infty}X_{m}^{L}=X^{L},\;\;\text{in}\;\;||\cdot||_{h_{p}^{L}\to h_{p}^{L}},$$
$$\sum_{m=0}^{\infty}X_{m}^{L}=\widetilde{X}^{L},\;\;\text{in}\;\;||\cdot||_{h_{q}^{L}\to h_{q}^{L}}.$$
By $\dim X^{L}<\infty$ and $\dim \widetilde{X}^{L}<\infty,$ we have $X^{L}=\widetilde{X}^{L}.$
Since $L>0$ is arbitrary, we have $X=\widetilde{X}=\widehat{F}(0, \xi).$
Therefore, $$X=\sum_{m=0}^{\infty} X_{m}$$ solves \eqref{2017-7-6-3} and
\be\label{1704049}||\lfloor X\rceil||_{h_{p}\to h_{p}}\leq \delta+\sum_{m=0}^{\infty}(2\varepsilon_{0})^{m+1}\delta\leq 2\delta,\;\;\;\;||\lfloor X\rceil||_{h_{q}\to h_{q}}\leq \delta+\sum_{m=0}^{\infty}(2\varepsilon_{0})^{m+1}\delta\leq 2\delta.\ee
This completes the proof of \eqref{2017-7-6-1}. Applying $\p_\xi$ to both sides of \eqref{2017-7-6-3},  we can prove \eqref{2017-7-6-2} similarly.

Up to now, the proof of Lemma \ref{lem5.1} is completed.$\qed$

\ \
\begin{rem} Comparing \eqref{1704049}  and \eqref{1704045}, we find that $ \widehat{F}(k)$ with $k\neq 0$ is of regularity of order $\kappa$, while $ \widehat{F}(0)$ has no regularity.
\end{rem}

\section{\label{section4}Iterative Lemma}

Before giving the iterative lemma, we need the following iterative constants and domains:
\begin{itemize}

\item $m$ -  number of the iterative steps;

\item $C,C_1,C_2,...$- positive constants which arrive in estimates. They are independent of $\e_{0}$ and $m$, maybe different in different position of the text;

\item $\e_m=\e_0^{(1+\varrho_0)^m}$, which measures the size of the perturbation in the $m^{\text{th}}$ iteration, $m=1,2,...,$ where $\varrho_0>0$ is an absolute constant;

\item $e_m=\frac{1^{-2}+2^{-2}+\cdots+m^{-2}}{2(1^{-2}+2^{-2}+\cdots)}$ (so $0<e_m<\frac12$ for all $m$);

\item $s_m=s_0(1-e_m)$ (so $s_m>\frac12 s_0$ for all $m$), which measure the width of the angle variable $x$ in  the $m^{\text{th}}$ iteration, here $s_0>0$ is an
absolute constant;

\item  $r_m=r_0(1-e_m)$ (so $r_m>\frac12 r_0$ for all $m$), which measure the radius of the action variable $y$ as well as the normal coordinate $(z,\bar z)$ in  the $m^{\text{th}}$ iteration, $r_0>0$ is an absolute constant;
\item $s_m^j=(1-\frac{j}{6})s_m+\frac{j}{6}s_{m+1}$, \; ($j=0,1,...,6$) which is a bridge between $s_m$ and $s_{m+1}$;

\item  $r_m^j=(1-\frac{j}{6})r_m+\frac{j}{6}r_{m+1}$,\; ($j=0,1,...,6$) which is a bridge between $r_m$ and $r_{m+1}$;

\item $K_m=\frac{2}{s_m^5-s_m^6}|\log\e_m|$ which truncates a periodic function into essential part and unessential one in the term of its Fourier coefficient;

\item  $C(m), C_1(m),...$ functions of $m$ and of the form $C_1 m^{C_2}$ or $C_1 2^{C\,m}$;

\item $D_p(s_m,r_m)=\{(x,y,z,\bar z)\in \m P^{p}:\; |\Im x|< s_m, |y|< r^{2}_m, ||z||_{p}< r_m, ||\bar z||_{p}< r_m\}$, which denotes a complexificated neighborhood of the torus
    \[\m T_0:=\mathbb T^N\times\{0\}\times\{0\}\times\{0\}\subset\m P^{p}. \] Obviously,
    \[D_p(s_0,r_0)\supset D(s_1,r_1)\supset\cdots\supset D_p(s_m,r_m)\supset\cdots\supset D_{p}(\frac{s_0}{2},\frac{r_0}{2});\]
%\item $D_{pm}=D_p(s_m,r_m)$ and $D_m^j=D(s_m^j,r_m^j)$ %($j=0,1,...,6$);

\item $\mathbb T^N_{s_m}=\{x\in \mathbb C^n/(2\pi\mathbb Z)^n:\: |\Im x|\le s_m\}$, which is a complixificated neighborhood of $\mathbb T^N$ with strip width $s_m$. Obviously,
\[\mathbb T^N_{s_0}\supset \mathbb T^N_{s_1}\supset\cdots\supset \mathbb T^N_{s_m}\supset\cdots\supset \mathbb T^N_{s_0/2}.\]

\end{itemize}

The proof of Theorem \ref{theorem2} will be completed by $m\rightarrow+\infty$ in the following lemma:

\begin{lem}\label{lemma1} Let $\omega^0,\lambda_j$'s and $B^0$ obey the assumptions {\bf A}, {\bf B} and {\bf E} in Theorem \ref{theorem2}, respectively. Suppose that we have had $m+1$ Hamiltonian functions
\begin{equation}\label{10} H^{(l)}=H_0^{(l)}+R^{(l)}+P^{(l)},\quad l=0,1,\cdots,m,\end{equation}
where
\begin{equation}\label{11}H_0^{(l)}=(\omega^{(l)}(\xi),y)+
\sum_{j\in\mathbb{Z}^d}\lambda_j(\xi)z_j\bar{z}_j+\langle B^{(l)z\bar{z}}(\xi)z,\bar{z}\rangle.\end{equation}
And suppose that there are $m+1$ closed parameters sets
\begin{equation}\label{12}
\mathbb{R}^{N}\supset \mathcal{O}_0\supset\mathcal{O}_1
\supset\cdots\supset\mathcal{O}_m\end{equation}
and $m+1$ domains
\begin{equation}\label{13}D_p(s_0,r_0)\supset\cdots\supset D_p(s_l,r_l)\supset\cdots D_p(s_m,r_m)\supset D_p(s_0/2,r_0/2)\end{equation}
%with $s_0>s_1>\cdots>s_m>s_0/2,r_0>r_1>\cdots r_m>r_0/2$
such that
\\
\noindent $(1)_l$
\begin{equation}\label{14}\omega^{(l)}=\omega^{0}+\sum_{j=1}^{l}\omega_{(j)},\quad l=1,2,\cdots, m,\quad\omega^{(0)}=\omega^{0},\end{equation}
where the function $\omega_{(j)}=\omega_{(j)}(\xi):\mathcal{O}_j\rightarrow\mathbb{R}^{N}$ is smooth, and
\begin{equation}\label{15}\sup_{\xi\in\mathcal{O}_j}|\omega_{(j)}|\leq C(j-1)\,\epsilon_{j-1},\quad
\sup_{\xi\in\mathcal{O}_j}|\partial_{\xi}\omega_{(j)}|\leq C(j-1)\,\epsilon_{j-1},\quad j=1,\cdots,l;\end{equation}
\noindent $(2)_l$
\begin{equation}\label{16}B^{(l)z\bar z}=\sum_{j=0}^{l}B_{(j)}^{z \bar z},\quad j=1,\cdots,l,\, \;\;B_{(0)}^{z\bar z}=B^0,\end{equation}
where the operator-value functions $\lf B^{z\bar{z}}_{(j)}\rc=\lf B^{z\bar{z}}_{(j)}(\xi)\rc:\mathcal{O}_j\rightarrow\mathcal{L}(h_p,h_q)$ is smooth in $\xi\in\mathcal{O}_j$ with
\begin{equation}\label{17}\sup_{\xi\in\mathcal{O}_j}||\lf B_{(j)}^{z\bar z}(\xi)\rc||_{h_p\rightarrow
h_{q}}\leq C(j-1)\, \epsilon_{j-1},\; j=1,...,l,\end{equation}
\begin{equation}\label{18}\sup_{\xi\in\mathcal{O}_j}||\lf \partial_{\xi}(B_{(j)}^{z\bar z}(\xi))\rc||_{{h_p}\rightarrow
h_q}\leq C(j-1)\,\epsilon_{j-1},\; j=1,...,l;\end{equation}
\noindent $(3)_l$ For $ l=0,1,...,m$,
the vector fields $X_{R^{(l)}}:\; D_p(s_l,r_l)\times\mathcal{O}_l\subset\mathcal{P}^{p}\times\mathcal{O}_l\to \mathcal{P}^{q}$ are analytic in $(x,y,z,\bar z)\in D_p(s_l,r_l)$ for fixed $\xi\in \m O_l$ and smooth in $\mathcal{O}_l$ for fixed $(x,y,z,\bar z)\in D_p(s_l,r_l)$, and
 \begin{equation}\label{19} \W \lf X_{R^{(l)}}\rc\W_{q,D_p(s_l,r_l)\times\mathcal{O}_l}\le C(l)\, \e_l,\;
  \W \lf \p_\xi\,X_{R^{(l)}}\rc\W_{q,D_p(s_l,r_l)\times\mathcal{O}_l}\le C(l)\, \e_l; \: \end{equation}
\iffalse%%%%%%%%%% \begin{equation}\label{20}\W X_{R^{(l)}}\circ(\Lambda^{-1}(\cdot))\W_{q,D_q(s_l,r_l)\times\mathcal{O}_l}\le \e_l,\;  \W X_{R^{(l)}}\circ(\Lambda^{-1}(\cdot))\W^{\m L}_{q,D_q(s_l,r_l)\times\mathcal{O}_l}\le \e_l\end{equation}
where \begin{equation}\label{21} \Lambda^{-1}X_{R^{(l)}}:=(\p_y R^{(l)},-\p_x R^{(l)},-{\bf i}\Lambda^{-1}\p_{\bar z}R^{(l)}, {\bf i}\Lambda^{-1}\p_{ z}R^{(l)}),\end{equation}
\fi%%%%%%%%%%%%%%%
%%%%%%%%%%%%%%%%
\noindent $(4)_l$ For $ l=0,1,...,m$, the  vector fields $X_{P^{(l)}}:\; D_p(s_l,r_l)\times\m O_l\subset\m P^{p}\times\m O_l\to \m P^{q}$ analytic in $(x,y,z,\bar z)\in D_p(s_l,r_l)$ for fixed $\xi\in \m O_l$ and smooth in $\mathcal{O}_l$ for fixed $(x,y,z,\bar z)\in D_p(s_l,r_l)$, and
     \be\label{22} P^{(l)}=O(|y|^2+|y|||z||_{p}+||z||_{p}^3)\ee and
    \be\label{23}  \W \lf X_{P^{(l)}}\rc\W_{q,D_p(s_l,r_l)\times\mathcal{O}_l}\le C,\;  \W \lf \p_\xi\,X_{P^{(l)}}
    \rc\W_{q,D_p(s_l,r_l)\times\mathcal{O}_l}\le C;\ee
 %%%%%%%%%%
 \noindent $(5)_l$  The Hamiltonian functions $H_0^{(l)}$ and $R^{(l)}$ and $P^{(l)}$ are real when $(x,y)$ are real and $\bar z$ is the complex conjugate of $z$, for $(x,y,z,\bar z;\xi)\in D_p(s_l,r_l)\times \m O_l$.
    %
\iffalse%%%%%%%%
 %%%%%%%%%
 \be \label{24}\W X_{P^{(l)}}\circ(\Lambda^{-1}(\cdot))\W_{q,D_q(s_l,r_l)\times\mathcal{O}_l}\le C,\;  \W X_{P^{(l)}}\circ(\Lambda^{-1}(\cdot))\W^{\m L}_{q,D_q(s_l,r_l)\times\mathcal{O}_l}\le C\ee
\noindent $(5)_l$  The functions $N^{(l)}, R^{(l)}, P^{(l)}$ are real for real arguments $(x,y,z,\bar z)$.\\
\fi%%%%%%%%%%%%%%%%%%%
%%%%%%%%%%%%%%%%%%%%
\noindent

Then there exists a subset $\m O_{m+1}\subset\m O_m$ with
\be\label{25} \text{Meas}\; \m O_{m+1}=(\text{Meas}\;\m O_m)(1-O(K_m^{-C}))\ee
and a symplectic transformation
\be\label{26} \Psi_m:\; D_p(s_{m+1},r_{m+1})\times\m O_{m+1}\to D_p(s_{m},r_{m})\times\m O_{m} \ee such that
\be\label{27} H_{m+1}:=H_m\circ\Psi=N^{(m+1)}+R^{(m+1)}+P^{(m+1)}\ee
satisfies the above conditions $(1)_l$--$(5)_l$ with $l=m+1$.

\end{lem}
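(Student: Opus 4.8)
The plan is to perform one KAM step, producing a generating Hamiltonian $\mathcal F_m$ of the same polynomial shape as the low‑order resonant part of $R^{(m)}$ whose time‑one flow $\Psi_m=X^1_{\mathcal F_m}$ conjugates $H^{(m)}$ to $H^{(m+1)}$ with the non‑integrable part roughly squared in size (up to a harmless factor $K_m^C$). First I would truncate and split: with $\Gamma=\Gamma_{K_m}$ the cut‑off \eqref{K}, write $\Gamma R^{(m)}=\mathcal R+\widetilde P$, where $\mathcal R=R^{x}+(R^{y},y)+\langle R^{z},z\rangle+\langle R^{\bar z},\bar z\rangle+\langle R^{zz}z,z\rangle+\langle R^{z\bar z}z,\bar z\rangle+\langle R^{\bar z\bar z}\bar z,\bar z\rangle$ collects the Taylor–Fourier coefficients of $R^{(m)}$ in the monomials $1,y,z,\bar z,zz,z\bar z,\bar z\bar z$, while $\widetilde P=O(|y|^2+|y|\|z\|_p+\|z\|_p^3)$ collects the remaining monomials; analyticity in $x$ makes the tail $R^{(m)}-\Gamma R^{(m)}$ exponentially small in $K_m$. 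By $(3)_m$ all coefficients of $\mathcal R$ and their $\xi$‑derivatives are $\le C(m)\epsilon_m$ in the relevant $\lfloor\cdot\rceil$‑moduli of $h_q$‑valued norms. I would then look for $\mathcal F_m$ of the same shape as $\mathcal R$ solving $\{H_0^{(m)},\mathcal F_m\}+\mathcal R=\widehat{\mathcal R}$, where $\widehat{\mathcal R}$ is the $x$‑average of the $(R^{y},y)$‑ and $\langle R^{z\bar z}z,\bar z\rangle$‑monomials — the only pieces that cannot be removed and that will be absorbed into the new normal form.

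\textbf{Solving the homological equation.} This splits component‑wise. The equations for $\mathcal F^{x}$ and $\mathcal F^{y}$ have the form $\omega^{(m)}\cdot\partial_x f=-(g-\widehat g(0))$ and are solved by Fourier division by $(k,\omega^{(m)})$ after excising the usual small‑divisor set of measure $O(K_m^{-C})$ (as in the first part of the proof of Lemma \ref{lem4.1}), and $\widehat{R^{y}}(0)$ becomes the frequency correction $\omega_{(m+1)}$. The equations for $\mathcal F^{z}$, $\mathcal F^{\bar z}$ are precisely the first Melnikov equation $\Gamma\big(({\bf i}\omega^{(m)}\cdot\partial_x+\Lambda+B^{(m)z\bar z})\mathcal F^{z}\big)=-\Gamma R^{z}$ for $0<|k|\le K_m$, handled by Lemma \ref{lem4.1} (the $k=0$ part being solved directly by inverting $\Lambda+B^{(m)z\bar z}$, which is bounded from $h_q$ to $h_p$ since $\|\Lambda^{-1}B^{(m)z\bar z}\|_{h_p\to h_p}=O(\epsilon_0)$, and needs no excision). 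The equations for $\mathcal F^{zz}$, $\mathcal F^{\bar z\bar z}$, $\mathcal F^{z\bar z}$ are of the second‑Melnikov form of Lemma \ref{lem5.1} with $B=\breve B=B^{(m)z\bar z}$: the anticommutator sign for $\mathcal F^{zz}$, $\mathcal F^{\bar z\bar z}$ (invertible even at $k=0$ because $\lambda_i+\lambda_j>0$) and the commutator sign for $\mathcal F^{z\bar z}$, where the $x$‑average $\widehat{R^{z\bar z}}(0)$ is not removable and becomes $B^{z\bar z}_{(m+1)}$. To legitimately invoke Lemmas \ref{lem4.1} and \ref{lem5.1} at level $K=K_m$ one checks that $\Lambda+B^{(m)z\bar z}$ still obeys Assumption \textbf{B} (its clustering) and the twist \eqref{usefortwist} with constants degraded only by $O(\epsilon_0)$; this holds because, by $(2)_l$ and the summability of $\sum_l C(l)\epsilon_l$, $\|\lfloor B^{(m)z\bar z}\rceil\|_{h_p\to h_q}=O(\epsilon_0)$ stays small, and the accumulated drift $\omega^{(m)}-\omega^0=\sum_l\omega_{(l)}$ stays $O(\epsilon_0)$ so that $\partial_\xi\omega^{(m)}$ is close to the identity. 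Reality (from $(5)_m$) makes $\mathcal R$, hence $\mathcal F_m$, real and $\widehat{R^{z\bar z}}(0)$ self‑adjoint in $\ell_2$, which supplies the self‑adjointness hypotheses of the two linear lemmas. Setting $\omega^{(m+1)}=\omega^{(m)}+\omega_{(m+1)}$ and $B^{(m+1)z\bar z}=B^{(m)z\bar z}+B^{z\bar z}_{(m+1)}$, the estimates of Lemmas \ref{lem4.1} and \ref{lem5.1} give $(1)_{m+1}$, $(2)_{m+1}$ and $\W\lfloor X_{\mathcal F_m}\rceil\W_{q,D_p(s_m^1,r_m^1)\times\mathcal O_{m+1}}\le K_m^C\epsilon_m$.

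\textbf{Estimating the new Hamiltonian and the measure.} With $X_{\mathcal F_m}$ of modulus $\le K_m^C\epsilon_m$, the standard estimate on the time‑$t$ flow (using the intermediate scales $s_m^j,r_m^j$ to absorb domain losses step by step) shows $\Psi_m:D_p(s_{m+1},r_{m+1})\to D_p(s_m,r_m)$ is well defined and $O(K_m^C\epsilon_m)$‑close to the identity. The Lie series then gives $H^{(m+1)}=H^{(m)}\circ\Psi_m=H_0^{(m+1)}+R^{(m+1)}+P^{(m+1)}$ with $H_0^{(m+1)}=H_0^{(m)}+\widehat{\mathcal R}$, where $R^{(m+1)}$ is the low‑order part of $\int_0^1\{(1-t)\widehat{\mathcal R}+t\mathcal R,\mathcal F_m\}\circ X^t_{\mathcal F_m}\,dt+(R^{(m)}-\Gamma R^{(m)})\circ\Psi_m$ and $P^{(m+1)}$ collects the remaining higher‑order monomials coming from $\widetilde P\circ\Psi_m$, $P^{(m)}\circ\Psi_m$ and the higher‑order part of that integral. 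The key structural observation is that the Poisson bracket of two vector fields each mapping $\mathcal P^p\to\mathcal P^q$ again maps $\mathcal P^p\to\mathcal P^q$, because $h_q\hookrightarrow h_p$ (this embedding, i.e. $q=p+\kappa>p$, and not any smoothing, is what makes the iteration close), and all operator compositions stay absolutely bounded thanks to the $\lfloor\cdot\rceil$‑modulus formalism. Cauchy estimates, $K_m\approx|\log\epsilon_m|$ and the choice of $\varrho_0$ small and $\epsilon_0<\epsilon^*$ then yield $\W\lfloor X_{R^{(m+1)}}\rceil\W_{q,D_p(s_{m+1},r_{m+1})\times\mathcal O_{m+1}}\le K_m^C\epsilon_m^2\le\epsilon_{m+1}$ (the tail contributing only $\epsilon_m e^{-K_m(s_m^5-s_m^6)}\le\epsilon_m^3$) together with the matching bound for $\partial_\xi X_{R^{(m+1)}}$, which is $(3)_{m+1}$; since $\Psi_m=\mathrm{id}+O(K_m^C\epsilon_m)$, $P^{(m+1)}$ keeps the shape $O(|y|^2+|y|\|z\|_p+\|z\|_p^3)$ and the universal bound of $(4)_{m+1}$, and reality $(5)_{m+1}$ is preserved because $\mathcal F_m$ is real. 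Finally $\mathcal O_m\setminus\mathcal O_{m+1}$ is the union of the small‑divisor sets excised in Lemmas \ref{lem4.1} and \ref{lem5.1} at level $K_m$, giving $\mathrm{Meas}\,\mathcal O_{m+1}=(\mathrm{Meas}\,\mathcal O_m)(1-O(K_m^{-C}))$, which is \eqref{25}. Letting $m\to\infty$ and composing the $\Psi_m$ proves Theorem \ref{theorem2} (and, with $\mathbf B^\star$ replacing $\mathbf B$, Theorem \ref{theorem2+2}).

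\textbf{Main obstacle.} The genuinely difficult analysis — solving the homological equations when the $\lambda_j$ accumulate at a finite point, so the first Melnikov conditions carry infinitely many small divisors and the second Melnikov conditions see no spectral gap — is already packaged in Lemmas \ref{lem4.1} and \ref{lem5.1}; inside the present lemma the work is structural bookkeeping. The two delicate points are: (i) that a single step excises only $O(K_m^{-C})$ of the parameter measure and that $\sum_m K_m^{-C}<\infty$, which holds since $K_m\approx(1+\varrho_0)^m|\log\epsilon_0|$ grows geometrically in the exponent, so $\prod_m(1-O(K_m^{-C}))>0$ and \eqref{1} holds in the limit; and (ii) that the drifting normal form $(\omega^{(m+1)},y)+\langle(\Lambda+B^{(m+1)z\bar z})z,\bar z\rangle$ still satisfies the hypotheses of Lemmas \ref{lem4.1} and \ref{lem5.1} — which is exactly why $(1)_l$, $(2)_l$ are posed with the summable bounds $C(l)\epsilon_l$ and why every quantity is measured in the paired spaces $h_p,h_q$ with $q=p+\kappa$; that pairing, together with the ``$X_R$ gains $\kappa$ derivatives'' built into Assumption \textbf{C} and propagated along the induction by $(3)_l$, is precisely what turns the potential loss of $\kappa$ derivatives in $\mathcal F^{z}$ and $\mathcal F^{z\bar z}$ into a bounded (if not smoothing) operation.
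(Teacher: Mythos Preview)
Your outline captures the general KAM mechanism but contains a genuine gap that would prevent the iteration from closing.

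\textbf{The homological equation must be extended.} You solve the naive equation $\{H_0^{(m)},\mathcal F_m\}+\mathcal R=\widehat{\mathcal R}$ and place the high-order part of $P^{(m)}\circ\Psi_m$ into $P^{(m+1)}$. But $P^{(m)}\circ\Psi_m-P^{(m)}=\{P^{(m)},\mathcal F_m\}+O(\e_m^2)$, and since $P^{(m)}=O(|y|^2+|y|\|z\|_p+\|z\|_p^3)$ is only bounded by a universal constant (not by $\e_m$), the bracket $\{P^{(m)},\mathcal F_m\}$ contains low-order monomials---linear in $y$, linear in $z$, quadratic in $z$---of size $O(K_m^C\e_m)$, \emph{not} $O(\e_m^2)$. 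Your accounting simply drops these terms; if they are put into $R^{(m+1)}$ one gets $\W\lf X_{R^{(m+1)}}\rc\W_q\ge cK_m^C\e_m\gg\e_{m+1}$, violating $(3)_{m+1}$. Note that here $r_m$ does not shrink geometrically (all $r_m>r_0/2$), so there is no P\"oschel-style domain-scaling trick to make $P^{(m)}$ small on the next domain. The paper's remedy is to include these very terms---denoted $R_+^{(2m)}$ and computed explicitly in \eqref{7.43}--\eqref{7.48}---in the homological equation itself: one solves $\Gamma\bigl(\{\tilde H_0,F\}+R_*^{(2m)}+R_{+,*}^{(2m)}\bigr)=0$, see \eqref{6.67}. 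This couples the components of $F$, but the system is triangular: $R_+^z,R_+^{\bar z}$ depend only on $F^x$; $R_+^y$ on $F^x,F^z,F^{\bar z}$; and $R_+^{zz},R_+^{z\bar z},R_+^{\bar z\bar z}$ on $F^z,F^{\bar z}$. Hence the order $F^x\Rightarrow F^z,F^{\bar z}\Rightarrow F^y\Rightarrow F^{zz},F^{z\bar z},F^{\bar z\bar z}$ decouples them. With this extended equation the dangerous contribution is absorbed, $R^{(m+1)}$ consists only of the tail $(1-\Gamma)(\cdots)$, the bracket $\{R_*^{(2m)},F\}=O(\e_m^2)$, and the double bracket $\int\{\{H^{(m)},F\},F\}=O(\e_m^2)$; and correspondingly $\omega_{(m+1)}=\widehat{R^y}(0)+\widehat{R_+^y}(0)$ and $B_{(m+1)}^{z\bar z}=\widehat{R^{z\bar z}}(0)+\widehat{R_+^{z\bar z}}(0)$ each pick up a second summand.

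\textbf{Regularity of $X_F$.} You assert $\W\lf X_{\mathcal F_m}\rc\W_q\le K_m^C\e_m$ and base the Poisson-bracket closure on ``two vector fields each mapping $\mathcal P^p\to\mathcal P^q$''. This is not quite right: as you yourself note, the $k=0$ part of $F^z$ lies only in $h_p$ (not $h_q$), and $\widehat{F^{zz}}(0),\widehat{F^{\bar z\bar z}}(0)$ are bounded $h_{\tilde p}\to h_{\tilde p}$ for $\tilde p\in\{p,q\}$ but not $h_p\to h_q$ (this is the content of \eqref{2017-7-6-1}). Hence $X_F$ is only guaranteed to lie in $\mathcal P^p$, and your bracket argument does not apply directly. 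The paper's Lemma~\ref{lemx7.1} salvages the estimate by observing that the offending zero-modes are constant (for $F^u$) or linear (for $F^{uv}$) in $w$, so their contribution to the tangent map $\m D X_F$ is either zero or an $(x,y,z,\bar z)$-independent operator bounded on both $h_p$ and $h_q$. One then uses
\[
\W\lf X_{\{R,F\}}\rc\W_q\le\|\lf\m D X_R\rc\|_{p,q}\,\W X_F\W_p+\|\lf\m D X_F\rc\|_{q,q}\,\W\lf X_R\rc\W_q,
\]
which needs only $X_F\in\mathcal P^p$ together with $\m D X_F$ bounded $h_q\to h_q$, not $X_F\in\mathcal P^q$.
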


\section{Derivation of homological equations}

Step 1: Splitting the perturbation.

In the iterative lemma, the step number $l=0,1,\cdots,m$. Consider the perturbation $R^{(l)}$ with $l=m$. Decompose $R^{(m)}$ into $$R^{(m)}=R^{(2m)}+R^{(3m)}$$
with
%
% \begin{equation}\label{43}H^{(m+1)}=H^{(m)}\circ %X_F^t|_{t=1}.\end{equation}
%
 \begin{eqnarray}\label{}R^{(2m)}&=& R^x(x,\xi)+( R^y(x,\xi),y) \\ && +\langle R^z(x,\xi),z\rangle+\langle R^{\bar z}(x,\xi),\bar z\rangle \\ &&+\langle R^{zz}(x,\xi)z,z\rangle+\langle R^{z\bar z}(x,\xi)z,\bar z\rangle+\langle R^{\bar z\bar z}(x,\xi)\bar z,\bar z\rangle\end{eqnarray}
and $$R^{(3m)}=R^{(m)}-R^{(2m)}=O(|y|^2+|y|||z||_{p}+||z||^3_{p}),$$
where we can assume $\widehat{R^{x}}(0,\xi)$, the $0$-Fourier coefficient of $R^{x}(x,\xi)$, vanishes, since it does not affect the dynamics.

\begin{Lemma}\label{lemma2}Let $u,v\in\{z,\bar z\}$. The terms $R^x(x,\xi)$, $R^y(x,\xi)$, $R^u(x,\xi)$ and $R^{uv}(x,\xi)$ are analytic in $x\in \mathbb T^N_{s_m}$ for fixed $\xi$ and smooth in $\xi\in\m O_m$ for fixed $x$,
% and real for real arguments,
%
 and obey the following estimates
\begin{enumerate}
\item[(i)] \be\label{28}|\lfloor R^x\rceil|_{s_{m},\m O_m}\le C(m)\,\e_m,\quad |\lfloor\p_\xi R^x\rceil|_{s_{m},\m O_m}\le C(m)\,\e_m ,\ee
    \be\label{29} |\lfloor R^y\rceil|_{s_{m},\m O_m}\le C(m)\,\e_m,\quad |\lfloor\p_\xi R^y\rceil|_{s_{m},\m O_m}\le C(m)\,\e_m ,\ee

\item[(ii)]
\be \label{30} ||\lfloor R^u\rceil||_{q,s_{m},\m O_m}\le C(m)\,\e_m,\quad  ||\lfloor\p_\xi  R^u\rceil||_{q,s_{m},\m O_m}\le C(m)\,\e_m,\; u\in\{z,\bar z\},\ee

\item[(iii)] for $ u,v\in\{z,\bar z\}$,
    \be\label{31} \sup_{\mathbb T^N_{s_m}\times\m O_m}||\lf R^{uv}(x,\xi)\rc||
    _{h_p\to h_q}\le C(m)\,\e_m,\quad \sup_{\mathbb T^N_{s_m}\times\m O_m}||\lf \p_\xi \,R^{uv}(x,\xi)\rc||_{h_p\to h_q}\le C(m)\,\e_m.\ee

\item[(iv)] The perturbation $R^{(3m)}$ is analytic in $(x,y,z,\bar z)\in D_p(s_m,r_m)$ for fixed $\xi$ and smooth in $\xi\in\m O_m$ for fixed $(x,y,z,\bar z)\in D_p(s_m,r_m)$, and real when $x$ and $y$ are real and $\bar{z}$ is the complex conjugate of $z$, and obeys  the following estimates
\be\label{32} \W \lf X_{R^{(3m)}}\rc \W_{q,D_p(s_m,r_m)\times\m O_m}\le C(m),\quad \W\lf\p_\xi X_{R^{(3m)}}\rc \W_{q,D_p(s_m,r_m)\times\m O_m}\le C(m).\ee
\end{enumerate}
%In all estimates above, a constant factor $C=C(s_0,r_0)>0$ is omitted.
\end{Lemma}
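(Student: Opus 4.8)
The plan is to treat \eqref{28}--\eqref{32} as a pure bookkeeping lemma: each of $R^x,R^y,R^u,R^{uv}$ is a single Taylor--Fourier component of $R^{(m)}$, so it should simply inherit the size $C(m)\e_m$ carried by the vector field $X_{R^{(m)}}$ in hypothesis $(3)_m$, while $R^{(3m)}$ is the complementary sub-sum and therefore keeps the claimed vanishing order in $(y,z,\bar z)$. Everything rests on two elementary facts about the modulus $\lf\cdot\rc$ of \S1: \emph{(a) monotonicity} --- if the Taylor--Fourier coefficients of $g$ are dominated, monomial by monomial, in absolute value by those of $f$, then $\lf g\rc\le\lf f\rc$ coefficientwise, hence $|\lf g\rc|\le|\lf f\rc|$ in each of the norms $|\cdot|_{s,\m O}$, $||\cdot||_{q,s,\m O}$, $\W\cdot\W_{q,\cdots}$; and \emph{(b)} that $\lf\cdot\rc$ commutes (in the appropriate majorant sense) with $\p_x,\p_y,\p_z,\p_{\bar z}$ and with the evaluation $y=z=\bar z=0$, each of which acts on the coefficient array by a nonnegative-coefficient operation, so that a vector-field component evaluated at the centre of the $(y,z,\bar z)$-ball has modulus norm no larger than on the whole ball. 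Both are immediate from the definition of $\lf\cdot\rc$, and granting them the proof is short.

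For the linear terms I would write $X_{R^{(m)}}=(R^{(m)}_y,-R^{(m)}_x,\mi R^{(m)}_{\bar z},-\mi R^{(m)}_z)$ and read off from $R^{(m)}=R^x+(R^y,y)+\<R^z,z\>+\<R^{\bar z},\bar z\>+\cdots$ (the dots being of degree $\ge2$ in $(y,z,\bar z)$) that $R^y=R^{(m)}_y|_{y=z=\bar z=0}$, $R^{\bar z}=R^{(m)}_{\bar z}|_0$, $R^z=R^{(m)}_z|_0$ and $\p_xR^x=R^{(m)}_x|_0$; property (b) and the hypothesis $(3)_m$ then give \eqref{29}, \eqref{30} and $|\lf\p_xR^x\rc|_{s_m,\m O_m}\le C(m)\e_m$ at once (and likewise after one $\p_\xi$). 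To recover $R^x$ from $\p_xR^x$ I would use the normalisation $\widehat{R^x}(0,\xi)=0$: for $0\ne k\in\zn$ one has $\widehat{\p_{x_l}R^x}(k)=\mi k_l\widehat{R^x}(k)$, so $|\widehat{\lf\p_xR^x\rc}(k)|^2=|k|^2|\widehat{R^x}(k)|^2\ge|\widehat{R^x}(k)|^2$ since $|k|\ge1$, whence $|\lf R^x\rc|_{s_m,\m O_m}\le|\lf\p_xR^x\rc|_{s_m,\m O_m}\le C(m)\e_m$ and the analogue for $\p_\xi R^x$; this is \eqref{28}. For the quadratic part, $R^{uv}(x,\xi)$ with $u,v\in\{z,\bar z\}$ is, up to a fixed numerical factor, the differential in $u$ of the relevant component ($R^{(m)}_z$ or $R^{(m)}_{\bar z}$), evaluated at $y=z=\bar z=0$; since that component is $h_q$-valued, analytic on $D_p(s_m,r_m)$, with majorant norm $\le C(m)\e_m$, I would run a Cauchy estimate in the $(z,\bar z)$ variables along circles of radius $r_m/2$, performed on the majorant series, to get $||\lf R^{uv}(x,\xi)\rc||_{h_p\to h_q}\le C(m)\e_m/r_m\le C(m)\e_m$ (here $r_m>r_0/2$, so $r_m^{-1}$ is absorbed into $C(m)$); taking the supremum in $x$ and $\xi$, and differentiating once in $\xi$, yields \eqref{31}.

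For the remainder, after discarding the dynamically irrelevant constant $\widehat{R^x}(0,\xi)$ the difference $R^{(3m)}=R^{(m)}-R^{(2m)}$ is exactly the sub-sum of the expansion of $R^{(m)}$ over monomials $y^\gamma z^\alpha\bar z^\beta$ with $|\gamma|\ge2$, or $|\gamma|\ge1$ and $|\alpha|+|\beta|\ge1$, or $|\gamma|=0$ and $|\alpha|+|\beta|\ge3$; on $D_p(s_m,r_m)$ these are $O(|y|^2+|y|\,||z||_p+||z||_p^3)$, and by monotonicity $\lf X_{R^{(3m)}}\rc\le\lf X_{R^{(m)}}\rc$ coefficientwise, so $\W\lf X_{R^{(3m)}}\rc\W_{q,D_p(s_m,r_m)\times\m O_m}\le C(m)\e_m\le C(m)$, with the same for $\p_\xi X_{R^{(3m)}}$; this is \eqref{32}. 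Analyticity in $(x,y,z,\bar z)$, $C^1$-dependence on $\xi$ and reality of $R^{(3m)}$ (as of all the pieces) are inherited from $R^{(m)}$ because extracting homogeneous-in-$(y,z,\bar z)$ components, truncating the Fourier series in $x$ and integrating in $x$ all respect these properties --- in particular the splitting $R^{(m)}=R^{(2m)}+R^{(3m)}$ is invariant under the conjugation $z\leftrightarrow\bar z$ that defines ``reality''. The one step I expect to demand genuine care is the operator-valued quadratic estimate \eqref{31}: the Cauchy estimate must be run on the majorant (so that it controls $||\lf R^{uv}\rc||_{h_p\to h_q}$ and not merely $||R^{uv}||_{h_p\to h_q}$), and passing from the $\ell^2$-weighted Fourier norm in $(3)_m$ to the supremum over the \emph{closed} strip $\mathbb T^N_{s_m}$ costs an infinitesimal shrinkage of strip width, which is harmless here because $R^{(m)}$ is in fact analytic on the larger domain $D_p(s_{m-1},r_{m-1})$ inherited from the previous step and the iterative constants of \S\ref{section4} already allow such room. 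Everything else is a direct application of the monotonicity and derivative-compatibility of $\lf\cdot\rc$.
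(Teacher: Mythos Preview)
Your approach is essentially the same as the paper's: the paper also dismisses \eqref{28}--\eqref{30} and \eqref{32} as ``trivial or simpler'' and proves only \eqref{31}, via the identity $\lf R^{z\bar z}\rc=\bigl(\p_z\lf\p_{\bar z}R^{(m)}\rc\bigr)\big|_{y=z=\bar z=0}$ followed by a Cauchy estimate in $z$ against $\W\lf X_{R^{(m)}}\rc\W_{q,D_p(s_m,r_m)\times\m O_m}$, exactly as you propose. One small caveat: your remedy for the sup-versus-Fourier-$\ell^2$ mismatch --- that $R^{(m)}$ is analytic on the larger domain $D_p(s_{m-1},r_{m-1})$ --- is \emph{not} among the hypotheses $(3)_m$ of the iterative lemma (only $D_p(s_m,r_m)$ is assumed there); in practice this is harmless because \eqref{31} is only ever applied on the strictly smaller strip $\mathbb T^N_{s_m^4}$, so the needed shrinkage is available from $s_m^4<s_m$ rather than from a larger domain at step $m$.
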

\begin{proof}
%Note that for a vector function $R^u(x,\xi)$ with $u\in\{x,y,z,\bar z\}$, we have $||R^u||_{h_q}=||\lf R^u\rc||_{h_q}$.
 The proofs for \eqref{28}, \eqref{29} and \eqref{30} and \eqref{32} are trivial or simpler than that of \eqref{31}. We give only the proof for the first inequality in (\ref{31}) with $u=z,v=\bar z$. Note that
\be \lf{R^{z\bar z}}(x,\xi)\rc=\left.\left( \p_z\lf\p_{\bar z} R^{(m)}\rc\right)\right|_{z=\bar z=0},\ee
 where $\lf\p_{\bar z} R^{(m)}\rc$ is one entry of the modulus   $\lf X_{R^{(m)}}\rc$.
 By Cauchy's estimate,
\be \sup_{\mathbb T^N_{s_m}\times\m O_m}||\lf R^{z\bar z}(x,\xi)\rc||_{h_p\to h_q}
\le\frac{1}{r_m}\W \lf X_{R^{(m)}}\rc\W_{q,D_p(s_m,r_m)\times\m O_m}\le\frac{1}{r_m}\e_m\le \frac{4}{r_0}\e_m\le C(m)\e_m,\ee
where (\ref{19}) is used in the second inequality.

\end{proof}

Step 2. New form of Hamiltonian $H^{(m)}$. Let \begin{equation}\label{33}\omega^{-}_m:=\widehat{R^y}(0,\xi).\end{equation}
By \eqref{29},
 \begin{equation}\label{34}\sup_{\xi\in\mathcal{O}_m}|\omega^{-}_m(\xi)|\leq C(m)\,\epsilon_m,\quad
 \sup_{\xi\in\mathcal{O}_m}|\partial_{\xi}\omega^{-}_m(\xi)|\leq C(m)\,\epsilon_m.\end{equation}
 Let \begin{equation}\label{35}\tilde{\omega}^{(m+1)}=\omega^{(m)}+\omega^{-}_m,\end{equation}
\begin{equation}\label{36}\tilde{B}^{(m+1)}=B^{(m)}+\widehat{R^{z\bar z}}(0,\xi).\end{equation}
And set
\begin{equation}\label{37}\tilde{H}_0^{(m+1)}=(\tilde{\omega}^{(m+1)},y)
+\langle\Lambda z,\bar z\rangle+
\langle \tilde{B}^{(m+1)}(\xi)z,\bar z\rangle,\end{equation}
\begin{equation}\label{38}R_*^y(x,\xi)=R^y(x,\xi)-\widehat{R^y}(0,\xi),\end{equation}
\begin{equation}\label{39}R_*^{z\bar z}(x,\xi)=R^{z\bar z}(x,\xi)-\widehat{R^{z\bar z}}(0,\xi)\end{equation}
and
\begin{eqnarray}\label{2017-3-8-1x1} R^{(2m)}_* &=& R^x(x,\xi)+( R^y_*(x,\xi),y) \\ \label{2017-3-8-1x2} && +\langle R^z(x,\xi),z\rangle+\langle R^{\bar z}(x,\xi),\bar z\rangle \\ \label{2017-3-8-1x3}&&+\langle R^{zz}(x,\xi)z,z\rangle+\langle R^{z\bar z}_*(x,\xi)z,\bar z\rangle+\langle R^{\bar z\bar z}(x,\xi)\bar z,\bar z\rangle,\end{eqnarray}
where $\widehat{R^x}(0,\xi)=0$, $\widehat{R^y_*}(0,\xi)=0$ and $\widehat{R_{*}^{z\bar z}}(0,\xi)=0$.

Consequently, the Hamiltonian $H^{(m)}$ takes on a new form
\begin{equation}\label{41}H^{(m)}=\tilde{H}^{(m+1)}_0+R_*^{(2m)}+R^{(3m)}+P^{(m)}.\end{equation}
Step 3. Derivation of homological equations. Suppose the to-be-specified Hamiltonian $F$ is of the same form as of $R_*^{(2m)}$:
\begin{eqnarray}\label{2017-3-8-1} F &=& F^x(x,\xi)+( F^y(x,\xi),y) \\ \label{2017-3-8-2}&& +\langle F^z(x,\xi),z\rangle+
\langle F^{\bar z}(x,\xi),\bar z\rangle \\ \label{2017-3-8-3}  && +\langle F^{zz}(x,\xi)z,z\rangle+\langle F^{z\bar z}(x,\xi)z,\bar z\rangle+
\langle F^{\bar z\bar z}(x,\xi)\bar z,\bar z\rangle\end{eqnarray}
with $$\widehat{F^x}(0,\xi)=0,\widehat{F^y}(0,\xi)=0, \widehat{F^{z\bar z}}(0,\xi)=0$$ and it is expected that $$ F=O(\epsilon_m).$$

Let
\begin{equation}\label{43}H^{(m+1)}=H^{(m)}\circ X_F^t|_{t=1},\end{equation}
where $X_F^t$ is the flow of Hamiltonian vector field $X_F$ with Hamiltonian $F$ with the symplectic structure \eqref{2.2}.
By Taylor's formula,
\begin{eqnarray}\label{44}H^{m+1}&=&H^{(m)}+\{H^{(m)},F\}+\int_0^1\int_0^t\{\{H^{(m)},F\},F\}\circ X_F^{\tau}d\tau dt\\
\nonumber&=&\tilde{H}^{(m+1)}_0+R_*^{(2m)}+R^{(3m)}+P^{(m)}+\{\tilde{H}^{(m+1)}_0,F\}+\{R_*^{(2m)},F\}\\
\nonumber &&+\{R^{(3m)}+P^{(m)},F\}+\int_0^1\int_0^t\{\{H^{(m)},F\},F\}\circ X_F^{\tau}d\tau dt,
\end{eqnarray}
where $\{\cdot,\cdot\}$ is the Poisson bracket with the symplectic structure (\ref{2.2}), that is, for any two Hamiltonian functions $f$ and $g$ of $(x,y,z,\bar z)$,
\[ \{f,g\}=\partial_x {f}\cdot\partial_y g-\partial_y{g}\cdot \partial_x f+ {\bf i}\<\p_{z}f,\p_{\bar z} g \>-{\bf i}\<\p_{\bar z} f,\p_{ z}g\>.\]

Set \be \label{170411-5.28-}\tilde{H}_0:=\tilde{H}^{(m+1)}_0,\quad B=\tilde{B}^{(m+1)},\quad \omega=\tilde{\omega}^{(m+1)}.\ee

 By calculation,
\be \label{6.52}\begin{array}{lll} \{\tilde H_0,F\}&=& \p_x\tilde H_0\cdot\p_y F-\p_y\tilde H_0\cdot\p_x F-
{\bf i}\<\p_{\bar z}\tilde H_0,\p_z F \>+{\bf i}\<\p_z \tilde H_0,\p_{\bar z}F\>\\&=&
%\<(F^y\cdot\p_x) B z,\bar z\>
-\omega\cdot\p_x F^x-(\omega\cdot\p_x F^y,y)-\<\omega\cdot\p_x F^z,z\>-\<\omega\cdot\p_x F^{\bar z},\bar z\>\\&&-\<\omega\cdot\p_x F^{zz}z,z\>-\<\omega\cdot\p_x F^{\bar z\bar z}\bar z,\bar z\>- \<\omega\cdot\p_x F^{ z\bar z} z,\bar z\>\\&&
+{\bf i}\left( \<(\Lambda+B)F^{\bar z},\bar z\>+\<(\Lambda+B)F^{z\bar z}z,\bar z\>+\<(\Lambda+B)F^{\bar z\bar z}\bar z,\bar z\>
+\<F^{\bar z\bar z}(\Lambda+B^T)\bar z  ,\bar z\>\right)\\&&
-{\bf i}\left( \<(\Lambda+B^T)F^{z},z\>+\<F^{z\bar z}(\Lambda+B)z,\bar z\>+\<(\Lambda+B^T)F^{ z z} z, z\>+\<F^{ z z}(\Lambda+B) z, z\>\right),
 \end{array} \ee where $T$ denotes the transpose of matrix.
By Taylor's formula, decompose $\{R^{(3m)}+P^{(m)},F\}$ into the lower order terms $R_+^{(2m)}$  and the higher order terms $R_+^{(3m)}$:
\be \{R^{(3m)}+P^{(m)},F\}=R_+^{(2m)}+R_+^{(3m)}, \ee
where
\begin{eqnarray}\label{6.21}
R_+^{(2m)}&=&R_+^x(x,\xi)+(R_+^y(x,\xi),y)+\\
\label{6.22} &  &\<R_+^z(x,\xi),z\>+\<R_+^{\bar z}(x,\xi),\bar z\>+
\\\label{6.23}
 && \<R_+^{zz}(x,\xi)z,z\>+\<R_+^{\bar z\bar z}(x,\xi)\bar z,\bar z\>+\\ \label{6.24}&&
 \<R_+^{z\bar z}(x,\xi)z,\bar z\>,
\end{eqnarray}
\be R_+^{(3m)}=O(|y|^2+|y|||z||_{p}+||z||_{p}^3).\ee
Noting that \be\label{170408-5.35}\tilde P^{(m)}:=R^{(3m)}+P^{(m)}=O(|y|^2+|y|||z||_{p}+||z||_{p}^3).\ee
By calculation,
we have
\be R^x_+(x,\xi)\equiv 0,\ee
\be \label{7.43}R^y_+(x,\xi)=-\left.\p_y\left(\p_y\tilde P^{(m)}\cdot\p_x F^x-{\bf i}\<\p_{\bar z}\tilde P^{(m)},F^z\>+{\bf i} \<\p_{ z}\tilde P^{(m)},F^{\bar z}\>\right)\right|_{y=0,z=\bar z=0},\ee
\be \label{7.44} R_+^z(x,\xi)=-\left.\p_z\left(\p_y\tilde P^{(m)},\p_x F^x \right)\right|_{y=0,z=\bar z=0} ,\ee
\be\label{7.45} R_+^{\bar z}(x,\xi)=-\left.\p_{\bar z}\left(\p_y\tilde P^{(m)},\p_x F^x \right)\right|_{y=0,z=\bar z=0}, \ee
\be\label{7.46}R_+^{zz}(x,\xi)=\left.\p_z\p_z\left(\p_y\tilde P^{(m)}\cdot\p_x \left(\<F^z,z\>+\<F^{\bar z},\bar z\>\right)+\mi \<\p_z \tilde P^{(m)}, F^{\bar z}\>-\mi \<\p_{\bar z} \tilde P^{(m)}, F^{z}\>\right) \right|_{y=0,z=\bar z=0},\ee
\be \label{7.47}R_+^{\bar z\bar z}(x,\xi)=\left.\p_{\bar z}\p_{\bar z}\left(\p_y\tilde P^{(m)}\cdot\p_x \left(\<F^z,z\>+\<F^{\bar z},\bar z\>\right)+\mi \<\p_z \tilde P^{(m)}, F^{\bar z}\>-\mi \<\p_{\bar z} \tilde P^{(m)}, F^{z}\>\right) \right|_{y=0,z=\bar z=0},\ee
\be\label{7.48} R_+^{ z\bar z}(x,\xi)=\left.\p_{ z}\p_{\bar z}\left(\p_y\tilde P^{(m)}\cdot\p_x \left(\<F^z,z\>+\<F^{\bar z},\bar z\>\right)+\mi  \<\p_z \tilde P^{(m)}, F^{\bar z}\>-\mi \<\p_{\bar z} \tilde P^{(m)}, F^{z}\>\right) \right|_{y=0,z=\bar z=0}.
\ee
%%%%%%%%%%%%%
%%%%%%%%%%%%%
\iffalse
Take $K=K_m$. Introduce the truncation operator $\Gamma=\Gamma_K$ as follows: for any vector-value (or operator value) function $f(x)=\sum_{k\in \mathbb Z^N}\hat{f}(k)\;e^{{\bf i}(k,x)}$, define
\[\Gamma _Kf(x)=\sum_{|k|\le K}\hat{f}(k)\;e^{{\bf i}(k,x)}.\]
\fi
%%%%%%%%%%%%%%%%%
%%%%%%%%%%%%%%
Let
\be\label{3-23-pm427} R^{(2m)}_{+,*}:=R_+^{(2m)}-(\widehat{R^y_+}(0,\xi),y)-\<\widehat{R_+^{z\bar z}}(0,\xi)z,\bar z\>.\ee
As a whole, the homological equation obeyed by $F$ reads:
\be\label{6.67}\Gamma\left( \{\tilde H_0,F\}
%-\<(F^y\cdot\p_x\tilde B^{(m)})z,\bar z\>
%
+R_*^{(2m)}+R^{(2m)}_{+,*}\right)=0. \ee
%Noting (\ref{6.35}) and (\ref{6.36}),
Let
\be\label{yuan7.50} B^{(m+1)}:=\tilde B^{(m+1)}
%+F^y\cdot\p_x\tilde B^{(m+1)}
%
+\widehat{R^{z\bar z}_+}(0,\xi)=B^{(m)}+\widehat{R^{z\bar z}}(0,\xi)%+(F^y\cdot\p_x)(B^{(m)}+R^{z\bar z} )
+\widehat{R^{z\bar z}_+}(0,\xi),\ee
\be \label{omegam+1}\omega^{(m+1)}:=\tilde\omega^{(m+1)}+\widehat{R^y_+}(0,\xi)
=\omega^{(m)}+\widehat{R^y}(0,\xi)+ \widehat{R_+^y}(0,\xi).\ee
In view of (\ref{170411-5.28-}), (\ref{6.52}) and (\ref{6.67}), we get
\be H^{(m+1)}=H_0^{(m+1)}+R^{(m+1)}+P^{(m+1)},\ee
where
\be H_0^{(m+1)}
=(\omega^{(m+1)},y)+\sum_{j\in\zd}\lambda_j\,z_j\bar z_j+\<B^{(m+1)}z,\bar z\>,\ee
\begin{eqnarray}\label{3-23-pm41} R^{(m+1)}&=& (1-\Gamma)\left(\{\tilde H_0^{(m+1)},F \} +R_*^{(2m)}+R_{+,*}^{(2m)}\right) \\\label{3-23-pm42} & & +\{R_*^{(2m)},F\}\\\label{170415-1} & &+ \int_0^1\int_0^t\{\{H^{(m)},F\},F\}\circ X^\tau_F\;d\tau dt,\end{eqnarray}
\be\label{3-24-7.56} P^{(m+1)}=R^{(3m)}+P^{(m)}+R_+^{(3m)}=O(|y|^2+|y|||z||_{p}+||z||_{p}^3).\ee
Recall \be \omega=\tilde\omega^{(m+1)}.\ee
Writing (\ref{6.67}) explicitly, we have the following homological equations:
\be\label{7.57} \omega\cdot\p_x F^x=\Gamma R^x,\ee
\be \label{7.58}\omega\cdot\p_x F^y=\Gamma(R^y-\widehat{R^y}(0,\xi)+R_+^y-\widehat{R^y_+}(0,\xi)),\ee
\be\label{7.59} \Gamma(\omega\cdot\p_x F^{z}+{\bf i}((\Lambda+B^T)F^{z}))=-\Gamma (R^{z}+R_+^{z}),\ee
\be\label{7.60} \Gamma(\omega\cdot\p_x F^{\bar z}-{\bf i}((\Lambda+B)F^{\bar z}))=-\Gamma (R^{\bar z}+R_+^{\bar z}),\ee
\be \label{7.61}\Gamma\left(\omega\cdot\p_x F^{zz}+{\bf i}\left((\Lambda+B^T)F^{zz}+F^{zz}(\Lambda+B)\right)\right)=-\Gamma (R^{zz}+R_+^{zz}),\ee
\be\label{7.62} \Gamma\left(\omega\cdot\p_x F^{\bar z\bar z}-{\bf i}\left((\Lambda+B)F^{\bar z\bar z}+F^{\bar z\bar z}(\Lambda+B^T)\right)\right)=\Gamma (R^{\bar z\bar z}+R_+^{\bar z\bar z}),\ee
\be\label{7.62+}  \Gamma\left(\omega\cdot\p_x F^{z\bar z}-{\bf i}\left((\Lambda+B)F^{z\bar z}-F^{ z\bar z}(\Lambda+B)\right)\right)
=\Gamma (R^{ z\bar z}+R_+^{ z\bar z})-\widehat{R^{z\bar z}}(0,\xi)-\widehat{R^{z\bar z}_+}(0,\xi),\ee
where $\Gamma F^x=F^x$, $\Gamma F^y=F^y$, $\Gamma F^z=F^z$, $\Gamma F^{\bar z}=F^{\bar z}$, $\Gamma F^{zz}=F^{zz}$, $\Gamma F^{\bar z\bar z}=F^{\bar z\bar z}$,  $\Gamma F^{zz}=F^{zz}$  and $\widehat{F^{z\bar z}}(0,\xi)=0$.

Finally, we point out that $B$ is self-adjoint in $\ell_2$. By {\bf Assumption D}, $\<B\, z,\bar z\>$ is real when $\bar z$ is regarded as the complex conjugate of $z$. Thus,
\[\<B\, z,\bar z\>=\overline{\<B\, z,\bar z\>}=\<\bar B\bar z,z\>=\<\bar z,\bar B^T\,z\>=\<\bar B^T\, z,\bar z\>,\] where the bar is the complex conjugate.
 It follows that $B$ is self-adjoint in $\ell_2(\mathbb {Z}^d)$, that is,
$B^T=\bar B.$

%Note that we always assume $B^T=B$ since it comes from the quadric form %$\langle B z,z\rangle$.

 \section{Solutions to the homological equations}

We will solve those homological equations \eqref{7.57}-\eqref{7.62+} in the following order:
\[ \eqref{7.57}\Rightarrow \eqref{7.59} \;\text{and}\;\eqref{7.60}\Rightarrow\eqref{7.58}\Rightarrow\eqref{7.61} \;\text{and}\;\eqref{7.62} \;\text{and}\;\eqref{7.62+}.\]
Recall
\be \omega=\tilde \omega^{(m+1)}=\omega^{(m)}+\widehat{R^y}(0,\xi),
%+\frac{1}{(2\pi)^N}\int_{\mathbb{T}^N}R^y(x,\xi)dx
\ee
 by \eqref{170411-5.28-} and \eqref{35} and \eqref{33}. In view of \eqref{29} and \eqref{14} and \eqref{15},
 \[\p_\xi \omega=\p_\xi \omega^{0}+O(\e_0).\]
Using Assumption {\bf A}, the map $\omega:\; \m O_m\to \omega(\m O_m)$ is a diffeomorphism between $\m O_m$ and its image $\omega(\m O_m)$ and
\[\left|\text{det}\,\frac{\p \omega}{\p\xi}\right|\ge c_1/2>0. \]
Therefore, we assume $\omega(\xi)\equiv \xi$ without loss of generality. In this section , we always let $K=K_m$.

\begin{lem} \label{lemx6.1}({\bf  Solutions to \eqref{7.57}}) There a subset $\m O_{1m}\subset \m O_m$ with
\[\text{Meas}\; \m O_{1m}=(\text{Meas}\; \m O_m)(1-O(K^{-C})),\]
such that for any $\xi\in \m O_{1m}$, Eq. \eqref{7.57} has a unique solution $F^x(x):\; \mathbb T^N_{s_m^1}\times \m O_{1m}\to \mathbb C^N$ which is analytic in $x\in\mathbb T^N_{s_m^1}$ and smooth  in $\xi\in\m O_{1m}$ and obeys
\be\label{170411-2}|\lfloor F^x\rceil|_{s_{m}^{1},\m O_{1m}}\le C(m) \e_m,\; |\lfloor\p_\xi F^x\rceil|_{s_{m}^{1},\m O_{1m}}\le C(m) \e_m.\ee
\end{lem}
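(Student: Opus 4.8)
\textbf{Proof proposal for Lemma \ref{lemx6.1}.}

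The plan is to solve the scalar (vector-valued) homological equation $\omega\cdot\partial_x F^x=\Gamma R^x$ componentwise by passing to Fourier coefficients in $x$, exactly as one does in the standard small-twist KAM scheme; here the analysis is easy because there are no normal-frequency small divisors, only the divisors $(k,\omega)$ with $0<|k|\le K$. First I would expand $F^x(x,\xi)=\sum_{0<|k|\le K}\widehat{F^x}(k,\xi)\,e^{\mathbf i(k,x)}$ and $\Gamma R^x=\sum_{0<|k|\le K}\widehat{R^x}(k,\xi)\,e^{\mathbf i(k,x)}$, recalling that by the normalization in Step~1 the zeroth Fourier coefficient $\widehat{R^x}(0,\xi)$ vanishes, so no obstruction to solvability arises at $k=0$ and we may set $\widehat{F^x}(0,\xi)=0$. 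Equating coefficients gives $\mathbf i(k,\omega)\,\widehat{F^x}(k,\xi)=\widehat{R^x}(k,\xi)$, hence $\widehat{F^x}(k,\xi)=-\mathbf i\,\widehat{R^x}(k,\xi)/(k,\omega)$ for $0<|k|\le K$.

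Next I would carve out the parameter set. Using Assumption \textbf{A} we may assume $\omega(\xi)\equiv\xi$, so the set where some $(k,\omega(\xi))$ is small is controlled by the non-degeneracy directional derivative: there is a constant $c_{20}>0$ and a subset $\m O_m\setminus\m O_{1m}$ of measure $O(K^{-c_{20}})$ such that $|(k,\omega(\xi))|\ge K^{-c_{21}}$ for all $0<|k|\le K$ and all $\xi\in\m O_{1m}$, with $c_{21}=c_{20}+N$ (this is precisely the estimate \eqref{62} already used in the proof of Lemma \ref{lem4.1}). On $\m O_{1m}$ the formula for $\widehat{F^x}(k,\xi)$ is well-defined, and uniqueness of the solution with $\Gamma F^x=F^x$ and $\widehat{F^x}(0,\xi)=0$ is immediate.

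Then come the estimates, which are routine. From $|\widehat{F^x}(k,\xi)|\le K^{c_{21}}\,|\widehat{R^x}(k,\xi)|$ for $0<|k|\le K$ one gets, after inserting the Fourier weights $e^{2|k|s_m^1}\le e^{2|k|s_m}$ and summing, the bound $|\lfloor F^x\rceil|_{s_m^1,\m O_{1m}}\le K^{C}\,|\lfloor R^x\rceil|_{s_m,\m O_m}\le K^C\,C(m)\e_m\le C(m)\e_m$ (here one absorbs the polynomial factor $K^C=K_m^C$, which is of the form $C_1 m^{C_2}$ by the definition of $K_m$, into the iterative constant $C(m)$), using \eqref{28}. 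For the parameter derivative, apply $\partial_\xi$ to $\mathbf i(k,\omega)\widehat{F^x}(k)=\widehat{R^x}(k)$, so that $\mathbf i(k,\omega)\partial_\xi\widehat{F^x}(k)=\partial_\xi\widehat{R^x}(k)-\mathbf i(\partial_\xi(k,\omega))\widehat{F^x}(k)$; since $|\partial_\xi(k,\omega)|\le |k|\le K$ and $|(k,\omega)|\ge K^{-c_{21}}$ on $\m O_{1m}$, one obtains $|\partial_\xi\widehat{F^x}(k)|\le K^{C}(|\partial_\xi\widehat{R^x}(k)|+|\widehat{R^x}(k)|)$, and summing against the Fourier weights with $s_m^1<s_m$ and using \eqref{28} again gives $|\lfloor \partial_\xi F^x\rceil|_{s_m^1,\m O_{1m}}\le C(m)\e_m$, which is \eqref{170411-2}. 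Analyticity of $F^x$ in $x\in\mathbb T^N_{s_m^1}$ and $C^1$-smoothness in $\xi\in\m O_{1m}$ follow from the exponential decay of the coefficients and the differentiated bounds, as usual.

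There is no real obstacle here: this is the simplest of the homological equations, the only small divisor being the Diophantine-type divisor $(k,\omega)$ already handled in the standard excision argument, and the loss of analyticity width from $s_m$ to $s_m^1$ together with the polynomial-in-$K_m$ blow-up is the price absorbed into $C(m)$. The genuinely delicate homological equations are \eqref{7.59}--\eqref{7.62+}, treated via Lemmas \ref{lem4.1} and \ref{lem5.1}; \eqref{7.57} is dispatched by the elementary argument above.
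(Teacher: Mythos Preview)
Your argument is correct and is precisely the ``standard argument in KAM theory'' that the paper invokes and then omits: pass to Fourier coefficients, excise the parameter set where $|(k,\omega)|<K^{-c_{21}}$ for some $0<|k|\le K$ (exactly as in \eqref{62}), solve $\widehat{F^x}(k)=-\mathbf i\,\widehat{R^x}(k)/(k,\omega)$, and differentiate in $\xi$ for the Lipschitz bound. One small inaccuracy: $K_m^C$ is not of the form $C_1 m^{C_2}$, since $K_m\sim m^2(1+\varrho_0)^m|\log\epsilon_0|$ grows exponentially in $m$; however the paper's iterative constants $C(m)$ are explicitly allowed to be of the form $C_1 2^{Cm}$, and indeed the paper itself writes $K^C\epsilon_m\le C(m)\epsilon_m$ at the end of \S7, so your absorption is consistent with the paper's conventions.
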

\begin{proof} Note \eqref{28}. The proof is finished by a standard argument in KAM theory. We omit it here.
\end{proof}

\begin{lem} \label{lemx6.2}({\bf  Solutions to \eqref{7.59} and \eqref{7.60}}) Let $u=z$ or $u=\bar z$.
 There a subset $\m O_{2m}\subset \m O_m$ with
\[\text{Meas}\; \m O_{2m}=(\text{Meas}\; \m O_m)(1-O(K^{-C}))\]
such that for any $\xi\in \m O_{2m}$, each equation of  \eqref{7.59} and \eqref{7.60}  has a unique solution $F^u(x)=\m F^u_1(\xi)+\m F^u_2(x,\xi)$, where $\m F^u_1(\xi):\; \m O_{2m}\to h_p$ is smooth in $\xi\in\m O_{2m}$ and obeys
\be\label{170411-w2}\sup_{\xi\in\m O_{2m}}||\lfloor\m F_1^u(\xi)\rceil||_{p}\le K^C \e_m,\; \sup_{\xi\in\m O_{2m}}||\lfloor\p_\xi \m F^u_1\rceil||_{p}\le K^C \e_m\ee
and where $\m F^u_2\; :\; \mathbb T^N_{s_m^2}\times \m O_{2m}\to h_q$ which is analytic in $x\in\mathbb T^N_{s_m^2}$ and smooth in $\xi\in\m O_{2m}$ and obeys
\be\label{170411-z2}||\lfloor\m F^u_2\rceil||_{q,s_m^2,\m O_{2m}}\le K^C \e_m,\;\;\; ||\lfloor\p_\xi \m F^u_2\rceil||_{q,s_m^2,\m O_{2m}}\le K^C \e_m.\ee
\end{lem}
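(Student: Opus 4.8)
The plan is to solve the two homological equations \eqref{7.59} and \eqref{7.60} by a direct application of Lemma \ref{lem4.1}, after first splitting off the $k=0$ Fourier mode. Consider, say, \eqref{7.60}:
\[
\Gamma\bigl(\omega\cdot\p_x F^{\bar z}-{\bf i}(\Lambda+B)F^{\bar z}\bigr)=-\Gamma(R^{\bar z}+R_+^{\bar z}).
\]
Passing to Fourier coefficients in $x$, the $k=0$ equation is $-{\bf i}(\Lambda+B)\widehat{F^{\bar z}}(0)=-(\widehat{R^{\bar z}}(0)+\widehat{R^{\bar z}_+}(0))$, i.e. a purely ``algebraic'' equation whose solution is $\mathcal F_1^{\bar z}(\xi)=(\Lambda+B)^{-1}(\cdots)$. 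Here $\Lambda+B$ is invertible on $h_p$ since $\|\Lambda^{-1}B\|_{h_p\to h_p}\le C\epsilon_0$ by Assumption {\bf E} and Remark \ref{2018-2-28}, so $\Lambda+B=\Lambda(1+\Lambda^{-1}B)$ is invertible by Neumann series, with $\|(\Lambda+B)^{-1}\|_{h_p\to h_q}\le\|\Lambda^{-1}\|\cdot\|(1+\Lambda^{-1}B)^{-1}\|\le C K^{\kappa c_{22}}$-type bound; but note we only claim $\mathcal F_1^u\in h_p$ (with a $K^C$ loss) in \eqref{170411-w2}, consistent with the ``loss of regularity of $F^z$'' phenomenon described in the introduction, so the estimate $\|\lfloor\mathcal F_1^u\rfloor\|_p\le K^C\epsilon_m$ follows from $\|\Lambda^{-1}(\widehat R^u(0)+\widehat{R^u_+}(0))\|_p\le C(m)\epsilon_m$ together with $\|(1+\Lambda^{-1}B)^{-1}\|_{h_p\to h_p}\le C$. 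The $\p_\xi$-estimate follows by differentiating the Neumann series and using the $\p_\xi$-bounds in Assumptions {\bf B}, {\bf E} and Lemma \ref{lemma2}.

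For the remaining modes $0<|k|\le K$, I set $\widetilde{\mathcal F}_2^u(x,\xi)=\sum_{0<|k|\le K}\widehat{F^u}(k,\xi)e^{{\bf i}(k,x)}$, which must satisfy
\[
\Gamma\bigl(({\bf i}\omega\cdot\p_x+\Lambda+B)\mathcal F_2^{\bar z}\bigr)=-\Gamma\bigl(R^{\bar z}+R_+^{\bar z}-(\widehat{R^{\bar z}}(0)+\widehat{R_+^{\bar z}}(0))\bigr)
\]
(after multiplying \eqref{7.60} by ${\bf i}$, or handling the sign as in the statement of Lemma \ref{lem4.1} which covers both $\Lambda+B$ and its conjugate), and whose right-hand side has zero average in $x$. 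This is exactly the situation of Lemma \ref{lem4.1}: the hypotheses {\bf A}, {\bf B}, {\bf E} hold for $\omega$, $\lambda_j$, $B=\widetilde B^{(m+1)}$ on $\mathcal O_m$ (using \eqref{34}, \eqref{36} and the inductive smallness of the corrections to see $\widetilde B^{(m+1)}$ still satisfies {\bf E} with a slightly larger constant), and the right-hand side is analytic in $x\in\mathbb T^N_{s_m}$ and $C^1$ in $\xi$ with $\|\lfloor\cdot\rfloor\|_{q,s_m,\mathcal O_m}\le C(m)\epsilon_m$ by \eqref{30}, Lemma \ref{lemma2} and the estimates on $R_+^{\bar z}$ coming from \eqref{7.45} (which is quadratic in $F$, hence $O(\epsilon_m)$ once $F^x$ is controlled by Lemma \ref{lemx6.1}). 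Lemma \ref{lem4.1} then produces a subset $\mathcal O_{2m}\subset\mathcal O_m$ with $\operatorname{Meas}\mathcal O_{2m}=(\operatorname{Meas}\mathcal O_m)(1-O(K^{-C}))$ and a solution $\mathcal F_2^u:\mathbb T^N_{s_m^2}\times\mathcal O_{2m}\to h_q$ with $\|\lfloor\mathcal F_2^u\rfloor\|_{q,s_m^2,\mathcal O_{2m}}\le K^C\|\lfloor\text{RHS}\rfloor\|_{q,s_m,\mathcal O_m}\le K^C\epsilon_m$ and the corresponding $\p_\xi$-bound, which is \eqref{170411-z2} (absorbing the constant $C(m)$ into $K^C=K_m^C$ since $K_m$ is a power of $|\log\epsilon_m|$). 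The case $u=z$ with equation \eqref{7.59} is identical with $B$ replaced by $B^T=\bar B$, which is again self-adjoint in $\ell_2$ and satisfies {\bf E}.

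The one genuinely new point compared to a routine KAM homological step — and the place I would be most careful — is the interplay between the two pieces: the $k=0$ mode $\mathcal F_1^u$ lives only in $h_p$ (no smoothing), while $\mathcal F_2^u$ lives in $h_q$. This is harmless here because the decomposition $F^u=\mathcal F_1^u+\mathcal F_2^u$ is exactly what is asserted, but I must make sure that the later use of $F^u$ (e.g. in forming the new perturbation $R^{(m+1)}$ via \eqref{3-23-pm41}–\eqref{170415-1}) never requires $h_q$-regularity of the $k=0$ part; I expect this is arranged precisely because $\widehat{F^{u}}(0,\xi)$ enters only through terms that are themselves $O(|y|^2+|y|\|z\|_p+\|z\|_p^3)$ or through the already-extracted $B^{(m+1)}$, $\omega^{(m+1)}$ corrections. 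A secondary technical obstacle is checking that the directional non-degeneracy \eqref{usefortwist} used inside Lemma \ref{lem4.1} survives with $B=\widetilde B^{(m+1)}$ and $\omega=\widetilde\omega^{(m+1)}$; this follows from \eqref{170510-2}, \eqref{34}, \eqref{15} and the smallness $C\epsilon_0$ of all accumulated corrections, exactly as in the Remark following Assumption {\bf B}. Granting these, the proof is a direct invocation of Lemma \ref{lem4.1}.
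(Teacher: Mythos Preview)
Your approach is essentially identical to the paper's: split off the $k=0$ mode, solve it by the Neumann series $(\Lambda+B)^{-1}=(1+\Lambda^{-1}B)^{-1}\Lambda^{-1}$ to land in $h_p$ only, and feed the mean-zero remainder directly into Lemma~\ref{lem4.1} to obtain $\mathcal F_2^u\in h_q$ on the reduced parameter set. One small slip: $R_+^{\bar z}$ in \eqref{7.45} is \emph{linear} in $F^x$ (with an $O(1)$ coefficient $\p_{\bar z}\p_y\tilde P^{(m)}$), not quadratic in $F$; your conclusion $O(\epsilon_m)$ is nevertheless correct for exactly that reason, and this is how the paper argues via \eqref{170411-2} and \eqref{170411-6.5}.
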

\begin{proof} We point out that for any vector function $f(x,\xi)$, $||f||_{q,s,\m O}=||\lf f\rc||_{q,s,\m O}$.
Recall that $\ti P^{(m)}=R^{(3m)}+P^{(m)}$. By \eqref{32} and \eqref{23} with $l=m$,
\be\label{170411-6.5} \W \lf X_{\ti P^{(m)}}\rc\W_{q,D_p(s_m,r_m)\times\m O_m}\le C(m),\; \W\lf\p_\xi X_{\ti P^{(m)}}\rc\W_{q,D_p(s_m,r_m)\times\m O_m}\le C(m).\ee
Applying \eqref{170411-2} and \eqref{170411-6.5} to \eqref{7.44} and \eqref{7.45}, we get
\be \label{17y1} ||\lfloor R^u_+\rceil||_{q, s^{1}_{m},\m{O}_{1m}}\le C(m) \e_m,\;   ||\lfloor \p_\xi\,R^u_+\rceil||_{q, s^{1}_{m}, \m{O}_{1m}}\le C(m) \e_m.\ee
Set $\m R^u=R^u+R^u_+$. By \eqref{17y1} and \eqref{30},
\be \label{17y2} ||\lfloor\m R^u\rceil||_{q, s^{1}_{m},\m{O}_{1m}}\le C(m) \e_m,\;  \; ||\lfloor\p_\xi\,\m R^u\rceil||_{q, s^{1}_{m},\m{O}_{1m}}\le C(m) \e_m.\ee
%
% By passing to Fourier coefficients,  \eqref{7.59} reads
%\be (-\l k,\omega\r+\Lambda+B)\hat{F}(k)^u=\mi\, \hat{\m R}(k). \ee
%
Decompose ${F}^u=\mathcal{F}_1^u(\xi)+\mathcal{F}^u_2(x,\xi)$ with
$$\mathcal{F}_1^u(\xi)= \widehat{F^u}(0,\xi),\quad\mathcal{F}_2^u(x,\xi)=\sum_{0<|k|\leq K}\widehat{F^u}(k)e^{\mi\langle k,x\rangle},$$
and decompose $\m R^u=\m R_1^u(\xi)+\m R_2^u(x,\xi)$ in the same way as done in $F^u$.
 When $k=0$, by \eqref{7.59} and \eqref{7.60},
 \begin{eqnarray*}\widehat{F^u}(0,\xi)&=&\mi \,(\Lambda+B)^{-1}(\widehat{\m R^u}(0,\xi))\\
 &=&{\mi}\,(1+\Lambda^{-1}B)^{-1}(\Lambda^{-1}\widehat{\m R^u}(0,\xi)).\end{eqnarray*}
 Note $||\Lambda^{-1}B||_{h_p\to h_p}=||B||_{h_p\to h_q}\leq\epsilon_0$ and
  \[||\Lambda^{-1}\widehat{\m R^u}(0,\xi)||_{p}\leq  ||\lfloor \m R^u\rceil||_{q, s_{m}^{1}, \m O_{1m}}\le C(m) \e_m,\]
  \[ ||\Lambda^{-1}\p_\xi \widehat{\m R^u}(0,\xi)||_{p}\leq  ||\lfloor \p_\xi \m R^u\rceil||_{q, s_{m}^{1}, \m O_{1m} }\le C(m) \e_m.\]
   So
 \begin{equation}\label{94}||\widehat{F^u}(0,\xi)||_{p}\le ||(1+\Lambda^{-1}\, B)^{-1}||_{h_p\to h_p}||\Lambda^{-1}\widehat{R^u}(0,\xi) ||_{p}\le C(m)\, \e_m,\quad ||\partial_{\xi}\widehat{F^u}(0,\xi)||_{p}\leq C(m)\,\epsilon_m.
\end{equation}
\iffalse%%%%%%%%%%%%%%
More,
\be ||\m F_1^u||_{p,s_m,\m O_m}\le \e_m,\; ||\p_\xi  \m F_1^u||_{p,s_m,\m O_m}\le\e_m.\ee and
\fi%%%%%%%%%%%%%%%%
%%%%%%%%%%%%%%
\iffalse%%%%%%%%%%\noindent For $\hat{F}^u(0)$, the situation is dangerous, since $||\Lambda^{-1}\hat{F}^u(0)||_{h_{p}}$ may be infinity. Fortunately, \begin{equation}\label{95}\mathcal{D}X_{\m{F}_1^u}\equiv0.\end{equation}
\fi%%%%%%%%%%%%%%%
%%%%%%%%%%%
Considering the decompositions  ${F}^u=\mathcal{F}_1^u+\mathcal{F}_2^u$ and $\m R=\m R_1^u+\m R_2^u$, we see that $\m F_2$ obeys
\be \Gamma(\omega\cdot\p_x \m F^u_2\pm{\bf i}((\Lambda+B)\m F^u_2)=\Gamma \m R_2^u,\ee
where
\[\m R_2^u=R^u(x,\xi)+R_+^u(x,\xi)-\widehat{R^u}(0)-\widehat{R^u_+}(0),\; \int_{\mathbb T^N}\m R_2^u(x,\xi)\,d x=0.\]
By Lemma \ref{lem4.1}, one gets that there exists a subset $\m O_{2m}\subset\m {O}_m$ with
\[\operatorname{measure}\, \m O_{2m}=(\text{measure}\;\m O_m)(1-K_m^{-C})\]
 such that
 \iffalse%%%%%%
  for any $\xi\in\m O_{2m}$ and $x\in \mathbb T^N_{s_m^2}$,
  \fi%%%%%%%%
\br\label{1704-6.5}
\begin{array}{ll}
||\lfloor\m F^u_2\rceil||_{q,s_{m}^{2},\m O_{2m}}\le K^C||\lfloor \mathcal{R}_{2}^{u}\rceil||_{q, s_{m},\m O_{1m}}\le K^C \e_m,&\\
 ||\lfloor\p_\xi \m F^u_2\rceil||_{q,s_{m}^{2},\m O_{2m}}\le K^C(||\lfloor \mathcal{R}_{2}^{u}\rceil||_{q, s^{2}_{m},\m O_{1m}}+||\lfloor\p_\xi \m R^u_2\rceil||_{q, s^{2}_{m},\m O_{1m}})\le K^C \e_m .&
\end{array}\er
This completes the proof of this lemma.
%%and
%\be \label{1704-6.6}||\Lambda^{-1}F^u(x,\xi)||_{h_q}\le K^C \e_m,\; ||\p_\xi \Lambda^{-1}F^u(x,\xi)||_{h_q}\le K^C \e_m,.\ee
%
\end{proof}
\begin{rem} Although $||\widehat{F^u}(0,\xi)||_{p}$ is small, the norm $||\widehat{F^u}(0,\xi)||_{q}$ may be infinite, which is precarious. Fortunately the vector   $\hat{F}^u(0,\xi)$ is independent of variables $(x,y,z,\bar z)$, and $||\widehat{F^u}(k,\xi)||_{q}$
is small for $k\neq 0$. By \eqref{170411-w2} and \eqref{170411-z2}, we have $||\lfloor F^{u}\rceil||_{p, s^{2}_{m},\m{O}_{2m}}\leq K^{c}\varepsilon_{m}$,
$||\lfloor \partial_{\xi}F^{u}\rceil||_{p, s^{2}_{m},\m{O}_{2m}}\leq K^{c}\varepsilon_{m}.$
\end{rem}
%%%
%%%
\begin{lem} \label{lemx6.3}({\bf  Solution to \eqref{7.58})}
 There a subset $\m O_{3m}\subset \m O_m$ with
\[\text{Meas}\; \m O_{3m}=(\text{Meas}\; \m O_m)(1-O(K^{-C}))\]
such that for any $\xi\in \m O_{3m}$, the equation  \eqref{7.58} has a unique solution $F^y(x,\xi):\; \mathbb T^N_{s_m^3}\times\m O_{3m}\to \mathbb C^N$ is smooth in $\xi\in\m O_{3m}$ and obeys
\be\label{170411-y2}|F^y(x,\xi)|_{s_m^3,\m O_{3m}}\le K^C \e_m,\;
|\p_\xi F^y(x,\xi)|_{s_m^3,\m O_{3m}}\le K^C \e_m.\ee
\end{lem}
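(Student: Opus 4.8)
The plan is to solve the scalar (vector-valued) homological equation $\omega\cdot\p_x F^y = \Gamma\bigl(R^y-\widehat{R^y}(0,\xi)+R_+^y-\widehat{R^y_+}(0,\xi)\bigr)$ by the same elementary small-divisor argument already used for \eqref{7.57} in Lemma~\ref{lemx6.1}. The right-hand side, call it $\m R^y := R^y + R_+^y$, has vanishing $0$-th Fourier coefficient after the subtraction, so passing to Fourier coefficients in $x$ gives, for $0<|k|\le K$,
\[
\mi\,(k,\omega)\,\widehat{F^y}(k,\xi) = \widehat{\m R^y}(k,\xi) - (\text{the }0\text{-mode terms, which are absent for }k\ne 0),
\]
hence $\widehat{F^y}(k,\xi) = \widehat{\m R^y}(k,\xi)/(\mi(k,\omega))$, and $\widehat{F^y}(0,\xi)=0$. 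First I would record the estimates on $\m R^y$: by \eqref{29} we have $|\lfloor R^y\rceil|_{s_m,\m O_m}\le C(m)\e_m$, and applying \eqref{170411-2} (the bound on $F^x$ from Lemma~\ref{lemx6.1}) together with \eqref{170411-6.5} to the explicit formula \eqref{7.43} for $R^y_+$ gives $|\lfloor R^y_+\rceil|_{s_m^1,\m O_{1m}}\le C(m)\e_m$ and the same for $\p_\xi R^y_+$; hence $|\lfloor\m R^y\rceil|_{s_m^1,\m O_{1m}}\le C(m)\e_m$ and likewise for its $\xi$-derivative.

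Next I would invoke Assumption~\textbf{A} (so that $\omega(\xi)\equiv\xi$ may be assumed) to produce, in the standard way, a subset $\m O_{3m}\subset\m O_{1m}$ with $\mathrm{Meas}\,\m O_{3m}=(\mathrm{Meas}\,\m O_{1m})(1-O(K^{-C}))$ on which the Diophantine-type lower bound $|(k,\omega(\xi))|\ge K^{-C}$ holds for all $0<|k|\le K$ (this is exactly \eqref{62} in the proof of Lemma~\ref{lem4.1}); the nondegeneracy \eqref{2.6} controls the measure of the excluded sublevel sets and \eqref{2.7} controls the derivative. On $\m O_{3m}$ the division is legitimate and, summing the geometric-in-$|k|$ loss against the exponential decay $e^{2|k|s_m}$ of the Fourier coefficients, a loss of analyticity width from $s_m$ (or rather $s_m^1$) down to $s_m^3$ absorbs the polynomial factor $K^C$ coming from the small divisor together with the factor from Cauchy/tail summation; concretely, $\sum_{0<|k|\le K} K^{2C}|\widehat{\m R^y}(k)|^2 e^{2|k|s_m^3}\le K^{2C}\sum_k |\lfloor\widehat{\m R^y}(k)\rceil|^2 e^{2|k|s_m^1}$ since $s_m^3<s_m^1$, which yields $|F^y|_{s_m^3,\m O_{3m}}\le K^C\e_m$. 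Applying $\p_\xi$ to both sides of \eqref{7.58} produces the analogous equation for $\p_\xi F^y$ with right-hand side involving $\p_\xi\m R^y$ and $(\p_\xi\omega)\cdot\p_x F^y$ moved to the right; since $|\p_\xi\omega|\le K$ and $|\p_x F^y|$ costs another inverse width factor, repeating the argument (with a further shrinkage inside the $s_m^j$ scale, or simply absorbing it into the universal $K^C$) gives $|\p_\xi F^y|_{s_m^3,\m O_{3m}}\le K^C\e_m$. Uniqueness is immediate from the requirement $\Gamma F^y=F^y$ and $\widehat{F^y}(0,\xi)=0$, which pins down every Fourier mode.

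I do not expect any serious obstacle here: unlike the homological equations for $F^z$ and $F^{z\bar z}$, equation \eqref{7.58} involves no normal frequencies $\lambda_j$ and no operator $B$, so there is no interaction with the finite limit-point clustering of the $\lambda_j$'s and no loss of regularity in the $h_p\!\to\!h_q$ scale to track — $F^y$ simply lives in $\mathbb C^N$. The only mild point of care is bookkeeping: ensuring that the measure exclusion $\m O_{3m}$ is taken as a further subset of $\m O_{1m}$ (on which $F^x$, hence $R_+^y$, is already defined), and that the width losses $s_m\to s_m^1\to s_m^3$ are spent consistently so that all constants collapse into the single $K^C$ with $K=K_m$. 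This is the routine part of the KAM step, and the write-up can legitimately be compressed by pointing to the argument of Lemma~\ref{lemx6.1} and Lemma~\ref{lem4.1}.
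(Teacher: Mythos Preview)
Your overall strategy is correct and matches the paper's: once the right-hand side is controlled, \eqref{7.58} is a pure small-divisor equation in $\mathbb C^N$ with no normal frequencies involved, and the standard KAM argument (as in Lemma~\ref{lemx6.1} / equation \eqref{62}) finishes it.

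There is, however, a bookkeeping oversight. The formula \eqref{7.43} for $R_+^y$ involves not only $F^x$ but also $F^z$ and $F^{\bar z}$, through the terms $\langle\p_{\bar z}\tilde P^{(m)},F^z\rangle$ and $\langle\p_z\tilde P^{(m)},F^{\bar z}\rangle$. Hence to bound $|\lfloor R_+^y\rceil|$ you must invoke, in addition to \eqref{170411-2} and \eqref{170411-6.5}, the estimates \eqref{170411-w2}--\eqref{170411-z2} from Lemma~\ref{lemx6.2} on $F^z,F^{\bar z}$ in the $h_p$ norm. This is exactly why the paper solves the homological equations in the order $F^x\to F^z,F^{\bar z}\to F^y$ rather than the naive order $F^x\to F^y$. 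Consequently the estimate on $R_+^y$ should be carried out on $\m O_{2m}$ with width $s_m^2$ (as in the paper's \eqref{170411-6.13}), not on $\m O_{1m}$ with width $s_m^1$, and $\m O_{3m}$ should be taken as a further subset of $\m O_{2m}$. Once this is corrected, your argument coincides with the paper's.
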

\begin{proof} Applying \eqref{170411-2},\eqref{170411-w2},\eqref{170411-z2},\eqref{170411-6.5} to \eqref{7.43}, one gets
\be\label{170411-6.13}\begin{array}{lll}
&&|\lfloor R_+^y\rceil|_{s_m^2,\m O_{2m}}\le C (
|\lfloor \p_x F^x\rceil|_{s_m^2,\m O_{2m}}+||\lfloor F^z\rceil||_{p,s_m^2,\m O_{2m}}+||\lfloor F^{\bar z}\rceil||_{p,s_m^2,\m O_{2m}})
\le K^C \e_m,\\
& &
|\lfloor\p_\xi R_+^y\rceil|_{s_m^2,\m O_{2m}}\le C(\sum_{t=0}^1|\lfloor\p_\xi^t\p_x F^x\rceil|_{s_m^2,\m O_{2m}}
+||\lfloor\p_\xi^t F^z\rceil||_{s_m^2,\m O_{2m}}+||\lfloor \p_\xi^t F^{\bar z}\rceil||_{s_m^2,\m O_{2m}} )
%\\ & &
 \le K^C \e_m.\end{array}\ee
By \eqref{170411-6.13} and \eqref{29}, the proof is completed by standard KAM procedure. We omit it.
\end{proof}

\begin{lem} \label{lemx6.4}({\bf  Solutions to \eqref{7.61} \;\text{and}\;\eqref{7.62} \;\text{and}\;\eqref{7.62+}})
Let $u,v\in\{z,\bar z\}$.
 There a subset $\m O_{4m}\subset \m O_m$ with
\[\text{Meas}\; \m O_{4m}=(\text{Meas}\; \m O_m)(1-O(K^{-C}))\]
such that for any $\xi\in \m O_{4m}$, each of equations  \eqref{7.61} \;\text{and}\;\eqref{7.62}
\;\text{and}\;\eqref{7.62+} has a unique solution
$$\lfloor F^{uv}(x,\xi)\rceil:\; \mathbb T^N_{s_m^4}\times\m O_{4m}\to \m L(h_p,h_q)$$ is smooth in $\xi\in\m O_{4m}$
and analytic in $x\in\mathbb T^N_{s_m^4}$
and obeys
\be\label{170412-6.16}\left\{\begin{array}{lll} & &\sup_{\mathbb T^N_{s_m^4}\times\m O_{4m}}||\lf F^{uv}(x,\xi)-\widehat{F^{uv}}(0,\xi)\rc||_{h_p\to h_q}\le K^C \e_m,\\ & &\sup_{\mathbb T^N_{s_m^4}\times\m O_{4m}}
||\lf \p_\xi \left(F^{uv}(x,\xi)-\widehat{F^{uv}}(0,\xi)\right)\rc||_{h_p\to h_q}\le K^C \e_m\end{array}\right.\ee
and
\be\label{170412-6.17}\sup_{\m O_{4m}}||\lf\widehat{F^{uv}}(0,\xi)\rc||_{h_{\ti p}\to h_{\ti p}}\le K^C \e_m,\;\; \sup_{\m O_{4m}}
||\lf\p_\xi \widehat{F^{uv}}(0,\xi)\rc||_{h_{\ti p}\to h_{\ti p}}\le K^C \e_m,\; \ti p\in\{p,q\}.\ee

\end{lem}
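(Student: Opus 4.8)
The plan is to observe that each of the three homological equations \eqref{7.61}, \eqref{7.62} and \eqref{7.62+} is, after multiplication by $\mp{\bf i}$, an instance of the abstract linear equation \eqref{II} of Lemma \ref{lem5.1}; the proof then reduces to (i) estimating the inhomogeneous right-hand sides and (ii) checking that the structural hypotheses of Lemma \ref{lem5.1} are met.

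Step (i). First I would estimate the quadratic correction operators $R_+^{zz}$, $R_+^{\bar z\bar z}$, $R_+^{z\bar z}$ given explicitly by \eqref{7.46}, \eqref{7.47}, \eqref{7.48}. Each of these is a bilinear expression in $\ti P^{(m)}=R^{(3m)}+P^{(m)}$ and in the already-constructed homological solutions $F^z$, $F^{\bar z}$ produced by Lemma \ref{lemx6.2}. Inserting the bound \eqref{170411-6.5} for $X_{\ti P^{(m)}}$ together with \eqref{170411-w2}, \eqref{170411-z2}, and converting from vector fields to the coefficient operators by Cauchy's estimate exactly as in the proof of Lemma \ref{lemma2}, one gets $\sup ||\lf R_+^{uv}\rc||_{h_p\to h_q}\le K^C\e_m$ and the same for $\p_\xi R_+^{uv}$, at the price of one further shrinkage of the strip (down to width $s_m^3$). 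Combining with the bound \eqref{31} for $R^{uv}$ itself, the operator $\m R^{uv}:=R^{uv}+R_+^{uv}$ (and, for \eqref{7.62+}, the operator $R^{z\bar z}+R_+^{z\bar z}-\widehat{R^{z\bar z}}(0,\xi)-\widehat{R_+^{z\bar z}}(0,\xi)$, which has vanishing $x$-average by construction) satisfies $\sup ||\lf \m R^{uv}\rc||_{h_p\to h_q}\le K^C\e_m$, likewise with $\p_\xi$; by Remark \ref{2018-2-28} this places $\Lambda^{-1}\lf\m R^{uv}\rc$ in $\m L(h_p,h_p)$, analytically in $x$, as required by Lemma \ref{lem5.1}.

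Step (ii). Multiplying \eqref{7.61} by $-{\bf i}$ rewrites it as $\Gamma((-{\bf i}\omega\cdot\p_x+(\Lambda+B^T))F^{zz}+F^{zz}(\Lambda+B))={\bf i}\,\Gamma\m R^{zz}$, which is \eqref{II} with $\breve B=B$, with $B$ in the inner slot replaced by $B^T$, and with the non-commutator (``$+\cdots+$'') choice of sign, so no average condition is needed and $\widehat{F^{zz}}(0,\xi)$ is genuinely solved by the $k=0$ analysis in the proof of Lemma \ref{lem5.1}. Since $B$ is self-adjoint in $\ell_2$, $|B_{ij}|=|B_{ji}|$, hence $\lf B^T\rc=\lf B\rc$, so $B^T$ is again self-adjoint in $\ell_2$ and satisfies Assumption {\bf E} with the same constants, and Lemma \ref{lem5.1} applies verbatim. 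Equation \eqref{7.62} is handled identically, with $B$ and $\breve B=B^T$ and the ``$-\cdots-$'' sign; and \eqref{7.62+} is precisely the commutator case $(\Lambda+B)F-F(\Lambda+B)$ of \eqref{II} with $\breve B=B$, whose zero-average hypothesis holds because $\widehat{R^{z\bar z}}(0,\xi)+\widehat{R_+^{z\bar z}}(0,\xi)$ has been subtracted on the right, and for which the normalization $\widehat{F^{z\bar z}}(0,\xi)=0$ is imposed. Each of the three applications deletes from $\m O_m$ an exceptional parameter set of measure $O(K^{-N})$; I would take $\m O_{4m}:=(\m O_{1m}\cap\m O_{2m}\cap\m O_{3m})$ with these three sets removed, so that $\text{Meas}\,\m O_{4m}=(\text{Meas}\,\m O_m)(1-O(K^{-C}))$. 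The bound \eqref{170412-6.16} for the $x$-dependent part $F^{uv}-\widehat{F^{uv}}(0,\xi)$ then follows directly from \eqref{5.8}, \eqref{5.10} (absorbing the factor $(s_m^3-s_m^4)^{-N}$ into $K^C$), and \eqref{170412-6.17} for the zeroth Fourier coefficient follows from \eqref{2017-7-6-1}, \eqref{2017-7-6-2} (trivially for $uv=z\bar z$, where this coefficient vanishes).

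The main obstacle is Step (i): carefully tracing through \eqref{7.46}--\eqref{7.48} how the $O(C(m))$-size field $X_{\ti P^{(m)}}$ combines with the $O(K^C\e_m)$-size solutions $F^z,F^{\bar z}$, and absorbing the polynomial-in-$m$ constants into powers of $K_m$; all the genuine small-divisor and Green-function analysis (the tensor-product reduction, the exceptional-set measure estimates, the regularity gain for $k\ne0$) is already packaged inside Lemma \ref{lem5.1} and need not be repeated here.
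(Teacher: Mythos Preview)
Your proposal is correct and follows the same approach as the paper: first bound the auxiliary operators $R_+^{uv}$ by combining $X_{\ti P^{(m)}}$ with the already-constructed $F^z,F^{\bar z}$, then recognize each of \eqref{7.61}--\eqref{7.62+} as an instance of \eqref{II} and invoke Lemma \ref{lem5.1}. The paper makes one point in Step (i) more explicit than you do: since $\m F_1^u(\xi)=\widehat{F^u}(0,\xi)$ lies only in $h_p$ (not $h_q$), the crucial observation is $\p_x\m F_1^u\equiv 0$, so in the $\p_y\ti P^{(m)}\cdot\p_x\langle F^u,u\rangle$ contribution only $\m F_2^u\in h_q$ survives, while for the $\langle\p_z\ti P^{(m)},F^{\bar z}\rangle$-type terms one instead Cauchy-estimates $\p_z^2\ti P^{(m)}:h_p\to h_q$ from \eqref{170411-6.5} and composes with $F^u\in h_p$ via \eqref{170411-w2}--\eqref{170411-z2}.
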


\begin{proof}
We firstly give the estimates of $R_+^{uv}$ with $u,v\in\{z,\bar z\}$ which are defined in \eqref{7.46}, \eqref{7.47} and \eqref{7.48}. Without loss of generality, we only give the estimate of $R_+^{zz}$ defined in \eqref{7.46}. According to the decomposition in Lemma \ref{lemx6.2}, write $F^u=\m F_1^u(\xi)+{\m F}_2^u(x,\xi)$ where $u\in\{z,\bar z\}$. A key observation is \[\p_x\, \m F_1^u(\xi)\equiv 0.\] Thus
\be \p_y \ti P^{(m)}\cdot\p_x\left(\p_z(\< F^z,z\>+\<F^{\bar z},\bar z\>)\right)=\p_y \ti P^{(m)}\cdot\p_x \m F_2^z.\ee
By \eqref{170411-z2} and \eqref{170411-6.5},
\[ \p_y \ti P^{(m)}\cdot\p_x \m F_2^z:\; D_p(s_m^2,r_m^2)\times\m O_{2m}\to h_q\]
with
\be \label{1704-6.16}\sup_{D_p(s_m^2,r_m^2)\times\m O_{2m}}|| \lf\p_y \ti P^{(m)}\cdot\p_x \m F_2^z\rc||_{h_q}\le C(m)\,\e_m,\;  \sup_{D_p(s_m^2,r_m^2)\times\m O_{2m}}||\lf\p_\xi( \p_y \ti P^{(m)}\cdot\p_x \m F_2^z)\rc||_{h_q}\le C(m)\,\e_m . \ee
Set\[A_1=\p_z\left( \p_y \ti P^{(m)}\cdot\p_x \m F_2^z\right)\left|_{y=0,z=\bar z=0}.\right.\]
By the Cauchy estimate and using \eqref{1704-6.16},
\be \label{1704-z6.17}\sup_{\mathbb T^N_{s_m^4}\times \m O_{2m}}||\lf A_1\rc||_{h_p\to h_q}\le C(m)\,\e_m,\;
\sup_{\mathbb T^N_{s_m^4}\times \m O_{2m}}||\lf\p_\xi A_1\rc||_{h_p\to h_q}\le C(m)\,\e_m. \ee
By \eqref{170411-6.5} and using the Cauchy estimate, we have that  $\p_z^2\, \ti P^{(m)}:\; h_p\to h_q$ with
\be\label{1704-6.18} \sup_{D_p(s_m,r_m)\times \m O_m}||\lf\p_z^2\, \ti P^{(m)}\rc||_{h_p\to h_q}\le C\frac{1}{r_m-r_m^2}|| \lf X_{\tilde P^{(m)}}\rc||_{h_p\to h_q}\le C(m). \ee
Using \eqref{170411-w2} and \eqref{170411-z2},
\be \label{1704-6.19}
||\lf F^u\rc||_{p,s_{m}^{2},\m O_{2m}}\le K^C\e_m,\;\; ||\lf\p_\xi\,F^u\rc||_{_{p,s_{m}^{2},\m O_{2m}}}\le K^C\e_m ,\; u\in\{z,\bar z\},\ee
where we have used $C(m)K^C\le K^C$ by enlarging the last $C$.
Combining the last two inequalities, we have
\be\label{1704-6.20}
||\lf\left(\p_z^2\ti P^{(m)}\right) F^u\rc||_{q,s_{m}^{2},\m O_{2m}}\le K^C \e_m, \; ||\lf\p_\xi\left(\p_z^2\ti P^{(m)}\right) F^u\rc||_{q,s_{m}^{2},\m O_{2m}}\le K^C \e_m.\ee
Let
\[A_2:=- \p_z\p_z\left(\<\p_z \tilde P^{(m)}, F^{\bar z}\>+\<\p_{\bar z} \tilde P^{(m)}, F^{z}\>\right)\left|_{y=0,z=\bar z=0}.\right.\]
By Cauchy inequality and using \eqref{1704-6.20},
\be \label{1704-6.21}\sup_{\mathbb T^N_{s_m^4}\times \m O_{2m}}||\lf A_2\rc||_{h_p\to h_q}\le C(m) K^C\e_m\le K^C\, \e_m,\; \sup_{\mathbb T^N_{s_m^4}\times \m O_{2m}}||\lf\p_\xi A_2\rc||_{h_p\to h_q}\le\le C(m)K^C\e_m\le K^C \e_m\ee where the last constant $C$ in each inequality is enlarged.
By \eqref{7.46} , we have $R_+^{zz}=A_1+A_2$.
It follows from \eqref{1704-6.21} and \eqref{1704-z6.17} that
\be \label{1704-6.ww21}\sup_{\mathbb T^N_{s_m^4}\times \m O_{2m}}||\lf R^{zz}_+\rc||_{h_p\to h_q}\le K^C\e_m,\; \sup_{\mathbb T^N_{s_m^4}\times \m O_{2m}}||\lf\p_\xi R^{zz}_+\rc||_{h_p\to h_q}\le K^C\e_m,\ee where $C$ is enlarged again.
Similarly, the last inequality holds true for $R^{z\bar z}$ and $R^{\bar z\bar z}$, too.
Noting \eqref{31}, \eqref{1704-6.ww21} and using Lemma \ref{lem5.1}, we finish the proof of this lemma.
\end{proof}

\section{Estimates for new perturbation}

Note $D_p(s_m^i,r_m^i)\subset D_p(s_{m+1},r_{m+1})$ for $i=1,2,3,4$. Let $\m O_{m+1}=\m O\cap\m O_{1m}\cap\m O_{2m}\cap \m O_{3m}\cap \m O_{4m}$. By Lemmas
\ref{lemx6.1}, \ref{lemx6.2}, \ref{lemx6.3} and \ref{lemx6.4},
\[\text{Meas}\; \m O_{m+1}=(\text{Meas}\; \m O_m)(1-O(K_m^{-C})).\]
Recall that $F$ is defined in  \eqref{2017-3-8-1},\eqref{2017-3-8-2} and \eqref{2017-3-8-3}.
The Hamiltonian vector field $X_{F}$ reads
\[\left(
\p_y F(w),- \p_x F(w), {\bf i}\,\p_{\bar z} F(w),-{\bf i}\,\p_{ z}
F(w)\right),\]
where $(w,\xi)=(x,y,z,\bar z;\xi)\in D_{p}(s_{m+1},r_{m+1})\times O_{m+1}$. Note that the tangent space $T_w\, \m P^p=\m P^p$ and $T_w\, \m P^q=\m P^q$.
Denote by $\m V^{\tilde p}$ all functions which map
\[D_{p}(s_{m+1},r_{m+1})\times O_{m+1}\to T_w \m P^{\ti p},\quad \ti p\in\{p,q\}.\]
%%%
Recall \eqref{20170729},
\[\W X_F \W_{q,D_p(s,r)\times \m O}=\sqrt{|\p_y\,X_F|^{2}_{ p,s,r,\m O}+
|\p_x\,X_F|^{2}_{p, s,r,\m O}+||\p_{\bar z}\,X_F||^{2}_{ p,  q,s,r,\m O}+||\p_z\,X_F||^{2}_{ p,  q,s,r,\m O}},\]
where $s=s_{m+1},r=r_{m+1}$ and $\m O=\m O_{m+1}$. Since $s_{m+1}, r_{m+1}, \m O_{m+1}$ and the domain $D_p(s_{m+1},r_{m+1})$ are fixed in this section, we write $\W\cdot\W_{q,D_p(s,r)\times \m O}$ as $\W \cdot\W_{q}$. Similarly, we write $\W \cdot\W_{p}:=\W \cdot \W_{p,D_p(s,r)\times \m O}$.
We will denote by $||\cdot||_{\ti p,\ti q}$ the operator norm from $\m V^{\ti p}$ to $\m V^{\ti q}$ where $\ti p,\ti q\in\{p,q\}$.
\begin{lem} \label{out-in-absolute} For $(w,\xi)=(x,y,z,\bar z;\xi)\in D_{p}(s_{m+1},r_{m+1})\times \m O_{m+1}$, we have
\[ ||\lf\m D X_{F(w,\xi)}\rc||_{q,q}\le (N+1) ||\m D\lf X_{F(w(\theta(t)),\xi)}\rc||_{q,q}\] and
  \[ ||\lf\m D X_{F(w,\xi)}\rc||_{p,p}\le (N+1) ||\m D\lf X_{F(w(\theta(t)),\xi)}\rc||_{p,p}.\]
\end{lem}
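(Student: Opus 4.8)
The statement compares two things: the modulus of the full differential $\lf \m D X_F\rc$ (taken as a function of $(w,\xi)$) and the differential of the modulus $\m D \lf X_F\rc$. The subtlety is that the modulus operation $\lf\cdot\rc$ replaces all Taylor–Fourier coefficients by their absolute values, and differentiation $\m D$ (with respect to the phase variables $w=(x,y,z,\bar z)$) brings down powers/indices of those variables. Since $F$ has the restricted polynomial form \eqref{2017-3-8-1}--\eqref{2017-3-8-3}, namely $F$ is at most quadratic in $(z,\bar z)$, linear in $y$, and a trigonometric polynomial in $x$, the components of $X_F=(\p_y F,-\p_x F,\mi\,\p_{\bar z}F,-\mi\,\p_z F)$ are themselves of this controlled shape. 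The plan is to reduce the operator-norm comparison to a coefficient-wise inequality, exploiting that the only genuinely ``expanding'' differentiation is $\p_x$, which produces a factor $\mi k_\ell$ ($\ell=1,\dots,N$) on the $k$-th Fourier mode, and that $|\mi k_\ell|\le |k|\le |k_1|+\cdots+|k_N|$ only mildly reshuffles constants.

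First I would write $X_F$ and its differential $\m D X_F$ explicitly, entry by entry, as matrices of partial derivatives $\p_a\p_b F$ with $a,b\in\{x_1,\dots,x_N,y_1,\dots,y_N,z_j,\bar z_j\}$. For each such entry, I would expand in the Taylor–Fourier series $\p_a\p_b F=\sum_{k,\gamma,\alpha,\beta} c_{k\gamma\alpha\beta}\,e^{\mi(k,x)}y^\gamma z^\alpha\bar z^\beta$. The key observation is that $\lf \m D X_F(w,\xi)\rc$, evaluated at a point, is the sum $\sum |c_{k\gamma\alpha\beta}|\,|e^{\mi(k,x)}|\,|y^\gamma|\,|z^\alpha||\bar z^\beta|$, whereas $\m D\lf X_F(w,\xi)\rc$ differs only in that the $x$-derivatives have already been applied inside the modulus, so the factor that appears is $\lfloor\p_x(\cdots)\rfloor$, i.e. $\sum_\ell |{\sum_k \mi k_\ell \,\widehat{(\cdot)}(k)\,e^{\mi(k,x)}}|$ replaced appropriately. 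Concretely, $\p_{x_\ell}\lf f\rc = \lf\p_{x_\ell}f\rc$ fails in general, but one always has the pointwise bound $\big|\lf\p_{x_\ell} f\rc\big|\le \lf\p_{x_\ell}f\rc$ and $\sum_{\ell=1}^N\lf\p_{x_\ell}f\rc = \lf(\sum_\ell\p_{x_\ell}\text{-with-}|k_\ell|)f\rc$, which is dominated by $\lf |k|\,\widehat f(k)\rc$ summed; this is where the factor $N+1$ (one for each of the $N$ angle directions, plus one to absorb the non-$x$ differentiations which commute with $\lf\cdot\rc$) enters.

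The main steps, in order: (1) fix $(w,\xi)\in D_p(s_{m+1},r_{m+1})\times\m O_{m+1}$ and a pair of unit test vectors, reducing $\|\lf\m D X_F\rc\|_{q,q}$ and $\|\m D\lf X_F\rc\|_{q,q}$ to estimating the action of the respective matrices on nonnegative test vectors (modulus matrices act monotonically, so it suffices to test against vectors with nonnegative entries); (2) observe that $\m D\lf X_F\rc$ and $\lf \m D X_F\rc$ agree entrywise except that in the former the $x$-differentiations sit \emph{inside} $\lf\cdot\rc$; (3) apply the pointwise inequality $\lf \p_{x_\ell} g\rc \le \p_{x_\ell}\text{-operation on }\lf g\rc$ is \emph{not} valid, so instead use $\big|\widehat{\p_{x_\ell}g}(k)\big| = |k_\ell||\widehat g(k)| \le |k||\widehat g(k)|$ and $|k|\le\sum_{\ell'=1}^N|k_{\ell'}|$, giving $\lf\p_{x_\ell}g\rc \le \sum_{\ell'=1}^N \lf\p_{x_{\ell'}}\!\!\big\rfloor$-type bound, whence summing over the at most one $x$-derivative appearing in any entry of $\m D X_F$ costs a factor $N$; (4) the remaining $y$-, $z$-, $\bar z$-differentiations commute with $\lf\cdot\rc$ exactly (they only multiply coefficients by integer combinatorial factors, which are real nonnegative and pull through the absolute value), contributing the extra ``$+1$''; (5) collect constants to obtain $\|\lf\m D X_F\rc\|_{q,q}\le (N+1)\|\m D\lf X_F\rc\|_{q,q}$, and repeat verbatim with $q$ replaced by $p$.

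I expect the main obstacle to be step (3): carefully tracking how the modulus interacts with $\p_x$ on the \emph{matrix} $\m D X_F$ rather than on a scalar, i.e. making sure the factor $N$ (not $N^2$ or worse) suffices when an $x$-derivative can land in any of the $2N$ rows/columns indexed by $x$ and $y$. The resolution is that $F$ is linear in $y$ and quadratic in $z,\bar z$, so each entry of $\m D X_F$ contains \emph{at most one} surviving $\p_x$ after the two differentiations forming $\m D X_F=\m D(\text{first derivatives of }F)$ — one derivative is already ``used up'' forming $X_F$ — and hence the loss is a single factor of $N$ from distributing $|k|\le\sum_\ell|k_\ell|$, plus the harmless $+1$; everything else is bookkeeping with the definitions \eqref{20170729} of $\W\cdot\W$ and the monotonicity of modulus operators, which I would not grind through in detail.
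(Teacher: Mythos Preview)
Your high-level decomposition is exactly the one the paper uses: separate the $\partial_x$-column block of $\lf \m D X_F\rc$ from the remaining columns, observe that the non-$x$ columns coincide with those of $\m D\lf X_F\rc$ (since differentiation in $y,z,\bar z$ commutes with $\lf\cdot\rc$), and extract a factor $N$ from the $N$ columns in the $x$-block plus $1$ from the rest. So the architecture is right.

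There is, however, a genuine gap in your step (3). Your coefficient inequality $|k_\ell|\,|\widehat g(k)|\le \sum_{\ell'}|k_{\ell'}|\,|\widehat g(k)|$ bounds the entries of $\lf\m D X_F\rc$ by those of $\sum_{\ell'}\lf\p_{x_{\ell'}}(\cdot)\rc$, but what actually sits in the $x$-columns of $\m D\lf X_F\rc$ is $\p_{x_{\ell'}}\lf\,\cdot\,\rc$, whose Fourier coefficients are $\mi k_{\ell'}|\widehat g(k)|$, not $|k_{\ell'}|\,|\widehat g(k)|$. Thus $\m D\lf X_F\rc$ is \emph{not} a modulus operator, and the monotonicity reduction to nonnegative test vectors you invoke in step (1) does not apply to it; your coefficient domination does not by itself produce an operator-norm bound in terms of $\|\m D\lf X_F\rc\|_{q,q}$.

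The missing observation, which is the heart of the paper's proof, is that the norm $\W\cdot\W_q$ in \eqref{20170729} is a weighted $\ell^2$-norm of Fourier coefficients. Consequently, for the $j$-th $x$-column $\digamma_{1j}$ of $\lf\m D X_F\rc$ (coefficients $|k_j|\cdot(\cdots)$) and the corresponding column $\tilde\digamma_{1j}$ of $\m D\lf X_F\rc$ (coefficients $\mi k_j\cdot(\cdots)$), one has the \emph{equality} $\W\digamma_{1j}\W_q=\W\tilde\digamma_{1j}\W_q$, since $|k_j|^2=|\mi k_j|^2$. A single-column operator has operator norm equal to the column's vector norm, so $\|\digamma_{1j}\|_{q,q}=\|\tilde\digamma_{1j}\|_{q,q}\le\|\m D\lf X_F\rc\|_{q,q}$ by Lemma~\ref{splitlemma}. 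Summing over $j=1,\dots,N$ and adding the non-$x$ block (another application of Lemma~\ref{splitlemma}) gives exactly $N+1$. Once you use this equality, the detour through $|k_\ell|\le|k|$ becomes unnecessary; taken literally, your route would in fact yield $N^2+1$ rather than $N+1$.
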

\begin{proof}
Note
\be \m DX_{F(w,\xi)}=(\p_x\, X_F,\p_y\, X_F,\p_z\, X_F,\p_{\bar z}\, X_F)=\begin{pmatrix} \p_x\p_y  F & \p_y\p_y F&  \p_z\p_y F &-\p_{\bar z}\p_y F\\ -\p_x\p_x F& -\p_y\p_x F& - \p_z\p_x F&\p_{\bar z}\p_x F\\ \mi\,\p_x\p_{\bar z} F & \mi\,\p_y\p_{\bar z} F& \mi\,\p_z\p_{\bar z} F & \mi\,\p_{\bar z}\p_{\bar z} F\\ -\mi\, \p_x\p_{z}F & -\mi\,\p_y \p_{ z}F & -\mi\,\p_z \p_{ z} F & -\mi\,\p_{ z} \p_{\bar z} F
 \end{pmatrix},\ee
% and $||\cdot||_{p,q}$ is the operator norm from the tangent space $T_w\, \m P^p$ to  $T_w\, \m P^q$.
 and
 \[\begin{array}{lll} \m D\lf X_{F(w,\xi)}\rc&=&(\p_x\, \lf X_F\rc,\p_y\,\lf X_F\rc,\p_z\,\lf X_F\rc,\p_{\bar z}\, \lf X_F\rc)\\&=&\begin{pmatrix} \p_x\lf \p_y  F\rc & \p_y\lf \p_y F\rc &  \p_z\lf \p_y F\rc &\p_{\bar z}\lf \p_y F\rc

 \\ \p_x\lf \p_x F\rc & \p_y\lf \p_x F\rc& \p_z\lf \p_x F\rc &\p_{\bar z}\lf \p_x F\rc

 \\ \mi\,\p_x\lf \p_{\bar z} F\rc  &  \mi\,\p_y\lf \p_{\bar z} F\rc &\mi\, \p_z\lf \p_{\bar z} F \rc& \mi\,\p_{\bar z} \lf \p_{\bar z} F\rc

 \\ -\mi\, \p_x\lf \p_{ z}F\rc &-\mi\, \p_y \lf \p_{ z}F \rc&-\mi\,\p_z\lf \p_{ z} F \rc&-\mi\, \p_{ z}\lf \p_{\bar z} F\rc
 \end{pmatrix}
 \\&=& \begin{pmatrix} 1 & 0 &  0 &0

 \\ 0&1& 0 &0

 \\ 0 &   &\mi & 0

 \\ 0 &0 &0&-\mi
 \end{pmatrix}

 \begin{pmatrix} \p_x\lf \p_y  F\rc & \p_y\lf \p_y F\rc &  \p_z\lf \p_y F\rc &\p_{\bar z}\lf \p_y F\rc

 \\ \p_x\lf \p_x F\rc & \p_y\lf \p_x F\rc& \p_z\lf \p_x F\rc &\p_{\bar z}\lf \p_x F\rc

 \\ \p_x\lf \p_{\bar z} F\rc  &  \p_y\lf \p_{\bar z} F\rc & \p_z\lf \p_{\bar z} F \rc&\p_{\bar z} \lf \p_{\bar z} F\rc

 \\  \p_x\lf \p_{ z}F\rc &\p_y \lf \p_{ z}F \rc&\p_z\lf \p_{ z} F \rc& \p_{ z}\lf \p_{\bar z} F\rc
 \end{pmatrix}\\&:=&\Xi_0\,\Theta.
  \end{array}\]
Note that $\Xi_0$ is a unitary operator from $\m V^q$ to $\m V^q$. Thus
\[|| \m D\lf X_{F(w,\xi)}\rc||_{q,q}=||\Theta ||_{q,q}.\]
If we write
\[X_F=\sum_{k,\alpha,\beta,\gamma} C_{k\,\alpha\,\beta\,\gamma}\, e^{\mi(k,x)}\, y^\gamma\,z^\alpha\,\bar z^\beta,\] we find that the indices $\alpha,\, \beta,\, \gamma$
of $ z,\, \bar z,\,y$ are non-negative integer vector, except for the index $k$ of $x$. So we have
 \[ \lf \m D X_{F(w,\xi)}\rc=\begin{pmatrix}
 \lf \p_x\lf \p_y  F\rc\rc & \p_y\lf \p_y F\rc &  \p_z\lf \p_y F\rc &\p_{\bar z}\lf \p_y F\rc
 \\ \lf\p_x\lf \p_x F\rc\rc & \p_y\lf \p_x F\rc& \p_z\lf \p_x F\rc &\p_{\bar z}\lf \p_x F\rc
 \\ \lf \p_x\lf \p_{\bar z} F\rc \rc &  \p_y\lf \p_{\bar z} F\rc & \p_z\lf \p_{\bar z} F \rc& \p_{\bar z}\lf \p_{\bar z} F\rc
 \\ \lf \p_x\lf \p_{ z}F\rc\rc & \p_y \lf \p_{ z}F \rc&\p_z\lf \p_{ z} F \rc& \p_{ \bar z}\lf \p_{ z} F\rc
 \end{pmatrix}.\]
 We see that all entries of $\m D\lf X_{F(w,\xi)}\rc$ and $\lf \m D X_{F(w,\xi)}\rc$ are the same except those entries in the first columns of them.
 Thus we partition $\lf \m D X_{F(w,\xi)}\rc:=\digamma_1+\digamma_2,$ where
 \[\digamma_1=\begin{pmatrix}
 \lf \p_x\lf \p_y  F\rc\rc & 0 & 0 &0
 \\ \lf\p_x\lf \p_x F\rc\rc &0& 0 &0
 \\ \lf \p_x\lf \p_{\bar z} F\rc \rc & 0& 0& 0
 \\ \lf \p_x\lf \p_{ z}F\rc\rc &0&0& 0
 \end{pmatrix}=\sum_{j=1}^{N}\oplus \begin{pmatrix}\lf
  \p_{x_j}\lf \p_y  F\rc\rc & 0 & 0 &0
 \\  \lf\p_{x_j}\lf \p_x F\rc\rc&0& 0 &0
 \\  \lf \p_{x_j}\lf \p_{\bar z} F\rc\rc  & 0& 0& 0
 \\ \lf \p_{x_j}\lf \p_{ z}F\rc\rc &0&0& 0
 \end{pmatrix}:=\sum_{j=1}^{N}\oplus  \digamma_{1j},\] and
 \[\digamma_2=\begin{pmatrix}
 0 & \p_y\lf \p_y F\rc &  \p_z\lf \p_y F\rc &\p_{\bar z}\lf \p_y F\rc
 \\ 0& \p_y\lf \p_x F\rc& \p_z\lf \p_x F\rc &\p_{\bar z}\lf \p_x F\rc
 \\ 0 &  \p_y\lf \p_{\bar z} F\rc & \p_z\lf \p_{\bar z} F \rc& \p_{\bar z}\lf \p_{\bar z} F\rc
 \\0 & \p_y \lf \p_{ z}F \rc&\p_z\lf \p_{ z} F \rc& \p_{ \bar z}\lf \p_{ z} F\rc
 \end{pmatrix}.\]
 We see that
 \[\Theta=\tilde \digamma_1+\digamma_2,\]
 where
 \[\tilde \digamma_1=\begin{pmatrix}
  \p_x\lf \p_y  F\rc & 0 & 0 &0
 \\ \p_x\lf \p_x F\rc&0& 0 &0
 \\  \p_x\lf \p_{\bar z} F\rc  & 0& 0& 0
 \\ \p_x\lf \p_{ z}F\rc &0&0& 0
 \end{pmatrix}=\sum_{j=1}^{N}\oplus \begin{pmatrix}
  \p_{x_j}\lf \p_y  F\rc & 0 & 0 &0
 \\  \p_{x_j}\lf \p_x F\rc&0& 0 &0
 \\   \p_{x_j}\lf \p_{\bar z} F\rc  & 0& 0& 0
 \\  \p_{x_j}\lf \p_{ z}F\rc &0&0& 0
 \end{pmatrix}:=\sum_{j=1}^{N}\oplus  \ti\digamma_{1j}.\]

  By Lemma \ref{splitlemma}, we have
 \[||\digamma_2 ||_{q,q}\le ||\Theta ||_{q,q}.\]
 %%%
 %%%%
 In order to compute $|| \digamma_1||_{q,q}$, we write
 \[\p_u\, F=\sum_k e^{\mi (k,x)}\sum_{\alpha\,\beta,\gamma}\, C_{k,\alpha,\beta,\gamma;u}\, y^\gamma\, z^\alpha\, \bar z^\beta,\quad u\in\{x,y,z,\bar z\}.\]
 %Without loss of generality, we assume $N=1$.
 Then
 \[\p_{x_j}\,\lf \p_u\, F \rc = \sum_k (\mi\, k_j)e^{\mi (k,x)}\sum_{\alpha\,\beta,\gamma}\,\left| C_{k,\alpha,\beta,\gamma;u}\right|\, y^\gamma\, z^\alpha\, \bar z^\beta:=\sum_k (\mi\, k_j)e^{\mi (k,x)}\sum_{\alpha\,\beta,\gamma}\,g_{ku},\quad u\in\{x,y,z,\bar z\} \]
 and
 \[\lf \p_{x_j}\,\lf \p_u\, F \rc\rc = \sum_k | k_j|\, e^{\mi (k,x)}\sum_{\alpha\,\beta,\gamma}\,
 \left| C_{k,\alpha,\beta,\gamma;u}\right|\, y^\gamma\, z^\alpha\,\bar{z}^{\beta}=\sum_k | k|\, e^{\mi (k,x)}g_{ku}. \]
 It follows that
\[\tilde \digamma_{1j}=\sum_k (\mi\, k_j) \, e^{\mi\, (k,x)}\,\begin{pmatrix}
  g_{ky} & 0 & 0 &0
 \\ g_{kx}&0& 0 &0
 \\ g_{k\bar z}  & 0& 0& 0
 \\ g_{kz} &0&0& 0
 \end{pmatrix},\; \digamma_{1j}=\sum_k |k_j| \, e^{\mi\, (k,x)}\,\begin{pmatrix}
  g_{ky} & 0 & 0 &0
 \\ g_{kx}&0& 0 &0
 \\ g_{k\bar z}  & 0& 0& 0
 \\ g_{kz} &0&0& 0
 \end{pmatrix} .\]
Let $g_{1j}=(g_{1j}^y,\, g_{1j}^x\, g_{1j}^{\bar z},\, g_{1j}^z)^T$  ($\ti g_{1j}$, respectively) be the first column of the operator $\digamma_{1j}$ ( $\ti\digamma_{1j}$ , respectively). Note that all columns of $\digamma_{1j}$ ( $\ti\digamma_{1j}$ , respectively) are zero vectors except  $g_{1j}$  ($\ti g_{1j}$, respectively). Thus
\[|| \ti \digamma_{1j}||_{q,q}=\W \ti  g_{1j}\W_q,\;\; ||  \digamma_{1j}||_{q,q}=\W g_{1j}\W_q.\]
By the definition of $\W\cdot\W_{q}$,
\[\begin{array}{lll}\W g_{1g}\W_q^2&=&|g_{1j}^y|_{p,s,r,\m O}^2+|g_{1j}^x|_{p,s,r,\m O}^2+||g_{1j}^{\bar z}||_{p,q,s,r,\m O}^2+|| g_{1j}^z||_{p,q,s,r,\m O}^2\\&=&\sup_{\xi\in\m O,|y|\le r^2,||z||_{p}\le r,||\bar z||_{p}\le r} \sum_k |k_j|^2\left( |g_{ky}|^2+|g_{kx}|^2+||g_{k\bar z}||_q^2+||g_{kz}||_q^2 \right)\\&
=& \W \ti g_{1g}\W_q^2.\end{array}\]
%%
%%%
%%%%
It follows
\[||\digamma_{1j}||_{q,q}=||\tilde \digamma_{1j}||_{q,q}.\]
 Note that $\ti \digamma_{1j}$ is a column of $\Theta$. By  Lemma \ref{splitlemma}, we have
\[||\digamma_{1j}||_{q,q}=||\tilde \digamma_{1j}||_{q,q}\le ||\Theta||_{q,q}.\]
 % and
 %\[||  \lf \p_x \p_{\bar z} F \rc ||_{{\ti p,s_m}}^2=||   \p_x\lf \p_{\bar z} F\rc  ||_{{\ti p,s_m}}^2=\sum_{k}|k|^2\,e^{2s_m\, |k|}||\sum_{\alpha,\beta,\gamma}|C(k,\alpha,\beta,\gamma)|\,y^\gamma\, z^\alpha\, \bar z^\beta ||_{h_{\ti p}}^2.\]
 Consequently,
 \[ ||\lf\m D X_{F(w,\xi)}\rc||_{q,q}\le (N+1) ||\m D\lf X_{F(w,\xi)}\rc||_{q,q}.\]
In the proof above, by change $q$ by $p$
\iffalse%%%%%%%%%and noting $\W g_{1g}\W_p\le \W g_{1g}\W_q $ and $\W \ti g_{1g}\W_p\le \W \ti g_{1g}\W_q $,
\fi%%%
 we have
  \[ ||\lf\m D X_{F(w,\xi)}\rc||_{p,p}\le (N+1) ||\m D\lf X_{F(w,\xi)}\rc||_{p,p}.\]
This completes the proof.
\end{proof}

%
\iffalse%%%%%%%%%%%
%%%%%%%%%%%%%%%%%%%%%%%%%
Note $s_m^4-s_{m+1}>\frac{1}{C m^2},\,r_m^4-r_{m+1}>\frac{1}{C m^2}$.
%
Thus, by Cauchy's inequality and Lemmas \ref{lemx6.1}, \ref{lemx6.2}, \ref{lemx6.3} and \ref{lemx6.4},
\be\label{1704-7.2} \W \lf X_{\check{F}}\rc\W_{q}\le K^C\e_m,\;\;\;\;
\W \lf\p_\xi\, X_{\check{F}}\rc\W_{q}\le K^C\e_m,\ee
where $\check{F}=F-(\langle \widehat{{F}^{z}}(0), z\rangle +\langle \widehat{{F}^{\bar{z}}}(0), \bar{z}\rangle
+\langle \widehat{F^{zz}}(0)z, z\rangle+\langle \widehat{F^{z\overline{z}}}(0)z, \overline{z}\rangle+\langle \widehat{F^{\overline{z}\overline{z}}}(0)\overline{z}, \overline{z}\rangle).$
\fi%%%%%%%%%%%%%%%
%%%%%%%%%%%%%%%%%%

\begin{lem} \label{lemx7.1}
For $(w,\xi)=(x,y,z,\bar z;\xi)\in D_{p}(s_{m+1},r_{m+1})\times \m O_{m+1}$,
\be\label{1704-xx7.5} ||\lf\m D X_{F(w,\xi)}\rc||_{p,p}\le K^C_m\e_m,\; ||\lf\m D X_{F(w,\xi)}\rc||_{q,q}\le K^C_m\e_m\ee and
\be\label{1704-y7.6} ||\lf\p_\xi \m D X_{F(w,\xi)}\rc||_{p,p}\le K^C_m\e_m,\; ||\lf\p_\xi \m D X_{F(w,\xi)}\rc||_{q,q}\le K^C_m\e_m.\ee
\end{lem}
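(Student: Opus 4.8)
\textbf{Proof plan for Lemma \ref{lemx7.1}.}

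The plan is to reduce everything to the estimates already obtained for the individual components $F^x, F^y, F^z, F^{\bar z}, F^{zz}, F^{z\bar z}, F^{\bar z\bar z}$ in Lemmas \ref{lemx6.1}--\ref{lemx6.4}, using Cauchy's estimate to pass from the bounds on $F$ itself to bounds on the entries of the Jacobian $\mathcal D X_F$, and then invoking Lemma \ref{out-in-absolute} to trade the modulus of the derivative $\lfloor \mathcal D X_F\rceil$ for the derivative of the modulus $\mathcal D \lfloor X_F\rceil$ (up to the harmless factor $N+1$). First I would recall that $F$ has the explicit polynomial form \eqref{2017-3-8-1}--\eqref{2017-3-8-3}, so $X_F = (\partial_y F, -\partial_x F, \mathbf{i}\,\partial_{\bar z}F, -\mathbf{i}\,\partial_z F)$ has entries that are polynomials of degree $\le 2$ in $(y,z,\bar z)$ with analytic coefficients in $x\in\mathbb T^N_{s_m^4}$, and $\mathcal D X_F$ likewise has entries built from the second derivatives of $F$. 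Each such entry, e.g. $\partial_z\partial_{\bar z}F = F^{z\bar z}(x,\xi)$, $\partial_y\partial_x F = \partial_x F^y(x,\xi)$, $\partial_z\partial_x F$ involving $\partial_x F^{zz}$ and $\partial_x F^z$, etc., is controlled either directly by the lemmas of the previous section or by one application of the Cauchy estimate in $x$ (losing a factor $(s_m^4 - s_{m+1})^{-1} \le C m^2$) or in $(y,z,\bar z)$ (losing $(r_m^4-r_{m+1})^{-1}\le C m^2$), together with the bound \eqref{20170729} on the $\pmb{|}\cdot\pmb{|}_{q,\ldots}$-norm of a vector-valued map.

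The key steps, in order, are: (i) write $\mathcal D\lfloor X_F\rceil$ as the matrix whose columns are $\partial_x\lfloor X_F\rceil, \partial_y\lfloor X_F\rceil, \partial_z\lfloor X_F\rceil, \partial_{\bar z}\lfloor X_F\rceil$, exactly as displayed in the proof of Lemma \ref{out-in-absolute}; (ii) estimate each block using Lemmas \ref{lemx6.1}--\ref{lemx6.4}: the $x$-derivative columns pick up an extra factor $|k|$ from Fourier coefficients, which is absorbed by shrinking the analyticity strip from $s_m^4$ to $s_{m+1}$ and using $\sup_k |k| e^{-|k|(s_m^4-s_{m+1})} \le C(s_m^4-s_{m+1})^{-1}$, and each entry's $\pmb{|}\cdot\pmb{|}$ or $\|\cdot\|_{h_p\to h_q}$-norm is then $\le K_m^C\,\e_m$ after absorbing the polynomial-in-$m$ losses $C(m)$ into $K_m^C$ (recall $K_m\approx |\log \e_m|$, so $C(m)K_m^C\le K_m^C$ for $m$ large, and $\e_m$ absorbs the rest); (iii) sum the finitely many block contributions to conclude $\|\mathcal D\lfloor X_F\rceil\|_{p,p}, \|\mathcal D\lfloor X_F\rceil\|_{q,q}\le K_m^C\,\e_m$; (iv) apply Lemma \ref{out-in-absolute} to get the stated bounds on $\|\lfloor \mathcal D X_F\rceil\|_{p,p}$ and $\|\lfloor \mathcal D X_F\rceil\|_{q,q}$; (v) repeat verbatim with $\partial_\xi$ applied throughout, using the $\partial_\xi$-halves of \eqref{170411-2}, \eqref{170411-w2}, \eqref{170411-z2}, \eqref{170411-y2}, \eqref{170412-6.16}, \eqref{170412-6.17}, to obtain \eqref{1704-y7.6}.

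The main subtlety — more bookkeeping than a genuine obstacle — is the non-regularity of the $0$-th Fourier mode $\widehat{F^{u}}(0,\xi)$ and $\widehat{F^{uv}}(0,\xi)$: by Lemma \ref{lemx6.2} and Lemma \ref{lemx6.4} these pieces are only bounded in $h_p\to h_p$ (resp.\ as $h_p$-vectors), not in $h_p\to h_q$. However, since $\widehat{F^u}(0,\xi)$ and $\widehat{F^{uv}}(0,\xi)$ are \emph{independent of} $(x,y,z,\bar z)$, every entry of $\mathcal D X_F$ involving them is killed by at least one of $\partial_x, \partial_y, \partial_z, \partial_{\bar z}$ — the only surviving contributions are $\partial_z\partial_{\bar z}F = F^{z\bar z}$ and $\partial_{\bar z}\partial_z F$, which do contain $\widehat{F^{z\bar z}}(0,\xi)$, but there the target space is $h_p\to h_q$ and \eqref{170412-6.17} only gives $h_p\to h_p$; one then notes that $\widehat{F^{z\bar z}}(0,\xi)$ enters the Hamiltonian as $\langle \widehat{F^{z\bar z}}(0,\xi)z,\bar z\rangle$, which is exactly of the form later absorbed into $B^{(m+1)}$ (cf.\ \eqref{yuan7.50}), so when estimating the \emph{vector field of the remaining perturbation} this term is not present — equivalently, one works with $\check F := F - \langle\widehat{F^{z\bar z}}(0)z,\bar z\rangle - \langle\widehat{F^z}(0),z\rangle - \cdots$ as flagged in the commented-out passage, and estimates $\mathcal D X_{\check F}$, for which every block is genuinely $h_p\to h_q$-bounded by $K_m^C\e_m$. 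I would phrase the lemma's proof so that the bound \eqref{1704-xx7.5}--\eqref{1704-y7.6} refers to precisely the part of $F$ that generates the new perturbation, matching the usage in the iterative scheme.
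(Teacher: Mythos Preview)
Your overall strategy (steps (i)--(v)) is correct and matches the paper's approach: decompose $F$ according to \eqref{2017-3-8-1}--\eqref{2017-3-8-3}, estimate each block of $\mathcal D\lfloor X_F\rceil$ via the bounds of Lemmas~\ref{lemx6.1}--\ref{lemx6.4} together with Cauchy estimates, and then invoke Lemma~\ref{out-in-absolute}. However, your handling of the $0$-Fourier modes in the ``subtlety'' paragraph contains a genuine confusion that, as written, leaves a gap.

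First, you misidentify which $0$-modes survive in $\mathcal D X_F$. You are right that $\mathcal D X_{f_0^u}\equiv 0$ for $f_0^u=\langle\widehat{F^z}(0),z\rangle+\langle\widehat{F^{\bar z}}(0),\bar z\rangle$, since the associated vector field is constant. But for the quadratic piece it is $\widehat{F^{zz}}(0,\xi)$ and $\widehat{F^{\bar z\bar z}}(0,\xi)$ that survive (they appear in the $\partial_z\partial_z F$ and $\partial_{\bar z}\partial_{\bar z}F$ blocks), \emph{not} $\widehat{F^{z\bar z}}(0,\xi)$, which vanishes by construction (see the line after \eqref{7.62+}). Your claim that ``every entry of $\mathcal D X_F$ involving $\widehat{F^{uv}}(0,\xi)$ is killed'' is false for $uv=zz$ and $uv=\bar z\bar z$: the contribution $\langle\widehat{F^{zz}}(0)z,z\rangle$ has a linear vector field, so its Jacobian is the nonzero constant operator $\widehat{F^{zz}}(0)$.

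Second, your proposed fix---replacing $F$ by $\check F$ and only estimating $\mathcal D X_{\check F}$---does not prove the lemma as stated, since the flow $\Psi_m=X_F^1$ used in \eqref{190414-7.5} and the Poisson-bracket estimates \eqref{1704-7.16}--\eqref{1704-7.22} requires bounds on $\mathcal D X_F$ for the \emph{full} $F$. The paper does not drop these terms. Instead, it observes that the lemma only asks for the $\|\cdot\|_{p,p}$ and $\|\cdot\|_{q,q}$ norms, \emph{not} $\|\cdot\|_{p,q}$; and \eqref{170412-6.17} provides precisely $\|\lfloor\widehat{F^{uv}}(0,\xi)\rceil\|_{h_{\tilde p}\to h_{\tilde p}}\le K_m^C\e_m$ for $\tilde p\in\{p,q\}$. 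Thus, isolating $f^{uu}(0):=\langle\widehat{F^{zz}}(0)z,z\rangle+\langle\widehat{F^{\bar z\bar z}}(0)\bar z,\bar z\rangle$ and noting that $\mathcal D X_{f^{uu}(0)}$ is a constant block matrix with exactly these two entries, one gets $\|\mathcal D\lfloor X_{f^{uu}(0)}\rceil\|_{p,p},\|\mathcal D\lfloor X_{f^{uu}(0)}\rceil\|_{q,q}\le K_m^C\e_m$ directly. No modification of $F$ is needed; the regularity you feared missing is simply not required.
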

%%%%%
\begin{proof}
 Let $$f^u=\<F^z(x,\xi),z\>+\<F^{\bar z}(x,\xi),\bar z\>,$$
\[f_0^u:=\< \widehat{F^z}(0),z\>+\< \widehat{F^{\bar z}}(0),\bar z\>\]
and
\[\ti f^u:=\<F^z(x,\xi),z\>+\<F^{\bar z}(x,\xi),\bar z\>-f_0^u=f^u-f_0^u.\]
By Lemma \ref{lemx6.2} (i.e., \eqref{170411-z2}),
\be ||\lf X_{\ti f^u}\rc||_{p,q, s_{m}^{2},r_{m}^{2},\m O_{2m}}\le K^C_m\;\e_m.  \ee
Note $s_m^4-s_{m+1}>\frac{1}{C m^2},\,r_m^4-r_{m+1}>\frac{1}{C m^2}$.
By the Cauchy's  estimate,
\be \sup_{D_p(s_{m+1},r_{m+1})\times \m O_{m+1}}|| \m D \lf X_{\ti f^u}\rc||_{p,q}\le K^C_m\;\e_m .\ee
%where we have used $||F^z(x,\xi)||_{h_q}=||\lf F^z(x,\xi)\rc||_{h_q}$ when $\widehat{F^z}(0,\xi)=0$.
%
Another important observation is   $$\m D X_{f_0^u}\equiv 0.$$  So
\be\label{1704-7.10} \sup_{D_p(s_{m+1},r_{m+1})\times \m O_{m+1}}||\m D \lf X_{ f^u}\rc||_{p,q}\le K^C_m\e_m.\ee
Note $h_q\subset h_p$. By \eqref{1704-7.10},
we have
\be\sup_{D_p(s_{m+1},r_{m+1})\times \m O_{m+1}} ||  \m D \lf X_{f^u}\rc||_{p,p}\le K_m^C \e_m,\quad \sup_{D_p(s_{m+1},r_{m+1})\times \m O_{m+1}}||\m D \lf X_{f^u}\rc||_{q,q}\le K_m^C \e_m.\ee
Let
\[f^{uu}:=\langle (F^{zz}(x,\xi)-\widehat{F^{zz}}(0,\xi))z,z\rangle+\langle F^{z\bar z}(x,\xi)z,\bar z\rangle+
\langle (F^{\bar z\bar z}(x,\xi)-\widehat{F^{\bar z\bar z}}(0,\xi))\bar z,\bar z\rangle.\] Note $\widehat{F^{z\bar z}}(0,\xi)=0$.
By \eqref{170412-6.16} in Lemma \ref{lemx6.4} and Cauchy's inequality,
\[\sup_{D_p(s_{m+1},r_{m+1})\times \m O_{m+1}}|| \m D \lf X_{f^{uu}}\rc||_{p,q}\le K_m^C \e_m.\]
Thus
\be \sup_{D_p(s_{m+1},r_{m+1})\times \m O_{m+1}}|| \m D \lf X_{f^{uu}}\rc||_{p,p}\le K_m^C \e_m,
\quad \sup_{D_p(s_{m+1},r_{m+1})\times \m O_{m+1}}||\m D \lf X_{f^{uu}}\rc||_{q,q}\le K_m^C \e_m.\ee
Let
\[f^{uu}(0):=\langle \widehat{F^{zz}}(0,\xi) z,z\rangle+
\langle \widehat{F^{\bar z\bar z}}(0,\xi)\bar z,\bar z\rangle.\]
Then
\[\m D X_{f^{uu}(0)}=\begin{pmatrix}0 & 0& 0 &0\\0 & 0& 0 &0\\0& 0& 0 & \mi\, \widehat{F^{\bar z\bar z}}(0,\xi)\\  0&0& -\mi\, \widehat{F^{ z z}}(0,\xi) & 0
 \end{pmatrix},\]
which is independent of $(x,y,z,\bar z)$. By  \eqref{170412-6.17} in Lemma \ref{lemx6.4}, we have
\be \sup_{D_p(s_{m+1},r_{m+1})\times \m O_{m+1}}||  \m D \lf X_{f^{uu}(0)}\rc||_{p,p}\le K_m^C \e_m,\quad \sup_{D_p(s_{m+1},r_{m+1})\times \m O_{m+1}}||\m D \lf X_{f^{uu}(0)}\rc||_{q,q}\le K_m^C \e_m.\ee

Let
\[f^{x,y}:=F^x(x,\xi)+( F^y(x,\xi),y).\]
By Lemmas \ref{lemx6.1}, \ref{lemx6.3}  and Cauchy's inequality,
\be \sup_{D_p(s_{m+1},r_{m+1})\times \m O_{m+1}}|| \m D \lf X_{f^{x,y}}\rc||_{p,p}\le K_m^C \e_m,\quad
\sup_{D_p(s_{m+1},r_{m+1})\times \m O_{m+1}}||\m D \lf  X_{f^{x,y}}\rc||_{q,q}\le K_m^C \e_m.\ee
Consequently,
\be \sup_{D_p(s_{m+1},r_{m+1})\times \m O_{m+1}}|| \m D \lf X_{F}\rc||_{p,p}\le K_m^C \e_m,\quad
\sup_{D_p(s_{m+1},r_{m+1})\times \m O_{m+1}}||\m D \lf  X_{F}\rc||_{q,q}\le K_m^C \e_m.\ee
 We proves \eqref{1704-xx7.5} by using Lemma \ref{out-in-absolute}. By applying the method as above to $\p_\xi X_F$, we can finish the proof of \eqref{1704-y7.6}. We omit the detail. This completes the proof of this lemma.
\end{proof}

\ \

Let $X^t_{F(w(t))}=w(t)$ be the flow of the Hamiltonian vector field $X_F$. Then
\[w(t)-w(0)=\int_0^t\, X^s_{F(w(s),\xi)}\, d\, s.\]
Thus, for $t\in[0,1]$,
\be\begin{array}{ll}\W w(t)-w(0)\W_p&\le \int_0^t\W X_{F(w(s),\xi)}-X_{F(w(0),\xi)}\W_p+\int_0^t\W X_{F(w(0),\xi)}\W_p\,d\, s
\\
&\le K_m\e_m+\int^t_0||\m D X_{F(w(\theta(s)),\xi)}||_{p,p}\W w(s)-w(0)\W_p\, d \, s,
\end{array}\ee
where $\theta(s)\in [0,1]$ is a function of $s$, and $\m D$ is the tangent map of $X_F$.

 By Lemma \ref{lemx7.1} and Gronwall's inequality,
\be\label{190414-7.5} \W w(t)-w(0)\W_p\le K_m^C\,\e_m\exp\left(\int_0^1||\m D X_{F(w(\theta(s)),\xi)}||_{p,p}\, d\, s\right)\le K_m^C\,\e_m,\ee
where $(w(0),\xi)\in D_p(s_{m+1},r_{m+1})\times \m O_{m+1}$ and we have used that $||\m D X_{F(w(\theta(s)),\xi)}||_{p,p}\le ||\m \lf \m D X_{F(w(\theta(s)),\xi)}\rc||_{p,p}$.

Note that $r_{m+1}+K_m^C \e_m<r_m,\; s_{m+1}+K_m^C\e_m<s_m$.
The inequality \eqref{190414-7.5} implies that there does exist the solution $w(t)$ for $t\in[0,1]$ and that $\Psi_m:=w(1)$ obeys
\[\W\Psi_m(w,\xi)-w\W_p\le K_m^C\e_m,\quad (w,\xi)\in D_p(s_{m+1},r_{m+1})\times \m O_{m+1}\]
and
\[\Psi_m:\;\; D_p(s_{m+1},r_{m+1})\times \m O_{m+1}\to D_p(s_{m},r_{m})\times \m O_{m}.\]
This actually proves \eqref{26}.

\ \
We are now in position to estimate \eqref{3-23-pm42}. Recall that $R_*^{(2m)}$ is defined in \eqref{2017-3-8-1x1}, \eqref{2017-3-8-1x2} and \eqref{2017-3-8-1x3}.
By Lemma \ref{lemma2},
\be\label{1704-yuan2} \W \lf X_{R^{(2m)}_*}\rc\W_{q,D_p(s_m,r_m)}\le C(m) \e_m,\;\; \W \lf \p_\xi\,X_{R^{(2m)}_*}\W_{q,D_p(s_m,r_m)}\le C(m) \e_m.\ee
Combining Lemma \ref{lemx7.1} and \eqref{1704-yuan2}, we have
\be\label{1704-7.16} \begin{array}{ll}
\W \lf X_{\{R_*^{(2m)},F\}}\rc\W_{q,D_p(s_{m+1},r_{m+1})}&\le ||\lf\m D\,
 X_{R_*^{(2m)}}\rc||_{p,q}\W X_F\W_p+||\lf\m D X_F \rc||_{q,q}\W\lf X_{R_*^{(2m)}}\rc\W_q \\ &\le 2\e_m\, K_m^C\e_m\le \e_{m+1}\end{array} \ee
and
\br\label{1704-7.17}
&&\W \lf\p_\xi\, X_{\{R_*^{(2m)},F\}}\W_{q,D_p(s_{m+1},r_{m+1})}\nonumber\\
&\le &
\sup(||\lf\p_\xi\,\m D\, X_{R_*^{(2m)}}\rc||_{p,q}) \W\lf X_F\rc\W_{p}+\sup(||\lf\m D\, X_{R^{(2m)}_*}\rc||_{p,q})\W\lf X_F\rc\W_{p}\\
&&+
\sup(||\lf\p_\xi\,\m D\, X_{F}\rc||_{q,q}) \W \lf X_{R_*^{(2m)}}\rc\W_{q}+\sup(||\lf\m D\, X_F\rc||_{q,q})\W \lf \p_\xi\, X_{R_*^{(2m)}}\rc\W_{q }\nonumber\\
&\le & 4\e_m\, K_m^C\e_m\le \e_{m+1},\nonumber
  \er
where the ``$\sup$" runs over $D_p(s_{m+1},r_{m+1})\times \m O_{m+1}$.

Recalling $\lambda_j\approx |j|^{-\kappa}$ and Assumption {\bf E}  and using \eqref{16}-\eqref{23}, we have
\be\label{1704-yuan3} \W \lf X_{H^{(m)}}\rc\W_{q,D_p(s_m,r_m)}\le C(m),\;\; \W \lf\p_\xi\, X_{H^{(m)}}\rc\W_{q,D_p(s_m,r_m)}\le C(m).\ee
Combining Lemma \ref{lemx7.1} and \eqref{1704-yuan3}, we have
\be \begin{array}{ll}\W \lf X_{\{H^{(m)},F\}}\rc\W_{q,D_p(s_{m}^5,r_{m}^5)}&\le \sup||\lf\m D\, X_{H^{(m)}}\rc||_{p,q}\W X_F\W_p+||\lf\m D X_F\rc||_{q,q}\W\lf X_{H^{(m)}}\rc\W_q \\ &\le C(m)\, K_m^C\,\e_m\end{array} \ee
and
\br &&\W \lf \p_\xi\,X_{\{H^{(m)},F\}}\rc\W_{q,D_p(s_{m}^5,r_{m}^5)}\nonumber\\
&\le &\sup(||\lf\p_\xi\,\m D\, X_{H^{(m)}}\rc||_{p,q}) \W \lf X_F\rc\W_{p}+\sup(||\lf\m D\, X_{H^{(m)}}\rc||_{p,q})\W \p_\xi\, X_F\W_{p}\nonumber\\
&&+
\sup(||\lf\p_\xi\,\m D\, X_{F}\rc||_{q,q}) \W \lf X_{H^{(m)}}\rc\W_{q}+\sup(||\lf \m D\, X_F\rc||_{q,q})\W \lf\p_\xi\,X_{H^{(m)}}\rc\W_{q} \\
&\le &C(m)\, K_m^C\,\e_m,\nonumber \er
 where the ``$\sup$" runs over $D_p(s_m^5,r_m^5)\times \m O_{m+1}$.
Repeating the last procedure, we have
\be\label{1704-7.21} \begin{array}{ll}\W\lf X_{\{\{H^{(m)},F\},F\}}\rc\W_{q,D_p(s_{m+1},r_{m+1})}&\le ||\lf\m D\, X_{\{H^{(m)},F\}}\rc||_{p,q}\W\lf X_F\rc\W_p+||\lf\m D X_F\rc||_{q,q}\W\lf X_{\{H^{(m)},F\}}\rc\W_q \\ &\le C(m)\, K_m^C\,\e_m^2<\e_{m+1}\end{array} \ee
and
\be\label{1704-7.22} \begin{array}{ll}&\W \lf\p_\xi\,X_{\{\{H^{(m)},F\},F\}}\rc\W_{q,D_p(s_{m+1},r_{m+1})}\\
&\le\sup(||\lf\p_\xi\,\m D\, X_{\{H^{(m)},F\}}\rc||_{p,q}) \W \lf X_F\rc\W_{p}+\sup(||\lf\m D\, X_{\{H^{(m)},F\}}\rc||_{p,q})\W \lf\p_\xi\,X_F\rc\W_{p}\\
&\quad+\sup(||\lf\p_\xi\,\m D\, X_{F}\rc||_{q,q}) \W\lf X_{\{H^{(m)},F\}}\rc\W_{q}+\sup(||\lf\m D\, X_F\rc||_{q,q})\W \lf \p_\xi\,X_{\{H^{(m)},F\}}\rc\W_{q}
 \\ &\le C(m)\, K_m^C\,\e_m^2\le \e_{m+1},\end{array} \ee
 where the ``$\sup$" runs over $D_p(s_{m+1},r_{m+1})\times \m O_{m+1}$.
By applying \eqref{1704-7.21}, \eqref{1704-7.22} and  \eqref{190414-7.5} to \eqref{170415-1}, we have
\be \label{yyy1}\W\lf X_{\text{\eqref{170415-1}}}\rc\W_{q,D_p(s_{m+1},r_{m+1})\times\m O_{m+1}}\le C(m+1)\, \e_{m+1}, \; \W \lf \p_\xi\,X_{\text{\eqref{170415-1}}}\rc
\W_{q,D_p(s_{m+1},r_{m+1})\times\m O_{m+1}}\le C(m+1)\, \e_{m+1}. \ee
By the definition of $\Gamma_{K_m}$, we have immediately
\be \label{yyy3}\W \lf X_{\text{\eqref{3-23-pm41}}}\rc\W_{q,D_p(s_{m+1},r_{m+1})\times\m O_{m+1}}\le C(m+1)\,\e_{m+1}, \; \W \lf\p_\xi\,X_{\text{\eqref{3-23-pm41}}}\rc
\W_{q,D_p(s_{m+1},r_{m+1})\times\m O_{m+1}}\le C(m+1)\,\e_{m+1}. \ee
By \eqref{1704-7.16}, \eqref{1704-7.17}, \eqref{yyy1} and \eqref{yyy3}, we have
\be \label{yyy4}\W \lf X_{R^{(m+1)}}\rc\W_{q,D_p(s_{m+1},r_{m+1})\times\m O_{m+1}}\le C(m+1)\,\e_{m+1}, \; \W \lf\p_\xi\,X_{R^{(m+1)}}\rc
\W_{q,D_p(s_{m+1},r_{m+1})\times\m O_{m+1}}\le C(m+1)\,\e_{m+1}. \ee
This proves $(3)_l$ with $l=m+1$ in Lemma \ref{lemma1}.

By a similar way,
\be \label{yyy4}\W \lf X_{P^{(m+1)}}\rc\W_{q,D_p(s_{m+1},r_{m+1})\times\m O_{m+1}}\le C(m+1), \; \W\lf\p_\xi\, X_{P^{(m+1)}}\rc
\W_{q,D_p(s_{m+1},r_{m+1})\times\m O_{m+1}}\le C(m+1). \ee
This proves $(4)_l$ with $l=m+1$ in Lemma \ref{lemma1}.

Finally, let us verify that $H_0^{(m+1)},\, R^{(m+1)}$ and $P^{(m+1)}$ are real when $(x,y)$ are real and $\bar z$ is the complex conjugate of $z$ for $(x,y,z,\bar z;\xi)\in D_p(s_{m+1},r_{m+1})\times \m O_{m+1}$. Note that $R^{2m}$ is real when $(x,y)$ are real and $\bar z$ is the complex conjugate of $z$. We assume $x$ is real until the end of this section. It follows that $R^{z\bar z}(x,\xi)$ is real symmetric operator and $R^x(x,\xi)$, $R^y(x,\xi)$ are real vectors, and
\[\overline{R^{z}(x,\xi)}=R^{\bar z}(x,\xi),\; \overline{R^{zz}(x,\xi)}=R^{\bar z\bar z}(x,\xi).\]
By \eqref{7.57}, $F^x(x,\xi)$ is real. By \eqref{7.44} and \eqref{7.45},  $\overline{R^z_+}=R^{\bar z}_+$. Note $B^T=\bar B$. Perform the complex conjugate ``bar" in both sides of \eqref{7.60} and \eqref{7.61}, we get
\be\label{real-1} \overline{F^{z}(x,\xi)}=F^{\bar z}(x,\xi).\ee
Furthermore, by \eqref{7.43} and \eqref{real-1}, we have that $R_+^y(x,\xi)$ is  real. It follows from \eqref{7.58} that $F^y(x,\xi)$ is real. By \eqref{real-1} and \eqref{7.46}, \eqref{7.47} and \eqref{7.48}, we get
\be \label{real-2} \overline{R_+^{zz}(x,\xi)}=R_+^{\bar z\bar z}(x,\xi), \;  \overline{R_+^{z\bar z}(x,\xi)}=R_+^{z\bar z}(x,\xi). \ee
Noting \eqref{real-2} and performing the complex conjugate ``bar" in both sides of \eqref{7.61}, \eqref{7.62} and \eqref{7.62+}, we get
\be \label{real-3} \overline{F^{zz}(x,\xi)}=F^{\bar z\bar z}(x,\xi), \;  \overline{F^{z\bar z}(x,\xi)}=F^{z\bar z}(x,\xi). \ee
Consequently, $F$ defined by \eqref{2017-3-8-1}, \eqref{2017-3-8-2} and \eqref{2017-3-8-3} is real when $(x,y)$ are real and $\bar z$ is the complex conjugate of $z$. Arbitrarily take two Hamiltonian functions $F_1$ and $F_2$ defined by \eqref{2017-3-8-1}, \eqref{2017-3-8-2} and \eqref{2017-3-8-3}. And assume $F_1$ and $F_2$ are real when $(x,y)$ are real and $\bar z$ is the complex conjugate of $z$. Then it is easy to prove that the Poisson bracket $\{F_1,F_2\}$ is also real.
It follows furthermore that $H_0^{(m+1)},\, R^{(m+1)}$ and $P^{(m+1)}$ are real when $(x,y)$ are real and $\bar z$ is the complex conjugate of $z$ for $(x,y,z,\bar z;\xi)\in D_p(s_{m+1},r_{m+1})\times \m O_{m+1}$. This proves $(5)_l$ with $l=m+1$ in Lemma \ref{lemma1}.

By \eqref{30}, \eqref{omegam+1} and \eqref{170411-6.13}, we have
$$\sup_{\xi\in \m O_{m+1}}|\partial_{\xi}^{t}(\widehat{R^{y}}(0, \xi)+\widehat{R_{+}^{y}}(0,\xi))|\leq K^C \varepsilon_{m}\le C(m)\e_m,\;\;t=0, 1.$$
This proves $(1)_l$ with $l=m+1$ in Lemma \ref{lemma1}.

By \eqref{31}, \eqref{yuan7.50} and \eqref{1704-6.ww21}, we obtain
$$||\partial_{\xi}^{t}(\widehat{R^{z\bar{z}}}(0,\xi)+\widehat{R_{+}^{z\bar{z}}}(0,\xi))||_{h_{p}\rightarrow h_{q}}\leq K^{C}\varepsilon_{m}\le C(m)\e_m.$$
This proves $(2)_l$ with $l=m+1$ in Lemma \ref{lemma1}.

\ \

Up to now, we have verified all the assumptions are fulfilled for $l=m+1$. Thus the proof of the iterative lemma is complete. $\qed$

\section{Proof of the main Theorems}

{\bf Proof of Theorem \ref{theorem2}. }

Let \[\m O=\cap_{j=0}^\infty\m O_j,\]
\[\omega(\xi)=\omega^0+\sum_{m=1}^\infty\omega_m,\]
\[B^{z\bar z}=B_{0}^{z\bar z}+\sum_{m=1}^\infty B_{m}^{z\bar z}.\]
And note
\[D_p(s_0,r_0)\supset D_p(s_0,r_0)\supset\cdots D_p(s_m,r_m)\supset\cdots \supset D_p(s_0/2,r_0/2). \]
Using the iterative lemma and let $m\to \infty$, the proof of the main theorem  is finished in the standard procedure in KAM theory. Here we omit the detail.

\vskip8pt

\noindent{\it Proof of Corollary \ref{Cor1}.} Since $\mathcal T_0$ is invariant for the flow of $X_{H^\infty}$, the pull-back tori $\Phi(\mathcal T_0)$ is invariant for the flow of the original Hamiltonian vector field $\Phi^*X_{H^\infty}=X_H$, too.

\vskip8pt

\noindent{\it Proof of Corollary \ref{Cor2}.} Consider a linear Hamiltonian system with Hamiltonian
\[H=\sum_{j\in\mathbb Z^d} \lambda_j \, z_j\, \bar z_j+\l B^0(\xi)\, z,\bar z\r+\l R^{zz}(x,\xi)z,z\r+\l R^{z\bar z}(x,\xi)z,\bar z\r+\l R^{\bar z\bar z}(x,\xi)\bar z,\bar z\r\]
where $(x,y,z,\bar z)\in D_p(s_0/2,r_0/2)$ and $\xi\in \m O$.
Using Theorem \ref{theorem2}, there is a symplectic transformation such that $H$ is reduced to
\[H^\infty=\l(\Lambda+ B^\infty(\xi))\, z,\bar z\r\]
by digging a subset of $\m O$ with small Lebesgue  measure.
Multiplying $\bar z$ in both sides of the equation
\[\dot z={\bf i}\,(\Lambda+B^\infty(\xi))z,\quad z\in h_p,\] one gets
\[\sum_{j\in\zd}\dot z_j\bar z_j={\bf i} \;\<(\Lambda +B^\infty(\xi))z,\bar z\>.\]
Recall that $\<(\Lambda +B^\infty(\xi))z,\bar z\>$ is real for any $z\in h_p$, when $\bar z$ is the complex conjugate of $z$. Write $z=(|j|^p w_j:\; j\in\mathbb Z^d)$ where $w=(w_j:\; j\in\mathbb Z^d)\in \ell_2$. So one gets
\[\frac12\frac{d}{dt}\sum_{j\in\mathbb Z^d}|j|^{2p}|w_j|^2 =\frac12\frac{d}{dt}\sum_{j\in\mathbb Z^d}|z_j(t)|^2=\sum_{j\in\zd}\Re \dot z_j\bar z_j=0,\; \text{while}\; z(t)\in h_p.\]
It follows that $||z(t)||_{p}\equiv$ constant. This completes the proof.$\qed$

\noindent{\bf Proof of Theorem \ref{theorem2+2}. }  In order to distinguish the normal frequencies $\lambda_j$'s in Theorem \ref{theorem2} and the $\lambda_j$'s in Theorem \ref{theorem2+2}, we denote by $\lambda_j^\prime$ the normal frequencies $\lambda_j$'s in Theorem \ref{theorem2+2}. By Assumption {\bf B$^\star$}, we can write
\[\lambda_j^\prime=\varpi+\lambda_j,\quad j\in\mathbb Z^d\]
where $\lambda_j$'s satisfy Assumption {\bf B}. Thus the homological equation \eqref{61} should be replaced by
 \begin{equation}\label{61++} (\varpi-( k,\omega)+\Lambda+{B})\widehat{{F}}(k)=\widehat{{R}}(k),\quad
 \forall \;k\in\mathbb{Z}^N,\;0<|k|\leq K,\end{equation}
where $\Lambda=\text{diag}\; (\lambda_j:\;j\in\mathbb Z^d)$ and $\varpi-( k,\omega)=(\varpi-( k,\omega))E$ with $E$ being an identity from $h_p$ to $h_p$. (at this time, $p=q$.) Moreover, \eqref{62} should be replaced by
 \begin{equation}\label{62++}|\varpi-(k,\omega(\xi))|\geq K^{-c_{21}},\quad 0<|k|\leq K,
   \xi\in\mathcal{O}\setminus\mathcal{O}_1.\end{equation}
And \eqref{17-3-12-1} should be replaced by
\be\label{17-3-12-1++} (( k,\omega) \pm(\varpi+\Lambda+B))\widehat{{F}}(k)\pm \widehat{{F}}(k)(\varpi+\Lambda+\breve B)=\widehat{{R}}(k),\quad k\in\mathbb Z^N,\;|k|\le K.\ee
Moreover,  \eqref{103*}
 should be replaced by
\begin{equation}\label{103*}|(( k,\omega)\pm\varpi)\pm\lambda_j|\geq\frac12 K^{-c},\quad \mbox{for}\;\; k\in\mathbb Z^N,\; 0<|k|\leq K
,\;j\in\mathbb{Z}^d,\;\xi\in\mathcal{O}\setminus\mathcal{O}_4.\end{equation}
After finishing the replacements as above, we can construct Lemmas \ref{lem4.1}  and \ref{lem5.1} for theorem \ref{theorem2+2}. The remaining proof for theorem \ref{theorem2+2} is similar to that of Theorem \ref{theorem2}. We omit it here.

%%%%%%%%%%%%
%%%%%%%%%

%\end{document}

\section{ Application to Benjamin-Bona-Mahony (BBM) equation}

Benjamin-Bona-Mahony (BBM) equation:
  was studied in 1972 by Benjamin, Bona, and Mahony\cite{Benjamin-Bona-Mahony} as an improvement of the Korteweg-de Vries equation (KdV equation) for modeling long surface gravity waves of small amplitude - propagating uni-directionally in $1+1$ dimensions. Also see  \cite{Avrin-Goldstein}, \cite{Goldstein-Wichnoski}
and \cite{Biler-Piotr} for the related topics.
Consider BBM equation subject to periodic boundary condition
\begin{equation}\label{w10}
u_t-u_{xxt}+u_x+uu_x=0,\quad u(t,0)=u(t,T).
\end{equation}
%Set $\tilde{x}=\tau x$ for convenience,
%\be
%u_t-\tau^2u_{xxt}+\tau u_{x}+\tau uu_x=0, \quad u(t,0)=u(t,2\pi).
%\ee
This equation can be written as a Hamiltonian system
\be\label{w1}
u_t=-(1- \p_{xx})^{-1}\p_x \nabla_u H(u)
\ee
with Hamiltonian function
\be\label{w11}
H(u)=\frac{1}{2}\int_0^{T}u^2 dx+\frac{1}{6}\int_0^{T}u^3 dx,
\ee
and the symplectic structure $-(1- \p_{xx})^{-1}\p_x$ and the working space
$$ u\in\mathcal{H}_0^{p_0}=\{u\in\mathcal{H}^{p_0}(\mathbb{T}:\,\mathbb R): \int_0^{T}u dx=0\},$$ where $\mathcal{H}^{p_0}(\mathbb{T})$ is the usual Sobolev space with some $p_0>0$.

Let $\bar{\mathbb{Z}}=\mathbb{Z}\setminus\{0\}$ and set $\tau=\frac{2\pi}{T}.$
Denote by $h_{p_0}$ the discretization of $\m H_0^{p_0}$, i.e.
$$h_{p_0}:=\{z=(z_j\in\mathbb C):\;  \|z\|_{p}^2=\sum_{j\in\bar{\mathbb{Z}}}|z_j|^2 j^{2p_0}<\infty\}.$$
Make Fourier transform $\m F:\; u\mapsto z=(z_j\in\mathbb C:\; j\in\bar{\mathbb Z})$ by
\be\label{170510-y4}
u=\sum_{j\in\bar{\mathbb{Z}}}\delta_j\,z_j\phi_j,\quad ~~\phi_j=\frac{1}{\sqrt{T}}e^{\mi \tau j\cdot x},\quad \delta_j=\sqrt{\frac{\tau\,|j|}{1+\tau^2\,j^2}}.
\ee Note that $\bar z_j=z_{-j}$ if and only if $u\in\mathbb R$.
Then $\m F$ is isometry from $\m H^{p_0}$ to $h_{p_0+\frac12}=h_p$ (with $p_0+1/2=p$) and \eqref{w1} is changed into a Hamiltonian system  with its symplectic structure $-\mi\, \sum_{j\ge 1}dz_j\wedge\, d z_{-j} $:
\be\label{9.5}
 \mi \, \dot{z}_j=\frac{\p\, H}{\p \bar z_j},\quad -\mi \, \dot{\bar z}_j=\frac{\p\, H}{\p  z_j},\quad \bar z_j=z_{-j},
\ee
where
\be\label{w6}
H(z,\bar z)=\sum_{j\ge 1}\lambda_j  z_j\,  z_{-j}+\frac{1}{6\sqrt{T}}\sum_{j+k+l=0,\, j, k,l\in\bar{\mathbb Z}}\delta_j\delta_k\delta_l z_j\,z_k\, z_l:=H_0+R
\ee
and
\be
\lambda_j=\frac{\tau j}{1+\tau^2j^2},\quad |\lambda_j|\approx |j|^{-\kappa},\quad \kappa=1.
\ee
% where $z_j$ and $\bar z_j=z_{-j}$ ($j\in\bar{\mathbb Z}$) are regarded independent variables.
%It is easy to verify that the  perturbation $R$ is quasi-linear:
%\be {\Lambda}^{-1}X_R:h_p\rightarrow h_p\; \text{or}\; X_R: h_p\to h_q,\; q=p+\kappa.
%\ee
Let \be {\Lambda}=\text{diag}\;(\lambda_j:\,j\in\bar{\mathbb Z})\ee
and
\be \label{9.9}R=\sum_{j+k+l=0, j, k,l\in\bar{\mathbb Z}} R_{jkl} z_j\,z_k\, z_l ,\quad G_{jkl}=\frac{\delta_j\delta_k\delta_l }{6\sqrt T}. \ee
By $X_R$ denote the Hamiltonian vector filed of $R$ with symplectic structure $-\mi\, \sum_{j\ge 1}dz_j\wedge\, d z_{-j} $:
\[X_R=(\mi \sigma_j \frac{\p R}{\p z_{-j}}:\, j\in\bar{\mathbb Z)},\; \sigma_j=\mbox{sign}\; j.\]

\begin{lem} \label{ylem9.1} The function $H(z,\bar z)$ is real when $\bar z_j$ is the complex conjugate of $z_j$ for each $j\in\bar{\mathbb Z}$. The  Hamiltonian vector field $X_R$ of the perturbation $R$ is analytic from $h_p$ to $h_q$ with $q=p+\kappa$ and $\kappa=1$. Moreover,
\be ||\lf X_R\rc||_{q}\le C\, ||z||_p^2.\ee
\end{lem}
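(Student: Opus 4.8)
\textbf{Proof plan for Lemma \ref{ylem9.1}.}

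The reality statement is immediate: when $\bar z_j$ is the complex conjugate of $z_j$, the corresponding function $u=\sum_{j}\delta_j z_j\phi_j$ is real by the constraint $\bar z_j=z_{-j}$ together with $\delta_{-j}=\delta_j$, so $H(z,\bar z)=\frac12\int u^2+\frac16\int u^3$ is manifestly real; alternatively one checks directly from \eqref{w6} that under $z_j\mapsto \bar z_j=z_{-j}$ the quadratic term $\sum\lambda_j z_j z_{-j}$ and the cubic term $\sum_{j+k+l=0}\delta_j\delta_k\delta_l z_jz_kz_l$ are each invariant under complex conjugation, using that $\lambda_j$ and $\delta_j$ are real. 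So the first assertion costs nothing.

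The substance is the analyticity and the bound $\|\lfloor X_R\rceil\|_q\le C\|z\|_p^2$. First I would write out a component of the vector field: from \eqref{9.9}, $R=\sum_{j+k+l=0}G_{jkl}z_jz_kz_l$ with $G_{jkl}=\delta_j\delta_k\delta_l/(6\sqrt T)$, so the $n$-th component of $X_R$ is, up to the unimodular factor $\mi\sigma_n$, equal to $3\sum_{k+l=-n}G_{-n,k,l}z_kz_l$ (the $3$ coming from the symmetry of $G$ in its three indices). Passing to the modulus only replaces $z_k$ by $|z_k|$ and $G_{-n,k,l}$ by $|G_{-n,k,l}|=\delta_{|n|}\delta_{|k|}\delta_{|l|}/(6\sqrt T)$, so it suffices to estimate $\|\lfloor X_R\rceil\|_q^2=\sum_{n}|n|^{2q}\bigl(\sum_{k+l=-n}\delta_{|n|}\delta_{|k|}\delta_{|l|}|z_k||z_l|\bigr)^2$. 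The key elementary facts are $\delta_j=\sqrt{\tau|j|/(1+\tau^2 j^2)}\le C|j|^{-1/2}$ and also $\delta_j\le C|j|^{1/2}$, so in particular $|n|^{q}\delta_{|n|}\le C|n|^{q-1/2}=C|n|^{p}$ (recall $q=p+1$, $p=p_0+1/2$, and we are using $|n|^{q}\delta_{|n|}\le C|n|^{q}\cdot|n|^{-1/2}$); and $\delta_{|k|}\le C|k|^{-1/2}\le C$, likewise for $l$. Thus I would bound
\[
|n|^{q}\delta_{|n|}\sum_{k+l=-n}\delta_{|k|}\delta_{|l|}|z_k||z_l|\le C|n|^{p}\sum_{k+l=-n}|k|^{-1/2}|l|^{-1/2}|z_k||z_l|,
\]
and then recognize the right-hand side as $C$ times the $n$-th Fourier-type coefficient of a convolution, so that by the standard algebra property of the weighted $\ell^2$ spaces $h_{p}$ with $p=p_0+1/2>1/2$ (equivalently, $\mathcal H^{p_0}$ with $p_0>0$ is an algebra once $p_0>1/2$; for the borderline one absorbs the two extra half-powers $|k|^{-1/2}$, $|l|^{-1/2}$ which precisely compensate) one gets $\|\lfloor X_R\rceil\|_q\le C\|z\|_p\cdot\|z\|_p=C\|z\|_p^2$. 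The cleanest way to run this is: set $a_k=|k|^{p-1/2}|z_k|=|k|^{p_0}|z_k|$, note $\sum_k a_k^2=\|z\|_p^2$ (up to the weight bookkeeping), write the inner sum as $\sum_{k+l=-n}(|k|^{1/2}|l|^{1/2})^{-1}|k|^{1/2-p}a_k\,|l|^{1/2-p}a_l$, and apply Young's inequality $\|u\star v\|_{\ell^2_w}\le\|u\|_{\ell^2_w}\|v\|_{\ell^1}$ after checking $\sum_k |k|^{1-2p}=\sum_k|k|^{-2p_0}<\infty$ for $p_0>1/2$, or more carefully use the Bambusi--Grebert-type convolution estimate that holds for all $p_0>0$ because the weights $|k|^{-1/2},|l|^{-1/2}$ are exactly the symbol of the symplectic operator $(1-\partial_{xx})^{-1}\partial_x$ and give the gain. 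Analyticity of $X_R$ as a map $h_p\to h_q$ then follows because $X_R$ is a (finite-degree, here quadratic) polynomial map whose homogeneous components are bounded multilinear maps by the estimate just proved; a polynomial map between Banach spaces with bounded coefficients is analytic.

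The main obstacle is purely the convolution/algebra estimate at low regularity: one must make sure the two factors $\delta_{|k|}\delta_{|l|}\sim|k|^{-1/2}|l|^{-1/2}$ supply enough decay to close the estimate for every admissible $p_0>0$ (i.e.\ down to $p=1/2^+$), rather than only for $p_0>1/2$. I expect this to be handled exactly as in \cite{Bambusi-Grebert}: split the sum over $k+l=-n$ according to whether $|k|\ge|l|$ or not, and in the region $|l|\le|k|$ bound $|n|^{p}\le C(|k|^{p}+|l|^{p})\le C|k|^{p}$ (since $|n|\le|k|+|l|\le 2|k|$), so that the weight is entirely absorbed by $a_k$ and the residual sum $\sum_l |l|^{-1/2}|l|^{-1/2}\cdot(\text{decaying in }|l|)$ converges, yielding $\le C\|z\|_p\cdot\|z\|_{1/2}\le C\|z\|_p^2$. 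I would also record the trivial sign/symmetry facts ($G$ symmetric, $\sigma_j=\mathrm{sign}\,j$ unimodular, moduli bookkeeping) so that the passage to $\lfloor X_R\rceil$ does not change any constant. No genuinely new idea is needed beyond the classical Sobolev/convolution lemma; the only care is the exponent bookkeeping between $p_0$, $p=p_0+\tfrac12$, $q=p+1$, and the $|j|^{\pm1/2}$ coming from $\delta_j$.
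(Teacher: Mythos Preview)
Your approach is the paper's approach---reduce to the convolution--algebra property of $h_s$ for $s>\tfrac12$---but you have a half-power slip and an unnecessary detour that together account for your ``weight bookkeeping'' unease.

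The slip: $|n|^{q}\delta_{|n|}\le C|n|^{q-1/2}=C|n|^{p+1/2}$, \emph{not} $C|n|^{p}$ (since $q=p+1$). This is exactly why your auxiliary sequence $a_k=|k|^{p-1/2}|z_k|$ fails to give $\sum a_k^2=\|z\|_p^2$. The clean fix---and this is what the paper does---is to absorb all three $\delta$'s symmetrically: set $w_k:=\delta_{|k|}|z_k|$, so that the $n$-th entry of $\lfloor X_R\rceil$ is bounded by $C\,\delta_{|n|}(w*w)_{-n}$, whence
\[
\|\lfloor X_R\rceil\|_{q}\;\le\; C\,\|w*w\|_{q-1/2}\;\le\; C\,\|w\|_{q-1/2}^2\;=\;C\,\|z\|_p^2,
\]
the last equality because $|k|^{2(q-1/2)}\delta_{|k|}^{2}\sim|k|^{2p}$.

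The detour: the algebra step $\|w*w\|_{s}\le C\|w\|_s^2$ requires only $s=q-\tfrac12=p+\tfrac12=p_0+1>\tfrac12$, which holds automatically for every $p_0>0$. So your concern about ``low regularity'' is unfounded, and the Bambusi--Gr\'ebert high/low splitting, Young's inequality in $\ell^2*\ell^1$, and the $\sum|k|^{-2p_0}<\infty$ check are all unnecessary here. Once the exponents are tracked correctly, the straight Sobolev-algebra inequality closes the estimate in one line.
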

\begin{proof} Observe that $\delta_j\approx|j|^{-1/2}$ and note that
\[\frac{\p R}{\p z_{-j}}=3\delta_j\sum_{k+l=j}\delta_k\delta_l\,z_k\,z_l.\]
Hence,
$$\left|\frac{\partial R}{\partial z_{-j}}\right|\leq 3\delta_{j}\sum_{k+l=j}\delta_k\delta_l\,|z_k|\,|z_l|=3\delta_j \cdot(w*w)_j,$$
where $w=(w_j:j\in\bar{\mathbb Z})$ with $w_j=|\delta_j \, z_j|$ and $w*w$ is the convolution of $w$ and $w$.
It follows that $$||\lf X_R\rc||_{q}\leq||w*w||_{q-\frac12}\le C||w||_{q-\frac12}^2=C ||w||^2_{p+\frac12}=C||z||_{p}^2.$$ The remaining statements are obvious.
\end{proof}

Fix two integers $N$ and $\ti N$ with $0<N\le \ti N$. Let
 \[J=\{1\leq j_1<j_t<\cdots<j_N\le \ti N:
 j_t\in\mathbb{N} ~\text{for}~1\leq t\leq N\}.\] Split $z=(z_j)_{j\in\bar{\mathbb Z}}=(\tilde{z}, \hat{z})$ with $\tilde{z}=(z_{j_1},\cdots,z_{j_N},\, z_{-j_1},\cdots,z_{-j_N})$ and  $\hat{z}=z \ominus \tilde z.$
%\color{red}
\begin{lem}\label{normal-form} Assume   the number $\tau=2\pi/T$ is transcendental.
 There exists a real analytic symplectic coordinate transformation $\Phi$ which maps the neighborhood of the origin of $h_p$ to $h_p$ such that   the Hamiltonian $H$ defined by \eqref{w6} is changed into a partial Birkhoff normal form up to  order four. More precisely,
 \be
 H\circ \Phi=H_0+\bar{G}+\hat{G}+\tilde{R},
 \ee
 where
 \be
 \bar{G}=\sum_{k,l\geq 1,\, \{k,l\}\cap J\neq \emptyset}\bar{G}_{kl}|z_k|^2|z_l|^2
 \ee
with
 \be\label{y9.12}
 \bar{G}_{kl}=\left\{
 \begin{array}{ll}
 -\frac{1}{T}\frac{\tau^2kl}{[\tau^2(k^2+kl+l^2)+3][\tau^2(k^2-kl+l^2)+3]},& k\neq l,\\
 \frac{1}{12T}\frac{1}{\tau^2k^2+1},& \text{otherwise},
 \end{array}\right.
 \ee
 and \be ||\lfloor X_{\hat{G}}\rceil||_q=O(\|\hat{z}\|_{p}^3),\quad \|\lfloor X_{\tilde{R}}\rceil\|_{q}=O(\|z\|_{p}^4).\ee
\end{lem}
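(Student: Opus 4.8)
The strategy is the classical partial Birkhoff normal form computation à la Kuksin--Pöschel, adapted to the $\delta_j$-weighted cubic nonlinearity of BBM. The Hamiltonian in \eqref{w6} is $H=H_0+R$ with $H_0=\sum_{j\ge 1}\lambda_j z_jz_{-j}$ diagonal and $R$ the cubic term $\sum_{j+k+l=0}G_{jkl}z_jz_kz_l$, $G_{jkl}=\delta_j\delta_k\delta_l/(6\sqrt T)$. The plan is to seek a Hamiltonian $F$, homogeneous of degree $3$ in $z$, and let $\Phi=X_F^t|_{t=1}$ be its time-one flow; then
\[
H\circ\Phi=H_0+\{H_0,F\}+R+\tfrac12\{\{H_0,F\},F\}+\{R,F\}+\cdots,
\]
and one chooses $F$ so that $\{H_0,F\}+R$ retains only those cubic monomials that cannot be removed, i.e.\ the resonant ones $j+k+l=0$ with $\lambda_j+\lambda_k+\lambda_l=0$. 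Since $\lambda_j=\tau j/(1+\tau^2 j^2)$ is an odd, strictly-monotone-on-each-sign function, one checks that for $j+k+l=0$ the resonance $\lambda_j+\lambda_k+\lambda_l=0$ forces two of the indices to coincide and the third to be their negative (the ``trivial'' resonances); for the transcendental $\tau$ all other cubic combinations $\lambda_j+\lambda_k+\lambda_l$ with $j+k+l=0$ are nonzero, and in fact bounded below in the relevant range. Hence all cubic terms can be removed except a finite set of trivial ones, which however involve only $z_j z_{-j}z_0$-type expressions and vanish because $0\notin\bar{\mathbb Z}$; so after this first step the cubic part is gone entirely and one is left with a quartic remainder.

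The second and main step is to compute the effective quartic term. Carrying the Taylor expansion one order further, the new degree-four part is
\[
G^{(4)}=\{R,F\}+\tfrac12\{\{H_0,F\},F\}=\tfrac12\{R,F\},
\]
using $\{H_0,F\}=-R$. One then splits $G^{(4)}$ into its resonant part $\bar G+\hat G$ (the monomials $z_{k_1}z_{-k_1}z_{k_2}z_{-k_2}$, i.e.\ depending only on $|z_k|^2$'s, which cannot be eliminated) and a non-resonant part that is killed by a second, degree-four, symplectic transformation, producing the genuinely higher-order tail $\tilde R=O(\|z\|_p^4)$ in $X$-field norm. The explicit coefficients $\bar G_{kl}$ in \eqref{y9.12} come from performing the convolution sum in $\tfrac12\{R,F\}$, collecting all ways to pair four indices summing to zero into two ``$|z|^2$'' factors: each such term carries a factor $\delta_j\delta_k\delta_l/(\lambda_j+\lambda_k+\lambda_l)$ from $F$ times another $\delta$-product from $R$, and inserting $\delta_j^2=\tau|j|/(1+\tau^2 j^2)$ and $\lambda_j=\tau j/(1+\tau^2j^2)$ and simplifying $\lambda_k+\lambda_l-\lambda_{k+l}$, $\lambda_{k}-\lambda_{l}-\lambda_{k-l}$ gives exactly the denominators $[\tau^2(k^2+kl+l^2)+3][\tau^2(k^2-kl+l^2)+3]$ (and the diagonal value $\frac{1}{12T}\frac{1}{\tau^2k^2+1}$ for $k=l$). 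The restriction $\{k,l\}\cap J\neq\emptyset$ reflects that we only need to normalize those quartic monomials that couple to the tangential modes $J$; the rest is absorbed into $\tilde R$.

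The third step is the bookkeeping of norms and analyticity. One must check that $F$, defined mode-wise by $\widehat F_{jkl}=\mathbf i\,G_{jkl}/(\lambda_j+\lambda_k+\lambda_l)$ on the non-resonant set, defines an analytic vector field $h_p\to h_q$; this follows exactly as in Lemma \ref{ylem9.1}, since dividing by $\lambda_j+\lambda_k+\lambda_l$ — which is bounded below away from the trivial resonances by a $\tau$-dependent constant, uniformly over all triples in a fixed neighborhood because the small-divisor set is, after the splitting, effectively finite in the ``large index'' directions — costs only a loss controlled by the $|j|^{-\kappa}$ decay already built into $q=p+\kappa$. Then $\{R,F\}$ and the iterated brackets are estimated by the algebra property of $\|\lfloor X_\cdot\rceil\|_q$ on the weighted convolution spaces (same convolution estimate $\|w*w\|\lesssim\|w\|^2$ as before), giving $\|\lfloor X_{\hat G}\rceil\|_q=O(\|\hat z\|_p^3)$ and $\|\lfloor X_{\tilde R}\rceil\|_q=O(\|z\|_p^4)$. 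Reality of $\Phi$ and of $H\circ\Phi$ follows because $F$ inherits the reality $\overline{F(z)}=F(\bar z)$ from $R$ (the coefficients $G_{jkl}$ are real and symmetric), exactly as in the reality discussion of Section 7.

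\textbf{Main obstacle.}
The delicate point is the non-resonance analysis: one must show that for $j+k+l=0$, $\lambda_j+\lambda_k+\lambda_l=0$ only in the trivial cases, and — more importantly for the quartic step — that the relevant degree-four divisors $\lambda_{k_1}+\lambda_{k_2}+\lambda_{k_3}+\lambda_{k_4}$ with $\sum k_i=0$ vanish \emph{only} on the $|z|^2$-monomials, with a quantitative lower bound on the complement so that the second transformation is analytic. Both reduce to showing that a certain rational function of $\tau$ with integer data does not vanish, which is where the hypothesis that $\tau=2\pi/T$ is transcendental is used (a nonzero polynomial in $\tau$ with integer coefficients cannot vanish). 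Making this uniform — i.e.\ getting a divisor bound independent of the (infinitely many) high indices — requires the same ``essentially finite small-divisor set'' observation that underlies Lemma \ref{lem4.1}: once indices are large, $|\lambda_j|$ is tiny and the divisor is close to a nonzero value, so only finitely many combinations are genuinely dangerous and each of those is nonzero by transcendence of $\tau$.
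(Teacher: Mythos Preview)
Your two-step plan (a cubic $F^{(3)}$ to kill all of $R$, then a quartic $F^{(4)}$ to remove the non-resonant part of $\tfrac12\{R,F^{(3)}\}$ touching $J$) is exactly the paper's route, and your localization of where transcendence of $\tau$ is genuinely needed --- the quartic divisors $\lambda_j+\lambda_k+\lambda_l+\lambda_m$ on $\Delta_4$ --- is correct.

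There is, however, a real gap in the cubic step. You claim the divisors $\lambda_j+\lambda_k+\lambda_l$ (for $j+k+l=0$) are ``bounded below by a $\tau$-dependent constant'' because the small-divisor set is ``effectively finite in the large-index directions.'' That reasoning applies to the quartic step (where the $J$-restriction caps one index), but \emph{not} here: at the cubic stage every triple must be handled, and since $\lambda_j\sim|j|^{-1}$ the divisor $\lambda_j+\lambda_k+\lambda_l$ tends to $0$ as indices grow. Dividing $G_{jkl}=\delta_j\delta_k\delta_l/(6\sqrt T)$ by a divisor of that size, without further structure, would destroy the $h_p\to h_q$ property of $X_{F^{(3)}}$. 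The paper does not use a lower bound at all; it uses an exact identity: for $j+k+l=0$,
\[
\lambda_j+\lambda_k+\lambda_l=\pm\,\delta_j^{2}\delta_k^{2}\delta_l^{2}\bigl(3+\tau^{2}(k^{2}+kl+l^{2})\bigr),
\]
so the $\delta_j\delta_k\delta_l$ in $G_{jkl}$ is over-cancelled and $F^{(3)}_{jkl}\sim(\delta_j\delta_k\delta_l)^{-1}(k^2+kl+l^2)^{-1}$. Combined with $k^2+kl+l^2\ge\tfrac12 j^2$ this yields an extra $|j|^{-3/2}$ factor, which is precisely what places $X_{F^{(3)}}$ in $h_q=h_{p+1}$ (and likewise controls $R^4=\tfrac12\{R,F^{(3)}\}$). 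Note this identity also shows the cubic divisor never vanishes for \emph{any} $\tau>0$ --- transcendence is irrelevant at order three. Your ``finite dangerous set plus transcendence'' mechanism is the right one for $F^{(4)}$, but for $F^{(3)}$ you need the algebraic factorization above; without it the analyticity of $\Phi$ does not follow.
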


In order to prove the last lemma, we need the following lemmas.

\begin{lem}\label{ylem9.3} Assume   the number $\tau=2\pi/T$ is transcendental.
(1) For any $j,k,l\in\bar{\mathbb Z}$ with $j+k+l=0$, one has
\[\lambda_j+\lambda_k+\lambda_l\neq 0;\]
 (2) For any $j,k,l,m\in\bar{\mathbb Z}$ with $j+k+l+m=0$ and $(j+k)(j+l)(j+m)\neq 0$, one has
\[\lambda_j+\lambda_k+\lambda_l+\lambda_m\neq 0.\]
\end{lem}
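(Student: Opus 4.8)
\textbf{Proof plan for Lemma \ref{ylem9.3}.}

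The plan is to reduce both non-resonance statements to a single arithmetic fact: if $\tau$ is transcendental, then no nontrivial polynomial identity in $\tau$ with integer coefficients can hold. For part (1), I would start from $j+k+l=0$ (so $l=-(j+k)$) and compute the common denominator of $\lambda_j+\lambda_k+\lambda_l$, namely $(1+\tau^2 j^2)(1+\tau^2 k^2)(1+\tau^2 l^2)$, which is never zero for real $\tau$. Clearing denominators, the numerator becomes a polynomial $P(\tau)$ with \emph{integer} coefficients whose coefficients depend on $j,k,l$. The claim $\lambda_j+\lambda_k+\lambda_l\neq 0$ is then equivalent to $P(\tau)\neq 0$. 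Since $\tau$ is transcendental, it suffices to show $P$ is not the zero polynomial, i.e.\ that some coefficient of $P$ is a nonzero integer. I would identify the lowest-degree (constant) term: at $\tau=0$ the numerator is $\tau(j+k+l)+O(\tau^3)$ up to the overall scaling $\tau$, so the first candidate vanishes precisely because $j+k+l=0$; hence one must go to the next order. Extracting the coefficient of $\tau^3$ and simplifying using $j+k+l=0$ should yield an expression proportional to $jkl$ (this is the standard KdV-type computation), which is a nonzero integer since $j,k,l\in\bar{\mathbb Z}$. Therefore $P\not\equiv 0$ and $P(\tau)\neq 0$.

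For part (2), the strategy is identical: with $j+k+l+m=0$, put $\lambda_j+\lambda_k+\lambda_l+\lambda_m$ over the common denominator $\prod (1+\tau^2 j^2)\cdots(1+\tau^2 m^2)$ (again nonzero for real $\tau$) and obtain an integer-coefficient polynomial $Q(\tau)$ whose non-vanishing at transcendental $\tau$ is equivalent to $Q\not\equiv 0$. As before, the constant and $\tau^2$ coefficients of $Q$ will vanish by virtue of $j+k+l+m=0$ and of symmetric-function identities, so the relevant information sits in a higher-order coefficient. Here I would factor that coefficient and show it is (up to a nonzero integer constant) a multiple of $(j+k)(j+l)(j+m)$ — this is exactly the combination that appears in the second Birkhoff normal form divisor — so the hypothesis $(j+k)(j+l)(j+m)\neq 0$ guarantees $Q\not\equiv 0$, hence $Q(\tau)\neq 0$ and the four normal frequencies cannot sum to zero.

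The main obstacle is the bookkeeping in part (2): one must verify that after clearing denominators and using $j+k+l+m=0$ the leading nonzero coefficient really does factor through $(j+k)(j+l)(j+m)$, rather than through some other quantity that could vanish even when those three factors do not. The cleanest way to organize this is to exploit the symmetry of $Q$ under permutations of $\{j,k,l,m\}$ together with the constraint $j+k+l+m=0$: any such symmetric polynomial that vanishes whenever two of the variables are equal and opposite (which forces a collision of frequencies of the form $\lambda_a+\lambda_{-a}=0$) must be divisible by the product of the three ``partial sums'' $(j+k)(j+l)(j+m)$ (note that $(j+k)=-(l+m)$, etc., so these three products already account for all pairings). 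Checking the degree count confirms the quotient is a nonzero integer constant, or at worst an integer polynomial with an easily identified nonzero coefficient. Once this factorization is in hand the conclusion is immediate from transcendence of $\tau$, exactly as in part (1).
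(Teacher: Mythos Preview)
Your plan is correct and coincides with the paper's argument: clear denominators to obtain an integer-coefficient polynomial in $\tau$, identify the lowest nonvanishing coefficient, and invoke transcendence. For part (2) the paper does exactly this, finding the $\tau^3$ coefficient to be $3(k+l)(k+m)(l+m)=-3(j+k)(j+l)(j+m)$, so your factorization is right and the elaborate symmetry argument you sketch as the ``main obstacle'' is unnecessary---a direct expansion using $p_3=3e_3$ when $p_1=0$ gives it immediately.

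One point worth noting: for part (1) the paper goes further than you propose. Rather than just identifying the $\tau^3$ coefficient as $-3jkl$, it computes the full numerator in closed form,
\[
\lambda_j+\lambda_k+\lambda_l=\pm\,\delta_j^2\delta_k^2\delta_l^2\bigl(3+\tau^2(k^2+kl+l^2)\bigr),
\]
and observes that $k^2+kl+l^2>0$ for all $k,l\in\bar{\mathbb Z}$. This shows the sum is nonzero for \emph{every} real $\tau$, not just transcendental ones---a stronger conclusion obtained with no extra work, and one the paper actually uses later (formula~\eqref{9.15}) to control the size of $F^{(3)}$.
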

\begin{proof} Recall $\lambda_j=\frac{\tau j}{1+\tau^2\,j^2}$ for all $j\in\bar{\mathbb Z}$. In view of $j+k+l=0$, by calculation,
\[\lambda_j+\lambda_k+\lambda_l=-\left( \frac{j\,k\,l\,
      {\tau }^3\,
      \left( 3 +
        \left( k^2 + k\,l +
           l^2 \right) \,
         {\tau }^2 \right) }
      {\left( 1 +
        j^2\,{\tau }^2 \right)
        \,\left( 1 +
        k^2\,{\tau }^2 \right)
        \,\left( 1 +
        l^2\,{\tau }^2 \right)
        } \right).\]
 That is,
 \be \label{9.15}\lambda_j+\lambda_k+\lambda_l= \pm \delta_j^2\,\delta_k^2\,\delta_l^2(3+\tau^2(k^2+kl+l^2)).\ee
Noting $k^2+kl+l^2>0$. It follows  $\lambda_j+\lambda_k+\lambda_l>0$.

     \ \

     When $j+k+l+m=0$, by calculation one has that

     \[\begin{array}{ll}&\tau^{-3}(\left( 1 + j^2\,{\tau }^2 \right) \,
  \left( 1 + k^2\,{\tau }^2 \right) \,
  \left( 1 + l^2\,{\tau }^2 \right) \,
  \left( 1 + m^2\,{\tau }^2 \right))(\lambda_j+\lambda_k+\lambda_l+\lambda_m)\\&=
  3\,\left( k + l \right) \,\left( k + m \right) \,
   \left( l + m \right)  +
  \left( k + l \right) \,\left( k + m \right) \,
   \left( l + m \right) \,
   \left( k^2 + l^2 + l\,m + m^2 +
     k\,\left( l + m \right)  \right) \,{\tau }^2\\&\;\; +
   k\,l\,\left( k + l \right) \,m\,
   \left( k + m \right) \,\left( l + m \right) \,
   \left( k + l + m \right) \,{\tau }^4.
  \end{array}\]
  Since $kl\left( k + l \right) \,\left( k + m \right) \,
   \left( l + m \right)\neq 0$ and that $\tau$ is transcendental, $\lambda_j+\lambda_k+\lambda_l+\lambda_m\neq 0$.

\end{proof}

\begin{lem} Assume   the number $\tau=2\pi/T$ is transcendental.  Set
% \[\Delta_2=\{(j,k)\in\bar{\mathbb Z}^2:\;j+k\neq 0,\;\min\{|j|,|k|\}\le N\},\]
 \[\Delta_3=\{(j,k,l)\in\bar{\mathbb Z}^3:\;j+k+l= 0,\;\min\{|j|,|k|,|l|\}\le \ti N\},\]  \[\Delta_4=\{(j,k,l,m)\in\bar{\mathbb Z}^4:\;j+k+l+m=0,\; (j+k)(j+l)(j+m)\neq 0,\;\min\{|j|,|k|,|l|,|m|\}\le \ti N\}.\]
 Then there exists a constant $C=C(\ti N)>0$ depending on only $\ti N$ such that

%\be\label{y9.14} \inf_{(j,k)\in \Delta_2}|\lambda_j+\lambda_k|\ge C(N);\ee

\be \label{y9.15}\inf_{(j,k,l)\in \Delta_3}|\lambda_j+\lambda_k+\lambda_l|\ge C(\ti N);\ee
and
\be\label{y9.16}\inf_{(j,k,l,m)\in \Delta_4}|\lambda_j+\lambda_k+\lambda_l+\lambda_m|\ge C(\ti N).\ee
\end{lem}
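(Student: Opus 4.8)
The plan is to run a layered compactness argument resting on two ingredients already available: the non-vanishing statements of Lemma~\ref{ylem9.3}, and the elementary decay bound $|\lambda_j|=\frac{\tau|j|}{1+\tau^2j^2}\le\frac{1}{\tau|j|}$ valid for every $j\in\bar{\mathbb Z}$. The guiding principle is that the only way a resonant combination such as $\lambda_j+\lambda_k+\lambda_l$ (or $\lambda_j+\lambda_k+\lambda_l+\lambda_m$) can be small is for some indices to be large; but the linear constraint that the indices sum to zero forces such large indices to occur in near-cancelling groups, so the contribution of the large indices tends to $0$ while the contribution of the bounded indices stays bounded away from $0$ by the (finite) non-resonance check of Lemma~\ref{ylem9.3}. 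Since both sums are symmetric in their entries, I will always relabel so that the index of absolute value $\le\ti N$ sits in the first slot; there are then only finitely many values of that index.

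For \eqref{y9.15}: fix $j$ with $1\le|j|\le\ti N$. From $j+k+l=0$ we get $|\lambda_k|\le\frac{1}{\tau|k|}$ and $|\lambda_l|\le\frac{1}{\tau|l|}$, so if $\min(|k|,|l|)\ge K_0$ then $|\lambda_k+\lambda_l|\le\frac{2}{\tau K_0}$ and hence $|\lambda_j+\lambda_k+\lambda_l|\ge|\lambda_j|-\frac{2}{\tau K_0}\ge\frac12|\lambda_j|$ once $K_0=K_0(\ti N,\tau)$ is large enough, using $\min_{1\le|i|\le\ti N}|\lambda_i|>0$. In the complementary range $\min(|k|,|l|)<K_0$, a further relabelling puts all three indices in a fixed finite box, so $\lambda_j+\lambda_k+\lambda_l$ takes only finitely many values, each nonzero by Lemma~\ref{ylem9.3}(1); their minimum modulus is a positive constant. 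Taking the minimum of the two bounds over the finitely many $j$ gives $C(\ti N)$ in \eqref{y9.15}.

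For \eqref{y9.16} I would first observe that, under $j+k+l+m=0$, the condition $(j+k)(j+l)(j+m)\ne0$ is equivalent to the symmetric statement that no two of $j,k,l,m$ sum to zero (e.g.\ $k=-l$ would force $m=-j$, i.e.\ $(j+m)=0$), so $\Delta_4$ is genuinely permutation-invariant and I may again take $1\le|j|\le\ti N$. Fix such a $j$. If $\min(|k|,|l|,|m|)\ge K_0$ then $|\lambda_k+\lambda_l+\lambda_m|\le\frac{3}{\tau K_0}$, so $|\lambda_j+\lambda_k+\lambda_l+\lambda_m|\ge\frac12|\lambda_j|$ for $K_0$ large. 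Otherwise one of $k,l,m$, say $m$, satisfies $|m|<K_0$; then $k+l=-j-m$ is bounded, and a second split against a threshold $K_0'$ either yields $|\lambda_k+\lambda_l|\le\frac{2}{\tau K_0'}$, whence $|\lambda_j+\lambda_k+\lambda_l+\lambda_m|\ge|\lambda_j+\lambda_m|-\frac{2}{\tau K_0'}$ with $|\lambda_j+\lambda_m|$ bounded below over the finitely many admissible bounded pairs (here $\lambda_j+\lambda_m$ vanishes only if $j+m=0$ or $1+\tau^2jm=0$, the first excluded by membership in $\Delta_4$ and the second impossible since $\tau$ is transcendental), or else all four indices lie in a fixed finite box and Lemma~\ref{ylem9.3}(2) applies directly. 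Collecting the finitely many positive lower bounds produces $C(\ti N)$ in \eqref{y9.16}.

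I expect the main obstacle to be purely organizational: making the nested case distinction airtight, in particular checking that every ``large-index'' regime really does force a near-cancellation (this is exactly where the vanishing of the index sum is used) and that every ``bounded'' regime is honestly finite and avoids precisely the resonant configurations excluded from $\Delta_3,\Delta_4$ and handled by Lemma~\ref{ylem9.3}. No analytic input beyond the decay bound $|\lambda_j|\le\frac{1}{\tau|j|}$ and the transcendence of $\tau$ should be required; if desired, the explicit factored formulas for $\lambda_j+\lambda_k+\lambda_l$ and $\lambda_j+\lambda_k+\lambda_l+\lambda_m$ derived in the proof of Lemma~\ref{ylem9.3} give an alternative, more computational route to the same limits.
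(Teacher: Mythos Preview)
Your proposal is correct and follows essentially the same compactness strategy as the paper: use the decay $|\lambda_j|\le\frac{1}{\tau|j|}$ to control contributions from large indices, and invoke Lemma~\ref{ylem9.3} on the remaining finite box. The paper's written proof actually treats only the sub-case of \eqref{y9.15} where \emph{two} indices $|j|,|k|\le\ti N$ are bounded and dismisses the rest as ``similar''; your layered argument starting from a single bounded index and peeling off one variable at a time is more complete and in fact closer to what is genuinely needed for the statement as formulated (only $\min\le\ti N$). Your check that $\Delta_4$ is permutation-invariant and your explicit verification that $\lambda_j+\lambda_m\neq 0$ via the factorization $\lambda_j+\lambda_m=\frac{\tau(j+m)(1+\tau^2 jm)}{(1+\tau^2 j^2)(1+\tau^2 m^2)}$ are both correct and fill in details the paper leaves implicit.
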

\begin{proof}
We give the proof only for \eqref{y9.15} with  $|j|\le \ti N$ and $|k|\le \ti N$ and $l\in\bar{\mathbb Z}$.  The others are similar.
Clearly, there is a constant $C_2=C_2(\ti N)>0$ such that
\[\inf_{\Delta_2}|\lambda_j+\lambda_k|\ge C_2(\ti N),\;\; \text{here}\; \Delta_2=\{(j,k)\in\bar{\mathbb Z}^2:\;j+k\neq 0,\;\max\{|j|,|k|\}\le \ti N\}.\]
 Note $\lim_{|l|\to \infty}\lambda_l=0$. So there is a constant $M=M(\ti N)>0$ depending on $\ti N$ such that $|\lambda_l|\le C_2(\ti N)/4$ when $|l|\ge M(\ti N)$. Thus, for $|l|\ge M(\ti N)$,
 \[\inf_{(j,k,l)\in \Delta_3}|\lambda_j+\lambda_k+\lambda_l|>\inf_{(j,k)\in \Delta_2}|\lambda_j+\lambda_k|-\frac{C_2(\ti N)}{4}\ge \frac{C_2(\ti N)}{2}.\]
 When $|j|\le \ti N$, $|k|\le \ti N$ and $|l|\le M(\ti N)$, using Lemma \ref{ylem9.3}, there is a constant $\tilde C_3(\ti N)=\tilde C_3(\ti N,M(\ti N))>0$ such that
\[\inf_{(j,k,l)\in \Delta_3}|\lambda_j+\lambda_k+\lambda_l|\ge\inf_{j+k+l=0,|j|+|k|+|l|\le 2\ti N+M(\ti N)}|\lambda_j+\lambda_k+\lambda_l|\ge \tilde C_3(\ti N).\]
 The inequality \eqref{y9.15} with  $|j|\le \ti  N$, $|k|\le \ti N$ and $l\in\bar{\mathbb Z}$ is proved by letting $$C(\ti N)=\min\{C_2(\ti N)/2,\tilde C_3(\ti N)\}.$$

%Noting $\lim_{|l|\to \infty}\lambda_l=0$ and using Lemma \ref{ylem9.3}, the proof is finished. We omit the detail.

\iffalse%%%%%%%%%
The inequality \eqref{y9.14} holds true obviously. When $(j,k,l)\in\nabla_3$ with
 \[\nabla_3=\{(j,k,l)\in\bar{\mathbb Z}^3:\;j+k+l= 0,\;\max\{|j|,|k|,|l|\}\le N\},\]
 it follows from Lemma \ref{ylem9.3} that \eqref{y9.15} holds true.
Assume
When $|j|\le N$ and $|k|\le N$ and $|l|\le M(N)$, using Lemma \ref{ylem9.3}, there is a constant $\tilde C_3(N)=\tilde C_3(N,M(N))>0$ such that
\[\inf_{(j,k,l)\in \Delta_3}|\lambda_j+\lambda_k+\lambda_l|\ge \tilde C_3(N).\]
\fi%%%%%%%
\end{proof}

{\bf Proof of Lemma \ref{ylem9.1}.} Let $\Psi^{(3)}=X_{F^{(3)}}^1$ be the time-1-map of the flow of the
hamiltonian vector field $X_{F^{(3)}}$ given by the hamiltonian
\[F^{(3)}=\sum_{j,k,l\in\bar{\mathbb Z},j+k+l=0}F^{(3)}_{jkl}z_j\,z_k\,z_l\]
with coefficients
\[\mi F^{(3)}_{jkl}=\left\{\begin{array}{ll}  \frac{R_{jkl}}{\lambda_j+\lambda_k+\lambda_l},& j+k+l=0, \\ 0, & \text{otherwise.}\; \end{array} \right.\]
Recall that $\lambda_j+\lambda_k+\lambda_l\neq 0$, when $j+k+l=0$. So $F^{(3)}_{jkl}$ is well defined in the last formula.

By \eqref{9.15} and \eqref{9.9}  we have that the $j$-th entry of the vector field $X_{F^{(3)}}$ is
\[\pm\,\mi\,\frac{\p F^{(3)}}{\partial z_{-j}}=\frac{1}{2\sqrt{T}\delta_j}\sum_{k+l=j}\frac{z_k\,z_l}{\delta_k\,\delta_l\left(\tau^2(k^2+l^2+k\,l)+3\right)}.\]
Define a vector field $\lf X_{F^{(3)}}\rc$ with its $j$-th entry being
\[\lf  X_{F^{(3)}}\rc_j=\frac{1}{2\sqrt{T}\delta_j}\sum_{k+l=j}\frac{|z_k|\,|z_l|}{\delta_k\,\delta_l\left(\tau^2(k^2+l^2+k\,l)+3\right)}.\]
Observe that
\[k^2+k\,l+l^2=(k+l)^2-k\,l\ge \frac12 (k+l)^2=\frac12\, j^2. \]
So
\[\lf X_{F^{(3)}}\rc_j\le C |j|^{-3/2}\sum_{k+l=j}\sqrt{|k|\,|l|}|z_k|\,|z_l|.\]
Let $w=(w_k:\; k\in\mathbb Z^d)$ with $w_k=\sqrt{|k|}|z_k|$.
Then, noting $q=p+1$,
\[||\lf X_{F^{(3)}}\rc||_q\le C ||w * w||_{p-\frac12}\le C ||w||_{p-\frac12}^2=C||z||_p^2. \]
Therefore $X_{F^{(3)}}$  is a real analytic vector field which maps a small neighborhood of the origin in $h_p$ to $h_q$.  And  hence $\Psi^{(3)}$ is a real analytic, symplectic change of coordinates
defined at least in a neighborhood of the origin in $h_p$.

Expanding at $t = 0$ and using Taylor's formula we have
\be\label{9.18}\begin{array}{ll} H\circ \Psi^{(3)}&=H\circ X_{F^{(3)}}^t\left.\right|_{t=1}\\
&=H+\{H,F^{(3)}\}+\int_0^1(1-t)\{\{H,F^{(3)}\},F^{(3)}\}\circ  X_{F^{(3)}}^t\, dt\\
&=H_0+R+\{H_0,F^{(3)}\}\\& \quad +\{R,F^{(3)}\}+\int_0^1(1-t)\{\{H,F^{(3)}\},F^{(3)}\}\circ  X_{F^{(3)}}^t\, dt.
   \end{array}\ee
By calculation, we obtain
\be\label{9.19} R+\{H_0,F^{(3)}\}=0.\ee
Again using Taylor's formula in the last term in \eqref{9.18} and noting \eqref{9.19}, we have
\be\label{9.20}  H\circ \Psi^{(3)}=H_0+\frac12\{R,F^{(3)}\}+\frac12\int_0^1(1-t^2)\{\{R,F^{(3)}\},F^{(3)}\}\circ  X_{F^{(3)}}^t\, dt.\ee
By direct calculation, we have
\[\begin{array}{ll}R^4:&=\frac12\{R,F^{(3)}\} \\ & =\frac12\sum_{j\in\bar{\mathbb Z}}\frac{\p\, R}{\p\, z_j}(-\mi\sigma_j)\frac{\p \, F^{(3)}}{\p\,z_{-j}}\\ &=\frac{1}{8T}\sum_{k+l+m+n=0}^\prime \frac{\delta_k\,\delta_l}{\sigma_m\,\sigma_n\delta_m\,\delta_n(\tau^2(m^2+mn+n^2)+3)}
z_k\, z_l\,z_m\,z_n
\\& := \sum_{k+l+m+n=0}^\prime R_{klmn}z_k\, z_l\, z_m\, z_n,\end{array}\]
where $\sum_{k,l,m,n}^\prime $ runs over the set $\{(k,l,m,n)\in \, \bar{\mathbb Z}^4\}$ and $\sigma_k$ is the sign of $k$, i.e. $\sigma_k=1$ if $k>0$ and $\sigma_k=-1$ if $k<0$.
So the $j$-th entry of the vector field $X_{R^4}$ is
\[\begin{array}{ll}\pm \mi\, \frac{\partial R^4}{\partial_{z_{-j}}}=&\delta_j
\sum_{l+m+n=j}^\prime \frac{\,\delta_l}{4T\,\sigma_m\,\sigma_n\delta_m\,\delta_n(\tau^2(m^2+mn+n^2)+3)}
 z_l\,z_m\,z_n\\
 &+
\sum_{k+l+m=j}^\prime \frac{\delta_k\,\delta_l}{4T\,\sigma_m\,\sigma_j\delta_m\,\delta_j^2
(\tau^2(m^2-mj+j^2)+3)}
 z_k\,z_l\,z_m. \end{array}\]
Note that
\[m^2+mn+n^2+3\ge |mn|, \;\;m,n\in\bar{\mathbb Z}.\]
Then
\[|4T\,\sigma_m\,\sigma_n\delta_m\,\delta_n(\tau^2(m^2+mn+n^2)+3)|\ge \frac1 C \sqrt{|m||n|}\] and
\[|4T\,\sigma_m\,\sigma_j\delta_m\,\delta_j^2
(\tau^2(m^2-mj+j^2)+3)|\ge \frac{1}{C}\sqrt{|m|}.\]
Thus,
\[\left| \pm \mi\, \frac{\partial R^4}{\partial_{-z_j}}\right|\le C\frac{1}{\sqrt{|j|}} \sum^{\prime}_{l+m+n=-j}\frac{|z_l|}{\sqrt{|l|}}\,\frac{|z_m|}{\sqrt{|m|}}\,
\frac{|z_n|}{\sqrt{|n|}}.\]
Let $w=(\frac{|z_l|}{\sqrt{|l|}}:\, l\in\bar{\mathbb Z})$. Then
\be \label{9.21}|| \lf X_{R^4}\rc||_q=||\lf X_{R^4}\rc||_{p+1}\le ||w* w* w||_{p+\frac12}\le ||w||^3_{p+\frac12}=||z||_p^3.\ee
%%%%%%%%%%
%%%%%%%%%%%%
\iffalse%%%%%%%%%%%%%%%%%%
Therefore $X_{R^{4}}$  is a real analytic vector field which maps a small neighborhood of the origin in $h_p$ to $h_q$. Define a vector field $\ulcorner X_{R^{4}}\lrcorner$ with its $j$-th entry is
\[\ulcorner X_{R^4}\lrcorner_j=\sum_{k+l+m=-j} |R_{klmj} |z_k|\,|z_l|\,|z_m|\]
By the same argument for \eqref{9.21}, we have
\be \label{9.21+1}|| \ulcorner X_{R^4}\lrcorner||_q\le ||z||_p^3.\ee
\fi%%%%%
%%%%%%%%%%%%%%%%
Let
%\[\Xi_1=\{\{(k,l,m,n)\in \, \bar{\mathbb Z}^4:\; k+l+m+n=0,\; m+n=0\}\]
\[\Xi_{\le}=\{(k,l,m,n)\in \, \bar{\mathbb Z}^4:\; k+l+m+n=0,\; (k+l)(k+m)(l+m)= 0,\;J\cap\{|k|,|l|,|m|,|n|\}\neq\emptyset\},\]
\[\Xi_{>}=\{\{(k,l,m,n)\in \, \bar{\mathbb Z}^4:\; k+l+m+n=0,\;J\cap\{|k|,|l|,|m|,|n|\}=\emptyset\}.\]
Then we can write
\begin{eqnarray}R^4&=&\sum_{k,l,m,n}^\prime R_{klmn}z_k\,z_l\,z_m\,z_n\\&=&\sum_{\Xi_{\le}}R_{klmn}z_k\,z_l\,z_m\,z_n+
\sum_{\Delta_4}R_{klmn}z_k\,z_l\,z_m\,z_n+\sum_{\Xi_{>}}R_{klmn}z_k\,z_l\,z_m\,z_n\\ &:= &
R^4_{(1)}+R^4_{(2)}+R^4_{(3)}.\end{eqnarray}
By direct calculation,
\[R^4_{(1)}=\sum_{\Xi_{\le}}R_{klmn}=\sum_{k\ge 1,\, l\ge 1}\bar G_{kl}|z_k|^2|z_l|^2,\]
where $\bar G_{kl}$ is defined by \eqref{y9.12}. Let $X^t_{F^{(4)}}$ be the flow of the Hamiltonian vector field with Hamiltonian $F^{(4)}:$
\[F^{(4)}=\sum_{k,l,m,n\in\bar{\mathbb Z}}F_{klmn}^4z_k\,z_l\,z_m\,z_n,\]
where
\[\mi \,F_{klmn}^4=\left\{\begin{array}{ll} \frac{1}{\lambda_k+\lambda_l+\lambda_m+\lambda_n} R_{klmn},&(k,l,m,n)\in\Delta_4, \\ 0, & \text{otherwise}.\end{array}  \right.\]
By \eqref{y9.16}, we have
\be\label{17-5-9.1}|\mi \,F_{klmn}^4|\le C |R_{klmn}|.\ee
By \eqref{17-5-9.1} and \eqref{9.21},
\be ||\lf X_{F^{(4)}}\rc||_q\le C||\lf X_{R^4}\rc||_q\le C||z||_p^3. \ee
Let $\Psi^{(4)}=X_{F^{(4)}}^t\left.\right|_{t=1}$. Thus,
\begin{eqnarray}\label{17-5-9y.1} H\circ\Psi^{(3)}\circ\Psi^{(4)}&=&H_0+\{H_0,F^{(4)}\}+R^4\\ \label{17-5-9y.2}
&&+\int_0^1(1-t)\,\{\{H_0,F^{(4)}\},F^{(4)}\}\circ\, X^t_{F^{(4)}}\, dt\\\label{17-5-9y.3} & &+\int_0^1\{R^4,F^{(4)}\}\circ\, X^t_{F^{(4)}}\, dt\\\label{17-5-9y.4}  &&+ \left(\int_0^1(1-t^2)\{\{R,F^{(3)}\},F^{(3)}\}\circ  X_{F^{(3)}}^t\, dt\right)\circ\Psi^{(4)}.\end{eqnarray}
By direct calculation,
\[\{H_0,F^{(4)}\}+R^4=R^4_{(1)}+R_{(3)}^4=\bar G+O(||\hat z||_p^4).\]
Note that all of $\lfloor X_R\rceil, \; \lfloor X_{R^3}\rceil,\; \lfloor X_{R^4}\rceil,\; \lfloor X_{F^{(3)}}\rceil,\; \lfloor X_{F^{(4)}}\rceil$ are in  $h_q$.
It is easy to verify that
\eqref{17-5-9y.2}$=O(||z||_p^6)$ and \eqref{17-5-9y.3}$=O(||z||_p^6)$ and \eqref{17-5-9y.4}$=O(||z||_p^5)$.
The proof of Lemma \ref{normal-form} is finished.$\qed$

\ \

Restrict $|\tilde z|\le \e_0^{1/4}$ and $||\hat z||_p\le  \e_0^{1/3}$.
Then
\be \label{170510-y3}||\lf X_{\hat G+\tilde R}\rc||_q\le C(|\tilde z|^4+||\hat z||_p^3)\le C\, \e_0.\ee

Introduce action-angle variables $(y,x)$ by
\be\left\{\begin{array}{ll} z_{j_k}=\sqrt{\zeta_k+y_k}\, e^{-\mi x_k}, &
 z_{-j_k}=\sqrt{\zeta_k+y_k}\, e^{\mi x_k},\; k=1,...,n,\; j_k\in J,\\ & \\ z_j=z_j,& z_{-j}=z_{-j},\; j\in\bar{\mathbb Z}\setminus J,  \end{array}\right. \ee
where $\zeta=(\zeta_1,...,\zeta_n)\in\mathbb R_+^n$ and $\e_0^{1/2}<|\zeta|\le 2\e_0^{1/2}$. Then
\[H_0=\sum_{1\le k\le N}\frac{\tau j_k}{1+\tau^2\,j_k^2}(\zeta_k+y_k)+\sum_{\bar{\mathbb Z}\setminus J}\frac{\tau^2 j}{1+\tau^2\, j^2}z_j\,z_{-j}\] and
\begin{eqnarray}\bar G &=&\sum_{1\le k,l\le N}\bar{G}_{j_kl_k}(\zeta_k+y_k)(\zeta_l+y_l)+
\sum_{1\le k\le N,\, l\in\bar{\mathbb Z}\setminus J}\bar G_{j_k l}(\zeta_k+y_k)z_l\,z_{-l}\\ & &+
\sum_{k\in\bar{\mathbb Z}\setminus J,\,1\le l\le N}\bar G_{kj_l}(\zeta_l+y_l)z_k z_{-k}.%+\sum_{k,l\in\bar{\mathbb Z}\setminus J}\bar G_{kl}z_k %z_{-k}z_lz_{-l}
\end{eqnarray}
%With this the Hamiltonian $H^*(x,y,\hat z,\hat{\bar z}):=H\circ %\Psi^{(1)}\circ \Psi^{(4)}$ reads
%%%%%%%%%%%%%%%
Thus, up to a constant depending on $\zeta$,
\[H_0+\bar G=\sum_{1\le j\le N}\omega_j^0(\zeta)\, y_j+\sum_{j\in\bar{\mathbb Z}\setminus J}\Omega_j^0(\zeta)
z_j\,z_{-j},\]
where
\be\omega^0(\zeta)=\lambda^{(N)}+\m B\, \zeta,\quad \Omega^0(\zeta)=\lambda^\infty+S\, \zeta \ee
with
\be\label{1707-1.9.37}\lambda^{(N)}=\left(\frac{\tau \, j_1}{1+\tau^2\, j_1^2},...,\frac{\tau \, j_N}{1+\tau^2\, j_N^2} \right),\quad \lambda^\infty=(\frac{\tau \, j}{1+\tau^2\, j^2}:\, j\in\bar{\mathbb Z}\setminus J) \ee
and $\m B=(\m B_{kl}:\; k,l\in J)$ with matrix elements
\[
 \m B_{kl}=
 \left\{\begin{array}{ll} -\frac{2}{T}\frac{\tau^2\, j_k\, j_l}{\left(\tau^2(j_k^2+j_k\, j_l+j_l^2)+3\right)\left(\tau^2(j_k^2-j_k\, j_l+j_l^2)+3\right)},& k\neq l,  \\  \frac{1}{6T}\frac{1}{\tau^2\, j_k^2+1}, & k=l, \end{array} \right.\]
and $S=(S_{kl}:\, k\in \bar{\mathbb Z}\setminus J,\,l\in J)$ with matrix elements
\be\label{Skl}
S_{kl}=-\frac2T\frac{\tau^2k\, l}{\left(\tau^2(k^2+kl+l^2)+3\right)\left(\tau^2(k^2-kl+l^2)+3\right)}.
\ee

Let $M =\text{diag}\, (\frac{1}{6T}\frac{1}{\tau^2\, j_k^2+1}:\; k=1,...,N,\;j_k\in J)$. Clearly, there does exist $M^{-1}$ and the matrix elements of the matrix $M^{-1}\,\m B$ are
\[
 (M^{-1}\,\m B)_{kl}=
 \left\{\begin{array}{ll} -3\frac{(\tau^2\, j_k^2+1)(\tau^2\, j_k\, j_l)}{\left(\tau^2(j_k^2+j_k\, j_l+j_l^2)+3\right)\left(\tau^2(j_k^2-j_k\, j_l+j_l^2)+3\right)},& k\neq l,
  \\  1 , & k=l. \end{array} \right.\]
Thus we see that
\[\text{det}\, (M^{-1}\, \m B)=\frac{F(\tau)}{G(\tau)},\] where $F(\tau)$ and $G(\tau)$ are  polynomials of integral coefficients in $\tau$ and they have no common factor, i.e. $(F,G)=1$. Observe that $\text{det}\, (M^{-1}\,\m  B)=1$ when $\tau=0$. In view of the assumption that $\tau$ is transcendental, we have that $ \text{det}\, (M^{-1}\,\m  B)\neq 0$. So $ \text{det}\, ( \m B)\neq 0$. Take $\bar\Pi=[0,\,\e_0^{1/2}]^N$. Then $\text{Measure}\, \bar\Pi=\e_0^{N/2}$ and $\text{Diameter}\,\bar\Pi=\e_0^{1/2}$. Define $\Phi:\zeta\mapsto \xi$ by
\[\lambda^{(N)}+\m B\, \zeta=\xi.\]
And let $\Pi=\Phi(\bar\Pi)$. Thus,
\[\frac{1}{C}\e_0^{N/4}\le\,\text{Measure}\,\Pi\le C\e_0^{N/4},\,\quad \frac{1}{C}\e_0 \le \text{Diameter}\,\Pi\le C\e_0,\]
and
\be\label{17510-1}\omega^0(\zeta)=\xi,\quad \Omega^0(\xi)=\lambda^\infty-S\, \m B^{-1}\,\lambda^{(N)}+S\, \m B^{-1}\,\xi.\ee
Observe that $|S_{kl}|\le C/|k|^2$ for $k\in\bar{\mathbb Z}\setminus J,\; l\in J$.
It follows that the Assumptions {\bf A} and \eqref{3-08-morning} and \eqref{170512-1} of Assumption {\bf B} are fulfilled with taking $\kappa=1$.
Arbitrarily take an infinite dimensional integer vector $l=(l_{j}\in \mathbb{Z}: j\in \overline{\mathbb{Z}}\backslash J)$ with $1\leq |l|\leq 2$
(here $|l|=\sum_{j\in \overline{\mathbb{Z}}\backslash J}|l_{j}|$). Observe that where $0\neq k\in \mathbb{Z}^{N},$
the function
$  (k,\xi)+\<l,\Omega^0(\xi)\>$
is a affine function of $\xi$. Thus, letting $\xi_0$ is the direction such that $\frac{d}{d\xi_0}\,(k,\xi)=|k|$ along the direction, then
\[\frac{d}{d\,\xi_0}\left((k,\xi)+\<l,\Omega^0(\xi)\>\right)=(k,\xi_0)+\<l,\Omega^0(\xi_0)\>.\]
And note that the affine function can be written as $F(\tau)/G(\tau)$ where $F,\, G$ are polynomials of integral coefficient in $\tau$.
 %%%%%%%%%%%%%%%%%%
%%%%%%%%%%%%%%%%%%%%%
\iffalse
And note $\<l,\Omega^0(\xi_0)\>\neq 0$ when $l=(...,+1,...,-1,...)$ and $|l|=2$.
 \fi
 %%%%%%%%%%%%
 %%%%%%%%%%%%%
And note that when $\tau=0$ \[(k,\xi_0)+\<l,\Omega^0(\xi_0)\>=(k,\xi_0)=|k|\neq 0.\]
Thus
\[(*):=\frac{d}{d\,\xi_0}\left((k,\xi)+\<l,\Omega^0(\xi)\>\right)=(k,\xi_0)
+\<l,\Omega^0(\xi_0)\>\neq 0.\]
From \eqref{Skl} and the fact $|S_{kl}|\le C/|k|^2$ for any $k\in \bar{\mathbb Z}\setminus J,\, l\in J$, it follows that there are a constant $C_0>0$ and an integer $j_0\in \bar{\mathbb Z}\setminus J$ such that
\[|\l l,\Omega^0(\xi_0)\r|\le C_0/|j_0|.\]
We can choose $J$ such that $|j_0|$ large enough with $ C_0/|j_0|<1$. Thus there is $c_0>$ such that $(*)>c_0>0$.
 This verifies \eqref{170510-2}.
Let
\[D_p:=D_p(\e_0)=\{(x,y,\hat z,\bar {\hat{z}})\in \, \mathbb C^N/(2\pi\mathbb Z)^N\times \mathbb Z^N\times h_p\times h_p:\; |\Im\, x|\le s_0,\, |y|\le \e_0,\, ||\hat z||_p\le \e_0^{1/3},\,||\overline{\hat z}||_p\le \e_0^{1/3}\}.\]
By \eqref{170510-y3},  we verify Assumption  {\bf C} and \be \W \lf X_{\hat G+\tilde R}\rc\W_{q,D_p\times\Pi} \le C\,\epsilon_0,\; \W \lf \p_\xi\,X_{\hat G+\tilde R}\rc\W_{q,D_p\times\Pi} \le C\, \sqrt{\epsilon_0}.\ee
 In \eqref{170510-y4}, $u$ is real if $z_{-j}$ is the complex conjugate of $z_j$. It follows that Assumption {\bf  D} holds true. Finally,  Assumption {\bf E} holds true clearly, since $B=0$.

\ \

By Theorem \ref{theorem2} we have the following theorem.

\begin{thm} \label{on-BBM}Assume $\frac{2\pi}{T}$ is transcendental. Around the neighborhood of  $u=0$,  BBM equation \eqref{w10} has many (the initial value set of $N$-dimensional positive Lebesgue measure) smooth solutions  which are  quasi-periodic in time, linear stable and of zero Lyapunov exponent. More exactly, there exists $\e_0^*=\e_0^*(N,\tau, J)>0$ depending on $N,\tau, J$ such that for any $0<\e_0<\e_0^*$ there is a subset $\breve{\Pi}$ of the initial value set $\Pi_0:=[\sqrt{\e_0},2\sqrt{\e_0}]^N$ with
\[\text{Leb}\;\breve{\Pi}=(\text{Leb}\, \Pi_0)\; \left(1-C \frac{1}{|\log\,\e_0|}\right) \] and for any $\xi=(\xi_l:\,l=1,...,N)\in\breve{\Pi}$, BBM equation has a quasi-periodic solution $u(t,x)$ of frequency $\omega\in\mathbb R^N$ in time $t$
\[u(t,x)=\sum_{k\in\mathbb Z^N,j\in\mathbb Z\setminus\{0\}}\;\hat{u}(k,j)\; e^{{\bf i}(k,\omega)}\, e^{{\bf i}j\,\tau\, x} \] satisfying
\[|\omega-\omega_0|\le C\sqrt{\e_0},\; \omega\in\mathbb R^N, \; \omega_0=(\frac{\tau\, j_l}{1+\tau^2\, j_l^2}:\; j_t\in J)\in\mathbb R^N,
\]
\[ \left| \hat{u}(e_l,j_l)-\xi_l\right|<C\, \e_0^{1/3},\; e_l-l^{\text{th}}\, \text{unit vector of}\; \mathbb Z^N,\;
j_l\in J,\,l=1,...,N, \] and
\[\sum_{(k,j)\notin \mathcal{S}}\left|\hat{u}(k,j \right|^2e^{|k|\, s_0+2a\,|j|}|j|^{2p}<C\,\e_0^{1/3},\quad \mathcal{S}={(e_l,j_l):\; l=1,...,N},\]
where some constants $s_0>0,a>0$ and $p>1/2$.
\end{thm}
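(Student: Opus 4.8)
The plan is to recast \eqref{w10} as the infinite–dimensional Hamiltonian \eqref{w6}, massage it into the form covered by Theorem \ref{theorem2} (the case $\varpi=0$), and then read off the quasi-periodic solutions from the abstract conclusion together with Corollary \ref{Cor1} and Corollary \ref{Cor2}. The first step is the partial Birkhoff normalization of Lemma \ref{normal-form}: since $\tau=2\pi/T$ is transcendental, Lemma \ref{ylem9.3} gives the non-resonance of $\lambda_j+\lambda_k+\lambda_l$ (for $j+k+l=0$) and of $\lambda_j+\lambda_k+\lambda_l+\lambda_m$ (for $j+k+l+m=0$, $(j+k)(j+l)(j+m)\neq0$) on the finite index sets $\Delta_3,\Delta_4$, so the real-analytic symplectic map $\Phi=\Psi^{(3)}\circ\Psi^{(4)}$ removes the cubic term entirely and kills every non-integrable quartic monomial meeting the tangential set $J$, producing $H\circ\Phi=H_0+\bar G+\hat G+\tilde R$ with $\bar G$ integrable and $\|\lfloor X_{\hat G}\rceil\|_q=O(\|\hat z\|_p^3)$, $\|\lfloor X_{\tilde R}\rceil\|_q=O(\|z\|_p^4)$; the coefficients $\bar G_{kl}$ are the explicit rational functions \eqref{y9.12} of $\tau$.

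Next I would introduce action–angle variables $(x,y)$ on the $N$ tangential sites $j_1<\dots<j_N\le\tilde N$, restrict to $|\tilde z|\le\e_0^{1/4}$, $\|\hat z\|_p\le\e_0^{1/3}$ and to actions $\zeta$ in a cube of side $\sqrt{\e_0}$, and rescale, so that $H_0+\bar G$ becomes $(\omega^0(\zeta),y)+\langle\Omega^0(\zeta)z,\bar z\rangle$ up to an additive constant, with $\omega^0=\lambda^{(N)}+\mathcal B\zeta$ and $\Omega^0=\lambda^\infty+S\zeta$. The algebraic heart of the matter, which the excerpt essentially carries out, is that the twist matrix $\mathcal B$ is non-degenerate: $\det(M^{-1}\mathcal B)$ is a rational function of $\tau$ equal to $1$ at $\tau=0$, hence nonzero for transcendental $\tau$, so $\xi:=\omega^0(\zeta)$ is an admissible parameter over $\Pi=\Phi(\bar\Pi)$, which has measure $\asymp\e_0^{N/4}$ and diameter $\asymp\e_0$. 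With this parametrization Assumption \textbf{A} holds, and \eqref{3-08-morning}, \eqref{170512-1} hold with $\kappa=1$, $\varpi=0$ because $\lambda_j=\tau j/(1+\tau^2j^2)\approx|j|^{-1}$ and $S_{kl}=O(|k|^{-2})$. The directional Melnikov inequality \eqref{170510-2} is verified along the direction $\xi_0$ with $\partial_{\xi_0}(k,\xi)=|k|$: the quantity $(k,\xi_0)+\langle l,\Omega^0(\xi_0)\rangle$ is affine in $\xi$, equals $|k|\neq0$ at $\tau=0$, and is again of the form $F(\tau)/G(\tau)$ with non-constant integer-coefficient polynomials, hence nonzero at transcendental $\tau$; a uniform positive lower bound then follows because $|\langle l,\Omega^0(\xi_0)\rangle|\le C_0/|j_0|$ and $J$ is chosen so that $|j_0|$ is large. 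Assumptions \textbf{C}, \textbf{D}, \textbf{E} follow from Lemma \ref{ylem9.1} and \eqref{170510-y3} (with $B^0=0$), the perturbation $X_{\hat G+\tilde R}$ being of size $C\e_0$ on $D_p\times\Pi$.

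With all hypotheses checked I would apply Theorem \ref{theorem2}. Since here $\mathcal O_0=\Pi$ depends on $\e_0$ with diameter $\asymp\e_0$, the excision function is $\gamma(\e_0)=1/|\log\e_0|$, as in the Remark following Theorem \ref{theorem2} for $\mathcal O_0\sim[0,\sqrt{\e_0}]^N$; one thus obtains a subset $\breve\Pi$ of the action set $\Pi_0$ of relative measure $1-C/|\log\e_0|$ and, for each $\xi\in\breve\Pi$, an analytic invariant $N$-torus of the transformed Hamiltonian with frequency $\omega=\omega(\xi)$, $|\omega-\omega^0|\le C\sqrt{\e_0}$. Corollary \ref{Cor1} converts this into a quasi-periodic solution in time, and Corollary \ref{Cor2} yields linear stability and zero Lyapunov exponents (using that $B^\infty(\xi)$ is self-adjoint, by Assumption \textbf{D}). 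Finally I would push the torus back through $\Psi^{(3)}\circ\Psi^{(4)}$, the action–angle map and the KAM conjugacy $\Phi$ into the original $u$–variable; since each of these maps is $O(\sqrt{\e_0})$ or $O(\e_0^{1/3})$ close to the identity in the relevant norm, one reads off the estimates on $\omega$, on the leading Fourier coefficients $\hat u(e_l,j_l)\approx\xi_l$, and on the tail $\sum_{(k,j)\notin\mathcal S}|\hat u(k,j)|^2e^{|k|s_0+2a|j|}|j|^{2p}<C\e_0^{1/3}$, the exponential decay in $j$ coming from the analyticity of all the vector fields involved, whose moduli map $h_p$ analytically into $h_q$ and hence admit a strip-analytic scale of spaces. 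The only genuinely delicate points are the simultaneous non-degeneracy of $\mathcal B$ and of \eqref{170510-2}: both reduce to the nonvanishing at transcendental $\tau$ of explicit non-constant rational functions with integer-coefficient numerator and denominator, combined with the freedom to send the tangential sites $j_k$ to infinity to dominate the $O(1/|j_0|)$ error; everything else is bookkeeping already prepared in Lemmas \ref{ylem9.1}–\ref{normal-form} and in the iterative lemma.
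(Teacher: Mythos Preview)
Your proposal is correct and follows essentially the same route as the paper: the paper's ``proof'' of this theorem is precisely the sequence of preparatory steps you describe (Lemma~\ref{ylem9.1}, the partial Birkhoff normal form of Lemma~\ref{normal-form} via Lemma~\ref{ylem9.3}, the action--angle coordinates, the verification that $\det\mathcal B\neq 0$ and that \eqref{170510-2} holds by the rational-function-in-$\tau$ argument, and the checks of Assumptions \textbf{A}--\textbf{E}), after which the paper simply invokes Theorem~\ref{theorem2} together with Corollaries~\ref{Cor1} and~\ref{Cor2}. Your additional remark about the exponential weight $e^{2a|j|}$ in the tail estimate (coming from analyticity of the vector fields rather than directly from the abstract theorem, which is stated only in the polynomial-weight spaces $h_p$) is a point the paper itself leaves implicit.
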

\begin{rem} Theorem  \ref{theorem2} applies to more general BBM equation:
\begin{equation}\label{w10}
u_t-u_{xxt}+u_x+F(u)\,u_x=0,\quad u(t,0)=u(t,T),
\end{equation}
and Hirota-Satsuma equation
\be u_t-u_{xxt}+u_x-F(u)\, u_t-u_x\p_x^{-1}\, u_t=0,\quad u(t,0)=u(t,T),\ee
where $F(u)=u+\sum_{j\ge 2} c_j\, u^j$ is an analytic function of $u$ with $c_j\in\mathbb R$.
\end{rem}

\begin{rem} We are glad to mention a recent paper \cite{Zengchongchun}, where the linear stability of traveling wave solution of BBM is studied among the other things.
\end{rem}

\section{Application to $d$-dimensional generalized Pochhammer-Chree equation}

The  Pochhammer-Chree (gPC) equation represents a nonlinear model of longitudinal wave
propagation of elastic rods\cite{Chree,Pochhammer}. See also \cite{Bogolu,Clarkson,Park,Parand}, for example.
Consider a $d$-dimensional generalized Pochhammer-Chree (gPC) equation
%---------------------------------
\begin{equation}\label{L1}
\left\{
  \begin{array}{ll}
    u_{tt}-\Delta u-\Delta u_{tt} +\Delta (u^{3})=0,\;\;x\in \Omega\subset\mathbb{R}^{d},\\
    u|_{\partial \Omega}=0,
  \end{array}
\right.
\end{equation}
%------------------------------------------------------
where $\Omega=\Pi_{j=1}^{d}[0, T_{j}]$ with $T_{j}>0.$

Let
%-------------------------------------------
\begin{equation}\label{L1+}
\phi_{k}(x)=\sin k_{1}\tau_{1}x_{1}\cdots \sin k_{d}\tau_{d}x_{d},\;\;k\in \mathbb{Z}_{+}^{d},\;\;\tau_{j}=\frac{2\pi}{ T_{j}}
\end{equation}
%---------------------------------------------
with $\mathbb{Z}_{+}^{d}=\{k=(k_{1}, \cdots, k_{d})\mid k_{j}\in \mathbb{Z}_{+},\;\;j=1, \cdots, d\},$
where $\mathbb{Z}_{+}=\{1, 2, 3, \cdots\}$ is the set of all positive integers.

Let
%-------------------------------------------------------
\begin{equation}\label{L2}
u(t, x)=\sum_{k\in \mathbb{Z}_{+}^{d}}u_{k}(t)\phi_{k}(x)
\end{equation}
%-------------------------------------------------------
and put \eqref{L2} into \eqref{L1}. Then we have
%----------------------------------------------------
\begin{equation}\label{L3}
\ddot{u}_{k}+\parallel k \parallel^{2}u_{k}+\parallel k \parallel^{2}\ddot{u}_{k}+\parallel k \parallel^{2} \widetilde{G}_{k}(u)=0,\;\;k\in \mathbb{Z}_{+}^{d},
\end{equation}
%---------------------------------------------------
where
%--------------------------------------------------
\begin{equation}\label{L4}
\parallel k \parallel^{2}=\tau_{1}^{2}k_{1}^{2}+\cdots +\tau_{d}^{2}k_{d}^{2},\;\;k=(k_{1},\cdots, k_{d}),
\end{equation}
%------------------------------------------------------
\begin{equation}\label{L5}
\widetilde{G}_{k}(u)=\sum_{m,n,l\in \mathbb{Z}_{+}^{d}}\widetilde{C}_{mnlk}u_{m}u_{n}u_{l},
\end{equation}
%--------------------------------------------
and
%--------------------------------------------
\begin{equation}\label{L6}
\widetilde{C}_{mnlk}=\int_{\Omega}\phi_{m}\phi_{n}\phi_{l}\phi_{k}dx,\quad m,n,l,k\in\mathbb Z^d_+.
\end{equation}
%---------------------------------------------
Rewrite \eqref{L3} as
%--------------------------------------------
\begin{equation}\label{L7}
\ddot{u}_{k}+\lambda_{k}u_{k}+\lambda_{k}\widetilde{G}_{k}(u)=0,\;\;k\in \mathbb{Z}_{+}^{d},\;\;\lambda_{k}=\frac{\parallel k\parallel^{2}}{1+\parallel k\parallel^{2}}.
\end{equation}
%---------------------------------------------
Let
$$u_{k}=\frac{1}{\sqrt[4]{\lambda_{k}}}w_{k},\;\;\dot{u}_{k}=\sqrt[4]{\lambda_{k}}v_{k},\;\;k\in \mathbb{Z}_{+}^{d}.$$
Then \eqref{L7} reads
%----------------------------------------
\begin{equation}\label{L8}
\left\{
  \begin{array}{ll}
    \dot{w}_{k}=\sqrt{\lambda_{k}}v_{k}, \\
    \dot{v}_{_{k}}=-\sqrt{\lambda_{k}}w_{k}-G_{k}(w),
  \end{array}
\right.
\end{equation}
%------------------------------------------
where
%---------------------------------------
\begin{eqnarray}\label{L9}
G_{k}(w)&=&\frac{\lambda_{k}}{\sqrt[4]{\lambda_{k}}}\widetilde{G}_{k}(u)=\frac{\lambda_{k}}{\sqrt[4]{\lambda_{k}}}\,
\sum_{m, n, l\in\mathbb Z_+^d}\widetilde{C}_{mnlk}\frac{w_{m}w_{n}w_{l}}{\sqrt[4]{\lambda_{m}}\sqrt[4]{\lambda_{n}}\sqrt[4]{\lambda_{l}}}\\
&=& \lambda_{k}\sum_{m, n, l\in\mathbb Z_+^d}C^*_{mnlk}w_{m}w_{n}w_{l},\nonumber
\end{eqnarray}
%-----------------------------------------------
\begin{eqnarray}\label{L10}
C^*_{mnlk}=\frac{1}{\sqrt[4]{\lambda_{m}}\sqrt[4]{\lambda_{n}}\sqrt[4]{\lambda_{l}}\sqrt[4]{\lambda_{k}}}\int_{\Omega}\phi_{m}\phi_{n}\phi_{l}\phi_{k}dx.
\end{eqnarray}
%------------------------------------------------
We write $G_{k}(w)$ in the form of gradient:
%----------------------------------
\begin{eqnarray}\label{L11}
G_{k}(w)&=&\lambda_{k}\sum_{m,n,l}C^*_{mnlk}w_{m}w_{n}w_{l}\\
&=& \frac{1}{4}(\lambda_{m}\sum_{n,l,k}C^*_{mnlk}w_{n}w_{l}w_{k}+\lambda_{n}\sum_{m,l,k}C^*_{mnlk}w_{m}w_{l}w_{k}\nonumber\\
&&+\lambda_{l}\sum_{m,n,k}C^*_{mnlk}w_{m}w_{n}w_{k}+\lambda_{k}\sum_{m,n,l}C^*_{mnlk}w_{m}w_{n}w_{l})\nonumber\\
&=&\partial_{w_{k}}G(w),
\end{eqnarray}
where
%-----------------------------------
\begin{eqnarray}\label{L12}
G(w)=\sum_{m, n ,l,k}C_{mnlk}w_{m}w_{n}w_{l}w_{k},
\end{eqnarray}
%-------------------------------------
\begin{eqnarray}\label{L13}
C_{mnlk}=\frac{\lambda_{m}+\lambda_{n}+\lambda_{l}+\lambda_{k}}{4\sqrt[4]{\lambda_{m}\lambda_{n}\lambda_{l}\lambda_{k}}}
\int_{\Omega}\phi_{m}\phi_{n}\phi_{l}\phi_{k}dx.
\end{eqnarray}
By \eqref{L1+}, we have
%-----------------------------------------
\begin{eqnarray}\label{L14}
C_{mnlk}=0,\quad  \mbox{unless there is a combination of $+$ and $-$ such that} \;\; m\pm n\pm l\pm k=0.
\end{eqnarray}
Now \eqref{L8} can be written as a Hamiltonian system
%------------------------------------------------------
\begin{eqnarray}\label{L15}
\left\{
  \begin{array}{ll}
     \dot{w}_{k}=\frac{\partial H}{\partial v_{k}},\\                                                             \\
     \dot{v}_{k}=-\frac{\partial H}{\partial w_{k}},\;\;k\in \mathbb{Z}_{+}^{d},
  \end{array}
\right.
\end{eqnarray}
where
%--------------------------------------------------------------
\begin{eqnarray}\label{L16}
H=\sum_{k\in \mathbb{Z}_{+}^{d}}\frac{1}{2}\sqrt{\lambda_{k}}(w_{k}^{2}+v_{k}^{2})+G(w),
\end{eqnarray}
%-------------------------------------
\begin{eqnarray}\label{L17}
G(w)=\sum_{m\pm n\pm l\pm k=0}C_{mnlk}w_{m}w_{n}w_{l}w_{k}.
\end{eqnarray}
%-------------------------------------------------------------
\begin{lem}\label{Llem1}
Let $\partial_{w}G=(\partial_{w_{k}}G: k\in \mathbb{Z}_{+}^{d}).$ Then
$$\parallel \lfloor \partial_{w} G\rceil\parallel_{p}\leq c \parallel w\parallel_{p}^{3}.$$
\end{lem}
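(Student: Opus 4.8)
The plan is to reduce the assertion to the standard ``$h_p$ is a Banach algebra for $p>d/2$'' estimate, exactly in the spirit of Lemma \ref{ylem9.1}. First I would record from \eqref{L9}--\eqref{L11} that $\partial_{w_k}G=G_k(w)=\lambda_k\sum_{m,n,l\in\mathbb Z_+^d}C^*_{mnlk}\,w_m w_n w_l$ with $C^*_{mnlk}=(\lambda_m\lambda_n\lambda_l\lambda_k)^{-1/4}\int_\Omega\phi_m\phi_n\phi_l\phi_k\,dx$, and observe that since $k\in\mathbb Z_+^d$ forces $\|k\|^2\ge\min_i\tau_i^2>0$, one has $0<c_1\le\lambda_k\le 1$ for every $k$. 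Hence all the factors $\lambda_k$ and $\lambda_\bullet^{-1/4}$ are bounded above and below by positive constants and may be absorbed into $c$.

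Second, I would analyse $\int_\Omega\phi_m\phi_n\phi_l\phi_k\,dx=\prod_{i=1}^d\int_0^{T_i}\sin(m_i\tau_i x)\sin(n_i\tau_i x)\sin(l_i\tau_i x)\sin(k_i\tau_i x)\,dx$. Expanding each product of four sines into cosines of $\pm m_i\pm n_i\pm l_i\pm k_i$ and using $\int_0^{T_i}\cos(p\tau_i x)\,dx=T_i\,\delta_{p,0}$, each factor equals $\tfrac{T_i}{16}$ times the number of sign patterns annihilating $\pm m_i\pm n_i\pm l_i\pm k_i$; consequently $|\int_\Omega\phi_m\phi_n\phi_l\phi_k\,dx|\le C$, and it vanishes unless in each coordinate there is a sign pattern with $\pm m_i\pm n_i\pm l_i\pm k_i=0$. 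In particular $k_i\le m_i+n_i+l_i$ coordinatewise, so $|k|\le|m|+|n|+|l|$, and for fixed $(m,n,l)$ the admissible indices $k$ form a set of cardinality at most $8^d$.

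Third, with $a=(|w_j|)_{j\in\mathbb Z_+^d}$ this yields $\lfloor\partial_{w_k}G\rceil\le C\sum_{(m,n,l)\ \mathrm{admissible\ for}\ k}a_m a_n a_l$. I would then use $|k|^p\le 3^p(|m|^p+|n|^p+|l|^p)$ on the admissible set, together with the symmetry of the sum, to bound $|k|^p\,\lfloor\partial_{w_k}G\rceil$ by $C$ times a triple sum in which exactly one factor carries the weight $|j|^p$. Extending $a$ (and $b_j:=|j|^p a_j$) by even reflection to sequences $\tilde a,\tilde b$ on $\mathbb Z^d$ (set to $0$ when a coordinate vanishes), the constrained triple sum is dominated by $(\tilde b*\tilde a*\tilde a)$ evaluated at the finitely many signed images of $k$, whence $\||k|^p\,\lfloor\partial_{w_k}G\rceil\|_{\ell^2_k}\le C\,\|\tilde b*\tilde a*\tilde a\|_{\ell^2(\mathbb Z^d)}$. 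By Young's inequality $\|\tilde b*\tilde a*\tilde a\|_{\ell^2}\le\|\tilde b\|_{\ell^2}\|\tilde a\|_{\ell^1}^2$, and since $p>d/2$ we have $\|\tilde a\|_{\ell^1}\le 2^d\big(\sum_{j\in\mathbb Z_+^d}|j|^{-2p}\big)^{1/2}\|w\|_p\le C\|w\|_p$ while $\|\tilde b\|_{\ell^2}\le C\|w\|_p$. Combining these, $\|\lfloor\partial_w G\rceil\|_p=\big\||k|^p\,\lfloor\partial_{w_k}G\rceil\big\|_{\ell^2}\le C\|w\|_p^3$, which is the claim.

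The only genuinely delicate step, and the one I would write out carefully, is the combinatorial bookkeeping of the $\pm$ constraints produced by the Dirichlet (sine) eigenfunctions: one must check that passing to the moduli (the operation $\lfloor\cdot\rceil$) and to the even extension really turns the constrained sum into an honest convolution on $\mathbb Z^d$ with only a dimension-dependent multiplicative loss, so that no cancellation is needed and Young's inequality applies directly. Everything past that point is the routine algebra-property computation for the scale $h_p$ with $p>d/2$, and requires no small divisors or measure estimates.
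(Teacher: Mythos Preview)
Your proposal is correct and follows essentially the same route as the paper: bound the $\lambda$-factors by constants, use \eqref{L14} to reduce to a convolution-type sum, and invoke the algebra property of $h_p$ for $p>d/2$. The paper's proof is a three-line sketch that writes $\lfloor\partial_{w_k}G\rceil\le C\,(|w|*|w|*|w|)(k)$ and then quotes $\|\,\lfloor\partial_w G\rceil\,\|_p\le C\|w\|_p^3$ directly, whereas you have unpacked the combinatorics (coordinatewise $\pm$ constraints, even reflection to $\mathbb Z^d$, Young's inequality) that makes that line honest; this extra care is welcome, since ``convolution in $\ell_2(\mathbb Z_+^d)$'' is exactly the point that needs the reflection argument you supply.
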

\begin{proof}
Recall \eqref{L12} and \eqref{L14}. Then
$$\partial_{w_{k}}G=G_{k}(w)=\lambda_{k}\sum_{\pm m\pm n\pm l=k}C_{mnlk}w_{m}w_{n}w_{l}.$$
Thus
$$\lfloor \partial_{w_{k}}G\rceil\leq C\sum_{\pm m\pm n\pm l=k}|w_{m}||w_{n}||w_{l}|
=C{(w\ast w\ast w)(k)}$$ where $\ast$ is the convolution in $\ell_2(\mathbb Z_+^d)$.
So $$\parallel \lfloor \partial_{w}G\rceil\parallel_{p}\leq C \parallel w\parallel_{p}^{3}.$$

\end{proof}
Let $$z_{k}=\frac{1}{\sqrt{2}}(w_{k}+\sqrt{-1}v_{k}),\;\;\overline{z}_{k}=\frac{1}{\sqrt{2}}(w_{k}-\sqrt{-1}v_{k}),\;\;k\in \mathbb{Z}_{+}^{d}.$$
This is a symplectic transformation, which changes \eqref{L16} into
%--------------------------------------------------------------------
\begin{eqnarray}\label{L18}
H=\sum_{k}\sqrt{\lambda_{k}}z_{k}\overline{z}_{k}+G(z, \overline{z}),
\end{eqnarray}
where the symplectic structure is $\sqrt{-1}\, d\,\bar{z}\wedge d\, z,$
and
\begin{eqnarray}\label{L19}
G(z, \overline{z})=G(w)=\sum_{m\pm n\pm l\pm k=0}\frac{1}{4}C_{mnlk}(z_{m}
+\overline{z}_{m})(z_{n}+\overline{z}_{n})(z_{l}+\overline{z}_{l})(z_{k}+\overline{z}_{k}).
\end{eqnarray}

A  polynomial of degree $n$ of $d$-dimension variable $x=(x_1,...,x_d)$ can be written as
\[P(x)=\sum_{\alpha\in\mathbb N^d, |\alpha|\le n} C_{\alpha} \, x^\alpha,\quad x^{\alpha}=x_1^{\alpha_1}\cdots x_d^{\alpha_d},\; \alpha=(\alpha_1,...,\alpha_d)\]
where $C_{\alpha}$'s are coefficients and there is $\alpha$ with $|\alpha|= n$ such that $C_{\alpha}\neq 0$. We call that $P(x)$ is a polynomial  with coefficients in the field of
the rational numbers, if all coefficients $C_{\alpha}$'s are in $\mathbb Q$. By $\mathbb Q_n[x]$ denote the set of all  polynomials of degree $n$ with coefficients in the field of
the rational numbers. Let $\mathbb Q[x]=\bigcup_{n\in\mathbb N}\,  \mathbb Q_n[x]$ where $\mathbb N=\{1,2,...\}$. Clearly, the set $\mathbb Q[x]$ is countable. For any $P\in \mathbb Q[x]$, let $S_P$ be the set of all solutions to the polynomial equation
$P(x)=0$. Let $\Theta=[1,2]^d$. We claim that the Lebesgue measure of $S_P\bigcap \Theta$ is zero. In fact, the result is clear when the dimension $d=1$. The proof for $d\ge 2$ can be finished by Fubini Theorem and mathematical induction.
Let
\[S=\bigcup_{P\in\mathbb Q[x]}\, (S_P\bigcap \Theta).\]
Considering that  $\mathbb Q[x]$ is countable, we have that
$\text{Leb}\; S=0$.
 Define
$\tilde\Theta=\Theta \setminus  S$.
 When $\tau\in\tilde \Theta$, we call $\tau$ is typical. At this time, $\text{Leb}\, \tilde \Theta=1$. Therefore, for any $P\in\mathbb Q[x]$ and any $\tau\in\tilde\Theta$, we have $P(\tau)\neq 0$.

Fix arbitrarily integer $N$ which denotes the number of the incited oscillators. Let
$$J=\{j_{1}, \cdots, j_{N}\mid L< |j_{1}|<|j_{2}|<\cdots <|j_{N}|,\;\;j_{t}\in \mathbb{Z}_{+}^{d},\;\;t=1, 2, \cdots, N\},$$
where $L$ is supposed to be large $L\gg 1$ and $L\gg N$. The large $L\gg 1$ means that the incited oscillators are of high frequency. Here
it should be pointed out that the assumption $L\gg 1$ is just for simplifying the following computation.
Split $z=(z_{j}: j\in \mathbb{Z}_{+}^{d})=(\tilde{z}, \hat{z})$ with
$$\tilde{z}=(z_{j_{1}}, \cdots, z_{j_{N}}),\;\;\hat{z}=z\ominus \tilde{z}.$$
We will eliminate those terms of lower frequencies in $G(z,\bar z)$ , which involve $\tilde z$, as many as possible by Birkhoff normal form.
To this end we need the following lemma.
%----------------------------------------------------------------------------------
\begin{lem}\label{Llem2-}
Assume that $\tau=(\tau_{1}, \cdots, \tau_{d})$ is typical, i.e.,$\tau\in\tilde \Theta$.
\begin{enumerate}
  \item [(1)] If $m\pm n\pm k\pm l=0$ and $\{m, l\}\neq \{n, k\},$ then$$\sqrt{\lambda_{m}}-\sqrt{\lambda_{n}}+\sqrt{\lambda_{l}}-\sqrt{\lambda_{k}}\neq 0;$$
  \item [(2)] If $m\pm n\pm k\pm l=0,$ then $$\pm (\sqrt{\lambda_{m}}+\sqrt{\lambda_{n}}+\sqrt{\lambda_{l}}+\sqrt{\lambda_{k}})\neq 0,\; \;\pm (\sqrt{\lambda_{m}}+\sqrt{\lambda_{n}}+\sqrt{\lambda_{l}}-\sqrt{\lambda_{k}})\neq  0;$$
  \item [(3)] $$\sqrt{\lambda_{m}}\pm \sqrt{\lambda_{n}}\pm \sqrt{\lambda_{l}}+p\neq 0,\;\;p=0, \pm 1,\cdots ;$$
  \item [(4)] $$\sqrt{\lambda_{m}}\pm \sqrt{\lambda_{n}}+p\neq 0,\;\;p=0, \pm 1, \pm 2,$$ \textcolor[rgb]{0.00,0.00,0.00}{where $m\neq n$ for  $\sqrt{\lambda_{m}}- \sqrt{\lambda_{n}}+p$.}
\end{enumerate}
\end{lem}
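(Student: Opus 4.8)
\textbf{Proof proposal for Lemma \ref{Llem2-}.}

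The plan is to reduce everything to the single fact that the numbers $\sqrt{\lambda_k}=\sqrt{\|k\|^2/(1+\|k\|^2)}$, once squared, are rational functions of the variables $\tau_1^2,\dots,\tau_d^2$ with integer coefficients, together with the definition of ``typical'' $\tau$ given just above: for every $P\in\mathbb Q[x]$ and every typical $\tau\in\tilde\Theta$ one has $P(\tau)\neq 0$. So the strategy throughout is: suppose one of the claimed non-vanishing relations fails; isolate radicals and square repeatedly to clear all square roots; arrive at a polynomial identity $P(\tau)=0$ with $P$ having integer (hence rational) coefficients; then argue that $P$ is not the zero polynomial by evaluating at a convenient special value of $\tau$ (e.g. $\tau_j\to 0$, or rescaling so $\|k\|^2$ becomes an integer parameter); conclude $\tau$ is not typical, a contradiction. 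This is exactly the mechanism already used in Lemma \ref{ylem9.3} for the BBM case, transplanted to the square-root frequencies $\sqrt{\lambda_k}$.

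First I would record the algebraic normalization. Write $\mu_k=\|k\|^2=\tau_1^2k_1^2+\cdots+\tau_d^2k_d^2$, so $\lambda_k=\mu_k/(1+\mu_k)$ and $\sqrt{\lambda_k}=\sqrt{\mu_k}/\sqrt{1+\mu_k}$. For item (1), suppose $\sqrt{\lambda_m}-\sqrt{\lambda_n}+\sqrt{\lambda_l}-\sqrt{\lambda_k}=0$ with $\{m,l\}\neq\{n,k\}$; move two radicals to each side, square, isolate the remaining cross term, square again, and one lands on a polynomial relation among $\mu_m,\mu_n,\mu_l,\mu_k$; substituting $\mu_\bullet$ by its expression in $\tau^2$ and $k$-coefficients gives $P(\tau)=0$ for a fixed $P\in\mathbb Q[\tau]$. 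To see $P\not\equiv 0$ I would use the constraint $m\pm n\pm k\pm l=0$ and the hypothesis $\{m,l\}\neq\{n,k\}$ to exhibit a direction in which the two sides genuinely differ — the cleanest route is to check the small-$\tau$ (or large-frequency) asymptotics, where $\sqrt{\lambda_k}\approx 1-\tfrac1{2\mu_k}+\cdots$, so that $\sqrt{\lambda_m}-\sqrt{\lambda_n}+\sqrt{\lambda_l}-\sqrt{\lambda_k}\approx -\tfrac12(\mu_m^{-1}-\mu_n^{-1}+\mu_l^{-1}-\mu_k^{-1})$, and the latter combination cannot vanish identically unless $\{m,l\}=\{n,k\}$. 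Items (2)–(4) are handled the same way: in (2) all four (or three-plus-one) radicals have the same sign up to an overall $\pm$, so after clearing roots the leading ($\tau\to0$) behaviour is $\pm(4-\tfrac12(\mu_m^{-1}+\mu_n^{-1}+\mu_l^{-1}+\mu_k^{-1}))\neq 0$, resp.\ $\pm(2-\cdots)\neq0$; in (3) and (4) the integer $p$ just shifts the leading constant by an integer, and $3+p$, $2+p$, $1+p$ (for the relevant sign patterns) are never zero for the listed ranges of $p$, while for the minus-sign sub-cases one again falls back on the $\mu^{-1}$-expansion with the stated restriction $m\neq n$ to rule out the trivial collision. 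In each case the outcome ``$P(\tau)=0$ for some nonzero $P\in\mathbb Q[\tau]$'' contradicts typicality of $\tau$.

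The main obstacle — and the only place that needs genuine care rather than bookkeeping — is verifying that the polynomial $P$ produced after clearing radicals is actually nonzero, i.e.\ that squaring has not collapsed a true inequality into a formal identity. This is where the hypotheses $\{m,l\}\neq\{n,k\}$ in (1) and $m\neq n$ in (4) are used, and where one must be careful about the sign ambiguities introduced by squaring (a spurious root of the cleared polynomial could correspond to a different sign pattern that does vanish). I would handle this by keeping track, at each squaring, of which sign combinations are being committed to, and by always certifying $P\not\equiv0$ via an explicit evaluation: either the $\tau_j\to 0^+$ asymptotic expansion of $\sqrt{\lambda_k}$ indicated above, or, when that is delicate, specializing to $d=1$ and $\tau_1^2$ rational so that all $\lambda_k$ become explicit rationals and the relation becomes a concrete arithmetic falsehood. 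Everything else — the repeated squarings, collecting terms, identifying $P$ as an element of $\mathbb Q[\tau]$ — is routine and I would not write it out in full.
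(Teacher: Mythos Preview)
Your strategy is the paper's strategy: the paper also proves only case~(1) in detail, squares twice to obtain the rational identity
\[
[(\lambda_{m}+\lambda_{l}-\lambda_{n}-\lambda_{k})^{2}-4(\lambda_{n}\lambda_{k}+\lambda_{m}\lambda_{l})]^{2}-64\lambda_{m}\lambda_{n}\lambda_{k}\lambda_{l}=0,
\]
clears denominators to get $P(\tau)=0$ with $P\in\mathbb Q[\tau]$, and then argues $P\not\equiv0$ so typicality is contradicted. The only difference is in how $P\not\equiv0$ (equivalently $\gamma\not\equiv0$) is certified. The paper specialises to a single coordinate $\tau^{(1)}=(\tau_1,0,\dots,0)$, sets $s=\tau_1^{-2}$, and computes the first two $s$-derivatives of $\gamma$ at $s=0$, obtaining $\tfrac1{m_1^2}-\tfrac1{n_1^2}+\tfrac1{l_1^2}-\tfrac1{k_1^2}$ and the same expression with fourth powers; having both vanish would force $\{m_1,l_1\}=\{n_1,k_1\}$, and the coordinate is chosen (WLOG) so this fails. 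Your variant keeps all coordinates in play and reads off the leading behaviour of $\gamma$ as $\|\tau\|\to\infty$ --- note this is the \emph{large}-$\tau$ regime, not small-$\tau$ as you wrote (as $\tau\to0$ one has $\sqrt{\lambda_k}\to0$, not $1$). Your key claim, that $\mu_m^{-1}-\mu_n^{-1}+\mu_l^{-1}-\mu_k^{-1}$ cannot vanish identically in $\tau$ unless $\{m,l\}=\{n,k\}$, is correct and can be proved by comparing the pole loci of the rational functions $\mu_j^{-1}$ (distinct $j\in\mathbb Z_+^d$ give distinct quadrics $\{\mu_j=0\}$), but you should supply that argument; the paper bypasses this by using the second derivative. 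Finally, your caution about squaring introducing spurious sign-variants is well placed and is exactly the point both proofs must address: after the double squaring, $P=0$ detects the vanishing of any of the eight forms $\sqrt{\lambda_m}\pm\sqrt{\lambda_n}\pm\sqrt{\lambda_l}\pm\sqrt{\lambda_k}$, so ``$\gamma(\tau_0)\neq0$'' alone does not give ``$P(\tau_0)\neq0$''; one closes this by noting the $(+{+}{+}{+})$ and three-versus-one forms have nonzero limit as $\tau\to\infty$, and each two-versus-two form is handled by the same $\mu^{-1}$ pole argument.
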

%----------------------------------------------------------------
\begin{proof}
We give the proof for only case (1). The remaining is similar. Recall that $\lambda_m$ is a function of $\tau$ and
$$\lambda_{m}=\lambda_m(\tau)=\frac{\parallel m\parallel^{2}}{1+\parallel m\parallel^{2}}=\frac{\tau_{1}^{2}m_{1}^{2}
+\cdots+\tau_{d}^{2}m_{d}^{2}}{1+\tau_{1}^{2}m_{1}^{2}+\cdots +\tau_{d}^{2}m_{d}^{2}},\;\;\forall \; m=(m_1,...,m_d)\in \mathbb{Z}_{+}^{d}.$$ Write $m=(m_1,...,m_d)\in\mathbb Z_+^d$, etc. Since  $\{m, l\}\neq \{n, k\}$, we can assume that $m_1\neq n_1$ or $l_1\neq k_1$ without loss of generality. For $\tau=(\tau_1,\tau_2,...,\tau_d)\in \mathbb R^d$, let $\tau^{(1)}=(\tau_1,0,...,0)$. Then
\[\lambda_m(\tau^{(1)})=\frac{\tau_1^2 m_1^{2}}{1+ \tau_1^2 m_1^{2}}=\frac{ m_1^{2}}{\tau_1^{-2}+ m_1^{2}}=\frac{ m_1^{2}}{s+ m_1^{2}}:=\lambda_m(s),\quad \text{here}\;\; s=\tau_1^{-2}.\]

Set
\[\Gamma(s)=\sqrt{\lambda_{m}(s)}-\sqrt{\lambda_{n}(s)}+\sqrt{\lambda_{l}(s)}-\sqrt{\lambda_{k}(s)}\]
Then by a simple computation we have
\[-2\frac{d\,\Gamma}{d\,s}\left.\right|_{s=0}=\frac{1}{m_1^2}-\frac{1}{n_1^2}+\frac{1}{l_1^2}-\frac{1}{k_1^2}:=(*)\]
and
\[\frac43\frac{d^2\,\Gamma}{d\,s^2}\left.\right|_{s=0}=\frac{1}{m_1^4}-\frac{1}{n_1^4}+\frac{1}{l_1^4}-
\frac{1}{k_1^4}:=(**).\]
By $m_1\pm n_1\pm k_1\pm l_1=0$ and $m_1\neq n_1$ or $l_1\neq k_1$, we get that either $(*)\neq 0$ or $(**)\neq 0$. It follows that there exists a $s_0\in\mathbb R$ such that $\Gamma(s_0)\neq 0$. Moreover,   there exists a $\tau_0\in\mathbb R^d$ such that
\[\gamma(\tau_0):=\sqrt{\lambda_{m}(\tau_0)}-\sqrt{\lambda_{n}(\tau_0)}+
\sqrt{\lambda_{l}(\tau_0)}-\sqrt{\lambda_{k}(\tau_0)}\neq 0 \]
%%%%%%%
If $\sqrt{\lambda_{m}}-\sqrt{\lambda_{n}}+\sqrt{\lambda_{l}}-\sqrt{\lambda_{k}}=0$ where $\lambda_m=\lambda_m(\tau)$, etc., then $\sqrt{\lambda_{m}}+\sqrt{\lambda_{l}}=\sqrt{\lambda_{n}}+\sqrt{\lambda_{k}}.$
It follows
\begin{equation}\label{L51}
[(\lambda_{m}+\lambda_{l}-\lambda_{n}-\lambda_{k})^{2}-4(\lambda_{n}\lambda_{k}+\lambda_{m}\lambda_{l})]^{2}-64\lambda_{m}\lambda_{n}\lambda_{k}\lambda_{l}=0.
\end{equation}

Multiplying \eqref{L51} by $(1+\parallel m\parallel^{2})^{4}(1+\parallel n\parallel^{2})^{4}(1+\parallel k\parallel^{2})^{4}(1+\parallel l\parallel^{2})^{4},$ and noting
$m\pm n\pm k\pm l=0,$
we get
\begin{eqnarray}\label{L52}
P(\tau):=\sum_{\begin{array}{c}
        10\le |\alpha|\le 26 \\
        \alpha\in\mathbb Z_+^d
       \end{array}}
C_{\alpha}\,\tau^{\alpha}=0
\nonumber
\end{eqnarray}
where $C_{\alpha}\in \mathbb{Z}$.  By $\gamma(\tau_0)\neq 0$, we get that there exists a coefficient $C_\alpha\neq 0$. Thus, $P(x)\in\mathbb Q[x]$. Thus $P(\tau)\neq 0$ when $\tau\in\tilde\Theta$,
which is contradictory to \eqref{L51}. This completes the proof.
\end{proof}
%----------------------------------------------------------------------------------
\begin{lem}\label{Llem2--} Assume $\tau\in\tilde\Theta$.
If $m\pm n\pm k\pm l=0,$ and $\{m, n, k, l\}\cap J\neq \emptyset.$ Let $\triangle =\sqrt{\lambda_{m}}\pm \sqrt{ \lambda_{n}}\pm \sqrt{\lambda_{k}}\pm \sqrt{\lambda_{l}}$
(excluding $\Delta=\sqrt{\lambda_{m}}- \sqrt{ \lambda_{n}}+ \sqrt{\lambda_{l}}- \sqrt{\lambda_{k}}$ with $\{m, l\}=\{n, k\}$). Then there exists a  constant $C>0$
such that $|\triangle|>C>0.$
\end{lem}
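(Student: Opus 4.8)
The plan is to run a dichotomy on the sign pattern of $\triangle$ and to combine the elementary size control of $\sqrt{\lambda_j}$ with the finiteness of $J$ and the typicality of $\tau$. First I record the facts that drive the estimate: since $\tau\in\tilde\Theta\subset[1,2]^d$ we have $\|j\|^{2}=\sum_i\tau_i^{2}j_i^{2}\ge d$ for every $j\in\mathbb Z_+^d$, so $\lambda_j=\|j\|^{2}/(1+\|j\|^{2})\in[1/2,1)$ and hence $\sqrt{\lambda_j}\in[1/\sqrt2,1)$; moreover, writing $\eta_j:=1-\sqrt{\lambda_j}$, one has $\tfrac1{2(1+\|j\|^{2})}\le\eta_j\le\tfrac1{1+\|j\|^{2}}$, so $\eta_j$ is comparable to $\|j\|^{-2}$ and is as small as we like once $|j|$ is large. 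Write $\triangle=\sum_{i=1}^{4}\sigma_i\sqrt{\lambda_{p_i}}$ with $\sigma_i\in\{\pm1\}$ and $(p_1,\dots,p_4)$ a permutation of $(m,n,k,l)$, and set $\sigma=\sigma_1+\sigma_2+\sigma_3+\sigma_4$.

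If $\sigma\neq0$ (the all-plus, all-minus, and three-to-one patterns), then $|\triangle|\ge\frac{3}{\sqrt2}-1>1$, with no use of the hypotheses; this settles every unbalanced pattern. The genuine case is $\sigma=0$, i.e. two plus and two minus signs: write $\triangle=\sqrt{\lambda_a}+\sqrt{\lambda_b}-\sqrt{\lambda_c}-\sqrt{\lambda_d}=(\eta_c+\eta_d)-(\eta_a+\eta_b)$ where $\{a,b,c,d\}=\{m,n,k,l\}$. Fix a threshold $M>|j_N|$, to be chosen at the end in terms of $N$, $J$ and $\tau$; since every element of $J$ has size $\le|j_N|<M$, the index of $\{m,n,k,l\}$ that lies in $J$ is always $\le M$. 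Now invoke the momentum relation $m\pm n\pm k\pm l=0$ together with the positivity of all components in $\mathbb Z_+^d$: two indices carrying the same sign in that relation cannot partially cancel, so if two indices of size $>M$ shared a sign their sum would be a vector of norm at most $3M$ that simultaneously has norm $>M$, forcing both to lie in a ball of radius comparable to $M$. Hence, apart from finitely many configurations in which all four indices have size at most a dimensional constant times $M$, every index of size $>M$ is paired by the momentum relation with another index of size $>M$ of opposite sign, the two differing by a bounded vector; in particular the number of ``large'' indices is $0$ or $2$. When it is $0$, all four indices range over a fixed finite set, so $\triangle$ takes one of finitely many values, each of which is either the excluded degenerate one (with $\triangle=0$ and $\{m,l\}=\{n,k\}$) or, by Lemma~\ref{Llem2-} and the typicality of $\tau$, nonzero; the minimum of $|\triangle|$ over this finite list is a positive constant.

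When there are exactly two large indices $P,Q$ (so $|P|,|Q|>M$, $P-Q$ bounded, and the $J$-index is among the two small ones, say $a$), split again according to whether $P$ and $Q$ receive equal or opposite signs inside $\triangle$. If equal, then $\triangle=\pm\big[(\eta_a+\eta_b)-(\eta_P+\eta_Q)\big]$ with $\eta_a\ge\eta_{\min}:=\min_{j'\in J}(1-\sqrt{\lambda_{j'}})>0$ fixed and $\eta_P+\eta_Q$ at most $2d/M^{2}$, hence $|\triangle|\ge\eta_{\min}-2d/M^{2}$. If opposite, then the two small indices also split oppositely and $\triangle=\pm(\sqrt{\lambda_P}-\sqrt{\lambda_Q})\pm(\sqrt{\lambda_a}-\sqrt{\lambda_b})$, where $|\sqrt{\lambda_P}-\sqrt{\lambda_Q}|\le\frac1{\sqrt2}|\lambda_P-\lambda_Q|$ is of order $M^{-2}$; the remaining difference $\sqrt{\lambda_a}-\sqrt{\lambda_b}$ runs over the finite list indexed by the bounded pairs $(a,b)$ with one entry in $J$, and by typicality each such value is either zero, which forces $a=b$, whereupon the momentum relation collapses to $P-Q=\pm 2a$ and the $\triangle$-pairing, after one checks the surviving sign combinations, falls into the excluded degenerate family, or bounded below in absolute value by a positive constant. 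Choosing $M$ so large that the $O(M^{-2})$ errors above are below half of $\min\{\eta_{\min},\text{that constant}\}$, and taking $C$ to be the minimum of $1$, the constant from the no-large-index case, and these halved quantities, gives $|\triangle|>C$ in all cases.

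The part I expect to be the real obstacle is precisely the clause ``forces $a=b$, whereupon the $\triangle$-pairing falls into the excluded degenerate family'': this is a purely combinatorial verification that uses the exact interaction of the momentum relation $m\pm n\pm k\pm l=0$, the coincidence $a=b$, and the two-plus-two-minus sign pattern of $\triangle$, and it must be carried out case by case on the eight sign assignments so that no non-degenerate configuration with $\triangle$ arbitrarily small can survive. Everything else in the argument is either an elementary inequality flowing from $\sqrt{\lambda_j}\in[1/\sqrt2,1)$ and $\eta_j\asymp\|j\|^{-2}$, the positivity remark that no large index can stand alone in the momentum relation, or a direct appeal to Lemma~\ref{Llem2-} over an explicit finite list of index configurations.
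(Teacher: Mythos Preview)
Your overall architecture matches the paper's: split off the unbalanced sign patterns by the crude bound $\sqrt{\lambda_j}\in[1/\sqrt2,1)$, and in the balanced case introduce a largeness threshold so that the contribution of any ``large'' index to $\triangle$ is $O(M^{-2})$. The paper does exactly this with its $\widetilde N$, then restricts to the representative sub-case $m,n\in J$ and uses $|\sqrt{\lambda_m}-\sqrt{\lambda_n}|\ge C_3(J)$ when $l,k$ are large. Your treatment is more thorough in that you try to handle every distribution of $J$-indices and every sign pattern. Two points, however, do not go through.

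First, a minor one: your assertion that ``the number of large indices is $0$ or $2$'' is false. With one small index $a\in J$ and momentum $a-p_2+p_3-p_4=0$ one can take $p_2$ moderate and $p_3,p_4$ arbitrarily large, giving three large indices. This case is harmless---in the $2$-plus-$2$-minus pattern one has $\triangle=-\sigma_a\eta_a+O(M^{-2})$ with $\eta_a\ge\eta_{\min}$---but it is not covered by your dichotomy.

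Second, and this is the real issue: the clause you yourself flag, ``$a=b$ \dots falls into the excluded degenerate family,'' is not a combinatorial formality---it is false. Take $a\in J$, let $P\in\mathbb Z_+^d$ be large, and set $Q=P+2a$, so that $a+a+P-Q=0$ is a valid momentum relation. With $(m,n,k,l)=(a,a,Q,P)$ and the sign pattern of the exclusion clause,
\[
\triangle=\sqrt{\lambda_m}-\sqrt{\lambda_n}+\sqrt{\lambda_l}-\sqrt{\lambda_k}
=\sqrt{\lambda_P}-\sqrt{\lambda_Q},
\]
while the exclusion $\{m,l\}=\{n,k\}$ reads $\{a,P\}=\{a,Q\}$ and fails since $P\neq Q$. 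Yet $|\triangle|\sim\|P\|^{-2}\to0$. So this configuration is \emph{not} excluded, and no case-by-case check of the eight sign assignments can save it: the momentum signs and the $\triangle$-signs are independent, and the pairing $\{a,P\}$ versus $\{a,Q\}$ is genuinely non-degenerate in the sense of the lemma. The paper's proof glosses over exactly the same point: after placing $m,n\in J$ it asserts ``since $m\neq n$'' without justification, and the quadruple $(a,a,P,Q)$ with $m=n=a$ violates that assertion. In short, the obstacle you identified is not a routine verification but a hole shared with the paper; closing it requires either enlarging the excluded set (to all balanced patterns whose positive and negative index multisets intersect) or handling this near-resonant family by a separate argument.
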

\begin{proof}
We give proof for only the most difficult case
$$\triangle=\sqrt{\lambda_{m}}- \sqrt{ \lambda_{n}}+ \sqrt{\lambda_{l}}- \sqrt{\lambda_{k}},$$
where there is a combination of $+$ and $-$ such that $m\pm n\pm l\pm k=0,$ and $\{m, l\}\neq \{n, k\},$ and assuming
$$m, n\in J.$$
Let $\widetilde{N}>0$ be a large number which is to be specified later.
If $|l|\leq \widetilde{N}$ (or $|k|\leq \widetilde{N}$), by $m\pm n \pm k\pm l=0$ and $m,n\in J$,  we have that there exists $C_{1}=C_{1}(\widetilde{N})>0$ such that
$|k|\leq C_{1}(\widetilde{N}).$ Recall that $m, n\in J$ implies $|m|+|n|\leq 2(L+j_N).$ So by taking $\tilde N>L+j_N$  and using Lemma \ref{Llem2-} there is a constant $C_{2}=C_{2}(\widetilde{N})>0$ such that
$$|\Delta|\geq C_{2}(\widetilde{N})>0.$$
Now we assume $|l|>\widetilde{N}$ and $|k|> \widetilde{N}.$ Then
\begin{eqnarray}\label{L*}
|\Delta|&=&|\sqrt{\lambda_{m}}- \sqrt{ \lambda_{n}}+ \sqrt{\lambda_{l}}- \sqrt{\lambda_{k}}|\nonumber\\
        &=& \big|\sqrt{\lambda_{m}}- \sqrt{ \lambda_{n}}+ \sqrt{1-\frac{1}{1+\parallel l\parallel^{2}}}- \sqrt{1-\frac{1}{1+\parallel k\parallel^{2}}}\big|\nonumber\\
&\geq & |\sqrt{\lambda_{m}}- \sqrt{ \lambda_{n}}|-\frac{C_{0}}{\widetilde{N}^{2}},
\end{eqnarray}
where $C_{0}=C_{0}(\Theta)>0$ is a constant. Since $m\neq n,$ and $m, n\in J$, we have that there exists a constant $C_{3}=C_{3}(L+j_N)$ such that
$$|\sqrt{\lambda_{m}}- \sqrt{ \lambda_{n}}|\geq C_{3}(L+j_N).$$
Choose $\widetilde{N}$ large enough such that
$$C_{3}-\frac{C_{0}}{\widetilde{N}^{2}}\geq \frac{C_{3}}{2}.$$
Then by \eqref{L*},
$$|\Delta|\geq \frac{C_{3}}{2}.$$
Consequently, let
$C_{4}=C_{4}(N, \widetilde{N})=\min\{C_{2}(\widetilde{N}), \frac{1}{2}C_{3}(N)\}.$
Then $|\Delta|\geq C_{4}(L,N, \widetilde{N})>0.$
\end{proof}
%-------------------------------------------------------------------------------------------------------
\begin{lem}\label{Llem2} Assume $\tau\in\tilde\Theta$.
There exists a symplectic $\Phi=Id.+O(||z||_p)$ such that
\be\label{HH} H\circ \Phi=\sum_{k\in \mathbb{Z}_{+}^{d}}\sqrt{\lambda_{k}}z_{k}\overline{z}_{k}+\overline{G}+\widehat{G}+\breve{G},\ee
where
\begin{eqnarray*}
&&\overline{G}=\sum_{\begin{array}{c}
    k,l\in\mathbb{Z}_{+}^{d} \\
     \{k, l\} \cap J\neq \emptyset
    \end{array}}\overline{G}_{kl}|z_{k}|^{2}|z_{l}|^{2},\\
&&\overline{G}_{kl}=\left\{
                      \begin{array}{ll}
                        (\frac{3}{8})^{d}T_{1}\cdots T_{d}\sqrt{\lambda_{k}} ,\;\;k=l,\\
                                                                                      \\
                       \frac{1}{2}\sum_{1\leq p\leq d}\left(
                                                        \begin{array}{c}
                                                          d \\
                                                          p \\
                                                        \end{array}
                                                      \right)(\frac{1}{4})^{p}(\frac{3}{8})^{q}T_{1}\cdots T_{d}(\sqrt{\lambda_{k}}+\sqrt{\lambda_{l}}),\;\;k\neq l,
                      \end{array}
                    \right.\\
&& \parallel X_{\widehat{G}}\parallel_{p}\leq C \parallel \hat{z}\parallel_{p}^{{3}},\;\;
\parallel X_{\breve{G}}\parallel_{p}\leq C \parallel {z}\parallel_{p}^{{5}}.
\end{eqnarray*}
\end{lem}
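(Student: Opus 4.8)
The proof will follow the partial Birkhoff normal form scheme already used for the BBM equation in Lemma \ref{normal-form}, but here in a single step: by \eqref{L18} the Hamiltonian is $H=H_0+G$ with $H_0=\sum_{k\in\mathbb Z_+^d}\sqrt{\lambda_k}\,z_k\bar z_k$ and $G=G(z,\bar z)$ a homogeneous polynomial of degree four (there is no cubic part, since the nonlinearity $\Delta(u^3)$ comes from a quartic potential). The plan is to look for a homogeneous degree-four Hamiltonian $F$ whose time-one map $\Phi:=X_F^1$ brings $H$ into the stated form. By Taylor's formula,
\[ H\circ\Phi=H_0+\{H_0,F\}+G+\{G,F\}+\int_0^1(1-t)\{\{H,F\},F\}\circ X_F^t\,dt, \]
so it is enough to choose $F$ solving the homological equation $\{H_0,F\}+G=\bar G+\hat G$, where $\bar G$ collects the resonant quartic monomials having at least one index in $J$ and $\hat G$ collects all quartic monomials whose four indices lie outside $J$; everything else is absorbed into $F$, and the last two terms above become $\breve G$.

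First I would construct $F$. Expand $G$ through \eqref{L19} into monomials in $z,\bar z$ of degree four; applying $\{H_0,\cdot\}$ to such a monomial multiplies it by $\mi$ times a combination $\pm\sqrt{\lambda_m}\pm\sqrt{\lambda_n}\pm\sqrt{\lambda_l}\pm\sqrt{\lambda_k}$, where the four indices are those of the monomial and the signs are fixed by which variables are barred. Under the selection rule $m\pm n\pm l\pm k=0$ from \eqref{L14}, Lemma \ref{Llem2-} shows that this combination vanishes exactly for the diagonal monomials $|z_k|^2|z_l|^2$; these are the resonant terms and cannot be removed. For every non-resonant monomial in $G$ that involves at least one index in $J$, I set the corresponding coefficient of $F$ equal to that of $G$ divided by $\mi$ times its (nonzero) divisor. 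By Lemma \ref{Llem2--} this divisor is bounded below in modulus by a positive constant $C=C(L,N,\widetilde N)$ depending only on the fixed set $J$, so $F$ is well defined. Since every monomial removed this way carries at least one index in $J$, hence of size $\le|j_N|$, the vector field $X_F$ obeys a cubic convolution estimate $\|\lfloor X_F\rceil\|_p\le C\|z\|_p^3$ exactly as in Lemma \ref{Llem1}; therefore $\Phi=X_F^1$ is a real-analytic symplectic map defined near the origin of $h_p$ with $\Phi=\mathrm{Id}+O(\|z\|_p)$.

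Next I would read off the three pieces. By the construction of $F$, $\{H_0,F\}+G$ equals the sum of all quartic monomials that were not removed, i.e. $\bar G+\hat G$ with $\bar G=\sum_{\{k,l\}\cap J\neq\emptyset}\bar G_{kl}|z_k|^2|z_l|^2$ and $\hat G$ a homogeneous quartic in $\hat z=(z_j:j\notin J)$, which again satisfies $\|\lfloor X_{\hat G}\rceil\|_p\le C\|\hat z\|_p^3$ by the convolution bound. The coefficients $\bar G_{kl}$ are obtained explicitly by extracting the coefficient of $|z_k|^2|z_l|^2$ from \eqref{L19} — counting the permutations of the index tuple and evaluating $\int_\Omega\phi_m\phi_n\phi_l\phi_k\,dx$ via elementary trigonometric identities — which yields the formula in the statement. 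Finally put $\breve G:=\{G,F\}+\int_0^1(1-t)\{\{H,F\},F\}\circ X_F^t\,dt$; writing $\{H_0,F\}=\bar G+\hat G-G$, each of $\{G,F\}$ and $\{\bar G+\hat G-G,F\}$ is a Poisson bracket of two homogeneous quartics, hence a homogeneous Hamiltonian of degree six, and composition with the near-identity flow $X_F^t$ preserves the leading order, so $\breve G=O(\|z\|_p^6)$ and $\|\lfloor X_{\breve G}\rceil\|_p\le C\|z\|_p^5$. The reality of $\Phi,\bar G,\hat G,\breve G$ (i.e. being real when $\bar z$ is the conjugate of $z$) follows from the corresponding property of $H$ together with the symmetry and reality of the coefficients $C_{mnlk}$, exactly as in the argument for the BBM case.

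The main obstacle is the small-divisor bookkeeping in the construction of $F$: one has to be certain that every quartic monomial one intends to eliminate is genuinely non-resonant \emph{and} simultaneously carries at least one index in $J$, so that Lemma \ref{Llem2--} applies with a divisor bounded away from zero. This is precisely why $J$ must consist of high frequencies ($L\gg1$), and it forces us to leave the purely normal-mode quartic block $\hat G$ untouched. The second point requiring care is confirming that the remainder $\breve G$ is of order six (not five) in $z$ — this is what will make the subsequent passage to action-angle variables produce a genuinely small perturbation for the KAM theorem. The convolution/analyticity estimates and the explicit evaluation of $\bar G_{kl}$ are then routine.
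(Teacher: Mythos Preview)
Your proposal is correct and follows essentially the same route as the paper: decompose $G$ into the resonant diagonal part with an index in $J$ (which becomes $\bar G$), the non-resonant part with an index in $J$ (eliminated via a degree-four generating Hamiltonian $F$ whose divisors are controlled by Lemma~\ref{Llem2--}), and the block with no index in $J$ (left untouched as $\hat G$), with the degree-six remainder $\breve G$ arising from the second-order Taylor expansion of $H\circ X_F^1$. The paper writes out this decomposition $G=G^{(1)}+G^{(2)}+G^{(3)}$ and the coefficients of $F$ explicitly, but the structure and estimates are exactly those you describe.
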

%----------------------------------------------------------------------------
\begin{proof}
Decompose $G(z,\overline{z})$ in \eqref{L19} as follows
%----------------------------------
\begin{eqnarray*}
 G(z, \overline{z})&=&G^{(1)}(z, \overline{z})+G^{(2)}(z,\overline{ z})+G^{(3)}(z,\overline{ z}),\\
 G^{(1)}(z, \overline{z})&=&\sum_{\{m,k\}\bigcap J\neq \emptyset}C_{mmkk}z_{m}\overline{z}_{m}z_{k}\overline{z}_{k},\\
 G^{(2)}(z, \overline{z})&=&\sum_{\begin{array}{c}
                                    m\pm n\pm l\pm k=0 \\
                                     \{m, n, l, k\}\cap J\neq \emptyset
                                  \end{array}}\frac{1}{4}C_{mnlk}(z_{m}+\overline{z}_{m})(z_{n}+\overline{z}_{n})
(z_{l}+\overline{z}_{l})(z_{k}+\overline{z}_{k})-G^{(1)}(z,\overline{ z}),\\
 G^{(3)}(z, \overline{z})&=&\sum_{\begin{array}{c}
                                    m\pm n\pm l\pm k=0 \\
                                     \{m, n, l, k\}\cap J= \emptyset
                                  \end{array}}\frac{1}{4}C_{mnlk}(z_{m}+\overline{z}_{m})(z_{n}+\overline{z}_{n})
(z_{l}+\overline{z}_{l})(z_{k}+\overline{z}_{k}).
\end{eqnarray*}
By \eqref{L13}, then
$$\overline{G}_{kl}:=C_{kkll}=\frac{1}{2}\frac{\lambda_{k}+\lambda_{l}}{\sqrt{\lambda_{k}\lambda_{l}}}\int_{\Omega}\phi_{k}^{2}\phi_{l}^{2}dx.$$
By \eqref{L1+}, we have
\begin{eqnarray*}
\overline{G}_{kl}&=&\frac{1}{2}(\sqrt{\lambda_{k}/\lambda_l}+\sqrt{\lambda_{l}/\lambda_k})\prod_{j=1}^{d}\int_{0}^{T_{j}}\sin^{2}k_{j}\tau_{j}x_{j}\sin^{2}l_{j}\tau_{j}x_{j}dx_{j}\\
&=&\left\{
     \begin{array}{ll}
       (\frac{3}{8})^{d}\, T_{1}T_{2}\cdots T_{d},\;\;k=l,\\
\\
       \frac{1}{2}\sum_{\begin{array}{c}
                          p+q=d \\
                          p\geq 1
                        \end{array}}\left(
                  \begin{array}{c}
                    d \\
                    p \\
                  \end{array}
                \right)(\frac{1}{4})^{p}(\frac{3}{8})^{q}T_{1}\cdots T_{d}(\sqrt{\lambda_k/\lambda_l}+\sqrt{\lambda_l/\lambda_k}),\;\;k\neq l.
     \end{array}
   \right.
\end{eqnarray*}
Rewrite
\begin{eqnarray*}
G^{(2)}(z, \overline{z})&=&\sum \frac{1}{4}C_{mnlk}z_{m}z_{n}z_{l}z_{k}+\sum \frac{1}{4}C_{mnlk}z_{m}z_{n}z_{l}\overline{z}_{k}
+\sum_{\{m, n\}\neq \{k, l\}} \frac{1}{4}C_{mnlk}z_{m}z_{n}\overline{z}_{l}\overline{z}_{k}\\
&&+\sum \frac{1}{4}C_{mnlk}z_{m}\overline{z}_{n}\overline{z}_{l}\overline{z}_{k}+\sum \frac{1}{4}C_{mnlk}\overline{z}_{m}\overline{z}_{n}\overline{z}_{l}\overline{z}_{k}
\end{eqnarray*} where the sum runs over $m,n,k,l\in\mathbb Z_+^d$ with some $m\pm n\pm k\pm l=0$.
Let $F=F(z,\bar z)$ be of the same form as $G^{(2)}(z, \overline{z})$:
\begin{eqnarray}
F=F(z, \overline{z})&=&\sum \frac{C_{mnlk}\,z_m\,z_n\, z_k\,\, z_l}{4\mathbf{i}(\sqrt{\lambda_{m}}+\sqrt{\lambda_{n}}+\sqrt{\lambda_{l}}+\sqrt{\lambda_{k}})}
+\sum \frac{C_{mnlk}\,z_m\,z_n\, z_k\,\,\bar z_l}{4\mathbf{i}(\sqrt{\lambda_{m}}+\sqrt{\lambda_{n}}+\sqrt{\lambda_{l}}-\sqrt{\lambda_{k}})}\nonumber\\ \label{Real}
&&+\sum \frac{C_{mnlk}\,z_m\,z_n\,\bar z_k\,\,\bar z_l}{4\mathbf{i}(\sqrt{\lambda_{m}}+\sqrt{\lambda_{n}}-\sqrt{\lambda_{l}}-\sqrt{\lambda_{k}})}+\sum \frac{C_{mnlk}\,z_m\,\bar z_n\,\bar z_k\,\,\bar z_l}{4\mathbf{i}(\sqrt{\lambda_{m}}-\sqrt{\lambda_{n}}-\sqrt{\lambda_{l}}-\sqrt{\lambda_{k}})}\\
&&+\sum \frac{C_{mnlk}\,\bar z_m\,\bar z_n\,\bar z_k\,\,\bar z_l}{-4\mathbf{i}(\sqrt{\lambda_{m}}+\sqrt{\lambda_{n}}+\sqrt{\lambda_{l}}+\sqrt{\lambda_{k}})}\nonumber
\end{eqnarray} where the sum runs over the same set as $G^{(2)}(z, \overline{z})$.
By Lemma \ref{Llem2--}, it is easy to get
$$\parallel \lfloor X_{F}\rceil\parallel_{p}\leq C\parallel z\parallel_{p}^{3}.$$
Moreover, $\Psi:=X_{F}^{t}\mid _{t=1}=Id.+O(||z||^3_p)$ maps a neighborhood of $z=0$ in $l_{p}$ into another neighborhood of $z=0$ in $l_{p}$,
and
$$H\circ \Psi=\sum_{k\in \mathbb{Z}_{+}^{d}}\sqrt{\lambda_{k}}z_{k}\overline{z}_{k}+\overline{G}+\widehat{G}+\breve{G},$$
where $\overline{G}=G^{(1)},\;\;\widehat{G}=G^{(3)},$
$$\breve{R}=\int_{0}^{1}\int_{0}^{t}\{\{\sum_{k\in \mathbb{Z}_{+}^{d}}\sqrt{\lambda_{k}}z_{k}\overline{z}_{k}, F\}, F\}
\circ X_{F}^{\tau}d \tau dt+\int_{0}^{1}\{G, F\}\circ X_{F}^{t} dt.$$
It is easy to check
$$\parallel\lfloor X_{\widehat{G}}\rceil\parallel_{p}\leq C\parallel \hat z\parallel_{p}^3,\;\;\;\;
\parallel \lfloor X_{\breve{G}}\rceil\parallel_{p}\leq C\parallel z\parallel_{p}^{5}.$$
\end{proof}
%----------------------------------------------------------------------------
As in Section 9, restrict $\mid\tilde{z}\mid<C\varepsilon_{0}^{1/4}$ and $\parallel \hat{z}\parallel_{p}\leq C\varepsilon_{0}^{1/3}. $
Then
\be\label{2018-3-18-1}\parallel\lfloor X_{\widehat{G}+\breve{G}}\rceil\parallel_{p}\leq C(\parallel \hat{z}\parallel_{p}^{3}+\parallel z\parallel_{p}^{^{5}})\leq C\varepsilon_{0}.\ee
Introduce action-angle variables $(y, x)$ by
\be \label{sym2}z_{j_{k}}=\sqrt{\zeta_{k}+y_{k}}e^{-\mathbf{i}x_{k}},\;\;\bar{z}_{j_{k}}=
\sqrt{\zeta_{k}+y_{k}}e^{\mathbf{i}x_{k}},\;\;k=1, 2, \cdots, N,\;\;j_{k}\in J\ee
 where $\zeta=(\zeta_1,...,\zeta_n)\in\mathbb R_+^n$ and
 \be\label{sym3}\e_0^{1/2}<|\zeta|\le 2\e_0^{1/2}.\ee

%%%%%
Then
\be \label{2018-3-2} \sum_{j\in\mathbb{Z}_{+}^{d}}\sqrt{\lambda_{j}}z_{j}\overline{z}_{j}=\sum_{k=1}^{N}\sqrt{\lambda_{j_{k}}}\zeta_{k}+\sum_{k=1}^{N}\sqrt{\lambda_{j_{k}}}y_{k}
+\sum_{j\in \mathbb{Z}_{+}^{d}\setminus J}\lambda_{j}z_{j}\overline{z}_{j}\ee and
\begin{eqnarray}\label{2018-3-3}\bar G &=&\sum_{1\le k,l\le N}\bar{G}_{j_kl_k}(\zeta_k+y_k)(\zeta_l+y_l)+
\sum_{1\le k\le N,\, l\in{\mathbb Z_+^d}\setminus J}\bar G_{j_k l}(\zeta_k+y_k)z_l\,\bar z_{l}\\\label{2018-3-4} & &+
\sum_{k\in{\mathbb Z_+^d}\setminus J,\,1\le l\le N}\bar G_{kj_l}(\zeta_l+y_l)z_k \bar z_{k}.%+\sum_{k,l\in\bar{\mathbb Z}\setminus J}\bar G_{kl}z_k %z_{-k}z_lz_{-l}
\end{eqnarray}
Let
\be \label{2018bu-1}\breve{G}(x,y,\hat z,\overline{\hat z};\zeta)= \breve{G}(\tilde z,\overline{\tilde z},
\hat z,\overline{\hat z})\ee where $(\tilde z,\overline{\tilde z})$ are defined by \eqref{sym2}.
%We will omit the constant $\sum_{k=1}^{N}\sqrt{\lambda_{j_{k}}}\zeta_{k}.$
%And $\textcolor[rgb]{1.00,0.00,0.00}{\overline{G}=(9.33)(9.34){\mathbb{\widetilde Z}}\setminus J \rightarrow \mathbb{Z}_{+}^{d}\setminus J}$
Let
%-----------------------------------------------------------------------------------
\begin{eqnarray*}
\omega^{0}(\zeta)=\lambda^{(N)}+TB\zeta,\;\;\Omega^{0}(\zeta)=\lambda^{\infty}+TS\zeta,\quad T=T_{1}\cdots T_{d},
\end{eqnarray*}
where
%--------------------------------------------------------------------------------
\begin{eqnarray*}
&&\lambda^{(N)}=\left(\sqrt{\frac{\parallel j_{1}\parallel^{2}}{1+\parallel j_{1}\parallel^{2}}}, \cdots,
\sqrt{\frac{\parallel j_{N}\parallel^{2}}{1+\parallel j_{N}\parallel^{2}}}\right),\;\;j_{1}, \cdots,j_{N}\in J,\\
&& \lambda^{\infty}=\left\{\sqrt{\lambda_j}=\sqrt{\frac{\parallel j\parallel^{2}}{1+\parallel j\parallel^{2}}}:\;\;j\in \mathbb{Z}_{+}^{d}\setminus J\right\},\;\;
\\
&&B=(B_{kl}:\;k, l\in J),\\
&&B_{kl}=\left\{
           \begin{array}{ll}
             a,\;\;k=l,\;k\in J, \\
\\
            \frac{1}{2}b(\sqrt{\lambda_{k}/\lambda_l}+\sqrt{\lambda_{l}/\lambda_k}),\;\;k\neq l,\;k, l\in J,
           \end{array}
         \right.\\
&& a=(\frac{3}{8})^{d},\;\;b=\sum_{1\leq p\leq d}\left(
                                                   \begin{array}{c}
                                                     d \\
                                                     p \\
                                                   \end{array}
                                                 \right)(\frac{1}{4})^{p}(\frac{3}{8})^{d-p}=(\frac58)^d-a,\\
&&S=(S_{kl}:\;\;k\in \mathbb{Z}_{+}^{d}\setminus J,\;\;l\in J),\\
&&S_{{kl}}=\overline{G}_{kl}=\frac{1}{2}b(\sqrt{\lambda_{k}}+\sqrt{\lambda_{l}}),\;\;k\in \mathbb{Z}_{+}^{d}\setminus J,\;\;l\in J.
\end{eqnarray*}
Then, up to a constant depending on $\zeta,$ the Hamiltonian $H\circ \Phi$ in \eqref{HH} can be written as $$H=H_0+R^0$$ with
\be \label{2018-3-18-5} H_{0}=\sum_{j=1}^{N}\omega_{j}^{0}(\zeta)y_{j}+\sum_{j\in \mathbb{Z}_{+}^{d}\setminus J}\Omega_{j}^{0}(\zeta)z_{j}\overline{z}_{j},\ee
\be \label{2018-3-18-6} R^0=\sum_{1\le k,l\le N}\bar{G}_{j_kl_k}\, y_k\,y_l+
\sum_{k\in{\mathbb Z_+^d}\setminus J,\,1\le l\le N}\bar G_{kj_l}\,y_l\,z_k \bar z_{k}+\hat G(\hat z,\overline{\hat z})+\breve{G}(\tilde z,\overline{\tilde z},
 \hat z,\overline{\hat z};\zeta).\ee
Let us write the matrix $B$ explicitly:
$$B=\left(
      \begin{array}{cccc}
        a & \frac{1}{2}b(\sqrt{\lambda_{j_{1}}/\lambda_{j_{2}}}+\sqrt{\lambda_{j_{2}}/\lambda_{j_{1}}}) & \cdots & \frac{1}{2}b(\sqrt{\lambda_{j_{1}}/\lambda_{j_{N}}}+\sqrt{\lambda_{j_{N}}/\lambda_{j_{1}}}) \\
        \frac{1}{2}b(\sqrt{\lambda_{j_{2}}/\lambda_{j_1}}+\sqrt{\lambda_{j_{1}}/\lambda_{j_2}}) & a & \cdots & \frac{1}{2}b(\sqrt{\lambda_{j_{2}}/\lambda_{j_{N}}}+\sqrt{\lambda_{j_{N}}/\lambda_{j_{2}}})\\
        \vdots & \vdots & \ddots & \vdots \\
        \frac{1}{2}b(\sqrt{\lambda_{j_{N}}/\lambda_{j_1}}+\sqrt{\lambda_{j_{1}}/\lambda_{j_N}}) & \frac{1}{2}b(\sqrt{\lambda_{j_{N}}/\lambda_{j_2}}+\sqrt{\lambda_{j_{2}}/\lambda_{j_N}}) & \cdots & a\\
      \end{array}
    \right)
$$
where $j_{p}\in J,\;\;p=1, 2, \cdots, N.$
Recall $\sqrt{\lambda_{j}}=\sqrt{\frac{\parallel j\parallel^{2}}{1+\parallel j\parallel^{2}}}$.
(Note $\parallel j \parallel=\sqrt{\tau_{1}^{2}j_{1}^{2}+\cdots+\tau_{d}^{2}j_{d}^{2}}\sim |j|.$) So
\be\label{yuanbu2}\sqrt{\lambda_{j}}=\sqrt{1-\frac{1}{1+\parallel j\parallel^{2}}}=1+O(\frac{1}{L^{2}}),\;\;j\in J.\ee
It follows that
$$B=\left(
      \begin{array}{cccc}
        a & b & \cdots & b \\
        b & a & \cdots & b \\
        \vdots & \vdots & \ddots & \vdots \\
        b & b & \cdots & a \\
      \end{array}
    \right)+O(\frac{1}{L^{2}}).
$$
Then
\begin{eqnarray}
\text{det} B&=&(a-b)^{N-1}(a+4b)+O(\frac{N}{L^{2}})\neq 0,\quad L\gg N,\nonumber\end{eqnarray} and
\begin{eqnarray}\label{L70}B^{-1}&=&\frac{1}{(a-b)(a+(N-1)b)}\left(
                                  \begin{array}{cccc}
                                    a+(N-2)b & -b & \cdots & -b \\
                                    -b & a+(N-2)b & \cdots &  -b\\
                                    \vdots&  \vdots&  \ddots& \vdots\\
                                    -b& -b& \cdots & a+(N-2)b\\
                                  \end{array}
                                \right)+O(\frac{N}{L^{2}})\nonumber\\
&:=&B_{0}^{-1}+O(\frac{N}{L^{2}}).
\end{eqnarray}
That is, $\parallel B^{-1}-B_{0}^{-1}\parallel\leq \frac{C}{L},$ where $C=C(N)$
depends on $N$.
%%%%%%%%
Take $\bar\Pi=[\sqrt{\e_0},\,2\sqrt{\e_0}]^N$. Then $\text{Measure}\, \bar\Pi=\e_0^{N/2}$ and $\text{Diameter}\,\bar\Pi=\e_0^{1/2}$. Define $\Phi:\zeta\mapsto \xi$ by
\[T B\, \zeta=\xi.\]
And let $\Pi=\Phi(\bar\Pi)$. Thus,
\[\frac{1}{C}\e_0^{N/4}\le\,\text{Measure}\,\Pi\le C\e_0^{N/4},\,\quad \frac{1}{C}\e_0 \le \text{Diameter}\,\Pi\le C\e_0,\] and
%and
%\be\label{17510-1}\omega^0(\zeta)=\xi,\quad \Omega^0(\xi)=\lambda^\infty-S\, B^{-1}\,\lambda^{(N)}+S\, B^{-1}\,\xi.\ee
%%%%%%%%%%%%%
\be\label{2018-3-19-1}\omega^{0}(\xi)=\lambda^{(N)}+\xi,\ee
\be\label{2018-3-19-2}\Omega^{0}(\xi)=\lambda^{\infty}+SB^{-1}\xi:=(\Omega^0_j(\xi):j\in\mathbb Z_+^d\setminus\, J),\ee
\be\label{2018-3-19-3}H_0=(\omega^0(\xi),y)+\sum_{j\in\mathbb Z_+^d\setminus\, J} \Omega^0_j(\xi)\,z_z\, \bar z_j. \ee
By \eqref{2018-3-19-1}, Assumption {\bf A} is obviously fulfilled.
Recall
\begin{equation}\label{L71}
S=\left(s_{il}=\frac{b}{2}(\sqrt{\lambda_{i}/\lambda_l}+\sqrt{\lambda_{l}/\lambda_i}):\;\;i\in \mathbb{Z}_{+}^{d}\setminus J,\;\;l\in J\right),
\end{equation}
where $k$ denotes the row index of $S$ and $l$ the column index of $S$ and recall
\be \label{2018-3-19-5} \lambda^\infty=\left( \sqrt{\lambda_j}=\sqrt{\frac{||j||^2}{1+||j||^2}}=1-O(\frac{1}{|j|^2}):\; j\in\mathbb Z_+^d\setminus\, J\right)\ee
 Write $B^{-1}=(b_{kl}:k,l\in J)$. By \eqref{2018-3-19-2}, \eqref{L71} and \eqref{2018-3-19-5},
 we have
 \[\Omega_j^0=\varpi+O(|j|^{-\kappa}),\; \kappa=2,\, \varpi=1+\frac{b}{2}\sum_{k,l\in J}b_{kl}\xi_l.\] This verifies that \eqref{3-08-morning+} and \eqref{170512-1+} of Assumption ${\bf B^\star}$ hold true.

For a matrix $X,$ by $X(k)$ denote the $k-$th row of $X$. Then by \eqref{yuanbu2},\eqref{L70} and\eqref{L71}, we get
%-----------------------------------------------------------
\begin{eqnarray}\label{L72}
&&(SB^{-1})(i)=(SB_{0}^{-1})(i)+O(\frac{N}{L^{2}})\nonumber\\
&=&\frac{b(\lambda_{i}+1)}{2\sqrt{\lambda_i}}\left(\left(
                                      \begin{array}{ccc}
                                        1 & \cdots & 1 \\
                                        1 & \cdots & 1 \\
                                        \vdots & \ddots & \vdots \\
                                        1 & \cdots & 1 \\
                                        \vdots & \vdots & \vdots \\
                                      \end{array}
                                    \right)B_{0}^{-1}\right)(i)+O(\frac{N}{L^{2}})\nonumber\\
&=&\frac{b(1+\lambda_{i})}{2\sqrt{\lambda_i}\,(a-b)(a+(N-1)b)}(-b(N-1)+a+(N-2)b, \cdots, -b(N-1)+a+(N-2)b)+O(\frac{N}{L^{2}})\nonumber\\
&=& \frac{b(1+\lambda_{i})}{2\sqrt{\lambda_i}\,(a+(N-1)b)}(1, \cdots, 1)+O(\frac{N}{L^{2}}),
\end{eqnarray}
where we have used $\sqrt{\lambda_{l}}=1+O(\frac{1}{L^{2}})$ with $l\in J.$ For any $k\in\mathbb{Z}^{N}\setminus \{0\},$ we
assume $k_{1}\neq 0$ without loss of generality.
Arbitrarily take an infinite dimensional integer vector $l=(l_{j}\in \mathbb{Z}: j\in \mathbb{Z}_{+}^{d}\setminus J)$ with
$|l|=\sum_{j\in\mathbb{Z}_{+}^{d}\setminus J}|l_{j}|\leq 2.$
Let $\langle l, \Omega^{0}(\xi)\rangle=\sum_{j\in\mathbb{Z}_{+}^{d}\setminus J}l_{j}\Omega_{j}^{0}(\xi)$.
Then
$$\left | \frac{d}{d\xi_{1}}\left(\langle k, \omega^{0}(\xi)\rangle+\langle l, \Omega^{0}(\xi)\rangle \right)\right |
=\left | k_{1}+\sum_{j\in\mathbb{Z}^{d}_{+}\setminus J}l_{j}\frac{b^{2}(1+\lambda_{j})}{2\sqrt{\lambda_j}\,(a+(N-1)b)}\right |+O(\frac{N}{L^{2}}).$$
Note $||j||^2=\tau_1^2\, j_1^2+\cdots+\tau_d\, j_d^2 \ge 1.$
So
 $$\left|\sum_{j\in\mathbb{Z}^{d}_{+}\setminus J}l_{j}\frac{b(1+\lambda_{j})}{2\sqrt{\lambda_j}\,(a+(N-1)b)}\right|\leq \frac12\sqrt{\frac{7}{2}}\,\frac{b}{a+(N-1)b}<\frac{b}{a+(N-1)b}.$$
Assume $N\geq 2.$ We have
\begin{eqnarray}\label{L73}
\left| \frac{d}{d\xi_{1}}\left(\langle k, \omega^{0}(\xi)\rangle+\langle l, \Omega^{0}(\xi)\rangle \right)\right |
&\geq & 1-\frac{b}{a+(N-1)b}+O(\frac{N}{L^{2}})\nonumber\\
& \geq &\frac{a+(N-2)b}{a+(N-1)b}+O(\frac{N}{L^{2}})\nonumber\\&\geq & C(N)>0,\quad \text{for}\quad L\gg N.
\end{eqnarray}
This verifies condition \eqref{170510-2} of Assumption ${\bf B^\star}$. Let $q=p>d/2$ and
\[D_p:=D_p(\e_0)=\{(x,y,\hat z,\bar {\hat{z}})\in \, \mathbb C^N/(2\pi\mathbb Z)^N\times \mathbb Z^N\times h_p\times h_p:\; |\Im\, x|\le s_0,\, |y|\le \e_0,\, ||\hat z||_p\le \e_0^{1/3},\,||\overline{\hat z}||_p\le \e_0^{1/3}\}.\]
By \eqref{2018-3-18-1}, \eqref{sym2}, \eqref{sym3} and \eqref{2018-3-18-6}  we verify Assumption  {\bf C} and \be \W \lf X_{R^0}\rc\W_{q,D_p\times\Pi} \le C\,\epsilon_0,\; \W \lf \p_\xi\,X_{ R^0}\rc\W_{q,D_p\times\Pi} \le C\, \sqrt{\epsilon_0}.\ee
  It follows from \eqref{L19}, \eqref{Real} and \eqref{sym2}  that Assumption {\bf  D} holds true. Finally,  Assumption {\bf E} holds true clearly, since $B=0$. Using Theorem \ref{theorem2+2} we have

\begin{thm} \label{on-BBM}\label{on-PC}Assume $(\frac{2\pi}{T_j}:j\in\mathbb Z_+^d)$ is in $\tilde\Theta$. Around the neighborhood of  $u=0$,  gPC equation \eqref{L1} has many (the initial value set of $N$-dimensional positive Lebesgue measure) smooth solutions  which are  quasi-periodic in time, linear stable and of zero Lyapunov exponent. More exactly, there exists $\e_0^*=\e_0^*(N,\tau, J)>0$ depending on $N,\tau, J$ such that for any $0<\e_0<\e_0^*$ there is a subset $\breve{\Pi}$ of the initial value set $\Pi_0:=[\sqrt{\e_0},2\sqrt{\e_0}]^N$ with
\[\text{Leb}\;\breve{\Pi}=(\text{Leb}\, \Pi_0)\; \left(1-C \frac{1}{|\log\,\e_0|}\right) \] and for any $\xi=(\xi_l:\,l=1,...,N)\in\breve{\Pi}$, gPC equation has a quasi-periodic solution $u(t,x)$ of frequency $\omega\in\mathbb R^N$ in time $t$
\[u(t,x)=\sum_{k\in\mathbb Z^N,j\in\mathbb Z\setminus\{0\}}\;\hat{u}(k,j)\; e^{{\bf i}(k,\omega)}\, \phi_j(x)\] with\[\phi_n(x)=
\sin n_{1}\tau_{1}x_{1}\cdots \sin n_{d}\tau_{d}x_{d}, \; \;\forall\;\;n=(n_1,...,n_d)\in\mathbb Z_+^d, \] satisfying
\[|\omega-\omega_0|\le C\sqrt{\e_0},\; \omega\in\mathbb R^N, \; \omega_0=\left(\sqrt{\frac{||j_l||^2}{1+|| j_l||^2}}:\; j_l\in J\right)\in\mathbb R^N,
\]
\[ \left| \hat{u}(e_l,j_l)-\xi_l\right|<C\, \e_0^{1/3},\; e_l-l^{\text{th}}\, \text{unit vector of}\; \mathbb Z^N,\;
j_l\in J,\,l=1,...,N, \] and
\[\sum_{(k,j)\notin \mathcal{S}}\left|\hat{u}(k,j \right|^2e^{|k|\, s_0+2a\,|j|}|j|^{2p}<C\,\e_0^{1/3},\quad \mathcal{S}={(e_l,j_l):\; l=1,...,N},\]
where some constants $s_0>0,a>0$ and $p>d/2$.
\end{thm}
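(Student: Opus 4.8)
The plan is to deduce Theorem \ref{on-PC} directly from the abstract KAM result Theorem \ref{theorem2+2}, following exactly the template already executed for the BBM equation in Section 10. The preparatory work (Lemmas \ref{Llem1}--\ref{Llem2} together with the passage to action-angle variables in \eqref{sym2}--\eqref{2018-3-19-3}) has already put the gPC Hamiltonian into the parameter-dependent normal form $H=H_0+R^0$ with $H_0$ given by \eqref{2018-3-18-5} and $R^0$ given by \eqref{2018-3-18-6}. What remains is purely bookkeeping: verify that Assumptions \textbf{A}, \textbf{C}, \textbf{D}, \textbf{E}, and \textbf{B}$^\star$ hold for this concrete system, invoke Theorem \ref{theorem2+2} to obtain the persistent invariant tori, and then translate the conclusion back through the inverse of the coordinate changes $\Psi$ (Birkhoff), the action-angle map \eqref{sym2}, and the substitutions $u_k\mapsto w_k\mapsto (z_k,\bar z_k)$, $u(t,x)=\sum u_k(t)\phi_k(x)$ to get a statement about smooth quasi-periodic solutions of \eqref{L1}.

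Concretely, I would proceed in the following order. First, record that Assumption \textbf{A} is immediate from \eqref{2018-3-19-1} since $\omega^0(\xi)=\lambda^{(N)}+\xi$, so $\partial_\xi\omega^0$ is the identity and \eqref{2.6}--\eqref{2.7} hold with $c_1=c_2=1$; this also legitimizes treating $\omega\equiv\xi$ as the parameter, and here one uses the invertibility of $B$ established via \eqref{L70} (using $L\gg N$, so $\det B=(a-b)^{N-1}(a+(N-1)b)+O(N/L^2)\neq 0$ since $a\neq b$). Second, verify Assumption \textbf{B}$^\star$: from \eqref{2018-3-19-2}, \eqref{L71}, and \eqref{2018-3-19-5}, $\Omega_j^0=\varpi+O(|j|^{-\kappa})$ with $\kappa=2$ and $\varpi=1+\frac b2\sum_{k,l\in J}b_{kl}\xi_l$, giving \eqref{3-08-morning+} and \eqref{170512-1+}; the non-resonance condition \eqref{170510-2} is the content of \eqref{L73}, which shows $|\frac{d}{d\xi_1}((k,\omega^0(\xi))+\langle l,\Omega^0(\xi)\rangle)|\ge C(N)>0$ for all $0\neq k\in\mathbb Z^N$ and $|l|\le 2$, by the explicit row computation \eqref{L72} for $SB^{-1}$ and the bound $b/(a+(N-1)b)<1$. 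Third, Assumption \textbf{C} (and the size bound on $X_{R^0}$) follows from Lemma \ref{Llem1}, Lemma \ref{Llem2}, the restrictions $|\tilde z|<C\e_0^{1/4}$, $\|\hat z\|_p\le C\e_0^{1/3}$, and \eqref{sym3}, exactly as in \eqref{2018-3-18-1} and the displayed estimate $\W\lfloor X_{R^0}\rceil\W_{q,D_p\times\Pi}\le C\e_0$; note $q=p$ here since $\varpi\neq 0$. Fourth, Assumption \textbf{D} (reality) comes from \eqref{L19}, the structure of $F$ in \eqref{Real}, and the real form of the action-angle substitution \eqref{sym2}; and Assumption \textbf{E} is trivial because $B^0=0$ in this application.

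Having checked all hypotheses, I would apply Theorem \ref{theorem2+2}, which yields a set $\mathcal O\subset\mathcal O_0$ with $\text{Meas}\,\mathcal O\ge(\text{Meas}\,\mathcal O_0)(1-O(\gamma(\e_0)))$ and $\gamma(\e_0)=1/|\log\e_0|$ (the case where $\mathcal O_0$ depends on $\e_0$, as $\Pi$ here has diameter $\asymp\e_0$), a symplectic $\Phi$ conjugating $H$ to a normal form with an invariant torus $\mathcal T_0^N$; Corollaries \ref{Cor1} and \ref{Cor2} then give that $\Phi(\mathcal T_0)$ carries quasi-periodic solutions, linearly stable, of zero Lyapunov exponent. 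The final step is to unwind the coordinate changes to express these as functions $u(t,x)=\sum_{k,n}\hat u(k,n)e^{\mathbf i(k,\omega)}\phi_n(x)$ and to read off the quantitative estimates: $|\omega-\omega_0|\le C\sqrt{\e_0}$ with $\omega_0=(\sqrt{\|j_l\|^2/(1+\|j_l\|^2)}:j_l\in J)$ from \eqref{9} and \eqref{2018-3-19-1}, $|\hat u(e_l,j_l)-\xi_l|<C\e_0^{1/3}$ from the amplitude $\sqrt{\zeta_l+y_l}$ and $|y|\le\e_0$, and the tail bound $\sum_{(k,n)\notin\mathcal S}|\hat u(k,n)|^2 e^{|k|s_0+2a|n|}|n|^{2p}<C\e_0^{1/3}$ from the smallness of the normal component $\hat z$ on the reduced domain. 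I do not expect any genuine obstacle here: every analytic ingredient has already been prepared in Sections 1--9 and in the first part of Section 11, so the proof is a verification-and-translation argument. The only place requiring mild care is ensuring the estimates on $\|B^{-1}-B_0^{-1}\|\le C/L$ and on the row $SB^{-1}(i)$ are uniform in $i\in\mathbb Z_+^d\setminus J$ and in $\xi\in\Pi$, so that the measure estimate in Theorem \ref{theorem2+2} applies with constants independent of the KAM step; this is handled by choosing $L\gg N$ at the outset, as already stipulated.
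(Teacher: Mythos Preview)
Your proposal is correct and follows essentially the same approach as the paper. The paper's proof consists precisely of the sequence you describe: the preceding discussion in Section 10 verifies Assumptions \textbf{A}, \textbf{B}$^\star$, \textbf{C}, \textbf{D}, \textbf{E} (via \eqref{2018-3-19-1}, \eqref{2018-3-19-2}--\eqref{2018-3-19-5} together with \eqref{L72}--\eqref{L73}, \eqref{2018-3-18-1}, \eqref{L19}--\eqref{Real}--\eqref{sym2}, and $B^0=0$, respectively), then concludes with the single sentence ``Using Theorem \ref{theorem2+2} we have'' before stating the theorem; you have reproduced this verification-and-invocation argument in full detail, including the same choice $q=p$ and the same measure loss $\gamma(\e_0)=1/|\log\e_0|$ coming from the $\e_0$-dependent parameter domain.
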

%%%
\begin{rem} Theorem  \ref{theorem2+2} applies to more general PC equation:
\begin{equation}\label{LL1}
\left\{
  \begin{array}{ll}
    u_{tt}-\Delta u-\Delta u_{tt} +F(u,\Delta u)=0,\;\;x\in \Omega\subset\mathbb{R}^{d},\\
    u|_{\partial \Omega}=0,
  \end{array}
\right.
\end{equation}
where $F(u)=\sum_{j+j\ge 3} c_{ij}\, u^i\,(\Delta u)^j$ is an analytic function of $u$ and $\Delta\, u$ with $c_{ij}\in\mathbb R$.
\end{rem}
%%%%%%%%%%%%%%%
%%%%%%%%%%%
\section{Final Remark on global solutions to BBM and gPC}

 Up to now there have been a lot of works on the existence and long-time asymptotic behavior as well as traveling solutions for BBM and gPC where the spatial variable is in the whole space $\mathbb R^d$. See \cite{Naher, PBILER, mwang, JALBERT, Shaomei-Fang} for BBM and \cite{Triki, Shawagfeh,Zhao-Zhang,Xu-Liu} for gPC and the references therein. According to our knowledge, there is not any results on the existence of solutions, let alone long-time  behavior
of solutions, for BBM and gPC when the spatial variable $x$ is in some compact space, $\mathbb T^d$, say.

By Theorems 9.5 and 10.5, $1$-dimensional BBM and $d$-dimensional gPC equations subject to typical periodic boundary conditions (the spatial variable in compact space)  have many quasi-periodic solutions with initial values of positive finite dimensional Lebesgue measure. These quasi-periodic solutions are of course global and of recurrent property. As done in \cite{cong-liu-yuan}, Those solutions whose initial date close to any quasi-periodic solution are almost global, that is, assuming $u_0(t,x)$ with initial datum $u_0(0,x)$ is a quasi-periodic solution for BBM or gPC equation subject the typical boundary conditions, then any solution $u(t,x)$ with initial value satisfying $||u(0,x)-u_0(0,x)||_p<\delta$ with any $0<\delta\ll 1$ obeys that the solution $u(t,x)$ exists for time $|t|<L\delta^{-1}$ and
\[||u(t,\cdot)-u_0(t,\cdot)||_p\le C \delta,\quad \forall\;\; |t|<\delta^{-1}.\]

\section{ Appendices}

Let $\ti p, \ti q>d/2$. For a linear operator $L:\; h_{\ti p}\to h_{\ti q}$, denote by $L_{ij}$'s   the matrix elements of $L$. Given an index set $I\subset\mathbb Z^d$.
Partition $L$ as follows
\[L=\begin{pmatrix} L^{(11)}& L^{(12)}\\ L^{(21)}& L^{(22)}\end{pmatrix},\]
where
\[L^{(11)}=(L_{ij}:\; i\in I,j\in I),\; L^{(12)}=(L_{ij}:\; i\in I,j\in \mathbb Z^d\setminus I),\] \[ L^{(21)}=(L_{ij}:\; i\in \mathbb Z^d\setminus I,j\in I),\; L^{(22)}=(L_{ij}:\; i\in \mathbb Z^d\setminus I,j\in \mathbb Z^d\setminus I).\]
Expand $L^{(ij)}$ ($i,j\in\{1,2\})$ to $\tilde L^{(ij)}$ as follows
\[\tilde L^{(11)}=\begin{pmatrix} L^{(11)}& 0\\ 0& 0\end{pmatrix},\;
\tilde L^{(12)}=\begin{pmatrix}0 & L^{(12)}\\ 0& 0\end{pmatrix},\;
\tilde L^{(21)}=\begin{pmatrix}L^{(21)} & 0\\ 0& 0\end{pmatrix},\; \tilde L^{(22)}=\begin{pmatrix}0 & 0\\ 0& L^{(22)}\end{pmatrix}. \]
Define
\[||L^{(ij)}||_{h_{\ti p}^i\to h_{\ti q}^j}=||\ti L^{(ij)}||_{h_{\ti p}\to h_{\ti q}}.\]
According to the partition of $L$, split the space $h_{\ti p}$:
\[h_{\tilde p}=h_{\ti p}^1\oplus h_{\ti p}^2,\]
where $h_{\ti p}^1=\{(z_j\in\mathbb C:\; j\in I)\}$ and $h_{\ti p}^2=\{(z_j\in\mathbb C:\; j\in \mathbb Z^d\setminus I)\}$. And define
\[||z||_{h_{\ti p}^1}^2=\sum_{j\in I} |j|^{2\ti p} |z_j|^2,\;\; z\in h_{\ti p}^1,\]
and
\[||z||_{h_{\ti p}^2}^2=\sum_{j\in \mathbb Z^d\setminus I} |j|^{2\ti p} |z_j|^2,\;\; z\in h_{\ti p}^2.\]

\begin{lem} \label{splitlemma} For any $\ti p,\ti q\in\{p,q=p+\kappa\}$ and any $i,j\in\{1,2\}$, we have
\[||L^{(ij)}||_{h_{\ti p}^i\to h_{\ti q}^j}\le ||L||_{h_{\ti p}\to h_{\ti q}}.\]
\end{lem}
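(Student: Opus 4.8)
The statement is that restricting a bounded operator to a "block" (a sub-matrix indexed by $i\in I$ for the rows and $j\in J$ for the columns, with $I,J\in\{I_0,\mathbb Z^d\setminus I_0\}$) does not increase its operator norm between the corresponding weighted $\ell^2$-spaces. The natural approach is to realize each block restriction as a composition of the original operator with orthogonal projections, which are norm-one maps. First I would introduce the coordinate projections: for the index set $I_0$, let $\pi_1:h_{\ti p}\to h_{\ti p}$ be the map that kills all coordinates $z_j$ with $j\notin I_0$ and leaves the rest unchanged, and $\pi_2=1-\pi_1$ the complementary one; similarly define $\rho_1,\rho_2$ acting on $h_{\ti q}$ (they are the same coordinate truncations, just viewed on the target space with its own weights). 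The key elementary observation is that each of $\pi_1,\pi_2$ (resp. $\rho_1,\rho_2$) is an orthogonal projection on the relevant Hilbert space, since the defining norm $\|z\|_{\ti p}^2=\sum_j |j|^{2\ti p}|z_j|^2$ is a weighted $\ell^2$-norm and coordinate truncation is orthogonal with respect to it; hence $\|\pi_a\|_{h_{\ti p}\to h_{\ti p}}\le 1$ and $\|\rho_b\|_{h_{\ti q}\to h_{\ti q}}\le 1$ for $a,b\in\{1,2\}$.

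Next I would identify the "expanded" block $\ti L^{(ij)}$ appearing in the lemma's definition with exactly the composition $\rho_j\circ L\circ \pi_i$. Indeed, $\pi_i$ zeroes out all input coordinates outside the $i$-th index group, then $L$ is applied, then $\rho_j$ zeroes out all output coordinates outside the $j$-th index group; the resulting matrix has entries $L_{kl}$ precisely for $k$ in the $j$-th group and $l$ in the $i$-th group, and $0$ elsewhere — which is the definition of $\ti L^{(ij)}$. Therefore
\[
\|L^{(ij)}\|_{h^i_{\ti p}\to h^j_{\ti q}}=\|\ti L^{(ij)}\|_{h_{\ti p}\to h_{\ti q}}=\|\rho_j\, L\, \pi_i\|_{h_{\ti p}\to h_{\ti q}}\le \|\rho_j\|\,\|L\|_{h_{\ti p}\to h_{\ti q}}\,\|\pi_i\|\le \|L\|_{h_{\ti p}\to h_{\ti q}},
\]
where the first equality is the definition given just before the lemma, and the middle inequality is submultiplicativity of the operator norm. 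One small point to check carefully is the identification $\|z\|_{h^i_{\ti p}}$ (the norm on the genuine subspace $h^i_{\ti p}$ of sequences supported on the $i$-th group) with the norm of the corresponding element of $h_{\ti p}$ under the natural zero-extension; this is immediate from the formulas for $\|\cdot\|_{h^1_{\ti p}}$ and $\|\cdot\|_{h^2_{\ti p}}$, so that the operator norm of $\ti L^{(ij)}$ on the whole space equals that of $L^{(ij)}$ viewed as a map $h^i_{\ti p}\to h^j_{\ti q}$.

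I do not anticipate a serious obstacle here; the only thing requiring a word of care is making sure the truncation maps really are contractions in the \emph{weighted} norms (they are, because the weight $|j|^{2\ti p}$ is diagonal, so truncation is still an orthogonal projection), and that the bookkeeping matching $\ti L^{(ij)}$ with $\rho_j L\pi_i$ is done for all four choices of $(i,j)$ uniformly. The argument is insensitive to whether $\ti p=\ti q$ or $\ti p\ne \ti q$, and works verbatim for any pair $\ti p,\ti q\in\{p,q\}$, as well as for the more general partitions used elsewhere in the paper (into three or more index groups), since a coordinate truncation onto any union of groups is still an orthogonal projection of norm $\le 1$.
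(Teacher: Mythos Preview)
Your argument is correct and is the standard proof: each block $\ti L^{(ij)}$ is the composition of $L$ with coordinate-truncation projections on the source and target, and those projections are orthogonal (hence norm $\le 1$) in the weighted $\ell^2$-norms because the weights $|j|^{2\ti p}$ are diagonal. The paper does not give its own proof here; it simply refers to p.~104 of \cite{Y}, where the same projection argument appears. One cosmetic remark: with the paper's convention $L^{(ij)}=(L_{kl}:k\in\text{group }i,\ l\in\text{group }j)$ the first index is the \emph{row} (output) index, so the correct identification is $\ti L^{(ij)}=\rho_i\,L\,\pi_j$ rather than $\rho_j\,L\,\pi_i$; this is only a labeling issue (and the paper's own display of $\ti L^{(21)}$ already contains a typo), and it does not affect your bound.
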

\begin{proof} The proof can be found in P. 104, \cite{Y}.
\end{proof}
%%%%%%%%
%%%%%%%%%%%%
\begin{lem}  Assuming that $X$ is self-adjoint in $\ell_2$ and assuming that $\operatorname{Dim}\, X<\infty$ , we have
\be \label{17-15-3}||X||_{\ell_2\to\ell_2}\le ||X||_{h_p\to h_p},\; \forall\,\, p>0.\ee

%(2). With the assumptions in (1). And $q=p+\kappa$ with $\kappa>0, \; %p>0$. Then
%\be \label{17-15-3+1}||X||_{h_p\to h_p}\le ||X||_{h_q\to h_q}.\ee

\iffalse%%%%%%%%(2). (2). Assume $0<p\le q$. Then
\be \label{17-15-3}||X||_{h_p\to h_p}\le ||X||_{h_q\to h_q}.\ee Let $\Xi\subset\mathbb Z^d$ be a compact subset of $\mathbb Z^d$. And for given $\alpha\in\mathbb R$, let $\Gamma=\operatorname{diag}\, (|j|^\alpha:\, j\in \Xi)$ and write $X=(X_{ij}:\; i,j\in\Xi)$. Then
\be \label{17-15-3+1}|| X||_{\ell_2\to\ell_2}\le ||\Gamma\,X||_{h_p\to h_p}\; \frac{1}{\min_{j\in\Xi}\{|j|^{\alpha}\}}.\ee
\fi%%%%%%%%%%%%%%%%
\end{lem}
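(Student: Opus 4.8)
\textbf{Proof proposal for the final lemma: $\|X\|_{\ell_2\to\ell_2}\le\|X\|_{h_p\to h_p}$ for $X$ self-adjoint in $\ell_2$ with $\operatorname{Dim}X<\infty$.}

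The plan is to exploit that a finite-dimensional self-adjoint operator is diagonalizable by a unitary matrix and that its operator norm on any Hilbert space equals its spectral radius only when we are careful about which inner product is used; the cleanest route avoids that subtlety entirely and works directly with the operator norm as a supremum of Rayleigh-type quotients. Concretely, since $X$ acts on a finite index set $\Xi\subset\mathbb Z^d$, write $\Gamma=\operatorname{diag}(|j|^{p}:j\in\Xi)$, so that $\Gamma$ is an invertible positive diagonal matrix and $\|z\|_{h_p}=\|\Gamma z\|_{\ell_2}$ for $z$ supported on $\Xi$. Then by definition
\[
\|X\|_{h_p\to h_p}=\sup_{z\neq 0}\frac{\|\Gamma X z\|_{\ell_2}}{\|\Gamma z\|_{\ell_2}}
=\sup_{w\neq 0}\frac{\|\Gamma X\Gamma^{-1}w\|_{\ell_2}}{\|w\|_{\ell_2}}=\|\Gamma X\Gamma^{-1}\|_{\ell_2\to\ell_2},
\]
after the substitution $w=\Gamma z$. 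So the statement to prove becomes $\|X\|_{\ell_2\to\ell_2}\le\|\Gamma X\Gamma^{-1}\|_{\ell_2\to\ell_2}$, i.e. that conjugating a self-adjoint matrix by a positive definite diagonal matrix does not decrease its $\ell_2$ operator norm.

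First I would reduce to eigenvalues. Since $X$ is self-adjoint in $\ell_2$, $\|X\|_{\ell_2\to\ell_2}=\rho(X):=\max_i|\mu_i|$, where $\mu_i$ are the (real) eigenvalues of $X$. On the other hand, $\Gamma X\Gamma^{-1}$ is similar to $X$, hence has exactly the same eigenvalues $\mu_i$; therefore its spectral radius is also $\rho(X)$. Now invoke the elementary fact that for any square matrix $A$ one has $\rho(A)\le\|A\|_{\ell_2\to\ell_2}$ (e.g. apply $A$ to an eigenvector realizing $|\mu_i|=\rho(A)$, or use Gelfand's formula). Applying this to $A=\Gamma X\Gamma^{-1}$ gives
\[
\|X\|_{\ell_2\to\ell_2}=\rho(X)=\rho(\Gamma X\Gamma^{-1})\le\|\Gamma X\Gamma^{-1}\|_{\ell_2\to\ell_2}=\|X\|_{h_p\to h_p},
\]
which is exactly the claim. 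The finiteness of $\operatorname{Dim}X$ is used twice: to guarantee that $X$ is unitarily diagonalizable in $\ell_2$ (so $\|X\|_{\ell_2\to\ell_2}=\rho(X)$), and to make the similarity $\Gamma X\Gamma^{-1}$ literally a finite matrix with well-defined eigenvalues; for $p>0$ the diagonal entries $|j|^{p}$ are bounded away from $0$ on the finite set $\Xi$, so $\Gamma$ is boundedly invertible and the change of variables above is legitimate.

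I do not expect a genuine obstacle here; the one point requiring a line of care is making sure the norm $\|\cdot\|_{h_p}$ in the lemma is interpreted, as everywhere else in the paper, as $\|z\|_{h_p}^2=\sum_j|z_j|^2|j|^{2p}$ restricted to the (finite) support of $X$, so that the conjugation identity $\|X\|_{h_p\to h_p}=\|\Gamma X\Gamma^{-1}\|_{\ell_2\to\ell_2}$ holds on the nose. If one instead wanted to avoid spectral theory altogether, an alternative is: for the self-adjoint $X$ pick a unit $\ell_2$-eigenvector $v$ with $Xv=\mu v$, $|\mu|=\|X\|_{\ell_2\to\ell_2}$, set $z=\Gamma^{-1}v$; then $\Gamma Xz=\mu\Gamma z$, whence $\|X\|_{h_p\to h_p}\ge\|\Gamma Xz\|_{\ell_2}/\|\Gamma z\|_{\ell_2}=|\mu|=\|X\|_{\ell_2\to\ell_2}$, giving the inequality directly. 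Either way the proof is two or three lines once the conjugation identity is recorded, and I would present the eigenvector version as it is the most self-contained.
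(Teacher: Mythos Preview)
Your proposal is correct and essentially matches the paper's proof. In fact, the ``alternative'' you sketch at the end --- picking an $\ell_2$-eigenvector $v$ with $Xv=\mu v$, $|\mu|=\|X\|_{\ell_2\to\ell_2}$, and plugging it into the Rayleigh quotient for the $h_p$-norm --- is exactly what the paper does (with $I=\operatorname{diag}(|j|^p)$ in place of your $\Gamma$); your main route via the conjugation identity $\|X\|_{h_p\to h_p}=\|\Gamma X\Gamma^{-1}\|_{\ell_2\to\ell_2}$ and the inequality $\rho(A)\le\|A\|$ is just a slight repackaging of the same idea.
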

\begin{proof} %\footnote{This simple proof was told to me by Professor Jianjun Liu.}
 Let $\lambda$ be any eigenvalue of $X$ and $x_0$ be the eigenvector with the eigenvalue $\lambda$. Since $X$ is self-adjoint, $||X||_{\ell_2\to\ell_2}=\sup\,\{|\lambda|\}$. Write $X=(X_{ij}:\, i,j\in\Xi)$. Let $I=\operatorname{diag}\, (|j|^p:\, j\in\Xi)$. Then
\[||X||_{h_p\to h_p}=\sup_{x\neq 0}\frac{||X\,x||_{h_p}}{||x||_{h_p}}=
\sup_{x\neq 0}\frac{||I\,X\, x||_{\ell_2}}{||I\, x||_{\ell_2}}\ge \frac{||I\,X \, x_0||_{\ell_2}}{||I\, x_0||_{\ell_2}}=\frac{||I\,\lambda \, x_0||_{\ell_2}}{||I\, x_0||_{\ell_2}}=|\lambda|.\]
Thus
\[||X||_{h_p\to h_p}\ge \sup\,\{|\lambda|\}=||X||_{\ell_2\to\ell_2}. \]

%When $X:\, h_p\subset \ell_2\to h_q\subset \, h_p\subset\ell_2$ is %bounded, it is compact, thus, is of pure point spectra.

\iffalse%%%%%%%%%%%%%%
%%%%%%%%%%%%%%%%%%%%%
(2). Note that $0<p<q$ and $h_0=\ell_2$. By the interpolation theorem (See p. 91, \cite{Kuk1}, or \cite{RS}),
\[||X||_{h_p\to h_p}\le (||X||_{\ell_2\to\ell_2})^{\tau}(||X||_{h_q\to h_q})^{1-\tau},\] where $\tau=1-\frac{p}{q}$. By \eqref{17-15-3} with replacing $p$ by $q$,
\[||X||_{h_p\to h_p}\le (||X||_{\ell_2\to\ell_2})^{\tau}(||X||_{h_q\to h_q})^{1-\tau}\le (||X||_{h_q\to h_q})^{\tau}(||X||_{h_q\to h_q})^{1-\tau}=||X||_{h_q\to h_q},\]
the proof is finished.
\fi%%%%%%%%%%%%%%

\iffalse%%%%%%%%%
\[||\Gamma\,X||_{h_p\to h_p}=\sup_{x\neq 0}\frac{||\Gamma\,X\,x||_{h_p}}{||x||_{h_p}}=
\sup_{x\neq 0}\frac{|I\,\Gamma\,X\, x||_{\ell_2}}{||I\, x||_{\ell_2}}\ge \frac{|I\,\,\Gamma\,X \, x_0||_{\ell_2}}{||I\, x_0||_{\ell_2}}=\frac{|I\,\Gamma\,\lambda_0 \, x_0||_{\ell_2}}{||I\, x_0||_{\ell_2}}\ge \min\{|j|^\alpha\}\cdot |\lambda_0|.\]
Thus
\[\]
This completes the proof.\fi%%%%%%%%%%
\color{black}
\end{proof}

\begin{lem} \label{absolute-summation}Assume $\lf X\rc$ and $\lf Y\rc$ are two bounded operator from $h_{\ti p}$ to $h_{\ti q}$ where $\ti p,\ti q\in\{p,q,0\}$. Then
\[||\lf X+Y\rc||_{h_{\ti p}\to h_{\ti q}}\le ||\lf X\rc ||_{h_{\ti p}\to h_{\ti q}}+||\lf Y\rc||_{h_{\ti p}\to h_{\ti q}}\] and
\[||\lf X\, Y\rc||_{h_{\ti p}\to h_{\ti p}}\le ||\lf X\rc ||_{h_{\ti p}\to h_{\ti p}}\, ||\lf Y\rc ||_{h_{\ti p}\to h_{\ti p}}  \]

\end{lem}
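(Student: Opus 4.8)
The statement to prove is the final Lemma in the Appendices: for operators $X,Y$ whose moduli $\lf X\rc, \lf Y\rc$ are bounded from $h_{\ti p}$ to $h_{\ti q}$ (resp. $h_{\ti p}$ to $h_{\ti p}$), one has $\|\lf X+Y\rc\|_{h_{\ti p}\to h_{\ti q}}\le \|\lf X\rc\|_{h_{\ti p}\to h_{\ti q}}+\|\lf Y\rc\|_{h_{\ti p}\to h_{\ti q}}$ and $\|\lf X\,Y\rc\|_{h_{\ti p}\to h_{\ti p}}\le \|\lf X\rc\|_{h_{\ti p}\to h_{\ti p}}\,\|\lf Y\rc\|_{h_{\ti p}\to h_{\ti p}}$.

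The plan is to argue entirely at the level of matrix entries, exploiting that the modulus $\lf\cdot\rc$ replaces every entry by its absolute value and that the norm $\|\cdot\|_{h_{\ti p}\to h_{\ti q}}$ is monotone under the entrywise partial order on nonnegative matrices. First I would record the elementary fact that if $A=(A_{ij})$ and $B=(B_{ij})$ are matrices with $0\le A_{ij}\le B_{ij}$ for all $i,j$, then $\|A\|_{h_{\ti p}\to h_{\ti q}}\le \|B\|_{h_{\ti p}\to h_{\ti q}}$: indeed for any $z=(z_j)$ with $\|z\|_{\ti p}\le 1$ one estimates $\|Az\|_{\ti q}^2=\sum_i |i|^{2\ti q}\big|\sum_j A_{ij} z_j\big|^2 \le \sum_i |i|^{2\ti q}\big(\sum_j B_{ij}|z_j|\big)^2 = \|B\,|z|\,\|_{\ti q}^2 \le \|B\|_{h_{\ti p}\to h_{\ti q}}^2$, where $|z|:=(|z_j|)$ has $\|\,|z|\,\|_{\ti p}=\|z\|_{\ti p}\le 1$. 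This monotonicity is the one real ingredient; everything else is bookkeeping.

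For the first (subadditivity) inequality, the entry of $\lf X+Y\rc$ at position $(i,j)$ is $|X_{ij}+Y_{ij}|\le |X_{ij}|+|Y_{ij}|$, which is exactly the $(i,j)$ entry of $\lf X\rc+\lf Y\rc$. Hence by the monotonicity fact $\|\lf X+Y\rc\|_{h_{\ti p}\to h_{\ti q}}\le \|\lf X\rc+\lf Y\rc\|_{h_{\ti p}\to h_{\ti q}}\le \|\lf X\rc\|_{h_{\ti p}\to h_{\ti q}}+\|\lf Y\rc\|_{h_{\ti p}\to h_{\ti q}}$, the last step being ordinary triangle inequality for the operator norm (the operators $\lf X\rc$, $\lf Y\rc$ are bounded $h_{\ti p}\to h_{\ti q}$ by hypothesis). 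For the second (submultiplicativity) inequality, I would compute the $(i,j)$ entry of $\lf XY\rc$: it equals $\big|\sum_k X_{ik}Y_{kj}\big|\le \sum_k |X_{ik}|\,|Y_{kj}| = \big(\lf X\rc \lf Y\rc\big)_{ij}$, so $\lf XY\rc$ is entrywise dominated by the nonnegative matrix $\lf X\rc\lf Y\rc$. Applying monotonicity once more and then the composition bound for operator norms on $h_{\ti p}$ gives $\|\lf XY\rc\|_{h_{\ti p}\to h_{\ti p}}\le \|\lf X\rc\lf Y\rc\|_{h_{\ti p}\to h_{\ti p}}\le \|\lf X\rc\|_{h_{\ti p}\to h_{\ti p}}\,\|\lf Y\rc\|_{h_{\ti p}\to h_{\ti p}}$.

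The only point requiring a word of care — and the closest thing to an obstacle — is the interchange of sums and the convergence of $\sum_k X_{ik}Y_{kj}$ defining the product $XY$, and the justification that the entrywise inequality $\|Az\|_{\ti q}\le \|B\,|z|\,\|_{\ti q}$ above is legitimate when the matrices are genuinely infinite; this is handled by working first with finite truncations (as is done elsewhere in the paper, e.g.\ via Lemma \ref{splitlemma} and the finite-dimensional reductions in Section 3) and passing to the limit, using that all terms are nonnegative so monotone convergence applies. Since $\lf X\rc$ and $\lf Y\rc$ are assumed bounded between the relevant spaces, these truncations are uniformly controlled and the limits exist, so no further subtlety arises. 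I would keep this proof to a few lines, citing the entrywise-monotonicity observation explicitly and leaving the truncation limit as routine.
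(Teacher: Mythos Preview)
Your proof is correct and is precisely the natural elaboration of the paper's own treatment, which simply states that the lemma ``is easily verified by the definitions of $\|\cdot\|_{h_{\ti p}\to h_{\ti q}}$ and $\lf\cdot\rc$'' and omits the details. Your entrywise monotonicity argument is exactly what underlies that one-line dismissal, so there is no substantive difference in approach.
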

\begin{proof} The proof is easily verified by the definitions of $||\cdot||_{h_{\ti p}\to h_{\ti q}}$ and $\lf \cdot \rc$. We omit it. \end{proof}

\begin{lem}\label{finite-norm} For a finite dimensional matrix $X=(X_{ij}:\; i,j\in\mathbb Z^d,\; |i|, |j|\le \Gamma),$ where $\Gamma$ is a fixed constant, then
\[||\lf X\rc ||_{h_{\ti p}\to h_{\ti q}} \le \Gamma^{d/2}\, ||X||_{h_{\ti p}\to h_{\ti q}},\]
where $h_{\ti p}$ is a space of finite dimensional vectors:
\[h_{\ti p}=\{z=(z_j\in\mathbb C:\; j\in\mathbb Z^d, |j|\le \Gamma) \}\]
with
\[||z||^2_{h_{\ti p}}=\sum_{|j|\le \Gamma} |j|^{2\ti p}|z_j|^2.\]
\end{lem}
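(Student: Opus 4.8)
The plan is to strip off the weights, reduce the claim to a purely $\ell_2\to\ell_2$ inequality, and there invoke the two–sided comparison between the operator norm and the Hilbert--Schmidt (Frobenius) norm, combined with the elementary fact that the Hilbert--Schmidt norm is unchanged when a matrix is replaced by the matrix of the absolute values of its entries. Throughout, the finiteness of the index set is what makes all the operators automatically bounded, so there is no convergence issue to track.

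First I would record the reduction to the unweighted case. Set $D_{\ti p}=\mathrm{diag}(|j|^{\ti p}:|j|\le\Gamma)$ and $D_{\ti q}=\mathrm{diag}(|i|^{\ti q}:|i|\le\Gamma)$; these are positive diagonal matrices, and by the very definition of the norm on $h_{\ti p}$ one has $||z||_{h_{\ti p}}=||D_{\ti p}z||_{\ell_2}$. Hence for any matrix $M$ indexed by $\{|i|,|j|\le\Gamma\}$,
\be ||M||_{h_{\ti p}\to h_{\ti q}}=||D_{\ti q}\,M\,D_{\ti p}^{-1}||_{\ell_2\to\ell_2}.\ee
Because $D_{\ti p},D_{\ti q}$ have strictly positive entries, conjugation by them commutes with taking the entrywise modulus, i.e. $\lf D_{\ti q}\,X\,D_{\ti p}^{-1}\rc=D_{\ti q}\,\lf X\rc\,D_{\ti p}^{-1}$. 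Applying the displayed identity to $M=X$ and to $M=\lf X\rc$, it therefore suffices to prove, with $A:=D_{\ti q}\,X\,D_{\ti p}^{-1}$, that $||\lf A\rc||_{\ell_2\to\ell_2}\le\Gamma^{d/2}\,||A||_{\ell_2\to\ell_2}$.

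The heart of the argument is then the classical chain, valid for any finite matrix $A$ of size $n$:
\be ||\lf A\rc||_{\ell_2\to\ell_2}\le||\lf A\rc||_{\mathrm{HS}}=||A||_{\mathrm{HS}}\le\sqrt{n}\,||A||_{\ell_2\to\ell_2}.\ee
Here the first inequality is the standard bound of the largest singular value by the $\ell_2$-norm of all singular values; the middle equality holds because $||\lf A\rc||_{\mathrm{HS}}^2=\sum_{i,j}|A_{ij}|^2=||A||_{\mathrm{HS}}^2$; and the last inequality is $||A||_{\mathrm{HS}}^2=\sum_k\sigma_k(A)^2\le n\,\sigma_1(A)^2$. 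Finally I would bound $n$, the number of admissible indices, by a lattice-point count: $n=\#\{j\in\mathbb Z^d:|j_1|+\cdots+|j_d|\le\Gamma\}\le(2\Gamma+1)^d$, so that $\sqrt n\le C\,\Gamma^{d/2}$, the purely dimensional factor being absorbed into the universal constant $C$ used throughout the paper (equivalently, one may replace $\Gamma$ by $3\Gamma$ in the statement with no effect on any application, since $\Gamma\ge1$). Combining this with the two displays and the reduction above gives $||\lf X\rc||_{h_{\ti p}\to h_{\ti q}}\le\Gamma^{d/2}\,||X||_{h_{\ti p}\to h_{\ti q}}$. I do not expect a genuine obstacle: the only substantive inputs are the Hilbert--Schmidt sandwich and its invariance under $X\mapsto\lf X\rc$, and the rest is bookkeeping with the diagonal weights; the one point to state carefully is that the modulus commutes with conjugation by a \emph{positive} diagonal matrix, which would fail for a general diagonal matrix.
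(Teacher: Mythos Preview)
Your proof is correct and follows essentially the same approach as the paper: both reduce to the unweighted $\ell_2$ case via conjugation by positive diagonal matrices, then control $\lf A\rc$ by a Hilbert--Schmidt type estimate. The paper carries this out by bounding each row norm by $\delta=\|A\|_{\ell_2\to\ell_2}$ and applying Cauchy--Schwarz row by row, which amounts to the same chain $\|\lf A\rc\|_{\mathrm{op}}\le\|A\|_{\mathrm{HS}}\le\sqrt{n}\,\|A\|_{\mathrm{op}}$ that you wrote explicitly; your formulation is just a bit more streamlined. Your remark on the lattice-point count is also apt: the paper writes $\sum_{|i|\le\Gamma}1\le\Gamma^d$, which is only true up to a dimensional constant, consistent with its convention of absorbing such factors into the universal $C$.
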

\begin{proof}
Introducing a weight $w_{ij}=|i|^{\ti p}|j|^{-\ti q}$. Let $\tilde X_{ij}=w_{ij}\, X_{ij}$. Then
\[||\tilde X||_{h_0\to h_0}=||X||_{h_{\ti p}\to h_{\ti q}},\; ||\lf \tilde X\rc||_{h_0\to h_0}=||\lf X\rc||_{h_{\ti p}\to h_{\ti q}}.\]
Let $\delta=||\tilde X||_{h_0\to h_0}$. Then
\[\max_{|i|\le \Gamma} \sum_{|j|\le \Gamma}|\tilde X_{ij}|^2\le \delta^2,\; \max_{|j|\le \Gamma} \sum_{|i|\le \Gamma}|\tilde X_{ij}|^2\le \delta^2.\]
For any $u=(u_j:\; |j|\le \Gamma)$ with $||u||_{h_0}=1$,
\[\begin{array}{lll} ||\lf X\, u\rc ||_{h_0\to h_0}^2&=& \sum_{|i|\le \Gamma}\left|\sum_{|j|\le \Gamma} |\tilde X_{ik}|\, u_j\right|^2\\
&\le & \sum_{|i|\le \Gamma} \left( \sum_{|j|\le \Gamma}|\tilde X_{ij}|^2\right)\left( \sum_{|j|\le \Gamma}|u_j|^2\right)\\
&\le &\sum_{|i|\le \Gamma}\delta^2 \\
&\leq &\Gamma^d\, \delta^2.\end{array}\]
This completes the proof.

\end{proof}

%\end{document}

%%%%%%%%%%%%
%%%%%%%%%%%%%%%%%

\section*{Acknowledgements} I am very grateful to the referee for the invaluable suggestions.
This article is a revised and updated version, replacing a $d$-dimensional generalized BBM equation by a $d$-dimensional generalized PC equation, of the 2017-August version ``KAM theorem with normal frequencies clustering at zero for some shallow water equations".
The author began to conceive this article when visiting the Mittag-Leffler Institute in 2010
invited by Professor H. Eliasson and Professor J.-C. Yoccoz. In 2015, the author was invited by Professor S. Kuksin to
 report the early version of this article at the Euler Institute. In 2016,
 the author was invited by Professor F. Meng to visit Qufu Normal University where the author
 made some revisions. Then the author had a beneficial discussion with Professor J. Liu.
 In this long writing process, the author was also encouraged by Professor W. Craig, Professor D. Bambusi,
 and Professor B. Grebert. In addition, Professor M. Gao and Professor J. Li helped the author edit part
 of the manuscript with latex. The author would like to express his sincere gratitude to
 all the professors mentioned above.  The author is also thankful to Dr. K. Zhang  for her help in computing
  the normal form of BBM and to Professor H. Cong and Dr. Y. Shi for their reading the manuscript.
%\color{red} Yesterday when i write here, I found the symplectic structure $...+\sum_j \lambda_j^{02}dz_j\wedge d\bar z_j$ is rather  inconvenient. Today, I i find a way to change the
%symplectic structure to the usual one $\sum_j dz_j\wedge d\bar z_j$. See KdV and KAM.p120. From today, i return the latter one. \color{black}

 \end{document}